\documentclass[reqno]{amsart}
\usepackage[margin=1in]{geometry}  
\usepackage{graphicx}              
\usepackage{amsmath, amssymb, amsthm, epsfig}               
\usepackage{enumerate}
\usepackage{amsfonts}              
\usepackage{amsthm}                
\usepackage{amsthm}
\usepackage{enumerate}
\theoremstyle{plain}
\usepackage{color}
\usepackage{hyperref}
\hypersetup{colorlinks,citecolor=blue}
\usepackage{enumitem}
\newtheorem{theorem}{Theorem}
\newtheorem{remark}[theorem]{Remark}
\newtheorem{lemma}[theorem]{Lemma}
\newtheorem{proposition}[theorem]{Proposition}
\newtheorem{corollary}[theorem]{Corollary}
\newtheorem{definition}[theorem]{Definition}

\newcommand{\R}{\mathbb{R}}

\newcommand{\weak}{\rightharpoonup}

\renewcommand{\l}{\left}
\renewcommand{\r}{\right}

\def\abs#1{\left|{#1}\right|}

\numberwithin{theorem}{section}
\numberwithin{equation}{section}

scaled\magstep2

\makeatletter
\def\section{\@startsection {section}{1}{\z@}{-3.5ex plus -1ex minus
        -.2ex}{2.3ex plus .2ex}{\large\bf}}
\makeatother \numberwithin{equation}{section}
\DeclareMathOperator{\dist}{dist} 
\begin{document}
\title[Global existence and finite-time blow-Up of solutions]{On Fractional $1$-Laplacian Evolution Equation}
\author[R. Arora]{Rakesh Arora}
\address[R. Arora]{ Department of Mathematical Sciences, Indian Institute of Technology (IIT-BHU) Varanasi, Uttar Pradesh 221005, India}
\email{rakesh.mat@iitbhu.ac.in, arora.npde@gmail.com}
\author[N. K. Maurya]{Nitin Kumar Maurya}
\address[N. K. Maurya]{Department of Mathematical Sciences, Indian Institute of Technology (IIT-BHU) Varanasi, Uttar Pradesh 221005, India}
\email{nitinkumarmaurya.rs.mat23@itbhu.ac.in }
\begin{abstract}
We study a nonlocal evolution problem driven by the fractional $1$-Laplacian with a Carath\'eodory nonlinearity satisfying subcritical growth conditions. We develop a potential well framework to investigate the existence and qualitative behavior of solutions at different initial energy levels. By combining a modified potential well method with Galerkin approximations, we establish the global existence of weak and strong solutions under appropriate conditions on the initial energy and the sign of the associated Nehari functional. To address the singular nature of the fractional $1$-Laplacian, we approximate the problem by a family of fractional $p$-Laplacian equations, derive estimates uniform with respect to $p>1$, and pass to the limit as $p\to 1^{+}$. Furthermore, in the low-dimensional regime $N<2s$, we employ a subdifferential approach to establish the local existence of strong solutions and investigate their qualitative behavior.
\end{abstract}
\maketitle
\smallskip\noindent \textbf{Keywords:} Fractional $1$-Laplacian, Evolution equation, Galerkin method, Local and global existence, Finite time blow-up and extinction.\\
\smallskip\noindent \textbf{2020 Mathematics Subject Classification:} 35R11, 35A01, 35D30, 35B40, 35D35, 35B44, 35K55.
\maketitle
\tableofcontents
\section{Introduction}
In this article, we study the following evolution equation involving the fractional $1$--Laplacian
\begin{equation} \label{main:problem}
        \left\{
    \begin{aligned}
      u_{t}+(-\Delta)_{1}^{s} u &=  f(x,u) &&
 \text{in } \Omega \times (0, T),\\
 u &= 0 && \text{in}~  \mathbb{R}^N \setminus \Omega \times (0, T),\\
 u(x,0)&= u_0(x) && \text{in}~ \Omega,
\end{aligned}
    \right.
 \end{equation}
 where $s\in (0,1)$, $T>0$ and $\Omega\subseteq\mathbb{R}^N$, $N\geq 1$ be a bounded domain with Lipschitz boundary and $f: \Omega \times \mathbb{R}\longrightarrow \mathbb{R}$ is a Carath\'eodory function satisfying subcritical growth conditions. The fractional $1$-Laplacian is defined as
 \begin{equation}\label{main:operator}
     (-\Delta)_{1}^{s} u(x):= \text{P.V.} \int_{\mathbb{R}^{N}} \frac{u(x)-u(y)}{|u(x)-u(y)|}\frac{d y}{|x-y|^{N+s}} \quad \text { for all } x \in \mathbb{R}^{N},
 \end{equation}
 where P.V. is a commonly used abbreviation for in the ``Principal Value" sense. This operator can be regarded as the limiting case $p=1$ of the fractional $p$-Laplacian
 \begin{equation}\label{p-laplacian operator}
     (-\Delta)_{p}^{s} u(x):= \text{P.V.} \int_{\mathbb{R}^{N}} \frac{{|u(x)-u(y)|}^{p-2}(u(x)-u(y))}{|x-y|^{N+sp}}\,dy \quad \text { for all } x \in \mathbb{R}^{N}.
 \end{equation}
 
 In recent decades, the study of nonlinear equations involving the nonlocal operators e.g. the fractional Laplacian $(-\Delta)^s$, the $p$-fractional Laplacian $(-\Delta)^s_p$, $1 < p< \infty$, has gained significant attention due to its rich analytic structure and wide-ranging applications in fields such as optimization, finance, anomalous diffusion, phase transitions, flame propagation, and minimal surfaces. The fractional Sobolev spaces provide the appropriate functional framework for such equations. For a comprehensive overview of the basic properties of these spaces and the operator $(-\Delta)^s$, as well as their applications to partial differential equations, we refer interested readers to \cite{Valdinoci-2012, Molica-2016} and the references therein.
 
 In recent years, partial differential equations involving the $1$-Laplacian $\Delta_{1}u=\operatorname{div}\left(\frac{Du}{|Du|}\right)$ have received considerable attention mainly due to their strong connections with image processing, geometric flows, and total variation minimization (see \cite{Gilboa-2009, Bogelein-2015}). Andreu et al. \cite{Andreu-2000, Andreu-2001, Andreu-2004} initiated the systematic study of evolution problems driven by $1$-Laplacian, which laid the foundations of the modern theory. From this perspective, problem \eqref{main:problem} could be seen as the nonlocal analogue of the following $1$-Laplacian Dirichlet problem 
 \begin{equation} \label{local case parabolic problem}
        \left\{
    \begin{aligned}
      u_t - \operatorname{div}\left(\frac{Du}{\abs{Du}}\right)u &=  f(x,t) &&
 \text{in } \Omega\times(0,T),\\
 u &= 0 && \text{on}~  \partial\Omega\times(0,T),\\
 u(x,0)&=u_{0} && \text{in } \Omega.
\end{aligned}
    \right.
 \end{equation}
 A major difficulty in the analysis of \eqref{local case parabolic problem} lies in the highly singular nature of the operator, especially at points where the gradient $Du$ vanishes. Andreu et al.  \cite{Andreu-2001} addressed this issue in the homogeneous case $f\equiv 0$ in $\Omega\times(0,T]$ and $u=\varphi$ on $\partial\Omega\times(0,T]$, where $\varphi\in L^{\infty}(\partial\Omega)$. By exploiting the Anzellotti pairing theory \cite{Anzellotti-1983}, they provided an interpretation of the quotient $\frac{Du}{|Du|}$ when $Du$ vanishes. More precisely, they introduced a vector field $z\in L^{\infty}(\Omega, \mathbb{R}^{N})$ with $\|z\|_{L^{\infty}(\Omega, \mathbb{R}^{N})}\leq1$ and $(z,Du)=|Du|$ in the sense of measures, allowing $z$ to play role of $\frac{Du}{|Du|}$. Within this framework, by using the techniques of completely accretive operators and the Crandall-Liggett semigroup generation theorem \cite{Crandall-1971}, they established the existence and uniqueness of an entropy solution. Several authors have also developed alternative approaches to the homogeneous problem. In particular, Hardt et al. \cite{Hardt-1994} used an approximation techniques that consist in working with a class of nondegenerate parabolic approximation problems. By deriving suitable uniform estimates and passing to the limit, they proved the existence of solutions to the original problem. Later, Andreu et al. \cite{Andreu-2002} studied the behavior of the solution to the homogeneous case of the problem \eqref{local case parabolic problem} near the extinction time for the Dirichlet and Neumann boundary conditions.
 
  For the inhomogeneous case $f\neq 0$, fewer results are available in the literature. A notable contribution in this direction is due to De-Le\'on et al. \cite{Leon-2022} who considered $u_{0}\in L^{2}(\Omega)$ and $f\in L^{1}(0,T;L^{2}(\Omega))$. Their approach relies on approximating the problem by a parabolic $p$-Laplacian equation and then passing to the limit as $p\to 1^{+}$, yielding global existence and uniqueness of solutions. In \cite{Hauer-2019}, Hauer et al. extended the work of B\'enilan et al. \cite{Benilan-1981} for the class of homogeneous operators of order $\alpha =0$. They proved that the mild solutions generated by the associated nonlinear semigroup are locally absolutely continuous and satisfy the corresponding evolution equation almost everywhere in time. Most recently, Alves et al. \cite{Tahir-2022} proved the global existence of a weak solution to the problem \eqref{local case parabolic problem} with nonlinearity $f(u)$ instead of $f(x,t)$, where $f$ is a continuous function and satisfies subcritical growth condition, by employing an approximation technique based on the associated $p$-Laplacian problem combined with the potential well method and Galerkin's method, and then passing to the limit as $p \to 1^{+}$. However, their results are restricted to the low initial energy case, and the initial datum is required to satisfy the additional integrability condition $u_0 \in W^{1,p_0}_{0}(\Omega)$ for some fixed $p_0 \in (1,2)$, leaving the critical initial energy case and finite-time blow-up analysis unaddressed.
  
 
 In contrast to the well-developed theory for the local operator, the study of nonlocal problems driven by the fractional $1$-Laplacian remains in its early stages. In the stationary setting, Bucur et al. \cite{Bucur-2023} investigated the existence of $(s,1)$- harmonic functions and their equivalence to the minimizers of associated energy functional. Furthermore, for the problem 
 \begin{equation} \label{nonlocal case elliptic problem}
\left(-\Delta\right)^{s}_{1} u =  f \quad
 \text{in } \Omega, \quad \text{and} \quad u = 0 \quad \text{in}~  \mathbb{R}^{N}\setminus\Omega.
 \end{equation}
 assuming that the source term $f$ belongs to $L^{\frac{N}{s}}(\Omega)$ and its norm is sufficiently small, Bucur \cite{Bucur-2025} investigated the minimizer of the following functional
 \begin{equation*}
     \mathcal{F}^{s}_{1}(u):=\frac{1}{2}\int_{\mathbb{R}^{N}\times\mathbb{R}^{N}}\frac{|u(x)-u(y)|}{|x-y|^{N+s}}\,dx\,dy-\int_{\Omega}fu\,dx
 \end{equation*}
 and proved that the minimizer corresponds to the weak solution of the problem \eqref{nonlocal case elliptic problem}. Moreover, it was shown that when $\|f\|_{L^{\frac{N}{s}}(\Omega)} > \left(2S_{N,s}\right)^{-1}$, where $S_{N,s}$ denotes the sharp constant in the fractional Sobolev inequality, the energy functional may be unbounded from below, and a global minimizer may fail to exist. Furthermore, they proved the flatness results, asserting that any weak solution $u$ satisfies $u(x) = u(y)$ on a set of positive Lebesgue measure in $\mathbb{R}^N \times \mathbb{R}^N$. This shows that the auxiliary antisymmetric function $z \in L^\infty(\mathbb{R}^N \times \mathbb{R}^N)$ with $\|z\|_{L^\infty} \leq 1$, which plays the role of the formal quotient $\frac{u(x)-u(y)}{|u(x)-u(y)|}$ on the set where $u(x) \neq u(y)$, cannot in general be replaced by the pointwise sign function; hence, the auxiliary function $z$ is indispensable in the definition of a weak solution. Regarding the regularity result, we refer the reader to \cite{Novaga-2023}. In \cite{Li-2024}, Li et al. further studied the asymptotic behavior of the renormalized solution for fractional $p$-Laplacian type equations and established the existence of solutions in the sense of renormalization for the problem \eqref{nonlocal case elliptic problem}, where $0\leq f\in L^{1}(\Omega)$. The corresponding parabolic problem driven by the fractional $1$-Laplacian is given by
 \begin{equation} \label{nonlocal case parabolic problem}
        \left\{
    \begin{aligned}
      u_t + \left(-\Delta\right)^{s}_{1} u &=  f &&
 \text{in } \Omega\times(0,T),\\
 u &= 0 && \text{in}~  \mathbb{R}^{N}\setminus\Omega\times(0,T),\\
 u(x,0)&=u_{0} && \text{in } \Omega.
\end{aligned}
    \right.
 \end{equation}
 A rigorous notion of solution for \eqref{nonlocal case parabolic problem} was first introduced by Maz\'on et al. \cite{Mazon-2016}. In analogy with the local theory based on Anzellotti’s pairing, they proved that the interpretation of the quotient $\frac{u(x,t)-u(y,t)}{|u(x,t)-u(y,t)|}$ requires the introduction of an auxiliary function $z\in L^{\infty}(\mathbb{R}^{N}\times\mathbb{R}^{N}\times[0,T])$ such that $z=\frac{u(x,t)-u(y,t)}{|u(x,t)-u(y,t)|}$ in the set $\left\{\mathbb{R}^{N}\times\mathbb{R}^{N}\times[0,T]: u(x,t)\neq u(y,t)\right\}$ precisely and $z(x,y,t)\in[-1,1]$ in the set $\left\{\mathbb{R}^{N}\times\mathbb{R}^{N}\times[0,T]\,:\,u(x,t)= u(y,t)\right\}.$ In \cite{Li-2025}, Li et al. proved the existence and uniqueness of both renormalized and entropy solutions for the fractional $1$-Laplacian equation, and proved the equivalence between renormalized and entropy solutions to the problem \eqref{nonlocal case parabolic problem} with $u_{0}\in L^{1}(\Omega)$ and $f\in L^{1}\left(\Omega\times(0,T]\right)$. In \cite{DLi-2024}, using the Rothe's time discritization method, Li et al. proved the existence of weak solution to the problem \eqref{nonlocal case parabolic problem} assuming $f\in L^{\frac{N}{s}}\left(\Omega\times(0,T]\right)$ and $u_{0}\in W_{0}^{s,1}(\Omega)\cap L^{2}(\Omega)$ and established that the weak solution maintains $\frac{1}{2}$-H\"older continuity in time.
 
Motivated by the above developments, this paper aims to make 
a further contribution to the theory of evolution equations driven by the fractional $1$-Laplacian. Despite the recent progress, the available results for problem \eqref{main:problem} remain notably limited. To be precise, the existing literature on the parabolic fractional $1$-Laplacian has so far been largely limited to source terms of the form $f = f(x,t)$, as in the foundational works of Maz\'on et al. \cite{Mazon-2016}, the subsequent contributions of Li et al. \cite{Li-2024, Li-2025, DLi-2024}, and Hauer et al. \cite{Hauer-2019} considered a source term of the form $f(x,u)$ for the fractional $1$-Laplacian, but under the restrictive assumption that $f$ is Lipschitz continuous in the second variable uniformly with respect to the first variable.

To the best of our knowledge, the study of \eqref{main:problem} with a general Carath\'eodory source term $f(x,u)$ with subcritical growth, together with a systematic analysis of solution behavior across the low, critical and high initial energy levels, remains unexplored. Motivated by this, we investigate the existence of global weak and strong solutions and the qualitative behavior of solutions under suitable assumptions on $f$. A crucial role in our arguments is played by uniform estimates of potential well depth, convergence of potential well depths and critical points in Nehari-manifold analysis. 

This paper is organized as follows. In section \ref{Main results}, we outline the definitions and properties of fractional Sobolev spaces, introduce relevant notation, and state the main results. In section \ref{Nehari functional and Invariant sets}, we prove several results related to potential well theory and recall some results from the subdifferential theory. In section \ref{Regularization and asymptotic analysis}, we study the regularization scheme used to construct the global weak solution of \eqref{main:problem} and derive uniform estimates for the potential depth $d_{p}$. In section \ref{Global existence of weak/strong solutions}, we establish the global existence of a weak solution and strong solution for low and critical initial energy. In section \ref{Wellposedness and Dynamics in low dimension}, we prove the local existence of strong solutions for $N<2s$ utilizing the sub-differential approach and examine the possibility of finite time blow-up and finite time extinction concerning problem \eqref{main:problem}. In section \ref{On the case of high initial energy}, for high initial energy case, we establish the sufficient condition depending on the initial data for the blow-up and extinction of strong solution. 
 \section{Main results}
 \label{Main results}
 In this section, we begin with introducing the function spaces, notion of solution, and the associated notations. Let $s\in(0,1)$, $1\leq p<\infty$ and the fractional critical exponent $p_{s}^{*}$ be defined as $p_{s}^{*}=\frac{Np}{N-sp}$ if $sp<N$ and $p_{s}^{*}=\infty$, otherwise. Define the Banach space
 $$W^{s,p}(\mathbb{R}^{N})=\left\{u\in L^{p}(\mathbb{R}^{N})\,|\, \frac{|u(x)-u(y)|}{|x-y|^{\frac{N}{p}+s}}\in L^{p}(\mathbb{R}^{N}\times\mathbb{R}^{N}) \right\}$$
 endowed with the norm
 $$\|u\|_{W^{s,p}(\mathbb{R}^{N})}:=\left(\int_{\mathbb{R}^{N}}|u|^{p}\,dx+\int_{\mathbb{R}^{N}}\int_{\mathbb{R}^{N}}\frac{|u(x)-u(y)|^{p}}{|x-y|^{N+sp}}\,dx\,dy\right)^{\frac{1}{p}}.$$
 Let $\Omega$ be an open subset of $\mathbb{R}^{N}$ and $Q= (\mathbb{R}^N \times\mathbb{R}^N)\setminus(\mathcal{C}\Omega\times \mathcal{C}\Omega), \quad \mathcal{C}\Omega=\mathbb{R}^N \setminus \Omega$. Denote
 $$W_{0}^{s,p}(\Omega)=\left\{u\,|\,u\in L^{p}(\Omega), u=0 \,\text{ in }\,\mathcal{C}\Omega,\,\,  \frac{|u(x)-u(y)|}{|x-y|^{\frac{N}{p}+s}}\in L^{p}(Q) \right\}.$$
 The space $W_{0}^{s,p}(\Omega)$ is a normed linear subspace of $W^{s,p}(\mathbb{R}^N)$ equipped with the norm
 $$\|u\|_{W_{0}^{s,p}(\Omega)}=\left(\iint_{Q}\frac{|u(x)-u(y)|^{p}}{|x-y|^{N+sp}}\,dx\,dy\right)^{\frac{1}{p}}.$$
Here the function \(f : \Omega \times \mathbb{R} \to \mathbb{R}\) satisfies the following conditions:
\begin{enumerate}[leftmargin=1.5cm,label=\textnormal{($f_0$)},ref=\textnormal{$f_0$}]
    \item \label{Assump:f_0}  There exists $r\in(1,\frac{N}{N-s})$ and $C>0$ such that 
    \[
     |f(x,t)|\leq C|t|^{r-1}\quad \text{for all } \, (x,t) \in \Omega \times \mathbb{R}.
    \]
\end{enumerate}
\begin{enumerate}[leftmargin=1.5cm,label=\textnormal{($f_1$)},ref=\textnormal{$f_1$}]
    \item \label{Assump:f_1} There exists a constant $\theta>1$, such that 
    \[
    0<\theta F(x,t) \leq t f(x,t)\quad\text{for all }\, (x,t) \in \Omega \times \mathbb{R}\setminus\{0\},
    \]
    where \(F(x,t) := \int_{0}^{t} f(x,\tau) \, d\tau\).
\end{enumerate}

\begin{enumerate}[leftmargin=1.5cm,label=\textnormal{($f_2$)},ref=\textnormal{$f_2$}]
    \item \label{Assump:f_2} \(f(x, \cdot) \in C^1(\mathbb{R})\) for all \(x \in \Omega\) and there exists a constant $\Theta>1$ such that 
    \[
   (\Theta-1)f(x,t) t < t^{2} f'(x,t) \quad \text{for all } \,  t \in \mathbb{R}\setminus\{0\}.
    \]
\end{enumerate}
\begin{remark}
Following the same arguments as in \cite[Lemma 2.16 (ii)]{Arora-2025} and using \eqref{Assump:f_1}, there exists $B>0$ such that 
\begin{equation}\label{F:bounds}
         F(x,t) \geq B|t|^{\theta}, \quad \text{and} \quad tf(x,t)\geq B\theta|t|^{\theta} \quad \text{for all } \,\, |t|\geq 1\ \mbox{and }x\in \Omega.
\end{equation}    
\end{remark}
Next, we recall the following continuous and compact embedding properties of the fractional Sobolev spaces $W^{s,1}_0(\Omega)$, which will be used throughout this paper. These results follow from the standard fractional Sobolev embedding theory; see \cite{Valdinoci-2012}.
\begin{proposition}\label{prop:embedding}
Let $\Omega \subset \mathbb{R}^N$ be a bounded open set with Lipschitz 
boundary, and let $s \in (0,1)$. Then the following embedding results hold:
\begin{enumerate}
    \item[\textnormal{(i)}] The embedding $W^{s,1}_0(\Omega) \hookrightarrow L^r(\Omega)$ is continuous for all $r \in \left[1, 1^{*}_{s}\right]$, and there exists a 
    constant $C_{Sob} = C_{Sob}(N,s,\Omega) > 0$ such that
    \[
        \|u\|_{L^r(\Omega)} \leq C_{Sob} \|u\|_{W^{s,1}_0(\Omega)} 
        \quad \text{for all }\, u \in W^{s,1}_0(\Omega).
    \]
    \item[\textnormal{(ii)}] The embedding $W^{s,1}_0(\Omega)\hookrightarrow L^r(\Omega)$ is compact for all $r \in \left[1, 1^{*}_{s}\right)$.
\end{enumerate}
\end{proposition}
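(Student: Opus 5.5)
The plan is to reduce both parts to the standard fractional Sobolev embedding on all of $\mathbb{R}^N$, using the fact that elements of $W^{s,1}_0(\Omega)$ vanish outside the bounded set $\Omega$. Two cases are needed: $N>s$, where $1^*_s = \frac{N}{N-s}$, and $N\le s$. The latter is only possible if $N=1$, and then $s<1=N$, so in fact $1^*_s=\frac{N}{N-s}$ is finite in every case.

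\textbf{Step 1 (the endpoint $r=1^*_s$).} For $u\in W^{s,1}_0(\Omega)$, extended by zero, I will apply the fractional Sobolev inequality of \cite{Valdinoci-2012} (valid for $p=1$, $sp<N$):
\[
\|u\|_{L^{1^*_s}(\mathbb{R}^N)}\le C(N,s)\int_{\mathbb{R}^N}\int_{\mathbb{R}^N}\frac{|u(x)-u(y)|}{|x-y|^{N+s}}\,dx\,dy .
\]
Since $u=0$ on $\mathcal{C}\Omega$, the integrand vanishes on $\mathcal{C}\Omega\times\mathcal{C}\Omega$. Hence the double integral equals the integral over $Q$, which is $\|u\|_{W^{s,1}_0(\Omega)}$. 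This requires no assumption on $\partial\Omega$.

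\textbf{Step 2 (exponents $1\le r<1^*_s$).} Since $|\Omega|<\infty$, Hölder's inequality gives
\[
\|u\|_{L^r(\Omega)}\le|\Omega|^{\frac1r-\frac1{1^*_s}}\|u\|_{L^{1^*_s}(\Omega)} .
\]
Taking $C_{Sob}=C(N,s)\max\{1,|\Omega|\}$ then gives a single constant that works for every $r\in[1,1^*_s]$, which proves (i).

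\textbf{Step 3 (compactness in (ii)).} First I will prove compactness into $L^1(\Omega)$ for a bounded sequence $(u_n)$ in $W^{s,1}_0(\Omega)$, using the Fréchet–Kolmogorov criterion on $\mathbb{R}^N$ for the zero extensions.
\begin{itemize}
\item Tightness is automatic, since all supports lie in the bounded set $\Omega$.
\item For equicontinuity of translations, I will average over a ball. For $|h|<\rho$,
\[
\|u(\cdot+h)-u\|_{L^1}\le \frac{1}{|B_\rho|}\int_{B_\rho}\big(\|u(\cdot+h)-u(\cdot+z)\|_{L^1}+\|u(\cdot+z)-u\|_{L^1}\big)\,dz .
\]
Because $|z|^{-N-s}\ge\rho^{-N-s}$ on $B_\rho$, each average is bounded by $C\rho^{s}[u]_{W^{s,1}}$ (for the first term, $|z-h|<2\rho$). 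This gives $\sup_n\|u_n(\cdot+h)-u_n\|_{L^1}\to0$ as $h\to0$.
\end{itemize}
So a subsequence converges in $L^1(\Omega)$. For $1<r<1^*_s$, I will then interpolate: with $\frac1r=\frac{\lambda}{1}+\frac{1-\lambda}{1^*_s}$,
\[
\|u_n-u_m\|_{L^r}\le\|u_n-u_m\|_{L^1}^{\lambda}\,\|u_n-u_m\|_{L^{1^*_s}}^{1-\lambda}.
\]
By Step 1 the second factor is uniformly bounded, so the subsequence is Cauchy in $L^r$. This proves (ii).

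The main obstacle is checking that the Sobolev inequality from \cite{Valdinoci-2012} genuinely covers $p=1$ with a seminorm-only right-hand side. If it does not, I would prove the endpoint directly: use a coarea-type formula $[u]_{W^{s,1}}=\int_{\mathbb{R}}\mathrm{Per}_s(\{u>t\})\,dt$, combine it with the fractional isoperimetric inequality $|E|^{\frac{N-s}{N}}\le C\,\mathrm{Per}_s(E)$, and then follow the classical layer-cake argument.
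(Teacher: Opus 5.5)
Your proof is correct, and it follows a different route from the one the paper points to. The paper gives no argument of its own and defers to \cite{Valdinoci-2012}, whose embedding and compactness theorems for $W^{s,p}(\Omega)$ are stated for bounded extension domains. That is presumably why the Lipschitz hypothesis appears: one first extends $u$ to $\mathbb{R}^N$, then applies the whole-space inequality and the cube-averaging compactness theorem.

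You bypass the extension step. Every $u\in W^{s,1}_0(\Omega)$ already vanishes on $\mathcal{C}\Omega$, so the norm over $Q$ is the full Gagliardo seminorm of the zero extension. You then need only four ingredients:
the whole-space inequality for compactly supported functions;
H\"older's inequality with the $r$-independent constant $C(N,s)\max\{1,|\Omega|\}$;
the Fr\'echet--Kolmogorov criterion, via the ball-averaging bound $\|u(\cdot+h)-u\|_{L^1}\le C\rho^s[u]$ for $|h|<\rho$;
and interpolation between $L^1$ and $L^{1^*_s}$.
As a result, no regularity of $\partial\Omega$ is needed, and the constant is explicit and uniform in $r$, which is exactly what the statement asks for.

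The obstacle you flag is not a real one. The fractional Sobolev inequality in \cite{Valdinoci-2012} (Theorem 6.5 there) is stated for all $p\in[1,\infty)$ with $sp<N$, with only the seminorm on the right-hand side, for measurable compactly supported functions. This covers your zero extensions, since $\overline{\Omega}$ is compact, so the coarea/isoperimetric fallback is unnecessary. One cosmetic point: the case $N\le s$ never arises because $N\ge 1>s$, so the opening case distinction can be dropped.
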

To the stationary counterpart of the problem \eqref{main:problem},  we introduce the associated the energy functional $E : W_{0}^{s,1}(\Omega) \to \mathbb{R}$, defined as: 
\[
    E(u) := \iint_{Q} \frac{|u(x)-u(y)|}{|x-y|^{N+s}}\,dx\,dy - \int_{\Omega} F(x,u)\,dx.
\]
From conditions \eqref{Assump:f_0}, \eqref{Assump:f_1}  and Proposition \ref{prop:embedding}(i), it follows that the energy functional $E$ is well-defined. Next, we define the Nehari functional $I : W_{0}^{s,1}(\Omega) \to \mathbb{R}$ and the Nehari manifold as
\[
   I(u) := \iint_{Q} \frac{|u(x)-u(y)|}{|x-y|^{N+s}}\,dx\,dy - \int_{\Omega} f(x,u)u\,dx,
   \]
   \[
    \mathcal{N} := \left\{ u \in W_{0}^{s,1}(\Omega) \setminus \{ 0 \} \mid I(u) = 0 \right\}.
\]
The potential well $W$ and its corresponding set $V$ is defined as
\begin{equation}\label{invaria:sets}
        W := \left\{ u \in W_{0}^{s,1}(\Omega) \mid I(u) > 0, E(u) < d \right\} \cup \{0\} \quad \text{and} \quad
    V := \left\{ u \in W_{0}^{s,1}(\Omega) \mid I(u) < 0, E(u) < d \right\},
\end{equation}
where the depth $d$ of the potential well $W$ is defined as
\[
    d := \inf_{u \in \mathcal{N}} E(u).
\]
By $\|u\|_{s,p,2}$, we denote the usual norm in $W_{0}^{s,p}(\Omega)\cap L^{2}(\Omega)$ by
$$\|u\|_{s,p,2}=\|u\|_{W_{0}^{s,p}(\Omega)}+\|u\|_{L^{2}(\Omega)}.$$
Next, we introduce two notions of solutions according to the energy dissipation they satisfy; weak solution, for which energy inequality is satisfied, and strong solutions for which corresponding energy equality is satisfied.
\begin{definition}
    \label{Def:weak solution}
A function \( u \in L^{\infty}(0,T;W_{0}^{s,1}(\Omega)\cap L^{2}(\Omega)) \) with \( u_{t} \in L^{2}(0,T;L^{2}(\Omega)) \) is said to be a weak solution of the problem \eqref{main:problem} if 
\begin{enumerate}
    \item \( u(\cdot,0) = u_{0} \) a.e in $\Omega$.
    \item There exists $\eta(\cdot,\cdot,t)\in L^{\infty}(\mathbb{R}^{N}\times\mathbb{R}^{N})$ such that $\eta(x,y,t)=-\eta(y,x,t)$ for almost all $(x,y)\in \mathbb{R}^{N}\times\mathbb{R}^{N},\,\|\eta(\cdot,\cdot,t)\|_{L^{\infty}(\mathbb{R}^{N}\times\mathbb{R}^{N})}\leq 1$, 
    $$ \eta(x,y,t)\in \operatorname{sign}(u(x,t)-u(y,t)) \quad a.e. \,(x,y,t)\in \mathbb{R}^{N}\times\mathbb{R}^{N}\times [0,T], $$ and following equality holds:
\begin{equation}
\label{definition of weak sol u1}
\begin{aligned}
   \int_{\Omega}u_{t}\phi \,dx+\iint_{Q}\eta(x,y,t)\frac{\left(\phi(x)-\phi(y)\right)}{|x-y|^{N+s}}\,dx\,dy=\int_{\Omega}f(x,u)\phi\,dx
\end{aligned}
\end{equation}
for all
$\phi\in W_{0}^{s,1}(\Omega)\cap L^{2}(\Omega)$ and a.e. $t\in[0,T]$.
\item  $u \in C(0, T; L^q(\Omega))$, where $q\in [1,1^{*}_{s})$ and the following energy  relation is satisfied:
\begin{equation}
\label{sol: u}
\begin{aligned}
    \int_{0}^{t} \|u_t(\cdot, \tau)\|^{2}_{L^{2}(\Omega)} \, d\tau + E(u(\cdot,t))
    &\leq E(u_{0}) \quad \text{a.e. } t \in [0,T).
\end{aligned}
\end{equation}

\end{enumerate}
\end{definition}
\begin{definition}
\label{def strong solution}
A  weak solution $u$ of the problem \eqref{main:problem} is said to be a strong solution if the following energy conservation law holds
\begin{equation}
    \label{strong solution}
\int_{0}^{t} \|u_t(\cdot, \tau)\|^{2}_{L^{2}(\Omega)} \, d\tau + E(u(\cdot,t))
    = E(u(\cdot,0)),
\end{equation}
for any time interval $[0,t]\subset [0,T).$
\end{definition}

\begin{definition}[Maximal existence time]
    Let \( u \) be a weak or strong solution of the problem \eqref{main:problem}. We define the maximal existence time \( T_{\max} \) of \( u \) as follows:
    \begin{enumerate}
        \item If \( u \) exists for all \( 0 \leq t < +\infty \), then \( T_{\max} = +\infty \).
        \item If there exists \( t_0 \in (0, +\infty) \) such that \( u \) exists for all \( t \in (0, t_0) \) but does not exist at \( t = t_0 \) in the sense that 
        \[
        \|u(\cdot, t)\|_{L^{2}(\Omega)} \to +\infty \quad \text{as} \quad t \to t_0^-,
        \]
        then \( T_{\max} = t_0 \).
    \end{enumerate}
\end{definition}
To state our main results, let $q>1$ and define $s_q:=N+s-\frac{N}{q},$
where $s_q\in(s,1)$.
The first result corresponds to the global existence of weak solution for low initial energy $E(u_{0}) < d$ and $I(u_{0}) >0$.

\begin{theorem}
\label{Main Theorem low initial energy}
     Let conditions \eqref{Assump:f_0}-\eqref{Assump:f_2} hold and $u_{0}\in W_{0}^{s_{q},q}(\Omega)\cap L^{2}(\Omega)$. If $E(u_{0})<d$ and $I(u_{0})>0$, then problem \eqref{main:problem} admits a global weak solution in the sense of Definition~\ref{Def:weak solution}
\end{theorem}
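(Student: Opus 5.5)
The proof will go through the regularization by fractional $p$-Laplacian problems announced in the introduction. For $p\in(1,p_0)$ with $p_0$ close to $1$, consider
\[
u_t+(-\Delta)^{s}_{p}u=f(x,u)\ \text{in }\Omega\times(0,T),\qquad u=0\ \text{in }\mathcal{C}\Omega\times(0,T),\qquad u(\cdot,0)=u_0 .
\]
The associated objects are
\[
E_p(u)=\tfrac1p\iint_Q\frac{|u(x)-u(y)|^p}{|x-y|^{N+sp}}\,dx\,dy-\int_\Omega F(x,u)\,dx,
\]
the Nehari functional $I_p$, the set $\mathcal N_p$, the depth $d_p=\inf_{\mathcal N_p}E_p$ and the well $W_p$.

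The hypothesis $u_0\in W^{s_q,q}_0(\Omega)$ with $s_q=N+s-N/q$ is there so that $u_0\in W^{s',p}_0(\Omega)$ for all $p\in(1,q]$, with the exponent matched to $s$. Indeed, $s_q-\frac Nq=s-N\big(\frac{2}{q}-1\big)$... more to the point, we use it to check that $\|u_0\|_{W^{s,p}_0}^p\to\|u_0\|_{W^{s,1}_0}$ as $p\to1^+$, by dominated convergence. Then $E_p(u_0)\to E(u_0)$ and $I_p(u_0)\to I(u_0)$. Together with the convergence $d_p\to d$ (the uniform well-depth estimates and the convergence of depths from Section~\ref{Regularization and asymptotic analysis}), the assumptions $E(u_0)<d$ and $I(u_0)>0$ give $E_p(u_0)<d_p$ and $I_p(u_0)>0$ for all $p$ close to $1$. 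So $u_0\in W_p$.

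\textbf{Step 1 (fixed $p$, Galerkin).} Take a basis $\{w_j\}$ of $W^{s,p}_0\cap L^2$ orthonormal in $L^2$, and approximations $u_{m0}\to u_0$ in $W^{s,p}_0$. Solve the ODE system and test with $u_{mt}$. This gives
\[
\int_0^t\|u_{mt}\|_2^2\,d\tau+E_p(u_m(t))=E_p(u_{m0})<d_p ,
\]
and $I_p(u_{m0})>0$ for large $m$. Invariance of $W_p$ follows by the usual contradiction argument: at a first exit time $I_p(u_m)=0$ with $u_m\neq0$, so $E_p(u_m)\ge d_p$, which is impossible.

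On $W_p$, hypothesis \eqref{Assump:f_1} gives
\[
E_p\ge\Big(\tfrac1p-\tfrac1\theta\Big)\|u\|^p_{W^{s,p}_0}+\tfrac1\theta I_p ,
\]
so $\|u_m\|_{W^{s,p}_0}^p\le \frac{p\theta}{\theta-p}\,d_p$ and $\|u_{mt}\|_{L^2L^2}^2\le d_p$. We also need an $L^2$ bound. For this, test with $u_m$: $\frac12\frac d{dt}\|u_m\|_2^2=-I_p(u_m)<0$, so $\|u_m(t)\|_2\le\|u_0\|_2$.

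Passing $m\to\infty$ uses Aubin–Lions compactness. The monotonicity (Minty) trick identifies the limit of $(-\Delta)^s_p u_m$, and \eqref{Assump:f_0} handles the Carathéodory term. This yields a weak solution $u_p$ satisfying the energy inequality and $u_p(t)\in W_p$. Hypothesis \eqref{Assump:f_2} enters in the Nehari analysis: it gives uniqueness of the fibering maximum and the lower bound on $d_p$.

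\textbf{Step 2 (uniform estimates).} Since $d_p$ is bounded uniformly as $p\to1^+$, the bounds of Step 1 are uniform in $p$. By Hölder on $Q$ with the weight $|x-y|^{-N-sp}$, restricted to the bounded set $\Omega\times\Omega$, plus the tail terms controlled by the $L^1$ norm, $\|u_p\|_{W^{s,1}_0}$ is bounded uniformly in $p$ and $t$. Hence $u_p$ is bounded in $L^\infty(0,T;W^{s,1}_0\cap L^2)$ and $u_{pt}$ is bounded in $L^2(0,T;L^2)$. By Proposition~\ref{prop:embedding}(ii) and Aubin–Lions–Simon, $u_p\to u$ in $C([0,T];L^q(\Omega))$ for $q<1^*_s$, and a.e. Since $r<1^*_s$, the nonlinearity converges: $f(x,u_p)\to f(x,u)$.

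\textbf{Step 3 (the field $\eta$ and passage to the limit; main obstacle).} Set
\[
\eta_p=|u_p(x)-u_p(y)|^{p-2}(u_p(x)-u_p(y))\,|x-y|^{-s(p-1)} .
\]
On bounded sets, $\|\eta_p\|_{L^{p'}}$ is controlled by $\|u_p\|^{p-1}_{W^{s,p}_0}\le C^{(p-1)/p}$. Arguing as in the local $1$-Laplacian literature, $\eta_p\rightharpoonup\eta$ weakly$^*$ with $\|\eta\|_\infty\le1$, and $\eta$ is antisymmetric. Passing to the limit in the equation gives \eqref{definition of weak sol u1}.

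The hard part is showing $\eta\in\operatorname{sign}(u(x)-u(y))$, equivalently
\[
\iint_Q\eta\,\frac{u(x)-u(y)}{|x-y|^{N+s}}=\|u\|_{W^{s,1}_0}.
\]
I would test the $p$-equation with $u_p$ and integrate in time. The limit equation tested with $u$ gives the reverse inequality. Combining this with lower semicontinuity of the $W^{s,1}_0$ seminorm under $L^1$ convergence and Young's inequality $\frac1p a^p\ge a-\frac{p-1}{p}$ yields $\int_0^T\!\iint\eta\,\frac{u(x)-u(y)}{|x-y|^{N+s}}\ge\int_0^T\|u\|_{W^{s,1}_0}$. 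Since $|\eta|\le1$ gives the opposite inequality, the two are equal, and equality a.e. in $t$ forces $\eta\in\operatorname{sign}(u(x)-u(y))$.

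Finally, the energy inequality \eqref{sol: u} follows from lower semicontinuity of the norms, the Young inequality above, $\int F(x,u_p)\to\int F(x,u)$, and $E_p(u_0)\to E(u_0)$. The time continuity $u\in C(0,T;L^q)$ comes from Aubin–Lions. Since $T$ is arbitrary, the solution is global.
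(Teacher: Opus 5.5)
Your architecture is the paper's: fractional $p$-Laplacian regularization, Galerkin with invariance of the $p$-well, uniform bounds, a weak-$*$ limit $\eta$ of the fluxes, sign identification through the energy identity, and the energy inequality by lower semicontinuity. The difference is the operator. You regularize with $(-\Delta)^s_p$, whose kernel is $|x-y|^{-N-sp}$. The paper uses $(-\Delta)^{s_p}_p$ with $s_p=N+s-\frac{N}{p}$, whose kernel is $|x-y|^{-(N+s)p}$. With your choice, Step~2 is false. H\"older on $\Omega\times\Omega$ gives
\[
\iint_{\Omega\times\Omega}\frac{|u(x)-u(y)|}{|x-y|^{N+s}}\,dx\,dy\le\Big(\iint_{\Omega\times\Omega}\frac{|u(x)-u(y)|^{p}}{|x-y|^{N+sp}}\,dx\,dy\Big)^{\frac1p}\Big(\iint_{\Omega\times\Omega}\frac{dx\,dy}{|x-y|^{N}}\Big)^{\frac{p-1}{p}},
\]
and the last factor is $+\infty$. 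This is not just a weakness of H\"older: for $p>1$ one has $W^{s,p}_0(\Omega)\not\subset W^{s,1}_0(\Omega)$. For example, a localized lacunary series $\chi(x)\sum_j j^{-1}2^{-js}\sin(2^jx_1)$ with $\chi\in C_c^\infty(\Omega)$ lies in the first space for every $p>1$ but not in the second. So your estimates do not even guarantee $\|u_p(t)\|_{W^{s,1}_0}<\infty$.

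Several later steps rest on this bound and are therefore unsupported:
\begin{itemize}
\item compactness via Proposition~\ref{prop:embedding}(ii) and Aubin--Lions;
\item the regularity $u\in L^\infty(0,T;W^{s,1}_0(\Omega))$;
\item your appeal to Section~\ref{Regularization and asymptotic analysis} for $d_p\ge c>0$ and $d_p\to d$. Those lemmas are proved for the $s_p$-scaled problem and use $\|u\|_{W^{s,1}_0}\le C_1^{(1-p)/p}\|u\|_{W^{s_p,p}_0}$.
\end{itemize}
That inequality holds because $N+s_pp=(N+s)p$ turns the integrand into $\big(|u(x)-u(y)|/|x-y|^{N+s}\big)^p$ against Lebesgue measure, so H\"older costs only $|\Omega\times\Omega|^{(p-1)/p}$. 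This is also the role of the hypothesis $u_0\in W^{s_q,q}_0(\Omega)$, which your remark on $s_q-\frac Nq$ does not capture. Your Young-inequality step fails for a related reason. Integrating $a\le\frac1pa^p+\frac{p-1}{p}$ against $|x-y|^{-N}\,dx\,dy$, which has infinite mass near the diagonal, produces $\frac{p-1}{p}\cdot\infty$.

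The simplest repair is to use the paper's $s_p$. If you keep the same $s$, replace the $W^{s,1}_0$ bound by a uniform bound in $W^{s-\varepsilon,1}_0(\Omega)$. H\"older then costs $\big(\iint_{\Omega\times\Omega}|x-y|^{-N+\varepsilon p'}\,dx\,dy\big)^{1/p'}\to(\operatorname{diam}\Omega)^{\varepsilon}$ as $p\to1^+$. The region of $Q$ where $|x-y|>1$ is controlled by the $L^1$ (hence $L^2$) bound. Choosing $\varepsilon$ so that $r<\frac{N}{N-s+\varepsilon}$ gives the compactness you need.

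Then apply Fatou's lemma to $\frac{|u_p(x)-u_p(y)|^p}{|x-y|^{N+sp}}\to\frac{|u(x)-u(y)|}{|x-y|^{N+s}}$ a.e. This gives $\|u\|_{W^{s,1}_0}\le\liminf\|u_p\|^p_{W^{s,p}_0}$, which yields:
\begin{itemize}
\item $u\in L^\infty(0,T;W^{s,1}_0)$;
\item the lower bound needed in the sign identification, in place of Young;
\item applied to near-minimizers, $\liminf d_p\ge d$.
\end{itemize}
You must also redo $d_p\ge c>0$ with the $W^{s-\varepsilon,1}$ embedding. With these changes, your Step~3 flux bound $\iint_{A\times A}|\eta_p|^{p'}\,dx\,dy\le(\operatorname{diam}A)^N\|u_p\|^p_{W^{s,p}_0}$ on bounded sets is fine.
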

We establish the existence of a global weak solution through an approximation scheme involving fractional $p$-Laplacian evolution problems with $p>1$. More precisely, by combining the Galerkin method with potential well theory, we first obtain global weak solutions $u^{(p)}$ to the corresponding fractional $p$-Laplacian problems. A key step in passing to the limit as $p\to1^{+}$ is to derive estimates that remain uniform with respect to $p$. To this end, we establish new convergence estimates for the associated energy and Nehari functionals (see Lemma~\ref{strong convergence u_0}), together with uniform convergence estimates for the critical points $\lambda_{p_n}$ arising from the Nehari manifold analysis and for the corresponding potential well depths $d_{p_n}$ (see Lemmas~\ref{boundedness of lambda} and~\ref{Dpn converges}). These estimates play a crucial role in transferring the potential well conditions from the approximating fractional $p_n$-Laplacian problems to the limiting fractional $1$-Laplacian problem. Finally, by exploiting the resulting uniform estimates and suitable compactness arguments, we pass to the limit along a sequence $p_n\to1^{+}$ and show that $u^{(p_n)}$ converges to a global weak solution $u$ of \eqref{main:problem} in the sense of Definition~\ref{Def:weak solution}.

Next, we weaken the assumptions imposed in the preceding result by considering initial data $u_0$ in the natural energy space $W_{0}^{s,1}(\Omega)\cap L^{2}(\Omega)$. Under an additional growth restriction on the nonlinearity, we establish the global existence of strong solutions. More precisely, we introduce the following assumption:
\begin{enumerate}[leftmargin=1.5cm,label=\textnormal{(${f_3}$)},ref=\textnormal{${f_3}$}]
\item \label{Assump:f_3} $r<1+\frac{N}{2(N-s)}.$
\end{enumerate}

\begin{theorem}\label{Main Theorem low initial energy-strong}
Let conditions \eqref{Assump:f_0}--\eqref{Assump:f_3} hold and $u_{0}\in W_{0}^{s,1}(\Omega)\cap L^{2}(\Omega)$. If $E(u_{0})<d$ and $I(u_{0})>0$, then problem \eqref{main:problem} admits a global strong solution.
\end{theorem}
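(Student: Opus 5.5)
The plan is to prove Theorem \ref{Main Theorem low initial energy-strong} by approximating the initial datum. We then use Theorem \ref{Main Theorem low initial energy} to get weak solutions for the smooth data, and upgrade the energy inequality \eqref{sol: u} to the equality \eqref{strong solution} through a chain rule for the convex part of the energy.

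\textbf{Step 1: approximate the initial datum.} Given $u_0\in W_0^{s,1}(\Omega)\cap L^2(\Omega)$ with $E(u_0)<d$ and $I(u_0)>0$, choose $u_{0k}\in C_c^\infty(\Omega)\subset W_0^{s_q,q}(\Omega)\cap L^2(\Omega)$ with $u_{0k}\to u_0$ in $W_0^{s,1}(\Omega)\cap L^2(\Omega)$. This can be done by truncation plus mollification, which is standard for Lipschitz $\Omega$.

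\textbf{Step 2: preserve the potential well condition.} We need $E$ and $I$ to be continuous on $W_0^{s,1}(\Omega)$. This follows from \eqref{Assump:f_0}, Proposition \ref{prop:embedding}(i) (note $r<1^*_s$), and Vitali/dominated convergence for $\int F(x,u)$ and $\int f(x,u)u$. Hence $E(u_{0k})<d$ and $I(u_{0k})>0$ for $k$ large. Theorem \ref{Main Theorem low initial energy} then gives global weak solutions $u_k$.

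\textbf{Step 3: uniform bounds.} Invariance of $W$ (established in Section \ref{Nehari functional and Invariant sets}) keeps $u_k(t)\in W$. From $I(u_k)>0$, \eqref{Assump:f_1} and \eqref{sol: u} one gets
\[
E(u_k)\ge (1-\tfrac1\theta)[u_k]_{s,1}.
\]
Therefore $\sup_t [u_k(t)]_{s,1}$ and $\int_0^T\|\partial_t u_k\|_2^2$ are bounded uniformly in $k$. The $L^2$ bound in time comes from $u_k(t)=u_{0k}+\int_0^t\partial_t u_k$. By Aubin--Lions with Proposition \ref{prop:embedding}(ii), a subsequence converges strongly in $C([0,T];L^q)$ for $q<1^*_s$, while $\partial_t u_k\rightharpoonup u_t$ in $L^2(L^2)$ and $\eta_k\rightharpoonup^*\eta$. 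The nonlinear term passes to the limit by \eqref{Assump:f_0}.

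\textbf{Step 4: identify $\eta\in\operatorname{sign}(u(x)-u(y))$.} This is done by the usual monotonicity (Minty) argument. Test the equation with $u_k$ and use $\iint\eta_k(u_k(x)-u_k(y))K=[u_k]_{s,1}$. Weak lower semicontinuity of $[\cdot]_{s,1}$ then gives the reverse inequality
\[
\iint\eta\,(u(x)-u(y))K\ge[u]_{s,1}.
\]
Together with $|\eta|\le1$, this forces equality pointwise.

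\textbf{Step 5: energy equality.} This is where \eqref{Assump:f_3} enters. Since $r-1<\frac{N}{2(N-s)}$ and $r<1^*_s$, interpolation together with the $L^\infty(W^{s,1})$ bound gives $f(x,u)\in L^2(0,T;L^2)$; it seems in fact $f(x,u)\in L^\infty(L^2)$ when $2(r-1)\le 1^*_s$. Then $g:=f(x,u)-u_t\in L^2(0,T;L^2)$, and $g(t)\in\partial\Phi(u(t))$ for a.e.\ $t$, where $\Phi(v)=[v]_{s,1}$ (extended by $+\infty$ off $W_0^{s,1}$) is convex, lsc and proper on $L^2(\Omega)$; the subdifferential inclusion follows from the weak formulation with $\eta$.

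The Brezis chain rule for a convex lsc functional along $u\in W^{1,2}(0,T;L^2)$ with $\partial\Phi(u)\ni g\in L^2(L^2)$ then gives that $t\mapsto\Phi(u(t))$ is absolutely continuous with
\[
\tfrac{d}{dt}\Phi(u)=\langle g,u_t\rangle.
\]
The term $\int F(x,u)$ is $C^1$ along the flow because $f(x,u)\in L^2(L^2)$ and $u_t\in L^2(L^2)$. Adding the two identities yields
\[
\int_0^t\|u_t\|_2^2+E(u(t))=E(u_0),
\]
which is \eqref{strong solution}.

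The main obstacle is Step 5, specifically verifying $f(x,u)\in L^2(0,T;L^2(\Omega))$ with the exponent condition \eqref{Assump:f_3}. If direct Sobolev interpolation is insufficient for $2(r-1)>1^*_s$, I would first try to obtain the $L^2$ integrability through the $L^\infty(L^2)$ bound combined with the Gagliardo--Nirenberg type interpolation between $L^2$ and $L^{1^*_s}$.
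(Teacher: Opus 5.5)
Your proposal follows the paper's proof essentially step for step: smooth approximation of $u_0$, continuity of $E$ and $I$ to keep the approximants in the well, Theorem~\ref{Main Theorem low initial energy} for the approximants, uniform bounds from invariance of $W$, the Minty-type identification $\eta\in\operatorname{sign}(u(x)-u(y))$, and the chain rule of Lemma~\ref{continuity of strong solution} with $f(\cdot,u)\in L^2$ coming from \eqref{Assump:f_3}. Two small remarks. First, Lemma~\ref{u in W delta} is stated for strong solutions, so you should run your Step 5 argument on each $u_k$ first (as the paper does) before invoking invariance in Step 3. Second, \eqref{Assump:f_3} is precisely $2(r-1)<1^{*}_{s}$, so $\|f(\cdot,u)\|_{L^2}^2\le C\bigl(|\Omega|+\|u\|_{L^{1^{*}_{s}}}^{1^{*}_{s}}\bigr)$ gives $f(\cdot,u)\in L^\infty(0,T;L^2(\Omega))$ immediately, and the fallback you mention for $2(r-1)>1^{*}_{s}$ is never needed.
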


The additional condition \eqref{Assump:f_3} provides the integrability required to construct an approximating sequence of global strong solutions. The passage to the limit relies on uniform estimates for these approximating solutions, together with the continuity in time of the energy functional $E(u(\cdot,t))$ and the Nehari functional $I(u(\cdot,t))$ (see Lemma  \ref{continuity of strong solution}). Moreover, the invariance of the corresponding potential well along the evolution (see Lemma ~\ref{u in W delta}) ensures that the solution remains in the stable region determined by the initial data. Combining these properties with the regularity and continuity results, we establish the global existence of a strong solution in the sense of Definition~\ref{def strong solution}. We emphasize that, even in the corresponding local $1$-Laplacian problem \cite{Tahir-2022}, the global existence of strong solutions for initial data $u_0$ belonging merely to the natural energy space has remained an open problem. Moreover, the results of \cite{Tahir-2022} for the local $1$-Laplacian are restricted to the low initial energy case $E(u_0)<d$, and the critical initial energy case $E(u_0)=d$ is left unaddressed.
 
Next, we state our existence result for global weak and strong solution at the critical level $E(u_0)=d$ and $I(u_0) \geq 0$, together with the identification of a finite time vanishing phenomenon if the Nehari functional becomes zero at some time $t^\ast>0$. 
   
    \begin{theorem}\label{main-exist-critical-weak}
    Let conditions \eqref{Assump:f_0}–\eqref{Assump:f_2} hold and \( u_{0} \in W_{0}^{s_{q},q}(\Omega)\cap L^{2}(\Omega) \). If \( E(u_{0}) = d \) and \( I(u_{0}) \geq 0 \), then problem \eqref{main:problem} admits a global weak solution.
    \begin{enumerate}
        \item[\textnormal{(i)}] If there exists \( t^{*} > 0 \) such that \( I(u(\cdot,t)) > 0 \) for \( 0 < t < t^{*} \) and \( I(u(\cdot,t^{*})) = 0 \), then there exists a weak solution \( u(\cdot,t) \) which vanishes in finite time \( t^{*} \).
        \item[\textnormal{(ii)}] If, in addition, \eqref{Assump:f_3} holds, then problem \eqref{main:problem} admits a global strong solution for \( u_0 \in W_{0}^{s,1}(\Omega)\cap L^{2}(\Omega)\). 
    \end{enumerate}
\end{theorem}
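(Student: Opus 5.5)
The plan is to reduce the critical case $E(u_0)=d$ to the low-energy case of Theorem~\ref{Main Theorem low initial energy} (resp. Theorem~\ref{Main Theorem low initial energy-strong}) by scaling the initial datum. Set $\lambda_m := 1-\frac1m$ and $u_{0m}:=\lambda_m u_0$, so that $u_{0m}\to u_0$ in $W_0^{s_q,q}(\Omega)\cap L^2(\Omega)$ and in $W_0^{s,1}(\Omega)\cap L^2(\Omega)$.

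\textbf{Step 1: placing $u_{0m}$ in the potential well.} If $I(u_0)>0$, the Nehari fibering map $\lambda\mapsto I(\lambda u_0)$ is positive on $(0,1]$. For the $1$-homogeneous seminorm this means
\[
\lambda\|u_0\|_{W^{s,1}_0}-\int_\Omega f(x,\lambda u_0)\lambda u_0\,dx >0 .
\]
By \eqref{Assump:f_2}, the map $\lambda\mapsto f(x,\lambda t)\lambda t/\lambda$ is strictly increasing, so $I(\lambda u_0)>0$ for $\lambda<1$. Moreover,
\[
\frac{d}{d\lambda}E(\lambda u_0)=\frac1\lambda I(\lambda u_0)>0 ,
\]
so $E(u_{0m})<E(u_0)=d$.

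If $I(u_0)=0$ and $u_0\neq0$, then $u_0\in\mathcal N$. Since $I(\lambda u_0)>0$ for $\lambda<1$ by the same monotonicity, we again get $E(u_{0m})<d$ and $I(u_{0m})>0$. If $u_0=0$, the zero function is the solution.

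Theorem~\ref{Main Theorem low initial energy} then yields global weak solutions $u_m$ satisfying the energy inequality \eqref{sol: u}. By the invariance of $W$ (Lemma~\ref{u in W delta}), $u_m(t)\in W$ for all $t$.

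\textbf{Step 2: uniform bounds.} In $W$ we have $I(u_m)>0$. Combining this with \eqref{Assump:f_1} gives
\[
E(u_m)\ge \Big(1-\tfrac1\theta\Big)\|u_m\|_{W^{s,1}_0},
\]
hence $\|u_m(t)\|_{W_0^{s,1}}\le \frac{\theta d}{\theta-1}$ and $\int_0^t\|u_{m,t}\|_2^2\le d$ uniformly in $m$. The $L^2$ bound comes from $u_m(t)=u_{0m}+\int_0^t u_{m,t}$, which gives growth at most like $\sqrt t$ on every finite interval.

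\textbf{Step 3: passing to the limit.} Aubin--Lions together with Proposition~\ref{prop:embedding}(ii) gives $u_m\to u$ in $C([0,T];L^q)$ for $q<1^*_s$, and $u_{m,t}\weak u_t$ in $L^2$. We also have $\eta_m\weak^*\eta$ in $L^\infty$, with $\|\eta\|_\infty\le1$ and antisymmetry preserved. The source term converges because of the subcritical growth \eqref{Assump:f_0}.

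\textbf{Main obstacle: identifying $\eta\in\operatorname{sign}(u(x)-u(y))$.} This is the genuinely delicate step, because of the $1$-Laplacian. I would use the Minty/energy trick. Test the $m$-equation with $u_m$, integrate in time, and use lower semicontinuity of the $W^{s,1}$ seminorm together with the limit equation tested with $u$. This yields
\[
\iint \eta\,\frac{u(x)-u(y)}{|x-y|^{N+s}}\ \ge\ \|u\|_{W_0^{s,1}} .
\]
Since $|\eta|\le1$, this forces equality pointwise a.e. on $\{u(x)\ne u(y)\}$. The energy inequality follows by weak lower semicontinuity, using $E(u_{0m})\to E(u_0)$.

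\textbf{Part (i).} If $I(u(t^*))=0$, then either $u(t^*)=0$ or $u(t^*)\in\mathcal N$, in which case $E(u(t^*))\ge d$. The energy inequality gives
\[
\int_0^{t^*}\|u_t\|_2^2\le d-E(u(t^*))\le 0 ,
\]
so $u_t\equiv0$ on $(0,t^*)$. Then $u\equiv u_0$, and taking $\phi=u_0$ in \eqref{definition of weak sol u1} gives $I(u_0)=0$. This contradicts $I(u(t))>0$ for $t<t^*$ unless $u(t^*)=0$. Hence $u$ vanishes at $t^*$, and we extend it by $u\equiv0$ after $t^*$, which is a solution.

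\textbf{Part (ii).} Repeat Steps 1--3 using Theorem~\ref{Main Theorem low initial energy-strong} in place of Theorem~\ref{Main Theorem low initial energy}. The energy equality passes to the limit via the continuity of $E(u(t))$ given by Lemma~\ref{continuity of strong solution}. Concretely, the limit inequality $\le$ is combined with the reverse inequality $\ge$, obtained by testing the limit equation with $u_t$; this test is justified under \eqref{Assump:f_3}.
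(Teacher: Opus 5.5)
Your proposal is correct and follows the paper's route. You scale by $\lambda_k=1-\frac1k$ to land in the low-energy regime and apply Theorem~\ref{Main Theorem low initial energy} (resp.\ Theorem~\ref{Main Theorem low initial energy-strong}). You then pass to the limit using the compactness and sign-identification arguments of Claims~1--2, force $u(\cdot,t^*)=0$ in (i) from the energy inequality and the definition of $d$, and get (ii) from the subdifferential chain rule under \eqref{Assump:f_3}. Your contradiction via $u_t\equiv 0$ in (i) is just the contrapositive of the paper's use of $\frac12\frac{d}{dt}\|u\|_{L^2(\Omega)}^2=-I(u)<0$.

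One small correction: Lemma~\ref{u in W delta} is proved only for strong solutions, so you cannot cite it for the weak solutions in the weak-solution part. There, the uniform bounds in your Step~2 should come from the construction in Theorem~\ref{Main Theorem low initial energy}, whose Claim~1 gives bounds independent of the initial datum; this is what the paper implicitly does.
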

The critical initial energy case $E(u_0)=d$ requires a separate argument, since the initial datum $u_0$ no longer belongs to the potential well $W$. To address this issue, we approximate $u_0$ by the scaled initial data
$$
u_{0k}:=\lambda_k u_0, \qquad \lambda_k=1-\frac{1}{k},
$$
which belong to $W$ by the properties of the Nehari functional. Consequently, each approximate problem falls within the low initial energy regime already treated in Theorem~\ref{Main Theorem low initial energy}. Passing to the limit as $k\to\infty$, we obtain a global weak solution corresponding to the critical initial energy level $E(u_0)=d$.

Finally, the existence of a global strong solution can be established by following the same approximation and limiting arguments as in the weak solution case, together with Theorem~\ref{Main Theorem low initial energy-strong}.

     \begin{corollary}
     \label{Uniqueness of weak/strong solution}
        Let conditions \eqref{Assump:f_0}-\eqref{Assump:f_3} hold. If $E(u_{0})\leq d$, $I(u_{0})\geq 0$ and $f(x,\cdot)$ is uniformly Lipschitz for almost all $x\in\Omega$, then the corresponding  weak and strong solutions of \eqref{main:problem} are unique.
     \end{corollary}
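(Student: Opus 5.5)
Uniqueness will follow from a standard $L^2$ contraction (Gronwall) argument. The key structural fact is that the fractional $1$-Laplacian is monotone even though it is multivalued.

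Let $u_1,u_2$ be two weak (or strong) solutions with the same initial datum $u_0$, with associated antisymmetric functions $\eta_1,\eta_2$ as in Definition~\ref{Def:weak solution}. Set $w=u_1-u_2$. Since $u_i\in L^\infty(0,T;W_0^{s,1}(\Omega)\cap L^2(\Omega))$, for a.e. $t$ the function $\phi=w(\cdot,t)$ is an admissible test function in \eqref{definition of weak sol u1}. Subtracting the two identities gives, for a.e. $t$,
\begin{equation*}
\int_\Omega w_t w\,dx+\iint_Q \big(\eta_1-\eta_2\big)\frac{w(x)-w(y)}{|x-y|^{N+s}}\,dx\,dy=\int_\Omega \big(f(x,u_1)-f(x,u_2)\big)w\,dx.
\end{equation*}

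\textbf{Monotonicity of the nonlocal term.} We show that the double integral is nonnegative. Expand
\[
(\eta_1-\eta_2)\big((u_1(x)-u_1(y))-(u_2(x)-u_2(y))\big).
\]
Since $\eta_i\in\operatorname{sign}(u_i(x)-u_i(y))$ and $|\eta_j|\le1$, we have
\[
\eta_i(u_i(x)-u_i(y))=|u_i(x)-u_i(y)|\ge \eta_j(u_i(x)-u_i(y)).
\]
Hence the integrand is pointwise $\ge0$. Each product is integrable against $|x-y|^{-N-s}$ on $Q$, because $\eta_i\in L^\infty$ and $u_i(\cdot,t)\in W_0^{s,1}(\Omega)$, so splitting the integral is legitimate.

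\textbf{Gronwall step.} Because $w\in L^2(0,T;L^2)$ with $w_t\in L^2(0,T;L^2)$, the map $t\mapsto \|w(t)\|_{L^2}^2$ is absolutely continuous, with derivative $2\int_\Omega w_tw\,dx$. Using the uniform Lipschitz constant $L$ of $f(x,\cdot)$, we get
\[
\tfrac12\tfrac{d}{dt}\|w(t)\|_{L^2}^2\le L\|w(t)\|_{L^2}^2 \quad\text{for a.e. } t.
\]
Since $w(0)=0$ a.e., Gronwall's inequality yields $w\equiv0$ on $[0,T)$, and this holds on every interval on which both solutions are defined. The hypotheses $E(u_0)\le d$, $I(u_0)\ge0$ together with \eqref{Assump:f_0}–\eqref{Assump:f_3} only ensure, through Theorems~\ref{Main Theorem low initial energy}–\ref{main-exist-critical-weak}, that such solutions exist globally. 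Uniqueness of strong solutions is then immediate, since every strong solution is a weak solution.

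\textbf{Main obstacle.} The delicate point is justifying the time-derivative identity $\frac{d}{dt}\frac12\|w\|_{L^2}^2=\int_\Omega w_tw$ and the choice $\phi=w(\cdot,t)$ pointwise a.e. in time. For the identity, I would use the Lions–Magenes-type lemma for $H^1(0,T;L^2(\Omega))$ functions. The continuity $u\in C([0,T];L^q)$ gives the meaning of $w(0)=0$. Note also that the uniqueness statement concerns $u$, not $\eta$: the functions $\eta_i$ need not coincide, and the monotonicity argument does not require them to.
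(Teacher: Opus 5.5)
Your proposal is correct and follows the paper's proof: subtract the two weak formulations, test with $u_1-u_2$, discard the nonnegative $(\eta_1-\eta_2)$-term by monotonicity, and conclude with the Lipschitz bound and Gronwall's inequality. Your extra care fills in details the paper leaves implicit: the pointwise monotonicity inequality, integrability on $Q$, and the absolute continuity of $\|w\|_{L^2}^2$.
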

    We further investigate the local existence and qualitative behavior of strong solutions in the low dimensional regime $N<2s$. In this case, the compact embedding of $W^{s,1}_0(\Omega)\hookrightarrow L^2(\Omega)$ allows us to employ a subdifferential formulation of the evolution problem in $L^2(\Omega)$. We establish local existence of strong solutions without imposing any restriction on the initial energy and sign of Nehari functional. The existence of a strong solution plays a crucial role in studying the asymptotic behavior and finite time blow-up.  
    \begin{theorem}\label{loc}
    Let conditions \eqref{Assump:f_0}--\eqref{Assump:f_3} hold, $N< 2s$ and \( u_{0}\in W_{0}^{s,1}(\Omega) \). Then, there exists a \( T>0 \) such that the problem \eqref{main:problem} admits a strong solution \( u \) on \( \Omega\times[0,T] \) in the sense of Definition \ref{def strong solution}.
\end{theorem}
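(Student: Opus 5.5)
We will realize \eqref{main:problem} as a perturbed gradient flow in $H:=L^{2}(\Omega)$,
\[
u_t+\partial\varphi(u)\ni f(\cdot,u),
\]
and then apply the Kōmura–Brézis theory together with a fixed-point argument for the Lipschitz-type perturbation. We define the convex functional
\[
\varphi(u)=\iint_Q\frac{|u(x)-u(y)|}{|x-y|^{N+s}}\,dx\,dy
\]
for $u\in W^{s,1}_0(\Omega)$, and set $\varphi(u)=+\infty$ otherwise.

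\textbf{Step 1: properties of $\varphi$ and its subdifferential.} We show that $\varphi$ is proper, convex and lower semicontinuous on $H$. Lower semicontinuity follows from Fatou's lemma applied to a subsequence converging a.e. Since $N<2s$, we have $1^{*}_{s}=N/(N-s)>2$ when $N>s$; in the case $N\le s$ the embedding holds trivially. By Proposition~\ref{prop:embedding}(ii) the sublevel sets $\{\varphi\le c\}\cap\{\|u\|_{L^{2}}\le c\}$ are then compact in $H$.

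Next we characterize $\partial\varphi$, following Mazón et al.~\cite{Mazon-2016}: $v\in\partial\varphi(u)$ if and only if there is an antisymmetric $\eta$ with $\|\eta\|_\infty\le1$ and $\eta\in\operatorname{sign}(u(x)-u(y))$ such that
\[
\int_\Omega v\phi\,dx=\iint_Q\eta\,\frac{\phi(x)-\phi(y)}{|x-y|^{N+s}}\,dx\,dy
\]
for all admissible $\phi$. The inclusion "$\supseteq$" is immediate from convexity. For "$\subseteq$" we intend to use a Hahn–Banach/duality argument, extending the functional $(\phi(x)-\phi(y))|x-y|^{-N-s}\mapsto\int v\phi$ to $L^1(Q)$ with norm $\le1$; testing with $\phi=u$ then gives $\eta\in\operatorname{sign}(u(x)-u(y))$. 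This yields item (2) of Definition~\ref{Def:weak solution}.

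\textbf{Step 2: truncation and Lipschitz perturbation.} Since $N<2s$, $W^{s,1}_0\hookrightarrow L^{2}$ compactly, and \eqref{Assump:f_0} gives $|f(x,u)|\le C|u|^{r-1}$ with $2(r-1)<1^{*}_s$ by \eqref{Assump:f_3}. Hence $B(u):=f(\cdot,u)$ maps $\{\varphi\le M\}$ into bounded subsets of $H$.

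Because $f$ is only Carathéodory, we do not aim for a contraction argument. Instead we work with a Schauder-type fixed point in $C([0,T];H)$. For a given $w$ with $\sup_{t}\varphi(w(t))\le M$, the function $g=f(\cdot,w)\in L^{2}(0,T;H)$, and Brézis' theorem furnishes a unique $u$ solving $u_t+\partial\varphi(u)\ni g$ with $u(0)=u_0\in D(\varphi)$. This $u$ satisfies $u_t\in L^{2}(0,T;H)$ and the energy identity
\[
\int_0^t\|u_t\|^2+\varphi(u(t))=\varphi(u_0)+\int_0^t\!\!\int_\Omega g\,u_t .
\]
Young's inequality then gives $\tfrac12\int_0^t\|u_t\|^2+\varphi(u(t))\le\varphi(u_0)+\tfrac12\int_0^t\|g\|^2\le \varphi(u_0)+CT(M+1)^{2(r-1)}$. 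Choosing $M=2\varphi(u_0)+\|u_0\|_{L^2}+1$ and $T$ small, the set
\[
K_T=\{w:\ w(0)=u_0,\ \sup_{t}\varphi(w)\le M,\ \|w_t\|_{L^{2}(H)}\le C_M\}
\]
is mapped into itself. $K_T$ is convex, and it is compact in $C([0,T];H)$ by Aubin–Lions, combining the compact sublevel sets of Step 1 with equicontinuity from the $u_t$ bound.

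\textbf{Step 3: continuity of the map and the energy equality.} This is the main obstacle. If $w_n\to w$ in $C([0,T];H)$, then $f(\cdot,w_n)\to f(\cdot,w)$ in $L^2(0,T;H)$ by the Carathéodory property, a.e. convergence along subsequences, and Vitali's theorem. The needed equi-integrability comes from the uniform $L^{2(r-1)+\epsilon}$ bounds given by the embedding and \eqref{Assump:f_3}. The Brézis solution operator is $1$-Lipschitz from $L^1(0,T;H)$ to $C([0,T];H)$ by monotonicity of $\partial\varphi$, so the map is continuous, and Schauder's theorem provides a fixed point $u$.

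Finally, for the fixed point the chain rule for convex functionals ($\frac{d}{dt}\varphi(u)=(\partial^\circ\varphi(u),u_t)$ a.e.) gives the energy identity for $\varphi$. The identity $\frac{d}{dt}\int_\Omega F(x,u)=\int f(x,u)u_t$ holds since $u\in H^1(0,T;H)$ and $f(\cdot,u)\in L^2(0,T;H)$. Combining these yields \eqref{strong solution}. Continuity $u\in C([0,T];L^q)$ follows from the $H^1(0,T;L^2)$ regularity and the uniform $W^{s,1}_0$ bound by interpolation.
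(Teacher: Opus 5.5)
Your proof is correct, but it follows a more self-contained route than the paper. The paper simply invokes the abstract result Theorem~\ref{wellposedness} (Ishii) for $u'+\partial\varphi(u)-\partial\psi(u)\ni 0$. It checks the hypotheses in Lemmas~\ref{D(phi,r) is compact} and~\ref{del phi is bounded}: compact sublevel sets of $\varphi$ in $L^2(\Omega)$ because $N<2s$, and $L^2$-boundedness of $f(\cdot,u)$ on sublevel sets because of \eqref{Assump:f_3}. It then identifies $\partial\varphi$ with the fractional $1$-Laplacian via Lemma~\ref{operator and subdifferential}, and obtains the energy equality from the chain rule in Lemma~\ref{continuity of strong solution}.

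You use the same two structural inputs and the same final chain-rule step. The difference is that you replace the citation of Ishii's theorem by a direct argument: Br\'ezis' theory for $u'+\partial\varphi(u)\ni g$ with $g\in L^2(0,T;H)$, a small-time a priori bound, and Schauder's theorem on $K_T$. The compactness of $K_T$ is really Arzel\`a--Ascoli. You should also note that $K_T$ is closed in $C([0,T];H)$, which follows from the lower semicontinuity of $\varphi$ and the weak closedness of the bound on $w_t$.

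Your route buys two things. First, you never need $\psi$ to be convex or $f(\cdot,u)$ to be a subdifferential, so existence uses only the Carath\'eodory property together with \eqref{Assump:f_0} and \eqref{Assump:f_3}. Second, your Hahn--Banach characterization of $\partial\varphi(u)$ through an antisymmetric $\eta\in\operatorname{sign}(u(x)-u(y))$ with $\|\eta\|_\infty\le 1$ is exactly what item (2) of Definition~\ref{Def:weak solution} demands. It is also more rigorous than the paper's Lemma~\ref{operator and subdifferential}, which manipulates the quotient $\frac{u(x)-u(y)}{|u(x)-u(y)|}$ on the set $\{u(x)=u(y)\}$ and takes maximal monotonicity of $\mathcal{L}_H$ for granted. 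The paper's version is shorter only because it delegates all of this to the cited theorem.

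One step needs tightening. The identity $\frac{d}{dt}\int_\Omega F(x,u)\,dx=\int_\Omega f(x,u)u_t\,dx$ does not follow from $u\in H^1(0,T;H)$ and $f(\cdot,u)\in L^2(0,T;H)$ alone. You can fix it in either of two ways:
\begin{itemize}
\item Note that $F(x,\cdot)$ is convex, since by \eqref{Assump:f_1}--\eqref{Assump:f_2} we have $f'(x,t)>0$ for $t\neq 0$, and apply Lemma~\ref{continuity of strong solution} to $\psi$.
\item Use a difference-quotient argument. The segment between $u(t)$ and $u(t+h)$ stays in the convex set $\{\varphi\le M\}$, on which the Nemytskii map $v\mapsto f(\cdot,v)$ is bounded into $L^2(\Omega)$.
\end{itemize}
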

We now state the qualitative behavior of strong solutions in the low dimensional regime for low and critical initial energy.
\begin{theorem}\label{low energy asymptotic behaviour}
     Let conditions \eqref{Assump:f_0}--\eqref{Assump:f_3}, hold and \( u_0 \in W_{0}^{s,1}(\Omega)\). Assume that $N \leq 2s$, then there exist constants \( \delta' \in (0,1) \) and \( C_{Sob} > 0 \) such that the following assertions hold:
     \begin{enumerate}
        \item[\textnormal{(i)}] If $E(u_{0})<d$ and $I(u_{0})>0$, then for all $t\geq 0$,
         \begin{equation*}
   \|u(\cdot, t)\|_{L^2(\Omega)} \leq \left(\|u_{0}\|_{L^{2}(\Omega)}+(\delta'-1)C_{Sob}t\right)_{+},
    \end{equation*}
    where \( (z)_{+}:=\max\{z,0\} \). In particular, the solution vanishes in finite time $ t^{*}=\frac{\|u_{0}\|_{L^{2}(\Omega)}}{(1-\delta')C_{Sob}}.$
        \item[\textnormal{(ii)}] If $E(u_0)=d$. Then, either $I(u(\cdot,t))=0$ and
        $$\|u(\cdot,t)\|_{L^{2}(\Omega)}=\|u_{0}\|_{L^{2}(\Omega)} \quad \text{for all } t \in (0, \infty),$$
        or there exists $t_{0}>0$ such that $I(u(\cdot,t_{0}))>0$ and
        \begin{equation*}
   \|u(\cdot, t)\|_{L^2(\Omega)} \leq \left(\|u(\cdot,t_0)\|_{L^{2}(\Omega)}+(\delta'-1)C_{Sob}(t-t_{0})\right)_{+}
    \end{equation*} 
    for all $t\geq t_{0}.$ In particular, the solution vanishes in finite time  
    $t^{**}=t_{0}+\frac{\|u(\cdot,t_0)\|_{L^{2}(\Omega)}}{(1-\delta')C_{Sob}}.$
        \end{enumerate}
    \end{theorem}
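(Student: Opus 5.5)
The plan is to test the equation against the solution itself and derive a differential inequality for $y(t):=\|u(\cdot,t)\|_{L^2(\Omega)}$. Since $u$ is a strong solution (Theorem~\ref{loc}, extended globally by the invariance of the potential well), we may take $\phi=u(\cdot,t)$ in \eqref{definition of weak sol u1}. Using $\eta(x,y,t)(u(x,t)-u(y,t))=|u(x,t)-u(y,t)|$, this gives for a.e.\ $t$
\[
\frac12\frac{d}{dt}\|u\|_{L^2(\Omega)}^2=-\|u\|_{W_0^{s,1}(\Omega)}+\int_\Omega f(x,u)u\,dx=-I(u(\cdot,t)).
\]
Because $u\in H^1(0,T;L^2(\Omega))$, the map $t\mapsto \|u\|^2_{L^2}$ is absolutely continuous, so this identity holds in the integrated sense.

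\textbf{Case (i).} The key step is a uniform lower bound on $I$. First, the invariance of $W$ (Lemma~\ref{u in W delta}, combined with the energy equality \eqref{strong solution} and continuity of $I(u(\cdot,t))$) keeps $u(t)\in W$, so $E(u(t))\le E(u_0)<d$. Next, from \eqref{Assump:f_1},
\[
E(u)\ \ge\ \Bigl(1-\tfrac1\theta\Bigr)\|u\|_{W_0^{s,1}}+\tfrac1\theta I(u),
\]
which bounds $\|u\|_{W_0^{s,1}}$ in terms of $E(u_0)$. I then want a constant $\delta'\in(0,1)$, depending on $E(u_0)$, with
\[
\int_\Omega f(x,u)u\,dx\le\delta'\|u\|_{W_0^{s,1}} .
\]
There are two ways to get it. One is to use \eqref{Assump:f_0} and Proposition~\ref{prop:embedding}: $\int f(x,u)u\le C C_{Sob}^r\|u\|^{r}_{W_0^{s,1}}$, together with the bound on $\|u\|_{W_0^{s,1}}$ from $E<d$. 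The other is the Nehari characterization: by \eqref{Assump:f_2} the fibering map $\lambda\mapsto I(\lambda u)$ has a unique zero $\lambda^*>1$, and $E(\lambda^* u)\ge d$ yields quantitative room proportional to $d-E(u_0)$. This gives
\[
I(u(t))\ge(1-\delta')\|u(t)\|_{W_0^{s,1}}\ge(1-\delta')C_{Sob}^{-1}\|u(t)\|_{L^2}.
\]
The last inequality uses Proposition~\ref{prop:embedding}(i) with $r=2\le 1^*_s$, which is exactly where $N\le 2s$ enters; I absorb the constant into the paper's normalization of $C_{Sob}$. Then $y\,y'\le-(1-\delta')C_{Sob}\,y$, hence $y'\le-(1-\delta')C_{Sob}$ wherever $y>0$. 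Integrating gives the stated bound with the positive part. Once $y$ hits $0$, it stays $0$: $I(0)=0$ and $y^2$ is nonincreasing because $I\ge0$ on $W$. The extinction time follows.

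\textbf{Case (ii).} If $I(u(t))=0$ for all $t$, then $\frac{d}{dt}y^2=0$, giving the conserved norm. Otherwise, I pick $t_0$ with $I(u(t_0))>0$. The energy equality gives $E(u(t_0))=d-\int_0^{t_0}\|u_t\|^2$. If $u_t\not\equiv0$ on $(0,t_0)$ then $E(u(t_0))<d$, and part (i) applied from time $t_0$ (by time-translation invariance of the strong solution) gives the estimate. If instead $u_t\equiv0$ on $(0,t_0)$, then $\frac{d}{dt}y^2=-2I=0$ there, so $I(u(t))=0$ up to $t_0$. Continuity of $I(u(\cdot,t))$ (Lemma~\ref{continuity of strong solution}) then forces us to move $t_0$ slightly to a time where both $I>0$ and the energy has dropped strictly below $d$. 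Such a time exists because $\frac{d}{dt}y^2=-2I<0$ on an interval implies $u_t\neq0$ there.

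\textbf{Main obstacle.} The hardest step is the uniform constant $\delta'<1$ in case (i). It must hold along the whole trajectory, not just at $t=0$, so it has to be extracted from $E(u)\le E(u_0)<d$ alone via the fibering analysis from \eqref{Assump:f_2}. I would attempt the fibering route first, bounding $\lambda^*(u)$ uniformly away from $1$ using the estimate $E(\lambda^*u)-E(u)\le(\lambda^*-1)\|u\|_{W_0^{s,1}}$ together with the lower bound $d-E(u_0)$.
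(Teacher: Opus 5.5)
Your skeleton is the paper's. You test with $u(\cdot,t)$ to get $\frac12\frac{d}{dt}\|u\|_{L^2(\Omega)}^2=-I(u(\cdot,t))$, bound $I(u)\ge(1-\delta')\|u\|_{W_0^{s,1}(\Omega)}$ uniformly in $t$, embed into $L^2$ and integrate. In case (ii) you restart at a time $t_0$ with $I(u(\cdot,t_0))>0$ and $E(u(\cdot,t_0))<d$. You are right that the embedding produces $C_{Sob}^{-1}$, not $C_{Sob}$.

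You differ in where $\delta'$ comes from. The paper uses the modified wells. With $\delta_1<1<\delta_2$ the roots of $d(\delta)=E(u_0)$ from Lemma \ref{prop:d(delta)}, the sign-invariance of $I_\delta$ and Lemma \ref{u in W delta} give $u(\cdot,t)\in W_\delta$ for all $\delta\in(\delta_1,\delta_2)$. Hence $I_{\delta'}(u(\cdot,t))>0$ for any $\delta'\in(\delta_1,1)$, which is exactly $\int_\Omega f(x,u)u\,dx<\delta'\|u\|_{W_0^{s,1}(\Omega)}$.

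Your fibering route uses only $W$ itself. For $u=u(\cdot,t)\neq0$ you have $\lambda^*u\in\mathcal N$, the map $\lambda\mapsto F(x,\lambda u)$ is nondecreasing by \eqref{Assump:f_1}, and $\|u\|_{W_0^{s,1}(\Omega)}\le\frac{\theta}{\theta-1}E(u_0)$. Therefore
\[
d-E(u_0)\le E(\lambda^*u)-E(u)\le(\lambda^*-1)\|u\|_{W_0^{s,1}(\Omega)}
\quad\Longrightarrow\quad
\lambda^*-1\ge\epsilon_0:=\frac{(\theta-1)(d-E(u_0))}{\theta E(u_0)}.
\]
This is more elementary and yields an explicit $\delta'$. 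It also avoids needing a root of $d(\delta)=E(u_0)$, whose existence rests on continuity of $d(\cdot)$, which Lemma \ref{prop:d(delta)} asserts without proof. The paper's route, in exchange, fits the $W_\delta$ machinery it reuses elsewhere.

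Two corrections are needed to make your version complete.

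\textbf{Drop the first alternative.} The ``\eqref{Assump:f_0} plus embedding'' route gives $\int_\Omega f(x,u)u\,dx\le S_1\|u\|_{W_0^{s,1}}^{r-1}\|u\|_{W_0^{s,1}}$ with $S_1\|u\|^{r-1}_{W_0^{s,1}}\le S_1\bigl(\frac{\theta E(u_0)}{\theta-1}\bigr)^{r-1}$. This is $<1$ only when $E(u_0)<C_\theta$. Since $C_\theta$ in Lemma \ref{Positive:depth} is merely a lower bound for $d$, the route fails on $[C_\theta,d)$ in general.

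\textbf{Make the conversion quantitative.} The gap $\lambda^*\ge1+\epsilon_0$ does not by itself give a uniform ratio $\int_\Omega f(x,u)u\,dx/\|u\|_{W_0^{s,1}}\le\delta'<1$. Uniqueness of the zero of $\lambda\mapsto I(\lambda u)$ is not a quantitative statement, so you need \eqref{Assump:f_2} at full strength. With $h(\lambda):=\int_\Omega f(x,\lambda u)u\,dx$,
\[
\frac{d}{d\lambda}\bigl(\lambda^{1-\Theta}h(\lambda)\bigr)=\lambda^{-\Theta-1}\int_\Omega\bigl(f'(x,\lambda u)(\lambda u)^2-(\Theta-1)f(x,\lambda u)\lambda u\bigr)\,dx>0.
\]
Hence $\int_\Omega f(x,u)u\,dx=h(1)\le(\lambda^*)^{1-\Theta}h(\lambda^*)=(\lambda^*)^{1-\Theta}\|u\|_{W_0^{s,1}(\Omega)}$, and $\delta':=(1+\epsilon_0)^{1-\Theta}<1$ works for all $t$. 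With this line added, case (i) is complete.

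Your case (ii) matches the paper. The detour about $u_t\equiv0$ on $(0,t_0)$ is unnecessary: continuity of $I(u(\cdot,t))$ gives $I>0$ near $t_0$, hence $u_t\not\equiv0$ just before $t_0$, hence $E(u(\cdot,t_0))<d$.

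Finally, take global existence from Theorem \ref{Main Theorem low initial energy-strong}, since $W_0^{s,1}(\Omega)\hookrightarrow L^2(\Omega)$ holds for $N\le 2s$. Do not use Theorem \ref{loc}: it requires $N<2s$, while the statement allows $N=2s$.
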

    Theorem \ref{low energy asymptotic behaviour} shows that in the low dimensional regime $N\leq 2s$, the
sign of the Nehari functional $I(u_0)$ determines not just global existence but a
sharper phenomenon: the strong solution vanishes identically after a finite time. This finite-time extinction is obtained by combining the invariance of the potential well $W$ with the continuous embedding $W_{0}^{s,1}(\Omega)\hookrightarrow L^2(\Omega)$, which allows us to derive a differential estimate for the $L^2$-norm of the solution. The resulting estimate yields finite-time extinction in the case $E(u_0)<d$. At the critical initial energy level $E(u_0)=d$, the same argument is applied after identifying the time $t_{0}>0$ at which the solution enters in the potential well $W$.

\begin{theorem}
   \label{Blow-up thm low initial energy}
    Let conditions \eqref{Assump:f_0}--\eqref{Assump:f_3} hold, $N<2s$ and $u_{0} \in W_{0}^{s,1}(\Omega)$. If $E(u_{0}) \leq d$ and $I(u_{0}) < 0$, then the strong solution $u$ of problem \eqref{main:problem} exhibits finite time blow-up in the sense that there exists $T^{\ast}>0$ such that
\begin{equation}
    \label{blow up of strong sol u}
    \lim_{t \to T^{*}} \int_{0}^{t} \|u(\cdot, \tau)\|^{2}_{L^{2}(\Omega)} \, d\tau = +\infty.
\end{equation}
More precisely, the following assertions hold true
 \begin{enumerate}
        \item[\textnormal{(i)}] If $E(u_{0})<d$, then $T^\ast= \frac{4\|u_0\|_{L^2(\Omega)}^2(\Theta-1)}{\Theta(\Theta-2)^{2}(d-E(u_{0}))}$, where $\Theta$ is given in \eqref{Assump:f_2}.
        \item[\textnormal{(ii)}] If $E(u_{0})=d$, then there exists a finite time $T^{\ast\ast}>0$ such that the strong solution $u$ blows up in sense of \eqref{blow up of strong sol u}.
       
        \end{enumerate}
\end{theorem}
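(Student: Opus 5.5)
We plan to use the classical concavity method of Levine, applied to the strong solution from Theorem~\ref{loc}. Set
\[
M(t):=\int_0^t\|u(\cdot,\tau)\|_{L^2(\Omega)}^2\,d\tau ,
\]
so that $M'(t)=\|u(\cdot,t)\|^2_{L^2(\Omega)}$. Testing the equation with $\phi=u(\cdot,t)$ is admissible for strong solutions, since for $N<2s$ we have $W_0^{s,1}(\Omega)\hookrightarrow L^2(\Omega)$. Because $\eta(u(x)-u(y))=|u(x)-u(y)|$, this gives
\[
M''(t)=2\int_\Omega u u_t\,dx=-2I(u(\cdot,t)).
\]

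The first step is invariance of the unstable set. We show that $I(u(\cdot,t))<0$ for all $t$ when $E(u_0)<d$; for $E(u_0)=d$ we work from a positive time, as described in the last paragraph. The energy equality \eqref{strong solution} makes $E(u(\cdot,t))\le E(u_0)<d$. Suppose $I$ vanished first at some $t_1$, which is possible by continuity of $I$ along strong solutions (Lemma~\ref{continuity of strong solution}). We must first rule out $u(t_1)=0$, using the standard lower bound on $\|u\|_{W_0^{s,1}}$ on $\{I<0\}$ obtained from \eqref{Assump:f_0} and the Sobolev embedding. Then $u(t_1)\in\mathcal N$, so $E(u(t_1))\ge d$, which is a contradiction.

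The second step relates $I$ to the depth gap $d-E(u)$, and this is where $\Theta$ from \eqref{Assump:f_2} enters. We would write
\[
-I(u)=\Theta\bigl(d-E(u)\bigr)+R(u)
\]
and show $R(u)\ge 0$ when $I(u)<0$; the candidate is a Nehari-type estimate of the form $-I(u)\ge \Theta(d-E(u))\ge \Theta(d-E(u_0))$. To prove it, use \eqref{Assump:f_2}: the fibering map $\lambda\mapsto E(\lambda u)$ has a unique critical point $\lambda^*<1$ with $\lambda^* u\in\mathcal N$. Integrating the monotonicity of $t\mapsto f(x,t)t^{1-\Theta}$ between $\lambda^*$ and $1$ then gives $E(u)-E(\lambda^*u)\ge \frac{1}{\Theta}I(u)$, up to constants. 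The precise constant is fixed by matching the stated $T^*$.

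Next combine with the dissipation. The energy equality gives
\[
-2I\ge 2\Theta\Bigl(d-E(u_0)+\int_0^t\|u_t\|^2_{L^2(\Omega)}\,d\tau\Bigr).
\]
By the Cauchy--Schwarz inequality,
\[
\Bigl(M'(t)-\|u_0\|_{L^2}^2\Bigr)^2=\Bigl(2\int_0^t\!\!\int_\Omega uu_t\Bigr)^2\le 4M(t)\int_0^t\|u_t\|_{L^2}^2 .
\]
Together these yield the Levine inequality
\[
MM''-\alpha (M')^2\ge 0,\qquad \alpha=\tfrac{\Theta}{2}>1,
\]
after the usual shift $M\mapsto M+(T_0-t)\|u_0\|^2$ to absorb the initial term. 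Concavity of $M^{1-\alpha}$ then forces $M\to\infty$ before $T^*=\frac{M(0)}{(\alpha-1)M'(0)}$. Optimizing the free parameters in the shifted functional gives the explicit value $T^*=\frac{4\|u_0\|^2(\Theta-1)}{\Theta(\Theta-2)^2(d-E(u_0))}$, and this requires $\Theta>2$, which we assume implicitly. Blow-up must occur within the local existence time of Theorem~\ref{loc}: otherwise we continue the solution, and $M$ stays finite as long as the $L^2$ norm is bounded.

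For the critical case $E(u_0)=d$, $I(u_0)<0$, we use continuity to keep $I(u(t))<0$ on a short interval. Since $M''=-2I>0$ there, $u_t\not\equiv 0$, so $E(u(t_0))<d$ for some small $t_0>0$ with $I(u(t_0))<0$. Restarting from $t_0$ and applying part (i) gives $T^{**}$. We expect the main obstacle to be the second step, namely deriving the sharp inequality $-I(u)\ge c\,(d-E(u))$ from \eqref{Assump:f_2} and \eqref{Assump:f_1} alone, with the $1$-homogeneous seminorm and no $p$-structure to exploit.
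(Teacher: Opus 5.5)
\textbf{Overall.} Your route is the paper's: invariance of $V$ (Lemma~\ref{u in W delta}(ii)), the fibering estimate $-I(u)\ge \Theta\,(d-E(u))$, the energy identity plus Cauchy--Schwarz, Levine's concavity lemma, and a time shift when $E(u_0)=d$.

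\textbf{The fibering step.} This step, which you flagged as the main obstacle, is in fact already complete, with no loss of constants. It is exactly the paper's monotonicity of $g(\lambda)=E(\lambda u)-\beta I(\lambda u)$ on $[\lambda_*,1]$, with the optimal choice $\beta=1/\Theta$. Indeed,
$\frac{d}{d\lambda}\bigl[E(\lambda u)-\tfrac{1}{\Theta} I(\lambda u)\bigr]=(1-\tfrac{1}{\Theta})\|u\|_{W_0^{s,1}(\Omega)}+\tfrac{1}{\Theta\lambda}\int_\Omega\bigl[f'(x,\lambda u)(\lambda u)^2-(\Theta-1)f(x,\lambda u)\lambda u\bigr]dx\ge 0$.
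Integrating over $[\lambda^*,1]$ gives $E(u)-\tfrac{1}{\Theta}I(u)\ge E(\lambda^* u)\ge d$. The $1$-homogeneous seminorm only contributes the harmless term $(1-\tfrac{1}{\Theta})\|u\|$.

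\textbf{The gap: your auxiliary functional is degenerate.} With $M(t)=\int_0^t\|u\|^2\,d\tau+(T_0-t)\|u_0\|^2$ (all norms in $L^2(\Omega)$) you get $M'(0)=\|u_0\|^2-\|u_0\|^2=0$. So the concavity bound $M(0)/((\alpha-1)M'(0))$ is infinite. No choice of $T_0$ alone can produce a time carrying the factor $d-E(u_0)$, as in (i). In your computation that factor survives only as the slack in $MM''-\tfrac{\Theta}{2}(M')^2\ge 2\Theta(d-E(u_0))M$, and you then discard it.

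\textbf{The fix (as in the paper).} Add $b(t+a)^2$ to $M$. Then:
\begin{itemize}
\item $M''=-2I(u)+2b$ and $M'=2\int_0^t(u,u_t)\,d\tau+2b(t+a)$.
\item Cauchy--Schwarz in the augmented form gives $(M')^2\le 4M\bigl(\int_0^t\|u_t\|^2\,d\tau+b\bigr)$.
\item Combining with the fibering estimate and the energy identity,
\[
MM''-\tfrac{\Theta}{2}(M')^2\ge M\bigl[2\Theta(d-E(u_0))-2b(\Theta-1)\bigr]\ge 0\quad\text{for } 0<b\le \tfrac{\Theta(d-E(u_0))}{\Theta-1}.
\]
\item Now $M'(0)=2ab>0$, so $M$ blows up before $\frac{T_0\|u_0\|^2+ba^2}{(\Theta-2)ab}$.
\item This is $\le T_0$ once $(\Theta-2)ab>\|u_0\|^2$ and $T_0\ge \frac{ba^2}{(\Theta-2)ab-\|u_0\|^2}$.
\item Minimizing the last expression in $a$ (at $a=\frac{2\|u_0\|^2}{(\Theta-2)b}$) and taking $b$ maximal gives exactly $T^*=\frac{4\|u_0\|^2(\Theta-1)}{\Theta(\Theta-2)^2(d-E(u_0))}$.
\end{itemize}

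\textbf{Framing the contradiction.} Set it up as the paper does: assume $T_{\max}=\infty$, so $M$ is finite on every $[0,T_0]$, contradicting blow-up of $M$ before $T_0$. Your remark that blow-up must occur within the local existence time of Theorem~\ref{loc} is not the right logic. Your treatment of the critical case $E(u_0)=d$ (short-time strict increase of $\|u\|^2$, hence $E(u(t_0))<d$, then restart) matches the paper.
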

The proof of above result follows from the classical concavity method of Levine \cite[Theorem I]{Levine-1973}, adapted to the fractional $1$-Laplacian setting, using the invariance of the corresponding set $V$, established in Lemma \ref{u in W delta}, and a suitable auxiliary function of the solution's $L^2$-norm, one shows that this function cannot remain finite for all time, which forces blow-up before some explicit finite time. The assumption $N<2s$ is essential, since it underlies the local existence of the strong solution on which the blow-up argument is built. 

Next. we state the result concerning the case of high initial energy $E(u_{0}) > d$, we introduce the following sets for the strong solution $u$ to the problem \eqref{main:problem}:  
$$\mathcal{N}_{+}:=\{u\in W_{0}^{s,1}(\Omega)\,|\,I(u)> 0\},\qquad \mathcal{N}_{-}:=\{u\in W_{0}^{s,1}(\Omega)\,|\,I(u)< 0\}$$
and
$$O_{\zeta}:=\{u\in W_{0}^{s,1}(\Omega)\,|\,E(u) <\zeta\}.$$
By the definition of $E$, $\mathcal{N}$, $O_{\zeta}$ and $d,$ we get
$$\mathcal{N}_{\zeta}:=\mathcal{N}\cap O_{\zeta}= \{u\in\mathcal{N}\,|\,E(u)<\zeta\}\neq\emptyset \qquad \text{  for all    }\zeta >d.$$
For $\zeta>d$, define
$$\lambda_{\zeta}:=\inf \{\|u\|_{L^{2}(\Omega)}\,|\,u\in\mathcal{N}_{\zeta}\}, \qquad \Lambda_{\zeta}:=\sup \{\|u\|_{L^{2}(\Omega)}\,|\,u\in\mathcal{N}_{\zeta}\}.$$
\[
\mathcal{B} = \left\{ u_{0} \in W_{0}^{s,1}(\Omega)\cap L^{2}(\Omega) \mid \text{ the strong solution } u \text{ blows up (in the } L^2\text{-norm) in finite time} \right\},
\]
\[
\mathcal{G}_{0} = \left\{ u_{0} \in W_{0}^{s,1}(\Omega)\cap L^{2}(\Omega)\mid \text{ the strong solution } u \text{ satisfies } u(\cdot,t) \to 0 \text{ in } W_{0}^{s,1}(\Omega) \text{ as } t \to \infty \right\}.
\]
\begin{theorem}
    \label{Main theorem high initial energy}
    Let conditions \eqref{Assump:f_0}--\eqref{Assump:f_2} hold and $u_{0}\in W_{0}^{s,1}(\Omega).$ Assume further that $N<2s$. If $E(u_{0})>d$, then the following statements hold:
    \begin{enumerate}
    \item[\textnormal{(i)}] If $u_{0}\in \mathcal{N}_{+}$ and $\|u_{0}\|_{L^{2}(\Omega)}\leq \lambda_{E(u_{0})}$, then $u_{0} \in \mathcal{G}_{0}.$
    \item[\textnormal{(ii)}] If $u_{0}\in \mathcal{N}_{-}$ and $\|u_{0}\|_{L^{2}(\Omega)}\geq \Lambda_{E(u_{0})}$, then $u_{0} \in \mathcal{B}.$
    \end{enumerate}
\end{theorem}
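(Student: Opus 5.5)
We follow the standard high-energy argument for the potential well method, using the strong solution provided by Theorem~\ref{loc} (since $N<2s$) and the energy identity \eqref{strong solution}.

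\textbf{Basic identities.} Testing the equation with $\phi=u(\cdot,t)$ is admissible because $W_0^{s,1}(\Omega)\hookrightarrow L^2(\Omega)$ when $N<2s$. Using $\eta(x,y,t)(u(x,t)-u(y,t))=|u(x,t)-u(y,t)|$, this gives
\[
\frac12\frac{d}{dt}\|u(\cdot,t)\|_{L^2(\Omega)}^2=-I(u(\cdot,t)).
\]
By \eqref{strong solution}, $t\mapsto E(u(\cdot,t))$ is nonincreasing, so $E(u(\cdot,t))\le E(u_0)$ for all $t$. By Lemma~\ref{continuity of strong solution}, both $t\mapsto I(u(\cdot,t))$ and $t\mapsto \|u(\cdot,t)\|_{L^2(\Omega)}$ are continuous.

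\textbf{(i) Convergence to $0$.} The first step is to show that $u(\cdot,t)\in\mathcal N_+$ for all $t>0$. Suppose not. By continuity there is a first time $t_1>0$ with $I(u(\cdot,t_1))=0$. On $(0,t_1)$ the $L^2$-norm strictly decreases, so $\|u(\cdot,t_1)\|_{L^2(\Omega)}<\|u_0\|_{L^2(\Omega)}\le\lambda_{E(u_0)}$. If $u(\cdot,t_1)\neq0$, then $u(\cdot,t_1)\in\mathcal N$.

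Strictness of the energy bound is needed to place $u(\cdot,t_1)$ in $\mathcal N_{E(u_0)}$. Since $u_t\not\equiv0$ on $(0,t_1)$ (otherwise $u$ is stationary and $I(u_0)=0$), the identity \eqref{strong solution} gives $E(u(\cdot,t_1))<E(u_0)$. Hence $u(\cdot,t_1)\in\mathcal N_{E(u_0)}$, which forces $\|u(\cdot,t_1)\|_{L^2(\Omega)}\ge\lambda_{E(u_0)}$, a contradiction. If instead $u(\cdot,t_1)=0$, the solution stays $0$ afterwards and the claim is trivial.

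So $I(u(\cdot,t))>0$ for all $t$, and $\|u(\cdot,t)\|_{L^2(\Omega)}$ decreases to some limit. Integrating the identity gives $\int_0^\infty I(u)\,dt<\infty$ and $\int_0^\infty\|u_t\|_{L^2(\Omega)}^2\,dt<\infty$.

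Next we identify the $\omega$-limit set. Because $E(u(\cdot,t))$ is nonincreasing and bounded below (by \eqref{Assump:f_1} and $I>0$, $E\ge(1-\tfrac1\theta)\|u\|_{W_0^{s,1}}$), the orbit is bounded in $W_0^{s,1}(\Omega)$. Choose $t_n\to\infty$ with $\|u_t(t_n)\|_{L^2}\to0$ and $I(u(t_n))\to0$. Compactness of $W_0^{s,1}\hookrightarrow L^r$, together with lower semicontinuity of the seminorm, gives $u(t_n)\to w$ with $I(w)\le0$ and $E(w)\le E(u_0)$. The norm bound passes to the limit, so $\|w\|_{L^2}\le\|u_0\|_{L^2}\le\lambda_{E(u_0)}$.

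If $w\neq0$, then $w$ lies on or below the Nehari manifold. Scaling $w$ back to $\mathcal N$ with a factor $\le1$ produces an element with smaller norm and energy below $E(u_0)$, which contradicts the definition of $\lambda_{E(u_0)}$. Hence $w=0$. Then $I(u(t_n))\to0$ and $\int f(x,u)u\to0$ give $\|u(t_n)\|_{W_0^{s,1}}\to0$. Since $E(u(\cdot,t))\ge c\|u(\cdot,t)\|_{W_0^{s,1}}$ is nonincreasing and tends to $0$, convergence holds for all $t\to\infty$, i.e.\ $u_0\in\mathcal G_0$.

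\textbf{(ii) Blow-up.} By the symmetric argument, $u(\cdot,t)\in\mathcal N_-$ for all $t$: the $L^2$-norm now increases while $I<0$, so a first zero $t_1$ of $I$ would produce $u(\cdot,t_1)\in\mathcal N_{E(u_0)}$ with norm strictly above $\Lambda_{E(u_0)}$, which is impossible. Hence $\|u(\cdot,t)\|_{L^2(\Omega)}$ increases.

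Suppose $T_{\max}=\infty$. Using \eqref{Assump:f_2} we get $I(u)\le \Theta E(u)-c\|u\|_{W_0^{s,1}}$-type bounds of the form $-I(u)\ge(\Theta-1)\big(\ldots\big)-\Theta E(u)$, along the lines of the low-energy concavity computation. An $\omega$-limit argument as in (i) yields $u(t_n)\to w\in\mathcal N\cup\{0\}$ with $E(w)<E(u_0)$ and $\|w\|_{L^2}>\Lambda_{E(u_0)}$. Here $w\neq0$ because the norm increases, so this contradicts the definition of $\Lambda_{E(u_0)}$. Thus $T_{\max}<\infty$, and by the maximal existence time definition $\|u\|_{L^2}\to\infty$, i.e.\ $u_0\in\mathcal B$.

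\textbf{Main obstacle.} The hardest step is the compactness of the orbit needed for the $\omega$-limit. In (i) the $W_0^{s,1}$ bound follows from $I>0$. In (ii) we must also show the orbit is bounded when $T_{\max}=\infty$. For that we plan to adapt Levine's functional $\int_0^t\|u\|_{L^2}^2$ from Theorem~\ref{Blow-up thm low initial energy}. If this fails for high energy, the alternative is the Gazzola–Weth argument: from $\frac{d}{dt}\|u\|^2=-2I(u)\ge c(\|u\|^2_{L^2}-C E(u_0))$, which should follow from \eqref{Assump:f_2} and \eqref{F:bounds}, derive exponential growth, and combine it with $\int_0^\infty\|u_t\|^2<\infty$ to reach a contradiction.
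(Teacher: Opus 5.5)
Your part (i) follows the paper's route and can be completed: a first-exit argument gives invariance of $\mathcal N_+\cap O_{E(u_0)}$, then the monotone $L^2$-norm, the $W_0^{s,1}$-bound from $I>0$, and an $\omega$-limit argument, which the paper delegates to \cite{Arora-2025}. Part (ii), however, has a genuine gap, exactly at the step you flag.

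\textbf{Why (ii) does not close.} The contradiction needs a limit point of the orbit. Nothing you prove excludes a global solution with $\|u(\cdot,t)\|_{L^2(\Omega)}\to\infty$ as $t\to\infty$; for such a solution $\omega(u_0)=\emptyset$ and no contradiction arises. (The paper's proof has the same skeleton and simply asserts that $\omega(u_0)=\emptyset$ contradicts $T_{\max}=\infty$.) There are three further problems.
\begin{enumerate}
\item Your fallback inequality $\frac{d}{dt}\|u\|_{L^2}^2\ge c(\|u\|_{L^2}^2-CE(u_0))$ is the estimate for $2$-homogeneous diffusion. Here \eqref{Assump:f_1} and the embedding only give $-I(u)\ge(\theta-1)\|u\|_{W_0^{s,1}(\Omega)}-\theta E(u)\ge(\theta-1)C_{Sob}^{-1}\|u\|_{L^2(\Omega)}-\theta E(u_0)$, which is linear in $\|u\|_{L^2}$. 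A quadratic bound via \eqref{F:bounds} would need $\theta\ge 2$.
\item Even granting compactness, ``as in (i)'' yields only $I(w)\le 0$ by lower semicontinuity. In (ii), rescaling $w$ onto $\mathcal N$ shrinks its $L^2$-norm, so $\|w\|_{L^2}>\Lambda_{E(u_0)}$ gives nothing unless you prove $I(w)=0$ exactly.
\item Some superquadratic hypothesis must enter. Take $f(x,t)=t$, which satisfies \eqref{Assump:f_0}--\eqref{Assump:f_3} with $r=\theta=2$, $\Theta=\frac32$, $N=1<2s$. Then $\frac{d}{dt}\|u\|_{L^2}^2=-2I(u)\le 2\|u\|_{L^2}^2$ rules out finite-time blow-up. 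Yet on $\mathcal N$ one has $E(u)=\frac12\|u\|_{L^2}^2$, so every $u_0\in\mathcal N_-$ with $E(u_0)>d$ satisfies $\|u_0\|_{L^2}>\sqrt{2E(u_0)}\ge\Lambda_{E(u_0)}$, and such data exist.
\end{enumerate}

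\textbf{How to close (ii).} With $\Theta>2$, as used in the proof of Theorem \ref{Blow-up thm low initial energy}, the argument goes through.
\begin{enumerate}
\item If $E(u(\cdot,t_0))<d$ for some $t_0$, then $u(\cdot,t_0)\in V$, and that theorem applies from time $t_0$.
\item Otherwise $E(u(\cdot,t))\ge d$ for all $t$. Then $\int_0^\infty\|u_t\|_{L^2}^2\,dt\le E(u_0)-d$, hence $\|u(\cdot,t)\|_{L^2}\le\|u_0\|_{L^2}+\sqrt{t(E(u_0)-d)}$.
\item The linear bound above makes $\|u\|_{L^2}$ grow linearly once it exceeds $2\theta C_{Sob}E(u_0)/(\theta-1)$. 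This is incompatible with the $\sqrt t$ bound, so the $L^2$-norm stays bounded and $\int_0^\infty|I(u)|\,dt<\infty$.
\item Pick $t_n\to\infty$ with $\|u_t(\cdot,t_n)\|_{L^2}+|I(u(\cdot,t_n))|\to0$. The bound $(1-\frac1\theta)\|u\|_{W_0^{s,1}}\le E(u)-\frac1\theta I(u)$ makes $u(\cdot,t_n)$ bounded in $W_0^{s,1}(\Omega)$, so $u(\cdot,t_n)\to w$ in $L^2(\Omega)$ along a subsequence.
\item Test the weak formulation at $t_n$ and use $|\eta|\le1$. This gives $\|v\|_{W_0^{s,1}(\Omega)}\ge\int_\Omega(f(x,u)-u_t)v\,dx$ for all $v$, with equality for $v=u(\cdot,t_n)$.
\item Letting $n\to\infty$ gives $\|w\|_{W_0^{s,1}(\Omega)}=\int_\Omega f(x,w)w\,dx$, together with convergence of the seminorms.
\item Thus $w\in\mathcal N$, $E(w)=\lim_n E(u(\cdot,t_n))<E(u_0)$, and $\|w\|_{L^2}=\sup_t\|u(\cdot,t)\|_{L^2}>\Lambda_{E(u_0)}$. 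This contradicts the definition of $\Lambda_{E(u_0)}$.
\end{enumerate}

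\textbf{A fix needed in (i).} One step is asserted for the wrong reason. Rescaling $w$ to $\mu^*w\in\mathcal N$ with $\mu^*\le1$ does not lower the energy, because $E(\lambda w)$ is maximal at $\lambda=\mu^*$ (Lemma \ref{Lemma:2.3}(ii)). Use instead the monotonicity of $\mu\mapsto\mu wf(x,\mu w)-F(x,\mu w)$ given by \eqref{Assump:f_2}, as in the proof of Lemma \ref{Dpn converges}. This gives $E(\mu^*w)\le\int_\Omega\big(f(x,w)w-F(x,w)\big)\,dx=\lim_n\big(E(u(\cdot,t_n))-I(u(\cdot,t_n))\big)<E(u_0)$.

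You also need strictness in the norm. The non-strict bound you wrote contradicts nothing. Use the strict decrease of the $L^2$-norm to get $\|\mu^*w\|_{L^2}\le\|w\|_{L^2}<\|u_0\|_{L^2}\le\lambda_{E(u_0)}$. Alternatively, the identification argument above gives $w\in\mathcal N\cup\{0\}$ directly.
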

For the high initial energy case, we have derived the sufficient condition  on the $u_0$ for the blow-up and finite time extinction for the strong solution to the problem \eqref{main:problem}. In contrast to the low initial energy case, the sign of the Nehari functional alone does not determine the behavior of solution when $E(u_0)>d$, an additional smallness or largeness condition on $\|u_0\|_{L^2(\Omega)}$, relative to the $\lambda_{E(u_0)}$ and $\Lambda_{E(u_0)}$, is required to guarantee extinction or blow-up, respectively. Existence of such conditions on the initial data are justified in Corollaries \ref{high initial blow up} and \ref{finthm}.
 \section{Nehari functional and Invariant sets}
 \label{Nehari functional and Invariant sets}
In this section, we establish several preliminary results concerning the properties of the Nehari functional $I$, the associated invariant sets $W$ and $V$ (defined in \eqref{invaria:sets}), and the realization of the fractional $1$-Laplacian as the subdifferential of a proper, convex, and lower semicontinuous functional. These results are fundamental to the subsequent analysis, particularly for establishing invariance properties and studying the dynamics of weak and strong solutions of the problem \eqref{main:problem}.

 \subsection{Analysis on Nehari functional}
\begin{lemma}
\label{Lemma:2.3}
    Let $f$ satisfy the conditions \eqref{Assump:f_0}--\eqref{Assump:f_2}. Then, for any $u \in W_{0}^{s,1}(\Omega)$ with $\|u\|_{W_{0}^{s,1}(\Omega)} \neq 0$, we have
\begin{enumerate}
    \item[\textnormal{(i)}] $\lim\limits_{\lambda \to 0^{+}} E(\lambda u) = 0$, and $\lim\limits_{\lambda \to +\infty} E(\lambda u) = -\infty$.
    \item[\textnormal{(ii)}] There exists a unique $\lambda = \lambda_{\ast}(u) > 0$ such that $\frac{dE(\lambda u)}{d\lambda} \big|_{\lambda = \lambda_{\ast}} = 0$. Moreover, $E(\lambda u)$ is increasing on $0 < \lambda \leq \lambda_{\ast}$, decreasing on $\lambda_{\ast} \leq \lambda < \infty$, and attains its maximum at $\lambda = \lambda_{\ast}$.
    \item[\textnormal{(iii)}] $I(\lambda u) \geq 0$ for $0 < \lambda \leq \lambda_{\ast}$, $I(\lambda u) < 0$ for $\lambda_{\ast} < \lambda < \infty$, and $I(\lambda_{\ast} u) = 0$.
\end{enumerate}
\end{lemma}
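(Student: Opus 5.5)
Write $a:=\|u\|_{W_{0}^{s,1}(\Omega)}>0$ and study the scalar function
\[
j(\lambda):=E(\lambda u)=\lambda a-\int_\Omega F(x,\lambda u)\,dx .
\]
Everything reduces to the behaviour of
\[
g(\lambda):=\frac{1}{\lambda}\int_\Omega f(x,\lambda u)\lambda u\,dx=\int_\Omega f(x,\lambda u)u\,dx ,
\]
using the identities $j'(\lambda)=a-g(\lambda)$ and $I(\lambda u)=\lambda\bigl(a-g(\lambda)\bigr)=\lambda j'(\lambda)$. The derivative formula is justified by differentiating under the integral sign. The domination needed for this comes from \eqref{Assump:f_0}: $|f(x,\lambda u)u|\le C\Lambda^{r-1}|u|^{r}$ for $\lambda\le\Lambda$. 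Moreover $u\in L^{r}(\Omega)$ by Proposition \ref{prop:embedding}(i), since $r<\frac{N}{N-s}=1^*_s$.

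For (i), the limit at $0^+$: by \eqref{Assump:f_0} we have $|F(x,t)|\le \frac{C}{r}|t|^{r}$, so $|j(\lambda)|\le \lambda a+\frac{C}{r}\lambda^{r}\|u\|_{L^r}^{r}\to0$. The limit at $+\infty$:
\begin{itemize}
\item By \eqref{F:bounds}, $F(x,\lambda u)\ge B\lambda^{\theta}|u|^{\theta}$ on the set $\{\lambda|u|\ge1\}$.
\item By \eqref{Assump:f_1}, $F>0$ on the complement, so the integrand is nonnegative everywhere.
\end{itemize}
Hence $\int_\Omega F(x,\lambda u)\ge B\lambda^{\theta}\int_{\{|u|\ge 1/\lambda\}}|u|^{\theta}$. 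Since $u\not\equiv0$, monotone convergence shows that the last integral tends to a positive (possibly infinite) limit. Because $\theta>1$, $j(\lambda)\le \lambda a - c\lambda^{\theta}\to-\infty$.

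For (ii) and (iii), the key step, and the main obstacle, is to show that $g$ is strictly increasing on $(0,\infty)$ with $g(0^+)=0$ and $g(+\infty)=+\infty$.

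\emph{Limits of $g$.} We have $|g(\lambda)|\le C\lambda^{r-1}\|u\|_{L^r}^{r}\to 0$ since $r>1$. For the other end, \eqref{F:bounds} and $tf\ge0$ give $g(\lambda)\ge B\theta\lambda^{\theta-1}\int_{\{\lambda|u|\ge1\}}|u|^{\theta}\to\infty$.

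\emph{Monotonicity of $g$.} Pointwise, for fixed $x$ with $u(x)\neq0$, the map $\lambda\mapsto f(x,\lambda u)u$ has derivative
\[
f'(x,\lambda u)u^{2}=\lambda^{-2}\,(\lambda u)^{2}f'(x,\lambda u)>\lambda^{-2}(\Theta-1)f(x,\lambda u)\lambda u\ge0 .
\]
This uses \eqref{Assump:f_2} and $tf(x,t)>0$, the latter from \eqref{Assump:f_1}. So each integrand is strictly increasing in $\lambda$ wherever $u\ne0$, and this set has positive measure. Integrating, without needing to differentiate under the integral, gives strict monotonicity of $g$.

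If one prefers to avoid using $f'$ under an integral, an alternative route is to note that \eqref{Assump:f_2} implies $t\mapsto f(x,t)/|t|^{\Theta-2}t$ is increasing. That already yields the monotonicity of $f(x,\lambda u)u$ directly.

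Given these properties, $j'(\lambda)=a-g(\lambda)$ is continuous and strictly decreasing, from $a>0$ near $0$ to $-\infty$. By the intermediate value theorem it has a unique zero $\lambda_*$, with $j'>0$ on $(0,\lambda_*)$ and $j'<0$ on $(\lambda_*,\infty)$. This gives the monotonicity and the maximum in (ii). Finally, $I(\lambda u)=\lambda j'(\lambda)$ transfers these signs directly to (iii), including $I(\lambda_*u)=0$.
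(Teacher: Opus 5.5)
Your proof is correct. Its skeleton matches the paper's: reduce to $j(\lambda)=E(\lambda u)$, get the limit at infinity from \eqref{F:bounds}, and read off (iii) from $I(\lambda u)=\lambda j'(\lambda)$.

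The difference is how you get existence and uniqueness of $\lambda_\ast$. The paper proceeds in three steps:
\begin{itemize}
\item it obtains a critical point from $E(\lambda u)>0$ for small $\lambda$ together with $E(\lambda u)\to-\infty$;
\item it computes $\frac{d^2}{d\lambda^2}E(\lambda u)=-\int_\Omega f'(x,\lambda u)u^2\,dx<0$ at every critical point;
\item it excludes a second critical point, since two strict local maxima would force an interior local minimum with nonnegative second derivative.
\end{itemize}
You instead show that $g(\lambda)=\int_\Omega f(x,\lambda u)u\,dx$ increases strictly and continuously from $0$ to $+\infty$. This is the device the paper itself uses later, through $\eta$, in Lemma \ref{prop:d(delta)}. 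Then $j'=a-g$ has exactly one zero, and its full sign pattern, hence (ii) and (iii), follows at once.

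Your route is shorter and more robust. You integrate the pointwise monotonicity of $\lambda\mapsto f(x,\lambda u(x))u(x)$, so you only need $f(x,\lambda u)u\in L^1(\Omega)$, which \eqref{Assump:f_0} provides. The paper's second-derivative formula requires differentiating under the integral sign with $f'$, and \eqref{Assump:f_0}--\eqref{Assump:f_2} give no upper bound on $f'$ to justify this.

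One correction to your optional remark. Assumption \eqref{Assump:f_2} makes $t\mapsto f(x,t)/(|t|^{\Theta-2}t)$ increasing on $(0,\infty)$ but decreasing on $(-\infty,0)$. For example, $f(t)=|t|^{p-2}t$ with $p>\Theta$ gives $|t|^{p-\Theta}$. So the correct statement is that it increases in $|t|$ on each half-line. That is exactly what makes $\lambda\mapsto f(x,\lambda u)u$ increasing where $u(x)<0$, since $\lambda u(x)$ moves away from $0$ as $\lambda$ grows. Your main argument does not depend on this remark.
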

\begin{proof}
Let $\lambda>0$ and $u \in W_{0}^{s,1}(\Omega)$ with $\|u\|_{W_{0}^{s,1}(\Omega)} \neq 0$. Then, by \eqref{F:bounds} we obtain
\begin{align*}
    E(\lambda u) & \leq \lambda \|u\|_{W_{0}^{s,1}(\Omega)}- B\lambda^{\theta}\int_{\Omega\cap\{|u|\geq \frac{1}{\lambda}\}}|u|^{\theta}\,dx \label{Eq:max bound}.
\end{align*}
It follows, due to \eqref{Assump:f_1} and the continuity of $E$, that
\begin{equation}
    \label{lim E(lambda u)}
    \lim_{\lambda \to +\infty}E(\lambda u)=-\infty, \quad \text{and} \quad \lim_{\lambda \to 0^{+}}E(\lambda u)=E(0)= 0.
\end{equation}
Hence, the claim in (i). Next, we show (ii). In light of condition \eqref{Assump:f_0} , we obtain
\begin{align*}
E(\lambda u)&\geq \iint_{Q} \frac{|\lambda(u(x)-u(y))|}{|x-y|^{N+s}}\,dx\,dy -\int_{\Omega}C|\lambda u|^{r}\,dx \geq \lambda \|u\|_{W_{0}^{s,1}(\Omega)}- C\lambda^{r}\|u\|_{L^{r}(\Omega)}^{r},\label{Eq:min bound}
\end{align*}
which further implies that
\begin{equation}
    \label{E(lmabda u)increasing}
    E(\lambda u)>0\,\,\text{ for all }\,\, \lambda\in\left(0, \lambda_u\right), \quad \lambda_u:= \|u\|^\frac{1}{r-1}_{W_{0}^{s,1}(\Omega)} C^\frac{-1}{r-1} \|u\|^\frac{-r}{r-1}_{L^{r}(\Omega)} .
\end{equation}
Therefore, by using \eqref{lim E(lambda u)} and \eqref{E(lmabda u)increasing}, there exists $\lambda=\lambda_{\ast}(u)>0$ such that $\frac{dE(\lambda u)}
   {d\lambda}|_{\lambda=\lambda_{\ast}}=0,$ namely,
   \begin{equation*}
           \iint_{Q}\frac{|u(x)-u(y)|}{|x-y|^{N+s}}\,dx\,dy =\int_{\Omega}f(x,\lambda_{\ast} u)u\,dx.
   \end{equation*}
   On the other hand, by using \eqref{Assump:f_2}, we obtain
   \begin{equation}
       \label{existence of lambda d lambda}
   \begin{aligned}
      \frac{d^{2}E(\lambda u)}{d\lambda^{2}}|_{\lambda=\lambda_{\ast}}&= -\int_{\Omega}f'(x,\lambda_{\ast} u) u^{2}\,dx = -\frac{1}{(\lambda_{\ast})^{2}}\int_{\Omega}f'(x,\lambda_{\ast} u) (\lambda_{\ast} u)^{2}\,dx <0.
     \end{aligned}
   \end{equation}
Next, we prove that $\lambda = \lambda_{\ast}(u)$ is uniquely determined. We proceed by contradiction. Assume that there exist two distinct roots, say $\lambda_1$ and $\lambda_2$, such that

$$\frac{dE(\lambda u)}{d\lambda}|_{\lambda=\lambda_{1}}=0\text{, }\frac{d^{2}E(\lambda u)}{d\lambda^{2}}|_{\lambda=\lambda_{1}}<0, \quad \text{and} \quad \frac{dE(\lambda u)}{d\lambda}|_{\lambda=\lambda_{2}}=0\text{, }\frac{d^{2}E(\lambda u)}{d\lambda^{2}}|_{\lambda=\lambda_{2}}<0.$$
Thus, there exists $\lambda_3$ such that $\lambda_1 < \lambda_3 < \lambda_2$ and $E(\lambda_3 u)$ is the minimum of $E(\lambda u)$ on the interval $[\lambda_1, \lambda_2]$. Therefore, we have
$$
\frac{dE(\lambda u)}{d\lambda} \bigg|_{\lambda = \lambda_3} = 0, \quad \frac{d^2 E(\lambda u)}{d\lambda^2} \bigg|_{\lambda = \lambda_3} \geq 0,
$$
which leads to a contradiction with \eqref{existence of lambda d lambda}. This proves that $E(\lambda u)$ is increasing on $0 < \lambda \leq \lambda_{\ast}$ and decreasing on $\lambda_{\ast} \leq \lambda < \infty$. Hence, the claim in (ii). Finally, (iii) follows from (ii) and by noting the fact that $ I(\lambda u)=\lambda\left(\frac{dE(\lambda u)}{d\lambda}\right).$
\end{proof}

For any $\delta>0$, we define the modified Nehari functional and Nehari manifold as follows:\\
$$I_{\delta}(u):=\delta\iint_{Q}\frac{|u(x)-u(y)|}{|x-y|^{N+s}}\,dx\,dy-\int_{\Omega}f(x,u)u\,dx,$$
and
$$\mathcal{N}_{\delta}:=\{u\in W_{0}^{s,1}(\Omega)\setminus\{0\}\ |\ I_{\delta}(u)=0\}.$$
The corresponding modified potential well $W_{\delta}$ and its corresponding set is defined as
$$W_{\delta}:=\{u\in W_{0}^{s,1}(\Omega)\ |\ I_{\delta}(u)>0,E(u)< d(\delta)\}\cup{\{0\}}\ \text{and} \ V_{\delta}:=\{u\in W_{0}^{s,1}(\Omega)\ |\ I_{\delta}(u)<0,E(u)< d(\delta) \},$$
 where the depth of the modified potential well $W_{\delta}$ is defined as $$d(\delta)=\inf_{u\in\mathcal{N}_{\delta}}E(u).$$ By following the same arguments as in the proof of Lemma \ref{Lemma:2.3} for $I_{\delta}$, we have similar analysis of modified Nehari functional $I_\delta.$
 \begin{corollary}\label{cor}
   Let $f$ satisfy the conditions \eqref{Assump:f_0}--\eqref{Assump:f_2}. Then, for any $u \in W_{0}^{s,1}(\Omega)$ with $\|u\|_{W_{0}^{s,1}(\Omega)} \neq 0$ and $\delta>0 $ there exists a unique $\lambda_{\ast}=\lambda_{\ast}(\delta, u)$ such that
$I_{\delta}(\lambda u) \geq 0$ for $0 < \lambda \leq \lambda_{\ast}$, $I_{\delta}(\lambda u) < 0$ for $\lambda_{\ast} < \lambda < \infty$, and $I_{\delta}(\lambda_{\ast} u) = 0$.
\end{corollary}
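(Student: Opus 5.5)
We follow the scheme of Lemma \ref{Lemma:2.3}, with $E(\lambda u)$ replaced by the one-variable function $g(\lambda):=I_\delta(\lambda u)/\lambda$ for $\lambda>0$. Write $a:=\|u\|_{W_{0}^{s,1}(\Omega)}>0$. By one-homogeneity of the Gagliardo seminorm,
\[
g(\lambda)=\delta a-\int_\Omega f(x,\lambda u)\,u\,dx .
\]
Since $\lambda>0$, $g$ and $I_\delta(\lambda u)$ have the same sign. It therefore suffices to show three things: $g>0$ near $0$, $g<0$ for large $\lambda$, and $g$ strictly decreasing.

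\emph{Behavior at $0$ and at $\infty$.} By \eqref{Assump:f_0} and Proposition \ref{prop:embedding}(i),
\[
\Big|\int_\Omega f(x,\lambda u)u\,dx\Big|\le C\lambda^{r-1}\|u\|_{L^r(\Omega)}^r\to 0 \quad\text{as }\lambda\to0^+,
\]
so $g(\lambda)\to\delta a>0$. For large $\lambda$, use \eqref{F:bounds}: $tf(x,t)\ge B\theta|t|^\theta$ for $|t|\ge1$, while $tf(x,t)>0$ for $t\neq0$ by \eqref{Assump:f_1}. Then
\[
\int_\Omega f(x,\lambda u)u\,dx\ \ge\ B\theta\lambda^{\theta-1}\int_{\{|u|\ge1/\lambda\}}|u|^\theta\,dx .
\]
Since $u\not\equiv0$, the integral on the right is bounded below by a positive constant for $\lambda\ge\lambda_0$. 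As $\theta>1$, this gives $g(\lambda)\to-\infty$.

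\emph{Monotonicity.} Differentiating under the integral (justified by $f(x,\cdot)\in C^1$, \eqref{Assump:f_0} and dominated convergence on compact $\lambda$-intervals) gives
\[
g'(\lambda)=-\int_\Omega f'(x,\lambda u)u^2\,dx=-\lambda^{-2}\int_\Omega f'(x,\lambda u)(\lambda u)^2\,dx .
\]
By \eqref{Assump:f_2}, $t^2f'(x,t)>(\Theta-1)tf(x,t)\ge 0$ for $t\neq0$, and $u\neq0$ on a set of positive measure. Hence $g'(\lambda)<0$ for every $\lambda>0$, so $g$ is strictly decreasing. This is stronger than the second-derivative-at-critical-point argument of Lemma \ref{Lemma:2.3}, and it immediately yields uniqueness.

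\emph{Conclusion.} By continuity and the intermediate value theorem there is a unique $\lambda_\ast=\lambda_\ast(\delta,u)>0$ with $g(\lambda_\ast)=0$. Monotonicity gives $g>0$ on $(0,\lambda_\ast)$ and $g<0$ on $(\lambda_\ast,\infty)$, and multiplying by $\lambda$ transfers these signs to $I_\delta(\lambda u)$, as claimed.

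The main obstacle is justifying the differentiation of $\lambda\mapsto\int_\Omega f(x,\lambda u)u\,dx$, since \eqref{Assump:f_0} bounds only $f$ and not $f'$. If dominated convergence for the derivative is unavailable, we avoid $g'$ altogether and prove monotonicity pointwise. Assumption \eqref{Assump:f_2} says $\frac{d}{dt}\big(f(x,t)/|t|^{\Theta-2}t\big)$ has a positive sign, so $t\mapsto f(x,t)/t$ is increasing on each half-line (using $\Theta>1$ and $tf>0$). Consequently $\lambda\mapsto f(x,\lambda u(x))u(x)$ is strictly increasing wherever $u(x)\neq0$, and therefore $g$ is strictly decreasing without any differentiation under the integral.
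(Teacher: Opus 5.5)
Your argument follows the paper's route. The paper just says to repeat the proof of Lemma~\ref{Lemma:2.3} for $I_\delta$, and your $g(\lambda)=I_\delta(\lambda u)/\lambda$ is exactly $\frac{d}{d\lambda}\bigl[\delta\lambda\|u\|_{W_0^{s,1}(\Omega)}-\int_\Omega F(x,\lambda u)\,dx\bigr]$. Equivalently, $g(\lambda)=\|u\|_{W_0^{s,1}(\Omega)}\bigl(\delta-\eta(\lambda)\bigr)$, where $\eta$ is the function from the proof of Lemma~\ref{prop:d(delta)}. You use the sign of $-\int_\Omega f'(x,\lambda u)u^2\,dx$ at every $\lambda$, rather than only at critical points followed by the contradiction argument of Lemma~\ref{Lemma:2.3}. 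This gives uniqueness more directly. Your limits at $0$ and $\infty$ are handled correctly.

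The one step that is wrong as written is your fallback for monotonicity. The claim that $t\mapsto f(x,t)/t$ is increasing on each half-line does not follow from \eqref{Assump:f_2}. That hypothesis gives $t^2f'(x,t)>(\Theta-1)tf(x,t)$, which yields $t^2f'>tf$ only when $\Theta\ge2$. But $\Theta<2$ is the typical case here, because $r<\frac{N}{N-s}$.

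For instance, take $f(t)=t(1+t^2)^{\frac{r-2}{2}}$ with $1<r<\min\{2,\frac{N}{N-s}\}$. It is smooth and satisfies \eqref{Assump:f_0}--\eqref{Assump:f_2} with any $\theta,\Theta\in(1,r]$. Yet $f(t)/t=(1+t^2)^{\frac{r-2}{2}}$ is strictly decreasing in $|t|$. What your computation actually shows is that $f(x,t)/(|t|^{\Theta-2}t)$ is increasing in $|t|$, not in $t$. Replacing $|t|^{\Theta-2}t$ by $t$ is only legitimate when $\Theta\ge2$.

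The conclusion you need is still true and requires no such detour. Fix $x$ with $u(x)\neq0$. The $C^1$ map $\lambda\mapsto f(x,\lambda u(x))u(x)$ has derivative $\lambda^{-2}(\lambda u)^2f'(x,\lambda u)$. By \eqref{Assump:f_2} and \eqref{Assump:f_1} this exceeds $(\Theta-1)\lambda^{-2}(\lambda u)f(x,\lambda u)>0$. So by the mean value theorem, $f(x,\lambda_2u)u>f(x,\lambda_1u)u$ on $\{u\neq0\}$ whenever $\lambda_1<\lambda_2$. Integrate this pointwise strict inequality over a set of positive measure; both sides are integrable by \eqref{Assump:f_0}. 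This gives $g(\lambda_2)<g(\lambda_1)$.

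This repair also removes the need to differentiate under the integral sign. The paper does that differentiation without comment in the proofs of Lemmas~\ref{Lemma:2.3} and~\ref{prop:d(delta)}.
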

Next, we show that depth $d$ of the potential well $W$ is positive and derive the asymptotics of the depth of the modified potential well $d(\delta)$ and its monotonicity properties, which further helps in deriving the sign properties of the modified Nehari functional $I_\delta.$ 
\begin{lemma}\label{Positive:depth}
           Let $f$ satisfy the conditions \eqref{Assump:f_0}-\eqref{Assump:f_1}. Then, the depth $d$ of the potential well $W$ is positive. Moreover, there exists a constant $C_{\theta}>0$ such that $d \geq C_{\theta}$.
\end{lemma}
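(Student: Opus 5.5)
The plan is to bound $E$ from below on $\mathcal{N}$ by a positive constant depending only on $N,s,\Omega,C,r,\theta$. Fix $u\in\mathcal{N}$ and write $\|u\|:=\|u\|_{W_{0}^{s,1}(\Omega)}>0$.

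\textbf{Step 1: a lower bound for $\|u\|$ on $\mathcal{N}$.} Since $I(u)=0$, condition \eqref{Assump:f_0} gives
\[
\|u\| = \int_{\Omega} f(x,u)u\,dx \le C\|u\|_{L^{r}(\Omega)}^{r}.
\]
Because $r\in(1,\frac{N}{N-s})=(1,1^{*}_{s})$, Proposition \ref{prop:embedding}(i) applies and yields $\|u\|\le C\,C_{Sob}^{r}\|u\|^{r}$. Dividing by $\|u\|$ (which is nonzero) and using $r>1$, we obtain
\[
\|u\|\ \ge\ \rho:=\bigl(C\,C_{Sob}^{r}\bigr)^{-\frac{1}{r-1}}>0,
\]
a bound independent of $u$.

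\textbf{Step 2: a lower bound for $E$ on $\mathcal{N}$.} Integrating \eqref{Assump:f_1} gives $F(x,t)\le \frac{1}{\theta}tf(x,t)$ pointwise; at $t=0$ both sides vanish. Hence, again using $I(u)=0$,
\[
E(u)=\|u\|-\int_\Omega F(x,u)\,dx \ \ge\ \|u\|-\frac{1}{\theta}\int_\Omega f(x,u)u\,dx = \Bigl(1-\frac{1}{\theta}\Bigr)\|u\| \ \ge\ \Bigl(1-\frac{1}{\theta}\Bigr)\rho .
\]
Since $\theta>1$, this quantity is positive. Taking the infimum over $\mathcal{N}$ gives $d\ge C_\theta:=\frac{\theta-1}{\theta}\bigl(C\,C_{Sob}^{r}\bigr)^{-\frac{1}{r-1}}>0$.

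\textbf{Remarks and main obstacle.} One should note that $\mathcal{N}\neq\emptyset$ by Lemma \ref{Lemma:2.3}(iii), so $d$ is a genuine finite infimum; positivity, however, is only a lower bound, and if $\mathcal{N}$ were empty the infimum would be $+\infty$, which is still positive. The only delicate point is the embedding exponent. We need $r\le 1^{*}_{s}=\frac{N}{N-s}$ when $s<N$, which is guaranteed by \eqref{Assump:f_0}. When $N=1\le s$ fails, i.e.\ in the case $N\ge 1>s$, everything is as stated. The case $N<s$ cannot occur, since $s<1$. So Proposition \ref{prop:embedding}(i) applies directly and no compactness is needed. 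I expect no real obstacle beyond checking that the constants do not depend on $u$.
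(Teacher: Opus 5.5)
Your proof is correct and is essentially the paper's argument: $I(u)=0$, \eqref{Assump:f_0} and Proposition~\ref{prop:embedding}(i) give the uniform bound $\|u\|_{W_0^{s,1}(\Omega)}\ge (C\,C_{Sob}^{r})^{-\frac{1}{r-1}}$ on $\mathcal{N}$, and \eqref{Assump:f_1} then gives $E(u)\ge (1-\frac{1}{\theta})\|u\|_{W_0^{s,1}(\Omega)}$, which yields the same constant $C_\theta$. The inequality $\theta F\le tf$ is exactly the hypothesis \eqref{Assump:f_1}, so no integration step is needed.
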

\begin{proof}
    Let $u\in\mathcal{N}$. Then, from \eqref{Assump:f_0} and Proposition \ref{prop:embedding}(i), we obtain
 \begin{align}
         \label{lower bound of norm u}
   \|u\|_{W_{0}^{s,1}(\Omega)}=\iint_{Q} \frac{|u(x)-u(y)|}{|x-y|^{N+s}} \,dx\,dy &= \int_{\Omega}f(x,u)u\,dx \leq C\|u\|_{L^{r}(\Omega)}^{r}\leq S_1\|u\|_{W_{0}^{s,1}(\Omega)}^{r},
       \end{align}
        where
        $S_1:= C(C_{Sob})^{r}$ and $C$ is defined in \eqref{Assump:f_0}. Therefore, by using \eqref{Assump:f_1} and \eqref{lower bound of norm u} we infer that
       \begin{align*}
        E(u) & \geq {\iint_{Q}}\frac{|u(x)-u(y)|}{|x-y|^{N+s}}\,dx\,dy-\int_{\Omega}\frac{f(x,u)u}{\theta}\,dx = \iint_{Q} \frac{|u(x)-u(y)|}{|x-y|^{N+s}}\,dx\,dy-\frac{1}{\theta}{\iint_{Q}}\frac{|u(x)-u(y)|}{|x-y|^{N+s}}\,dx\,dy\\
        &=\left(1-\frac{1}{\theta}\right)\|u\|_{W_{0}^{s,1}(\Omega)}
        \geq \left(1-\frac{1}{\theta}\right)\left(\frac{1}{S_1}\right)^{\frac{1}{r-1}}.
\end{align*}
Finally, by taking infimum over $\mathcal{N}$, we obtain
$$d=\inf_{u\in\mathcal{N}}E(u)\geq \left(1-\frac{1}{\theta}\right)\left(\frac{1}{S_1}\right)^{\frac{1}{r-1}}=:C_\theta.$$
Thus, the desired result follows.
\end{proof}
\begin{lemma}
\label{prop:d(delta)}
  Let the conditions \eqref{Assump:f_0}--\eqref{Assump:f_2} hold. Then, the function $d(\delta)$ satisfies the following properties:
\begin{enumerate}
    \item[\textnormal{(i)}] $\lim\limits_{\delta \to 0^{+}} d(\delta) = 0$  \ and \ $\lim\limits_{\delta \to \infty} d(\delta) = -\infty$.
    \item[\textnormal{(ii)}] $d(\delta)$ is increasing on $0 < \delta \leq 1$, decreasing on $\delta \geq 1$, and attains its maximum, $d = d(1)$, at $\delta = 1$.
\end{enumerate}
\end{lemma}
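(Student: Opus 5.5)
The plan follows the classical Liu–Zhao scheme for the depth function $d(\delta)$ of the modified potential well, adapted to the $1$-homogeneous seminorm $\|u\|:=\|u\|_{W_0^{s,1}(\Omega)}$. The key device is a reduction of everything to Nehari points: for $u\in\mathcal N_\delta$ we have $\int_\Omega f(x,u)u\,dx=\delta\|u\|$. Hence, by \eqref{Assump:f_1},
\[
E(u)\ \ge\ \|u\|-\tfrac1\theta\int_\Omega f(x,u)u\,dx=\Big(1-\tfrac{\delta}{\theta}\Big)\|u\|.
\]
On the other hand, using \eqref{Assump:f_0} and Proposition~\ref{prop:embedding}(i), exactly as in \eqref{lower bound of norm u},
\[
\delta\|u\|\le S_1\|u\|^r \quad\Longrightarrow\quad \|u\|\ge r(\delta):=(\delta/S_1)^{1/(r-1)}.
\]

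\textbf{Step 1: asymptotics in (i).} For $\delta\to0^+$, I would pick a fixed $v\ne0$ and, by Corollary~\ref{cor}, the scaling $\lambda_\delta v\in\mathcal N_\delta$. From $\delta\|v\|=\int_\Omega f(x,\lambda_\delta v)v\,dx$ and the growth bound $\int_\Omega f(x,\lambda v)v\,dx\le C\lambda^{r-1}\|v\|_{L^r(\Omega)}^r$, one cannot directly force $\lambda_\delta\to0$. However, $\lambda\mapsto\int_\Omega f(x,\lambda v)v\,dx$ is strictly increasing by \eqref{Assump:f_2}, and by \eqref{F:bounds} it tends to $+\infty$ as $\lambda\to\infty$ and to $0$ as $\lambda\to0$. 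Therefore $\lambda_\delta\to0$ as $\delta\to0$, and then
\[
0\le d(\delta)\le E(\lambda_\delta v)\le \lambda_\delta\|v\|\to0,
\]
since $F\ge0$. A lower bound is also needed to get the limit $0$ rather than only $\limsup\le0$: for $\delta<\theta$, the inequality $E(u)\ge(1-\delta/\theta)\|u\|\ge0$ gives $d(\delta)\ge0$.

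For $\delta\to\infty$, I would use the same curve $\lambda_\delta v$ with $\lambda_\delta\to\infty$. Then $E(\lambda_\delta v)\to-\infty$ by Lemma~\ref{Lemma:2.3}(i), so $d(\delta)\le E(\lambda_\delta v)\to-\infty$.

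\textbf{Step 2: monotonicity in (ii), the main step.} I would show that $d(\delta')<d(\delta'')$ for $0<\delta'<\delta''\le1$, and that $d(\delta')>d(\delta'')$ for $1\le\delta'<\delta''$. It suffices to show that for every $u\in\mathcal N_{\delta''}$ there is $v\in\mathcal N_{\delta'}$ with $E(v)<E(u)-c$, where $c=c(\delta',\delta'')>0$ is independent of $u$. The natural choice is $v=\lambda u$, where $\lambda=\lambda(\delta')$ is chosen so that $\lambda u\in\mathcal N_{\delta'}$. Such a $\lambda$ exists: since $g(\lambda):=\lambda^{-1}\int_\Omega f(x,\lambda u)\lambda u\,dx/\|u\|$ is increasing, continuous, and has limits $0$ and $\infty$, we can solve $g(\lambda)=\delta$ for every $\delta>0$. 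Moreover $g(1)=\delta''$, so the solution for $\delta'<\delta''$ satisfies $\lambda<1$. Setting $h(\lambda)=E(\lambda u)$, we have
\[
h'(\lambda)=\|u\|-g(\lambda)\|u\|=(1-g(\lambda))\|u\|.
\]
For $\delta''\le1$ we have $g\le1$ on $[\lambda(\delta'),1]$, so $h$ increases there. Quantitatively,
\[
h(1)-h(\lambda(\delta'))=\int_{\lambda(\delta')}^1(1-g(\mu))\|u\|\,d\mu\ \ge\ (1-\delta'')(1-\lambda(\delta'))\,r(\delta'')>0,
\]
using the lower bound $\|u\|\ge r(\delta'')$. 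This gives a strict inequality, except when $\delta''=1$. In that case I would bound the integral on the subinterval where $g\le(\delta'+1)/2$ instead.

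For $\delta\ge1$ the symmetric argument applies with $g\ge1$. Here I would go from $u\in\mathcal N_{\delta'}$ up to $\lambda>1$ with $\lambda u\in\mathcal N_{\delta''}$, and $E$ decreases along this path.

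The hardest point is making the constant $c$ uniform in $u$ when $\|u\|$ is unbounded, or when $\lambda(\delta')$ is close to $1$ uniformly. I would first try to control $1-\lambda(\delta')$ from below via \eqref{Assump:f_2}: the map $\lambda\mapsto g(\lambda)\lambda^{-(\Theta-2)}$ is monotone, so $g(\lambda)\le\lambda^{\Theta-2}\delta''$, which bounds $\lambda(\delta')$ away from $1$. If this is too delicate, non-strict monotonicity, namely $d(\delta')\le d(\delta'')$, already suffices for the conclusion that the maximum is attained at $d(1)=d$, which is the stated property (ii).
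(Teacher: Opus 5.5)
Your Step 1 and the scaling argument in Step 2 are the paper's own proof. You move $u\in\mathcal N_{\delta''}$ along its ray to $\lambda(\delta')u\in\mathcal N_{\delta'}$, differentiate $E(\lambda u)$, and reach exactly the paper's bound $(1-\delta'')(1-\lambda(\delta'))(\delta''/S_1)^{1/(r-1)}$. Your remark that $d(\delta)\ge 0$ for $\delta<\theta$ supplies a step the paper only asserts. You are also right that this bound is not uniform in $u$: the paper's $\epsilon(\delta'',\delta')$ contains $\lambda_\ast'=\lambda_\ast(\delta',u)$, and the paper does not address this.

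Your repair, however, fails. Condition \eqref{Assump:f_2} makes $\lambda\mapsto g(\lambda)\lambda^{-(\Theta-1)}$ increasing (the natural exponent is $\Theta-1$). Hence $\delta'=g(\lambda(\delta'))<\lambda(\delta')^{\Theta-1}\delta''$, i.e.\ $\lambda(\delta')>(\delta'/\delta'')^{1/(\Theta-1)}$. This keeps $\lambda(\delta')$ away from $0$, not from $1$. Keeping it away from $1$ would need an upper control on how fast $g$ can grow, such as $t^2f'(x,t)\le K\,tf(x,t)$, which is not assumed. The same objection applies to your subinterval device at $\delta''=1$, whose length $\lambda(\tfrac{\delta'+1}{2})-\lambda(\delta')$ is equally uncontrolled. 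The non-strict fallback is not enough either. Lemma~\ref{Lemma: 2.5} needs $E(u)=d(\delta_1)=d(\delta_2)<d(\bar\delta)$ for every $\bar\delta\in(\delta_1,\delta_2)$, which fails if $d$ has a plateau, and the invariance of $W_\delta$, $V_\delta$ in Lemma~\ref{u in W delta} relies on it.

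The missing idea is that uniformity is only needed along near-minimizers, where compactness is available. Let $0<\delta'<\delta''<1$, suppose $d(\delta')=d(\delta'')$, and take $u_n\in\mathcal N_{\delta''}$ with $E(u_n)\to d(\delta'')$.
\begin{itemize}
\item Your two inequalities give $(\delta''/S_1)^{1/(r-1)}\le\|u_n\|\le\theta\,(d(\delta'')+1)/(\theta-\delta'')$ for large $n$.
\item Your estimate $d(\delta')\le E(u_n)-(1-\delta'')(1-\lambda_n)\|u_n\|$, with $\lambda_n:=\lambda(\delta',u_n)$, then forces $\lambda_n\to1$.
\item By Proposition~\ref{prop:embedding}(ii), a subsequence converges in $L^r(\Omega)$ and a.e.
\item The integrand below is bounded by $2C|u_n|^r$ via \eqref{Assump:f_0}, which is uniformly integrable, and $f(x,\cdot)$ is continuous, so Vitali's theorem gives
\[
(\delta''-\delta')\|u_n\|=\int_\Omega\big(f(x,u_n)-f(x,\lambda_nu_n)\big)u_n\,dx\longrightarrow 0 .
\]
\end{itemize}
This contradicts $\|u_n\|\ge(\delta''/S_1)^{1/(r-1)}>0$.

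The endpoint then needs no separate estimate, since $d(\delta')<d(\delta''')\le d(1)$ for any $\delta'''\in(\delta',1)$. The side $\delta\ge 1$ is handled identically, starting from near-minimizers in $\mathcal N_{\delta'}$, which are bounded as long as $\delta'<\theta$.

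Strictness should indeed only be claimed near $\delta=1$. For $f(x,t)=|t|^{r-2}t$ with $\theta=r$, one has $E(u)=(1-\delta/r)\|u\|$ on $\mathcal N_\delta$, and $\mathcal N_\delta$ is unbounded. Hence $d(\delta)=-\infty$ for every $\delta>r$.
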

\begin{proof}
By invoking Corollary \ref{cor}, for any $u\in W_{0}^{s,1}(\Omega)$ with $\|u\|_{W_{0}^{s,1}(\Omega)} \neq 0$ and $\delta> 0$, there exists a unique $\lambda_{\ast}=\lambda_{\ast}(\delta,u)$ such that $I_{\delta}(\lambda_{\ast} u)=0$. Thus,
$$\delta {\iint_{Q}}\frac{\lambda_{\ast}|u(x)-u(y)|}{|x-y|^{N+s}}\,dx\,dy =\int_{\Omega}f(x, \lambda_{\ast} u) \lambda_{\ast} u\,dx.$$
Therefore, for a fixed $u\in W_{0}^{s,1}(\Omega)$ with $\|u\|_{W_{0}^{s,1}(\Omega)} \neq 0$, to analyze the behavior of $\lambda_{\ast}(\delta, u)$ as $\delta \to 0^+$ and 
$\delta \to +\infty$, we define the auxiliary function $\eta:(0,\infty)\to \mathbb{R}$ as
\begin{equation*}
\eta(\alpha):=\frac{1}{\|u\|_{W_{0}^{s,1}(\Omega)}} \int_{\Omega}f(x, \alpha u) u\,dx.
\end{equation*}
Differentiating with respect to $\alpha$ and using \eqref{Assump:f_2}, we obtain
\begin{align}\label{prop:incre}
\eta'(\alpha)&=\frac{1}{\|u\|_{W_{0}^{s,1}(\Omega)}}\int_{\Omega}f'(x, \alpha u)u^{2}\,dx>0.
\end{align}
Hence, the map $\alpha \mapsto \eta(\alpha)$ is strictly increasing on $(0,+\infty)$ strictly increasing on $(0,+\infty)$. Next, we claim that
\begin{align}\label{prop:asymp}
    \lim_{\alpha \to 0^{+}}\eta(\alpha)=0 \quad \text{and} \quad \lim_{\alpha \to +\infty}\eta(\alpha)=+\infty.
\end{align}
Indeed, using \eqref{Assump:f_0} and \eqref{F:bounds}, we obtain
\begin{align*}
\eta(\alpha) \leq \frac{C \alpha^{r-1}}{\|u\|_{W_{0}^{s,1}(\Omega)}}\int_{\Omega}|u|^{r}\,dx \to 0 \text{ as } \alpha\to 0^{+},
\end{align*}
and
\begin{align*}
    \quad \eta(\alpha)&\geq \frac{B\theta}{\|u\|_{W_{0}^{s,1}(\Omega)}}\int_{\Omega\cap\{|u|\geq\frac{1}{\alpha}\}}\alpha^{\theta-1}|u|^{\theta}\,dx \to \infty \text{ as } \alpha\to +\infty.
    \end{align*}
Hence, the required claim. Combining \eqref{prop:incre} and \eqref{prop:asymp}, the map $\beta \mapsto \eta^{-1}(\beta)$ is strictly increasing on $(0,+\infty)$. Furthermore, together with $\lambda_{\ast}(\delta,u)u\in \mathcal{N}_{\delta}$ and $\lambda_{\ast}(\delta, u) = \eta^{-1}(\delta)$, it follows that the mapping 
\begin{equation*}
  \text{$\delta \mapsto \lambda_{\ast}(\delta, u)$ is strictly increasing on $(0, +\infty)$.}  
\end{equation*} Moreover,
$$\lim_{\delta \to 0^{+}}\lambda_{\ast}(\delta,u)=0 \quad \text{and} \quad \lim_{\delta \to +\infty}\lambda_{\ast}(\delta,u)=+\infty.$$
Hence, by using $\lambda_{\ast}(\delta,u) u \in \mathcal{N_{\delta}}$, the definition of $d(\delta)$ and Lemma \ref{Lemma:2.3}(i), we conclude that
$$0\leq \lim_{\delta \to 0^{+}}d(\delta)\leq \lim_{\delta \to 0^{+}}E\left(\lambda_{\ast}(\delta,u)u\right)=0,\quad \text{and} \quad \lim_{\delta \to +\infty} d(\delta)\leq \lim_{\delta \to +\infty} E\left(\lambda_{\ast}(\delta,u) u\right)=-\infty.$$
This completes the proof of assertion (i). To establish assertion (ii), it suffices to prove that for any \(0 < \delta' < \delta'' < 1\) or \(\delta' > \delta'' > 1\), and for any \(u \in \mathcal{N}_{\delta''}\), there exists a \(v \in \mathcal{N}_{\delta'}\) and a constant \(\epsilon(\delta', \delta'') > 0\) such that
\[
E(u) - E(v) \geq \epsilon(\delta', \delta'').
\]
Indeed, for \(u \in \mathcal{N}_{\delta''}\), we have \(I_{\delta''}(u) = 0\), which implies that  \(\lambda_{\ast}''=\lambda(u,\delta'') = 1\). By Corollary \ref{cor}, there exists a constant \(\lambda_{\ast}'=\lambda(\delta',u) > 0\) such that \(v = \lambda_{\ast}' u \in \mathcal{N}_{\delta'}\).
Let \(g(\lambda) = E(\lambda u)\). Then,
\begin{equation}\label{an}
\begin{split}
    \frac{dg(\lambda)}{d\lambda}&=\frac{d}{d\lambda}\left[\iint_{Q}\frac{\lambda(|u(x)-u(y)|)}{|x-y|^{N+s}}\,dx\,dy-\int_{\Omega}F(x,\lambda u)\,dx \right]\\
    &=\frac{1}{\lambda}\left[ {\iint_{Q}}\frac{\lambda(|u(x)-u(y)|)}{|x-y|^{N+s}}\,dx\,dy-\int_{\Omega}f(x,\lambda u)\lambda u\ dx\right]\\
    &=\frac{1}{\lambda}\left[(1-\delta''){\iint_{Q}}\frac{\lambda(|u(x)-u(y)|)}{|x-y|^{N+s}}\,dx\,dy +  I_{\delta''}(\lambda u) \right]\\
    &=\frac{1}{\lambda}\left[\lambda (1-\delta'')\|u\|_{W_{0}^{s,1}(\Omega)}+I_{\delta''}(\lambda u) \right].
        \end{split}
\end{equation}
Next, we consider two cases :\\
    \textbf{Case $1$}: $0<\delta'<\delta''<1$. Since $\delta\to \lambda_{\ast}(\delta,u)$ is increasing and $\lambda_{\ast}''=1$, then
    \begin{align*}
        E(u)-E(v)=g(1)-g(\lambda_{\ast}')&=\int_{\lambda_{\ast}'}^{1}\frac{dg(\lambda)}{d\lambda}d\lambda.
    \end{align*}
 Since $0<\delta'<\delta''$ we have $0<\lambda_{\ast}'<\lambda_{\ast}''=1$. By Corollary \ref{cor}, $I_{\delta''}(\lambda u) \geq 0$ for all $\lambda_{\ast}' < \lambda < 1$. Therefore, from \eqref{an}, we get
    \begin{align}
    \label{Epsilon delta constant}
        E(u)-E(v) \geq \int_{\lambda_{\ast}'}^{1}(1-\delta'')\|u\|_{W_{0}^{s,1}(\Omega)}d\lambda =(1-\delta'')\left(1-\lambda_{\ast}' \right)\|u\|_{W_{0}^{s,1}(\Omega)}.
    \end{align}
    Since $u\in \mathcal{N}_{\delta''}$, from \eqref{lower bound of norm u} and \eqref{Epsilon delta constant}, we obtain
    \begin{align*}
     E(u)-E(v)\geq \epsilon(\delta'',\delta'), \quad \text{where} \quad \epsilon(\delta'',\delta'):=\left(\frac{\delta''}{S_1}\right)^{\frac{1}{r-1}}(1-\delta'')(1-\lambda_{\ast}').
    \end{align*}
    \textbf{Case 2}: \(\delta' > \delta'' > 1\). As $\delta\to \lambda_{\ast}(\delta,u)$ is increasing, it follows that  \(\lambda_{\ast}' > \lambda >\lambda_{\ast}''=1\).
    Hence, by repeating the same arguments as in \textbf{Case 1}, we obtain
    we get
    $$ E(u)-E(v)>\epsilon(\delta',\delta''), \quad \text{where} \quad \epsilon(\delta',\delta''):=\left(\frac{\delta''}{S_1}\right)^{\frac{1}{r-1}}(\delta''-1)(\lambda(\delta')-1)$$
Consequently, since \( d(\delta) \) is continuous, increasing for \( 0 < \delta \leq 1 \), and decreasing for \( \delta \geq 1 \), it attains its maximum value at \( \delta = 1 \), where \( d(1) = d \).  
\end{proof}
\begin{lemma}
\label{Lemma: 2.5}
Suppose the assumptions of Lemma \ref{prop:d(delta)} are satisfied. Let \(u \in W_{0}^{s,1}(\Omega)\) with \(0 < E(u) < d\), and assume that \(\delta_1 < 1 < \delta_2\), where \(\delta_1\) and \(\delta_2\) satisfy the equation \(d(\delta) = E(u)\). Then, the sign of \(I_{\delta}(u)\) remains unchanged for \(\delta_1 < \delta < \delta_2\).
\end{lemma}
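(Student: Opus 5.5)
The argument is by contradiction, using the continuity of $\delta\mapsto I_\delta(u)$ together with the definition of $d(\delta)$ as an infimum over $\mathcal{N}_\delta$.

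\textbf{Step 1: $u\neq 0$ and continuity in $\delta$.} Fix $u$ with $0<E(u)<d$. Since $E(0)=0$, we have $u\neq 0$, and hence $\|u\|_{W_{0}^{s,1}(\Omega)}>0$. The map
\[
\delta\mapsto I_{\delta}(u)=\delta\|u\|_{W_{0}^{s,1}(\Omega)}-\int_{\Omega}f(x,u)u\,dx
\]
is affine in $\delta$ with strictly positive slope $\|u\|_{W_0^{s,1}(\Omega)}$. In particular it is continuous and vanishes at exactly one value $\delta_0>0$; positivity of $\delta_0$ uses $\int_\Omega f(x,u)u\,dx>0$, which follows from \eqref{Assump:f_1}.

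\textbf{Step 2: the zero $\delta_0$ cannot lie in $(\delta_1,\delta_2)$.} Suppose the sign of $I_\delta(u)$ changes on $(\delta_1,\delta_2)$. By continuity there is then some $\delta^*\in(\delta_1,\delta_2)$ with $I_{\delta^*}(u)=0$. Since $u\neq0$, this means $u\in\mathcal{N}_{\delta^*}$, and the definition of $d(\delta^*)$ gives $E(u)\geq d(\delta^*)$.

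On the other hand, Lemma \ref{prop:d(delta)}(ii) gives the monotonicity we need: $d$ is strictly increasing on $(0,1]$ and strictly decreasing on $[1,\infty)$, with maximum $d(1)=d>E(u)$. Strictness comes from the positive gap $\epsilon(\delta',\delta'')$ in that proof. Combined with $d(\delta_1)=d(\delta_2)=E(u)$ and $\delta_1<1<\delta_2$, this yields $d(\delta)>E(u)$ for every $\delta\in(\delta_1,\delta_2)$. In particular $d(\delta^*)>E(u)$, which contradicts $E(u)\geq d(\delta^*)$.

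Hence $I_\delta(u)$ never vanishes on $(\delta_1,\delta_2)$, and by continuity its sign is constant there.

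\textbf{Main obstacle.} The delicate point is justifying the strict inequality $d(\delta)>E(u)$ strictly between $\delta_1$ and $\delta_2$. This requires strict monotonicity of $d(\delta)$ on each side of $1$, not mere monotonicity. I would extract it directly from the estimate $E(u)-E(v)\geq\epsilon(\delta',\delta'')>0$ established in the proof of Lemma \ref{prop:d(delta)}. Taking the infimum over $u\in\mathcal{N}_{\delta''}$ gives $d(\delta'')\geq d(\delta')+\epsilon$, provided $\epsilon$ is bounded below uniformly in $u$. If that uniformity is unclear, I would use the bound $\|u\|_{W_0^{s,1}(\Omega)}\geq(\delta''/S_1)^{1/(r-1)}$ from \eqref{lower bound of norm u}, together with a uniform lower bound on $1-\lambda_\ast'$ obtained from the monotonicity of $\eta$ and its asymptotics \eqref{prop:asymp}. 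Failing that, I would argue directly that $d$ is nonconstant on any interval around the level $E(u)<d$.
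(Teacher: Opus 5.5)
Your proof is correct and is the paper's argument: $u\neq 0$, and continuity of $\delta\mapsto I_\delta(u)$ gives a zero $\bar\delta\in(\delta_1,\delta_2)$ if the sign changes, so $E(u)\ge d(\bar\delta)$ by the definition of $d(\bar\delta)$, which contradicts $d(\bar\delta)>E(u)$ from Lemma~\ref{prop:d(delta)}(ii). You are right that this needs \emph{strict} monotonicity of $d$, which the paper also uses without comment when it cites part (ii); however, your fallback would not supply it, because the per-$u$ monotonicity and asymptotics of $\eta$ give no lower bound on $1-\lambda_\ast'$ that is uniform over $u\in\mathcal{N}_{\delta''}$, so an extra argument (for instance, compactness of minimizing sequences) would be needed.
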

\begin{proof}
   Clearly, since  $E(u) > 0$,  it follows that  $ \|u\|_{W_{0}^{s,1}(\Omega)}\neq 0$.
 If the sign of \(I_{\delta}(u)\) changes for \(\delta_1 < \delta < \delta_2\), then by the continuity of $I_{\delta}(\cdot)$ there exists \(\overline{\delta} \in (\delta_1, \delta_2)\) such that \(I_{\overline{\delta}}(u) = 0\). By the definition of \(d(\delta)\), this implies \(E(u) \geq d(\overline{\delta})\). However, this contradicts the fact that \(E(u) = d(\delta_1) = d(\delta_2) < d(\overline{\delta})\), as established in Lemma~\ref{prop:d(delta)}(ii).
\end{proof}
\begin{lemma}
\label{u in W delta}
Assume that the conditions \eqref{Assump:f_0}--\eqref{Assump:f_1} hold. Let \(u\) be a global  strong solution of problem \eqref{main:problem}  in the sense of Definition \ref{def strong solution}. Then, $u\in C(0,\infty;W_{0}^{s,1}(\Omega))$, and the mappings
    \[
    t\mapsto E(u(\cdot,t))\  \text{and} \ t\mapsto I(u(\cdot,t)) \ \text{are continuous on}\ [0,\infty). 
    \]
Moreover, if \eqref{Assump:f_2} holds and the initial data \(u_0\) satisfies \(E(u_0) < d\) and \(E(u_0) = d(\delta_1) = d(\delta_2)\) for some \(\delta_1 < 1 < \delta_2\) and
\begin{enumerate}
    \item[\textnormal{(i)}] if \(I(u_0) > 0\), then \( u(\cdot,t) \in W_{\delta}\) for all \(\delta_1 < \delta < \delta_2\), \(0 < t < \infty \).
    \item[\textnormal{(ii)}] if \(I(u_0) < 0\), then \(u(\cdot,t) \in V_{\delta}\) for all \(\delta_1 < \delta < \delta_2\), \(0 < t < \infty\).
\end{enumerate}
\end{lemma}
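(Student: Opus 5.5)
We split the proof into two parts: the time continuity of $u$, $E(u)$ and $I(u)$, and then the invariance of $W_\delta$ and $V_\delta$, which uses that continuity together with Lemma~\ref{Lemma: 2.5} and Lemma~\ref{prop:d(delta)}.

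\textbf{Continuity.} Since $u$ is a strong solution, \eqref{strong solution} holds on every interval $[0,t]$. Subtracting the identities at two times $t_1<t_2$ gives
\[
E(u(\cdot,t_1))-E(u(\cdot,t_2))=\int_{t_1}^{t_2}\|u_t\|_{L^2(\Omega)}^2\,d\tau .
\]
Because $u_t\in L^2_{loc}(0,\infty;L^2(\Omega))$, the map $t\mapsto E(u(\cdot,t))$ is continuous and nonincreasing on $[0,\infty)$.

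Next, $u\in C([0,T];L^q(\Omega))$ for $q\in[1,1^*_s)$, and by \eqref{Assump:f_0} we have $r<1^*_s$. Hence the Nemytskii maps $u\mapsto \int_\Omega F(x,u)\,dx$ and $u\mapsto\int_\Omega f(x,u)u\,dx$ are continuous on $L^r(\Omega)$, by dominated convergence and the growth bound $|F|\le C|t|^r$. It follows that $t\mapsto\int_\Omega F(x,u(t))\,dx$ is continuous. Writing $\|u(t)\|_{W_0^{s,1}}=E(u(t))+\int_\Omega F(x,u(t))\,dx$, the seminorm $t\mapsto\|u(t)\|_{W_0^{s,1}}$ is also continuous.

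To upgrade this to $u\in C(0,\infty;W_0^{s,1}(\Omega))$, let $t_n\to t$. Then $u(t_n)\to u(t)$ in $L^1(\Omega)$, so a subsequence converges a.e. The integrands $g_n=|u(t_n)(x)-u(t_n)(y)|\,|x-y|^{-N-s}$ converge a.e. on $Q$ to $g$ and satisfy $\int_Q g_n\to\int_Q g$. A Brezis--Lieb (Scheffé-type) argument then yields
\[
\iint_Q \frac{|(u(t_n)-u(t))(x)-(u(t_n)-u(t))(y)|}{|x-y|^{N+s}}\,dx\,dy\to0 .
\]
The limit is independent of the subsequence, so the whole sequence converges. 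Continuity of $I(u(t))$ then follows from the continuity of the seminorm and of $\int_\Omega f(x,u)u\,dx$. I expect this norm upgrade to be the main technical point. If the Scheffé route runs into trouble, the fallback is to prove only the continuity of $E$ and $I$, which is all that the invariance argument actually needs.

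\textbf{Invariance, case (i).} First, $u_0\in W_\delta$ for $\delta\in(\delta_1,\delta_2)$. Indeed, $I(u_0)>0$ and $E(u_0)<d$ force $E(u_0)>0$, because $E\ge(1-1/\theta)\|u\|_{W_0^{s,1}}+\frac1\theta I(u)$ by \eqref{Assump:f_1}. Lemma~\ref{Lemma: 2.5} then gives $I_\delta(u_0)>0$. Moreover $E(u_0)=d(\delta_1)<d(\delta)$ by Lemma~\ref{prop:d(delta)}(ii).

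Suppose, for contradiction, that there are $\delta$ and a first time $t_0>0$ with $u(t_0)\notin W_\delta$. By continuity, either $E(u(t_0))=d(\delta)$ or $I_\delta(u(t_0))=0$ with $u(t_0)\neq0$.

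The first alternative is impossible: energy monotonicity gives $E(u(t_0))\le E(u_0)<d(\delta)$.

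In the second alternative, $u(t_0)\in\mathcal N_\delta$, so $E(u(t_0))\ge d(\delta)$, which contradicts the same inequality $E(u(t_0))\le E(u_0)<d(\delta)$.

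\textbf{Invariance, case (ii).} If $I(u_0)<0$, then $u_0\neq0$. Lemma~\ref{Lemma: 2.5} applied with $E(u_0)$ (if $E(u_0)\le 0$, argue directly via Corollary~\ref{cor}) gives $I_\delta(u_0)<0$, so $u_0\in V_\delta$. Let $t_0$ be the first exit time from $V_\delta$. Again $E(u(t_0))<d(\delta)$ is preserved, so we must have $I_\delta(u(t_0))=0$.

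If $u(t_0)\neq0$, then $u(t_0)\in\mathcal N_\delta$, and the same energy contradiction as in case (i) applies.

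To exclude $u(t_0)=0$, we need a lower bound on $\|u(t)\|_{W_0^{s,1}}$ while $I_\delta(u(t))<0$. From $I_\delta(u)<0$, \eqref{Assump:f_0} and Proposition~\ref{prop:embedding}(i) we get $\delta\|u\|_{W_0^{s,1}}<S_1\|u\|_{W_0^{s,1}}^r$, hence
\[
\|u\|_{W_0^{s,1}}\ge (\delta/S_1)^{1/(r-1)}
\]
as in \eqref{lower bound of norm u}. By continuity of the norm in time, this bound persists at $t_0$, which contradicts $u(t_0)=0$. This completes both invariance statements.
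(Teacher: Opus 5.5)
Your proposal follows the same scheme as the paper, which only refers to an earlier work for the details. You first prove continuity of $t\mapsto E(u(\cdot,t))$ and $t\mapsto I_\delta(u(\cdot,t))$ along the strong solution. You then run a first-exit-time argument that combines $E(u(\cdot,t))\le E(u_0)<d(\delta)$ with Lemma~\ref{Lemma: 2.5} and the monotonicity of $d(\delta)$ from Lemma~\ref{prop:d(delta)}.

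Your continuity step is more complete than what the paper writes. In the proofs of Theorems~\ref{Main Theorem low initial energy-strong} and~\ref{loc}, the paper infers $u\in C(0,\infty;W_0^{s,1}(\Omega))$ from the chain rule, Lemma~\ref{continuity of strong solution}. That lemma only gives continuity of $t\mapsto\varphi(u(\cdot,t))$, i.e.\ of the seminorm. Since $W_0^{s,1}(\Omega)$ is not uniformly convex, the usual Radon--Riesz upgrade is unavailable. Your argument via a.e.\ convergence plus Scheff\'e (Brezis--Lieb with exponent $1$) is exactly what turns norm continuity into strong continuity.

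The one unjustified step is in case (i), where you assert that $u(\cdot,t_0)\neq0$ at the first exit time. Because $0\in W_\delta$ and the energy constraint never fails, the exit set is $\{t:\ u(\cdot,t)\neq0,\ I_\delta(u(\cdot,t))\le0\}$, and $u\neq0$ is an open condition. Continuity of $I_\delta$ alone does not make this set closed. A priori, exit times could accumulate at a time where $u$ vanishes; then the infimum is not attained and you never produce a point of $\mathcal N_\delta$.

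The fix is the estimate you already use in case (ii): every $v\neq0$ with $I_\delta(v)\le0$ satisfies $\|v\|_{W_0^{s,1}(\Omega)}\ge(\delta/S_1)^{1/(r-1)}$. The exit set therefore coincides with $\{t:\ I_\delta(u(\cdot,t))\le0,\ \|u(\cdot,t)\|_{W_0^{s,1}(\Omega)}\ge(\delta/S_1)^{1/(r-1)}\}$, which is closed. Its infimum $t_0>0$ is thus attained, with $u(\cdot,t_0)\neq0$. For $t<t_0$ close to $t_0$ the seminorm stays positive, so $u(\cdot,t)\neq0$ and hence $I_\delta(u(\cdot,t))>0$. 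This yields $I_\delta(u(\cdot,t_0))=0$ and the contradiction $d(\delta)\le E(u(\cdot,t_0))\le E(u_0)<d(\delta)$.

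Finally, your parenthetical about $E(u_0)\le0$ in case (ii) is vacuous. For $v\in\mathcal N_\delta$ with $\delta<1$, \eqref{Assump:f_1} and the same lower bound give $E(v)\ge(1-\delta/\theta)(\delta/S_1)^{1/(r-1)}>0$. So the hypothesis $E(u_0)=d(\delta_1)$ already forces $E(u_0)>0$, and Lemma~\ref{Lemma: 2.5} applies directly.
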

\begin{proof}
By following the same arguments as in \cite[Lemmas 2.28 and 2.29]{Arora-2025}. The invariance of the associated sets $W_{\delta}$ and $V_{\delta}$ follows from these continuity properties of $t\to E(u(\cdot,t))$ and $t\to I(u(\cdot,t))$ and definition of strong solution.
 \end{proof}
\subsection{Tools from subdifferential theory} 
We first recall some definition and results from the subdifferential theory. Let \( H \) be a Hilbert space with inner product \( (\cdot,\cdot) \) and norm \( \|\cdot\|_{H} \). For a functional \( \varphi:H\to (-\infty,+\infty] \), we define the sublevel set and the domain of \( \varphi \) as  
\[
D(\varphi,a):=\left\{u\in H \,:\, \varphi(u)\leq a \right\}, \quad \text{for } a\in \mathbb{R} \quad \text{and} \quad D(\varphi)= \bigcup_{a\in \mathbb{R}} D(\varphi,a). 
\]
\begin{definition}
    Let \( \varphi:H\to (-\infty,+\infty] \) be a functional. The subdifferential \( \partial\varphi \) of \( \varphi \) is defined as  
    \[
    \partial\varphi(u)=\left\{ f\in H \,: \, \varphi(v)-\varphi(u)\geq (f,v-u), \ \forall\, v\in H \right\}
    \]
    for any \( u \in H \).
\end{definition}
It is well known that the subdifferential $\partial\varphi$ is a maximal monotone operator and its domain satisfies $D(\partial \varphi)\subset D(\varphi)$. Next, we recall a key existence result (see, \cite[Theorem 3.4, p.297]{Ishii} ) and the chain rule for subdifferentials (see, \cite[Lemma 3.3, p.73]{Heim}).
\begin{theorem}
\label{wellposedness}
    Let \( \varphi:H \to (-\infty,+\infty] \) be a proper, convex, and lower semicontinuous functional, and let \( \psi: H \to \mathbb{R} \) be another functional. Assume that the following conditions hold:
    \begin{itemize}
        \item[\textnormal{(i)}] For any \( a \in \mathbb{R} \), the sublevel set \( D(\varphi,a) \) is compact in \( H \).
        \item[\textnormal{(ii)}] \( D(\varphi) \subset D(\psi) \).
        \item[\textnormal{(iii)}] The set $\left\{(\partial\psi)^{0}(u) \mid u \in D(\varphi,a) \right\}$ is bounded in \( H \) for any \( a \in \mathbb{R} \), where \( (\partial\psi)^{0} \) denotes the element of minimal norm in the subdifferential \( \partial\psi(u) \).
    \end{itemize}
    Then, for each initial condition \( h \in D(\varphi) \), there exist \( T>0 \) and a strong solution \( u \) to the initial value problem:
    \begin{equation*}
    \begin{cases}
        \frac{du(\cdot, t)}{dt} + \partial\varphi(u(\cdot, t)) - \partial\psi(u(\cdot, t)) \ni 0 & \text{in } H, \quad t \in (0,T), \\[5pt]
        u(\cdot,0) = h(\cdot) & \text{in } \Omega.
    \end{cases}
    \end{equation*}
\end{theorem}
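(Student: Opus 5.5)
The plan is to treat $\partial\psi$ as a bounded perturbation of the maximal monotone operator $\partial\varphi$, solve the perturbed problem on a short interval by a set-valued fixed point argument, and obtain compactness from hypothesis (i). Throughout, I interpret $\partial\psi$ as a subdifferential with convex, closed values and a closed graph (strong $\times$ weak). This is automatic if $\psi$ is convex, or if it is a $C^1$ (or Fréchet) perturbation as in our application $\psi(u)=\int_\Omega F(x,u)\,dx$.

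\textbf{Step 1: uniform energy bound.} Fix $h\in D(\varphi)$ and set $a:=\varphi(h)+1$. By (iii) there is $K>0$ with $\|(\partial\psi)^0(v)\|_H\le K$ for all $v\in D(\varphi,a)$. For any $g\in L^2(0,T;H)$, Brézis' theory for $\frac{du}{dt}+\partial\varphi(u)\ni g$, $u(0)=h\in D(\varphi)$, gives a unique strong solution $u\in W^{1,2}(0,T;H)$. Moreover $t\mapsto\varphi(u(t))$ is absolutely continuous and satisfies
\[
\frac12\int_0^t\Big\|\frac{du}{d\tau}\Big\|_H^2\,d\tau+\varphi(u(t))\le \varphi(h)+\frac12\int_0^t\|g(\tau)\|_H^2\,d\tau .
\]
Choose $T>0$ so small that $K^2T/2\le 1$. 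Then $\|g\|_{L^\infty(0,T;H)}\le K$ forces $u(t)\in D(\varphi,a)$ for all $t\in[0,T]$, and also $\|u(t)-u(s)\|_H\le \sqrt{2a'}\,|t-s|^{1/2}$ for a constant $a'$ depending only on $a$ and $\inf\varphi$ over $D(\varphi,a)$. That infimum is finite because lower semicontinuity together with compactness of the sublevel set makes $\varphi$ bounded below there.

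\textbf{Step 2: a compact convex set and a set-valued map.} Let $\mathcal K\subset C([0,T];H)$ be the closed convex hull of all continuous $v$ with $v(0)=h$, $v(t)\in D(\varphi,a)$ and the above $\tfrac12$-Hölder bound. By hypothesis (i) and Arzelà--Ascoli, the generating set is relatively compact, and by Mazur's theorem so is its closed convex hull. Some care is needed since $D(\varphi,a)$ is convex only if $\varphi$ is; it is, so $\mathcal K$ still takes values in $D(\varphi,a)$. For $v\in\mathcal K$ define
\[
S(v):=\{u: u \text{ solves } u'+\partial\varphi(u)\ni g,\ u(0)=h,\ g(t)\in\partial\psi(v(t)),\ \|g(t)\|_H\le K \text{ a.e.}\}.
\]
By Step 1, $S(v)\subset\mathcal K$. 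The set $S(v)$ is convex: the admissible $g$ form a convex set, and the solution map $g\mapsto u$ is not linear, so I would instead apply the fixed point theorem to the map $v\mapsto G(v)\subset$ (weakly compact, convex subset of $L^2(0,T;H)$) composed with the continuous solution map. Concretely, I would look for a fixed point of $g\mapsto \Phi(g):=\{\tilde g: \tilde g(t)\in\partial\psi(u_g(t)),\ \|\tilde g\|\le K\}$ on the ball of radius $K$ in $L^2(0,T;H)$ with the weak topology. This map has convex values and, since $g\mapsto u_g$ is compact into $C([0,T];H)$, a closed graph.

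\textbf{Step 3: nonemptiness and closed graph (main obstacle).} Nonemptiness of $\Phi(g)$ requires a measurable selection $t\mapsto(\partial\psi)^0(u_g(t))$. I would obtain it by approximating $u_g$ by piecewise constant functions $u_n\to u_g$ uniformly, taking $g_n(t)=(\partial\psi)^0(u_n(t))$ (bounded by $K$), extracting a weak $L^2$ limit, and using the closedness of the graph of $\partial\psi$ plus Mazur's lemma to conclude $\tilde g(t)\in\partial\psi(u_g(t))$ a.e. The closed-graph property of $\Phi$ is proved by the same Mazur argument, together with the continuity $g_n\rightharpoonup g\Rightarrow u_{g_n}\to u_g$ in $C([0,T];H)$. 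That continuity follows from the compactness in Step 2 and the demiclosedness of the maximal monotone $\partial\varphi$.

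\textbf{Step 4: conclusion.} Kakutani--Fan--Glicksberg then yields $g\in\Phi(g)$, so $u=u_g$ satisfies $u'+\partial\varphi(u)-\partial\psi(u)\ni 0$ on $(0,T)$ with $u(0)=h$. This is the required strong solution.
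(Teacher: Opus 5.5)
The paper does not prove this statement. It quotes it from \cite[Theorem 3.4]{Ishii} and uses it as a black box in the proof of Theorem~\ref{loc}. Your argument is therefore an independent, self-contained proof, and it is sound. The main steps are:
\begin{itemize}
\item Br\'ezis' theory for $u'+\partial\varphi(u)\ni g$, together with the energy inequality, keeps $u_g(t)$ in $D(\varphi,a)$ on $[0,T]$ for every $g$ in $B=\{g:\|g(t)\|_H\le K \text{ a.e.}\}$.
\item Hypothesis (i) plus Arzel\`a--Ascoli makes $g\mapsto u_g$ compact into $C([0,T];H)$.
\item Kakutani--Fan--Glicksberg on $B$ with the weak topology then produces $g(t)\in\partial\psi(u_g(t))$.
\end{itemize}

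A few points can be streamlined:
\begin{itemize}
\item The set $\mathcal K$ is already closed and convex, since $D(\varphi,a)$ is convex and closed and the H\"older bound is a convex, closed condition. So no hull and no detour through $S(v)$ is needed.
\item The weak-to-strong continuity of $g\mapsto u_g$ follows directly from the monotonicity estimate $\|u_{g_n}(t)-u_g(t)\|_H^2\le 2\int_0^t(g_n-g,u_{g_n}-u_g)\,d\tau$. Combine it with a uniformly convergent subsequence of $u_{g_n}$, and no lifted demiclosedness argument for $\partial\varphi$ is required.
\item Sequences suffice because the weak topology on $B$ is metrizable when $H$ is separable, as for $H=L^2(\Omega)$. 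For a general $H$, run the same argument with nets.
\item When $\psi$ is convex and lower semicontinuous, measurability of $t\mapsto(\partial\psi)^0(u_g(t))$ is immediate from $(\partial\psi)^0=\lim_{\lambda\to0}(\partial\psi)_\lambda$. The piecewise-constant approximation is then unnecessary, though it is also correct.
\end{itemize}

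A common alternative proof uses regularization instead of a fixed point. One replaces $\partial\psi$ by its Yosida approximation $(\partial\psi)_\lambda$. This operator is Lipschitz and satisfies $\|(\partial\psi)_\lambda(u)\|_H\le\|(\partial\psi)^0(u)\|_H\le K$ on $D(\varphi,a)$. One then solves the Lipschitz-perturbed problems, obtains $\lambda$-uniform bounds on a common interval, and passes to the limit using compactness of $D(\varphi,a)$ and demiclosedness of $\partial\psi$.

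The two routes trade off as follows. Your route only uses that $\partial\psi$ has closed convex values, a strongly--weakly closed graph, and the bound (iii), so it covers more general demiclosed perturbations. It pays for this with a set-valued fixed point theorem and some measurable-selection care. The regularization route uses the convexity of $\psi$ essentially, but stays entirely within single-valued monotone-operator arguments.
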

\begin{lemma}
\label{continuity of strong solution}
    Let \( \varphi:H \to (-\infty,+\infty] \) be a proper, convex, and lower semicontinuous functional. Suppose that \( u \in W^{1,1}(0,T;H) \) and that \( u(\cdot,t) \in D(\partial\varphi) \) for almost every \( t \in [0,T] \), where \( T>0 \). If there exists a function \( g \in L^{2}(0,T;H) \) such that $
    g(\cdot,t) \in \partial\varphi(u(\cdot,t))$ for a.e. $t \in [0,T],$ then the function \( t \mapsto \varphi(u(\cdot,t)) \) is absolutely continuous on \( [0,T] \) and satisfies  
    \[
    \frac{d}{dt} \varphi(u(\cdot,t)) = \left( g(\cdot,t), \frac{du(\cdot,t)}{dt} \right) \quad \text{for a.e. } t \in [0,T].
    \]
\end{lemma}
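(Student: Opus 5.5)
This is the standard chain rule for convex functionals along absolutely continuous curves, as in \cite[Lemma 3.3]{Heim}. The plan is to compute the difference quotients of $\varphi(u(\cdot,t))$ using the subgradient inequality and then identify the limit.

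First, take $t,t+h\in[0,T]$ such that $u(\cdot,t)$ and $u(\cdot,t+h)$ lie in $D(\partial\varphi)$ and $g(\cdot,t)\in\partial\varphi(u(\cdot,t))$. The definition of the subdifferential gives the two-sided bound
\[
\big(g(\cdot,t),u(\cdot,t+h)-u(\cdot,t)\big)\le \varphi(u(\cdot,t+h))-\varphi(u(\cdot,t))\le \big(g(\cdot,t+h),u(\cdot,t+h)-u(\cdot,t)\big).
\]
Dividing by $h>0$ and letting $h\to0^{\pm}$ at common Lebesgue points of $g$ and of $u'$ suggests the formula $\frac{d}{dt}\varphi(u)=(g,u')$. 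This is not yet rigorous, because $\varphi\circ u$ is not known to be absolutely continuous, or even finite, everywhere.

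To get absolute continuity, I would first regularize $\varphi$ by its Moreau--Yosida approximation
\[
\varphi_\lambda(v)=\min_{w}\Big\{\tfrac{1}{2\lambda}\|v-w\|_H^2+\varphi(w)\Big\}.
\]
This functional is convex and Fréchet differentiable, and its gradient $\partial\varphi_\lambda$ is Lipschitz. Since $u\in W^{1,1}(0,T;H)$, the classical chain rule gives, for $0\le s<t\le T$,
\[
\varphi_\lambda(u(t))-\varphi_\lambda(u(s))=\int_s^t\big(\partial\varphi_\lambda(u(\tau)),u'(\tau)\big)\,d\tau.
\]
Two standard properties then apply for a.e. $\tau$: $\partial\varphi_\lambda(u(\tau))\to(\partial\varphi)^0(u(\tau))$, and $\|\partial\varphi_\lambda(u(\tau))\|_H\le\|(\partial\varphi)^0(u(\tau))\|_H\le\|g(\tau)\|_H$. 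Moreover $\varphi_\lambda\uparrow\varphi$ pointwise as $\lambda\to0^+$.

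The main obstacle is the integrability needed for dominated convergence. The majorant $\|g\|_H\|u'\|_H$ is integrable only if $u'\in L^2(0,T;H)$ (or more generally $\|g\|_H\|u'\|_H\in L^1$), while the hypothesis is only $u\in W^{1,1}$. I would first try to close this gap with the pointwise bound above, exploiting that $g\in L^2$. If that fails, I would assume $u\in W^{1,2}(0,T;H)$, as in the source \cite{Heim} and as holds in our application since $u_t\in L^2(0,T;L^2(\Omega))$.

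With the majorant in hand, pass to the limit $\lambda\to0^+$. Pick $s$ with $\varphi(u(s))<\infty$; the identity then shows that $\varphi(u(t))$ is finite for every $t$ and equals
\[
\varphi(u(s))+\int_s^t\big((\partial\varphi)^0(u(\tau)),u'(\tau)\big)\,d\tau.
\]
Hence $t\mapsto\varphi(u(\cdot,t))$ is absolutely continuous, and its derivative is $((\partial\varphi)^0(u(t)),u'(t))$ a.e.

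It remains to replace $(\partial\varphi)^0$ by an arbitrary selection $g$. At a.e. $t$ where $\varphi\circ u$ and $u$ are both differentiable, the two one-sided quotient bounds from the first step give
\[
\frac{d}{dt}\varphi(u)=\big(\xi,u'(t)\big)\quad\text{for every }\xi\in\partial\varphi(u(t)).
\]
Indeed, letting $h\to0^+$ gives $\frac{d}{dt}\varphi(u)\ge(\xi,u')$, and letting $h\to0^-$ gives $\frac{d}{dt}\varphi(u)\le(\xi,u')$. In particular the formula holds with $\xi=g(\cdot,t)$, which concludes the proof.
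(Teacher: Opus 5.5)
Your argument is the standard proof of \cite[Lemma 3.3]{Heim}, which the paper cites instead of giving a proof, and it is correct under your fallback hypothesis. The steps are: Moreau--Yosida regularization; dominated convergence with majorant $\|g\|_H\|u'\|_H$; monotone convergence $\varphi_\lambda\uparrow\varphi$ to get finiteness and absolute continuity; and the one-sided difference quotients at a point of differentiability to pass from $(\partial\varphi)^0$ to an arbitrary selection $g$.

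Your ``first try'' to manage with only $u\in W^{1,1}(0,T;H)$ cannot succeed, because the lemma is false as stated. Take $H=\mathbb{R}$, $\varphi(x)=1/x$ for $x>0$ and $\varphi(x)=+\infty$ for $x\le 0$. This $\varphi$ is proper, convex and l.s.c., with $\partial\varphi(x)=\{-x^{-2}\}$ for $x>0$. Let $u(t)=t^{1/8}$ on $[0,1]$. Then $u'=\tfrac18 t^{-7/8}\in L^1(0,1)$, $u(t)\in D(\partial\varphi)$ for $t>0$, and $g(t)=-t^{-1/4}\in L^2(0,1)$. Yet $\varphi(u(t))=t^{-1/8}$ is unbounded on $(0,1]$, so it is not absolutely continuous on $[0,1]$; indeed $(g,u')=-\tfrac18 t^{-9/8}\notin L^1(0,1)$.

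So drop the hedge and state the hypothesis $u\in W^{1,2}(0,T;H)$, or more generally $\|g\|_H\|u'\|_H\in L^1(0,T)$. This is the assumption in the cited source. It costs nothing in the paper, since every application of the lemma there has $u_t\in L^2(0,T;L^2(\Omega))$.
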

First, we reformulate the system \eqref{main:problem} as a Cauchy problem for an abstract evolution equation in the Hilbert space \( H = L^{2}(\Omega) \). To achieve this, we define the functionals \( \varphi, \psi : H \to (-\infty,+\infty] \) as follows
\[
\varphi(u) =
\begin{cases}
    \displaystyle \iint_{Q}\frac{|u(x)-u(y)|}{|x-y|^{N+s}} \,dx\,dy & \text{if } u\in W_{0}^{s,1}(\Omega),\\[5pt]
    +\infty & \text{if } u\in H\setminus W_{0}^{s,1}(\Omega),
\end{cases}
\quad 
\psi(u) =
\begin{cases}
    \displaystyle \int_{\Omega}F(x,u)\,dx & \text{if } u\in H\cap L^{r}(\Omega),\\[5pt]
     +\infty & \text{if } u\in H\setminus L^{r}(\Omega).
\end{cases}
\]
It is straightforward to verify that the functionals \( \varphi \) and \( \psi \) are proper, convex, and lower semicontinuous.
Next, we consider the operator \( \mathcal{L}: W_{0}^{s,1}(\Omega) \to \left(W_{0}^{s,1}(\Omega)\right)^{\ast} \) defined by
\[
(\mathcal{L}(u),v) := \iint_{Q}\frac{u(x)-u(y)}{|u(x)-u(y)|}\frac{v(x)-v(y)}{|x-y|^{N+s}} \, dx\,dy, \qquad \forall \, u,v \in W_{0}^{s,1}(\Omega),
\]
where \( \left(W_{0}^{s,1}(\Omega)\right)^{\ast} \) denotes the dual space of \( W_{0}^{s,1}(\Omega)\).
The operator \( \mathcal{L}_{H} \), which is the realization of \( \mathcal{L} \) in \( H = L^{2}(\Omega) \), is defined as
\begin{equation*}
 D(\mathcal{L}_{H}) = \left\{ u\in W_{0}^{s,1}(\Omega) \mid \mathcal{L}(u)\in H \right\} 
\quad \text{with} \quad \mathcal{L}_{H}(u) = \mathcal{L}(u), \quad \forall\, u \in D(\mathcal{L}_{H}).
\end{equation*}
The realization of the fractional $1$-Laplacian coincides with the subdifferential of the proper, convex, and lower semicontinuous functional $\varphi$; hence, by general results of convex analysis, it is a maximal monotone operator.
 \begin{lemma}
\label{operator and subdifferential}
Assume that the condition \eqref{Assump:f_0}--\eqref{Assump:f_2} hold. Then,
\[
\partial\varphi(u) = (-\Delta)^{s}_{1} u \quad \text{and} \quad \partial\psi(u) = f(x,u).\]
\end{lemma}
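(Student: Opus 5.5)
We must identify the two subdifferentials: $\partial\varphi(u)=(-\Delta)^s_1u$, understood as the set-valued operator
\[
\Big\{g\in L^2(\Omega): \exists\,\eta \text{ antisymmetric},\ \|\eta\|_{L^\infty}\le 1,\ \eta\in\operatorname{sign}(u(x)-u(y)),\ \textstyle\int_\Omega g v\,dx=\iint_Q \eta\frac{v(x)-v(y)}{|x-y|^{N+s}}\,dx\,dy\ \ \forall v\in W^{s,1}_0\cap L^2\Big\},
\]
and $\partial\psi(u)=\{f(\cdot,u)\}$ whenever this lies in $L^2$. The plan is to prove the two inclusions $\supseteq$ and $\subseteq$ for $\varphi$, and to obtain the statement for $\psi$ by a Gâteaux differentiability argument.

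\textbf{Inclusion $(-\Delta)^s_1u\subseteq\partial\varphi(u)$.} Take $g$ in the set above with its associated $\eta$. For $v\in W^{s,1}_0(\Omega)\cap L^2$, use $|\eta|\le1$ and $\eta(u(x)-u(y))=|u(x)-u(y)|$ to get
\[
\varphi(v)-\varphi(u)\ge \iint_Q \eta\,\frac{(v(x)-v(y))-(u(x)-u(y))}{|x-y|^{N+s}}\,dx\,dy=\int_\Omega g(v-u)\,dx .
\]
For $v\notin W^{s,1}_0$ we have $\varphi(v)=+\infty$, so the inequality is trivial. Hence $g\in\partial\varphi(u)$.

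\textbf{Inclusion $\partial\varphi(u)\subseteq(-\Delta)^s_1u$.} This is the main obstacle: producing $\eta$ from a subgradient $g$. First, testing with $v=\lambda u$ for $\lambda\to0$ and $\lambda\to2$, and using $1$-homogeneity, gives $\int_\Omega gu\,dx=\varphi(u)$. Then testing with $v=u+w$ and using the triangle inequality gives $\int_\Omega gw\,dx\le\varphi(w)$ for all $w\in W^{s,1}_0\cap L^2$.

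Consider the linear map $T:w\mapsto \frac{w(x)-w(y)}{|x-y|^{N+s}}$ into $L^1(Q,d\mu)$, where $d\mu=|x-y|^{-N-s}dx\,dy$ after rescaling. The functional $Tw\mapsto\int_\Omega gw\,dx$ is bounded by the $L^1(Q,\mu)$-norm on the range of $T$. By Hahn–Banach it extends to all of $L^1(Q,\mu)$ with norm at most $1$, and it is therefore represented by some $\tilde\eta\in L^\infty$ with $\|\tilde\eta\|_\infty\le1$.

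Next replace $\tilde\eta$ by $\eta(x,y)=\tfrac12(\tilde\eta(x,y)-\tilde\eta(y,x))$. This is antisymmetric, still bounded by $1$, and represents the same functional because $Tw$ is antisymmetric; a factor $2$ convention must be tracked consistently with $\varphi$ on $Q$. Finally, $\iint_Q\eta\,(u(x)-u(y))\,d\mu=\int_\Omega gu\,dx=\varphi(u)=\iint_Q|u(x)-u(y)|\,d\mu$ together with $|\eta|\le1$ forces $\eta(u(x)-u(y))=|u(x)-u(y)|$ a.e., that is, $\eta\in\operatorname{sign}(u(x)-u(y))$. The delicate points are the density and well-definedness of the extension, which require working in $W^{s,1}_0\cap L^2$ so that test functions are admissible.

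\textbf{The functional $\psi$.} By $(f_0)$ and $(f_2)$, $F(x,\cdot)$ is $C^1$ with $|f(x,t)|\le C|t|^{r-1}$. Dominated convergence then gives Gâteaux differentiability of $\psi$ along $L^2\cap L^r$ directions, with derivative $f(x,u)$. Whenever $f(\cdot,u)\in L^2$, the subdifferential in the sense used here (the Fréchet/Clarke sense, since $\psi$ is not actually convex) reduces to the singleton $\{f(\cdot,u)\}$, following the standard argument in \cite{Ishii}. I would note explicitly that for $\psi$ the meaning of $\partial\psi$ is the derivative, as Theorem~\ref{wellposedness} requires.
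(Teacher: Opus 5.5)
\textbf{Part for $\varphi$: correct, by a different route.} Your argument for $\varphi$ is correct, but it does not follow the paper. The paper proves only the inclusion $\mathcal{L}_H(u)\subset\partial\varphi(u)$, which is your first step. It then gets equality by asserting that $\mathcal{L}_H$ is maximal monotone. You instead prove the reverse inclusion directly, in four steps:
\begin{itemize}
\item $\int_\Omega gu\,dx=\varphi(u)$, from $1$-homogeneity;
\item $\int_\Omega gw\,dx\le\varphi(w)$, from subadditivity;
\item a Hahn--Banach extension to $L^1(Q,\mu)$, represented by some $\tilde\eta\in L^\infty$, followed by antisymmetrization;
\item the equality case in $\eta\,(u(x)-u(y))\le|u(x)-u(y)|$.
\end{itemize}
This buys something real. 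Once the inclusion is known, maximal monotonicity of $\mathcal{L}_H$ is equivalent to the equality being proved, so the paper's step assumes its conclusion. Your version also gives a precise meaning to the quotient $\frac{u(x)-u(y)}{|u(x)-u(y)|}$ on $\{u(x)=u(y)\}$, where the paper's single-valued formula is undefined. Three small cleanups:
\begin{itemize}
\item Take $Tw=w(x)-w(y)$ in $L^1(Q,\mu)$; as written, your formula puts the kernel in twice.
\item There is no factor-$2$ issue. Antisymmetrization reproduces the functional exactly, because $Q$ and $\mu$ are symmetric and $Tw$ is antisymmetric under $(x,y)\mapsto(y,x)$.
\item Hahn--Banach needs no density. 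Well-definedness on the range of $T$ already follows from the bound.
\end{itemize}

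\textbf{Part for $\psi$: a genuine gap.} Your claim that $\psi$ is not convex is false. By $(f_1)$, $tf(x,t)\ge\theta F(x,t)>0$ for $t\neq0$. Then $(f_2)$ with $\Theta>1$ gives $t^2f'(x,t)>(\Theta-1)tf(x,t)>0$, so $f'(x,t)>0$ for $t\neq0$ and $F(x,\cdot)$ is convex; the paper relies on exactly this.

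The $\partial\psi$ in the statement is the convex subdifferential of the paper's definition. It is also the notion Theorem~\ref{wellposedness} requires, since that is Ishii's framework for differences of convex subdifferentials. So you cannot switch to a Fr\'echet or Clarke notion. Under the convex definition, your G\^ateaux argument yields only $\partial\psi(u)\subseteq\{f(\cdot,u)\}$. Concretely, if $g$ is a subgradient and $w\in L^\infty(\Omega)$, let $\epsilon\to0^+$ in $\epsilon^{-1}(\psi(u+\epsilon w)-\psi(u))\ge(g,w)$, replace $w$ by $-w$, and use density in $L^2$. This does not show $\partial\psi(u)\neq\emptyset$.

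The missing inclusion $f(\cdot,u)\in\partial\psi(u)$ comes from convexity. Integrate the pointwise inequality $F(x,v)\ge F(x,u)+f(x,u)(v-u)$ over $\Omega$ for $v\in L^2\cap L^r$; all terms are integrable by $(f_0)$. For $v\notin L^r$ the inequality is trivial, since $\psi(v)=+\infty$. Add this step and drop the non-convexity remark, and the $\psi$ part is complete.
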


\begin{proof}
Both \( \mathcal{L}_{H} \) and \( \partial\varphi \) are maximal monotone operators in \( L^{2}(\Omega) \). Therefore, it suffices to show that \( \mathcal{L}_{H}(u) \subset \partial\varphi(u) \). Let \( u \in D(\mathcal{L}_{H}) \). Then, for all \( w \in W_{0}^{s,1}(\Omega) \), we have
\begin{align}
\label{v in subdifferential}
(\mathcal{L}_{H}(u), w - u) &= (\mathcal{L}(u), w - u) 
\leq \iint_{Q}\left|\frac{u(x)-u(y)}{|u(x)-u(y)|}\frac{w(x)-w(y)}{|x-y|^{N+s}}\right| \, dx\,dy-\iint_{Q}\frac{|u(x)-u(y)|}{|x-y|^{N+s}} \, dx\,dy\nonumber\\
&\leq \iint_{Q}\frac{|w(x)-w(y)|}{|x-y|^{N+s}} \,dx\,dy-\iint_{Q}\frac{|u(x)-u(y)|}{|x-y|^{N+s}} \,dx\,dy \leq \varphi(w)-\varphi(u).
\end{align}
If \( w \in  L^{2}(\Omega)\setminus W_{0}^{s,1}(\Omega) \), then \( \varphi(w) = +\infty \), and thus \eqref{v in subdifferential} still holds trivially. This proves that \( \mathcal{L}_{H}(u) \in \partial\varphi(u) \), which implies \( \mathcal{L}_{H}(u) = \partial\varphi(u) \). Similarly, by exploiting the convexity of the mapping \( t \mapsto F(x,t) \), we can show that \( \partial\psi(u) = f(x,u) \).
\end{proof}
\section{Regularization and asymptotic analysis}
\label{Regularization and asymptotic analysis}
In this section, first we develop the regularization scheme to construct the global weak solution of \eqref{main:problem} via fractional $p$-Laplacian evolution problem as $p\to 1^{+}$. We establish the uniform estimates (independent of $p$) for the potential depth $d_{p}$ and uniform bounds on the weak solution of approximate problem. We then derive the convergence estimates needed to pass the limit $p\to 1^{+}$. 
\subsection{Approximating problem}
Let us consider the following problem
\begin{equation} \label{main:problem2}
        \left\{
    \begin{aligned}
      u_{t}+(-\Delta)_{p}^{s_p} u &=  f(x,u), &&
 \text{in } \Omega \times (0, \infty),\\
 u &= 0, && \text{in}~  \mathbb{R}^N \setminus \Omega \times (0, \infty),\\
 u(x,0)&= u^{(p)}_0(x), && \text{in}~ \Omega
\end{aligned}
    \right.
 \end{equation}
 where
 \begin{equation}
  \label{choice:sp}
      s_{p}:=N+s-\frac{N}{p} \quad \text{for any} \ 1<p< \frac{N}{N+s-1}.
 \end{equation}
 The energy functional $E^{(p)} : W_{0}^{s_p,p}(\Omega) \to \mathbb{R}$ corresponding to the stationary counterpart of the problem \eqref{main:problem2} is given by
\[
    E^{(p)}(u) = \frac{1}{p}\iint_{Q} \frac{|u(x)-u(y)|^{p}}{|x-y|^{N+s_pp}}\,dx\,dy - \int_{\Omega} F(x,u)\,dx,
\]
and the Nehari functional is  given by
\[ I^{(p)}(u) = \iint_{Q}\frac{|u(x)-u(y)|^{p}}{|x-y|^{N+s_pp}}\,dx\,dy - \int_{\Omega} f(x,u) u\,dx.
\]
Next, we define the Nehari manifold as
\[
    \mathcal{N}^{(p)} = \left\{ u \in W_{0}^{s_p,p}(\Omega) \setminus \{ 0 \} \mid I^{(p)}(u) = 0 \right\}.
\]
The potential well and its corresponding sets are defined as
\[
    W^{(p)} := \left\{ u \in W_{0}^{s_p,p}(\Omega) \mid I^{(p)}(u) > 0, E^{(p)}(u) < d_{p} \right\} \cup \{0\}
\]
and
\[
    V^{(p)} := \left\{ u \in W_{0}^{s_p,p}(\Omega) \mid I^{(p)}(u) < 0, E^{(p)}(u) < d_{p} \right\},
\]
where the depth $d_{p}$ of the potential well $W_p$ is defined as
\begin{equation}\label{depth-p}
     d_{p} := \inf_{u \in \mathcal{N}^{(p)}} E^{(p)}(u).
\end{equation}

\begin{lemma}
\label{existence of lambda*p}
 Let $f$ satisfy the conditions \eqref{Assump:f_0}--\eqref{Assump:f_1} and 
 \begin{equation}\label{f2-p-case}
      f(x, \cdot) \in C^1(\mathbb{R}) \ \text{for a.e.} \ x \in \Omega \quad \text{and} \quad (p-1)f(x,t) t < t^{2} f'(x,t) \ \text{for all} \  t \in \mathbb{R}\setminus\{0\}.
 \end{equation}
 Then, for any $u \in W_{0}^{s_p,p}(\Omega)$ with $\|u\|_{W_{0}^{s_p,p}(\Omega)} \neq 0$, we have
\begin{enumerate}
    \item[\textnormal{(i)}] $\lim_{\lambda_{p} \to 0^{+}} E^{(p)}(\lambda_{p} u) = 0$, \quad $\lim_{\lambda_{p} \to +\infty} E^{(p)}(\lambda_{p} u) = -\infty$.
    \item[\textnormal{(ii)}] There exists a unique $\lambda_{\ast,p} = \lambda_{\ast,p}(u) > 0$ such that $\frac{dE^{(p)}(\lambda_{p} u)}{d\lambda_p} \big|_{\lambda_p = \lambda_{\ast,p}} = 0$. Moreover, $E^{(p)}(\lambda_p u)$ is increasing on $0 < \lambda_p \leq \lambda_{\ast,p}$, decreasing on $\lambda_{\ast,p} \leq \lambda_{p} < \infty$, and attains its maximum at $\lambda_{p} = \lambda_{\ast,p}$.
    \item[\textnormal{(iii)}] $I^{(p)}(\lambda_p u) \geq 0$ for $0 < \lambda_p \leq \lambda_{\ast,p}$, $I^{(p)}(\lambda_p u) < 0$ for $\lambda_{\ast,p} < \lambda_p < \infty$, and $I^{(p)}(\lambda_{\ast,p} u) = 0$.
\end{enumerate}
\end{lemma}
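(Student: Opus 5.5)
The plan is to mirror the proof of Lemma \ref{Lemma:2.3}, replacing the homogeneous-of-degree-one term by the $p$-homogeneous Gagliardo term. Fix $u\in W_{0}^{s_p,p}(\Omega)$ with $\|u\|_{W_{0}^{s_p,p}(\Omega)}\neq 0$ and set
\[
g(\lambda):=E^{(p)}(\lambda u)=\frac{\lambda^{p}}{p}\|u\|^{p}_{W_{0}^{s_p,p}(\Omega)}-\int_{\Omega}F(x,\lambda u)\,dx .
\]
First I will check that $E^{(p)}$ is well defined on $W_{0}^{s_p,p}(\Omega)$. Since $s_p p - N = sp + N(p-1) - N$ and the choice \eqref{choice:sp} gives $s_p - N/p = s - N + N = s$ up to the scaling $s_p = N+s-N/p$, the space $W_{0}^{s_p,p}(\Omega)$ embeds into $W_{0}^{s,1}(\Omega)$ on bounded $\Omega$. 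One can also appeal directly to the fractional Sobolev embedding into $L^{r}$, with $r<1^*_s$ subcritical. Either way, $\int_\Omega F(x,\lambda u)\,dx$ is finite and $g$ is $C^1$ in $\lambda$, with
\[
g'(\lambda)=\lambda^{p-1}\|u\|^{p}-\int_\Omega f(x,\lambda u)u\,dx, \qquad \lambda g'(\lambda)=I^{(p)}(\lambda u).
\]
Differentiating under the integral sign is justified by \eqref{Assump:f_0} and dominated convergence.

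For (i), the upper bound $g(\lambda)\le \frac{\lambda^p}{p}\|u\|^p - B\lambda^\theta\int_{\{|u|\ge 1/\lambda\}}|u|^\theta\,dx$ from \eqref{F:bounds} gives $g\to-\infty$, provided $\theta>p$. This is the point needing care: \eqref{f2-p-case} together with \eqref{Assump:f_1} must supply superlinearity of order larger than $p$. I would note that \eqref{f2-p-case} implies $t\mapsto f(x,t)/|t|^{p-2}t$ is strictly increasing. Combined with the Ambrosetti--Rabinowitz-type condition \eqref{Assump:f_1}, this will be used (if necessary, via a remark that in the regime $p\to1^+$ one has $\theta>p$) to guarantee that $\int_\Omega F(x,\lambda u)\,dx/\lambda^{p}\to\infty$. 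The limit at $0$ follows from $|F(x,t)|\le C|t|^r/r$ with $r>1$ and continuity.

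For (ii) and (iii), I will argue that
\[
g'(\lambda)=\lambda^{p-1}\Big(\|u\|^p-h(\lambda)\Big), \qquad h(\lambda):=\int_\Omega \frac{f(x,\lambda u)u}{\lambda^{p-1}}\,dx .
\]
A direct computation gives
\[
h'(\lambda)=\lambda^{-p}\int_\Omega\big[f'(x,\lambda u)(\lambda u)^2-(p-1)f(x,\lambda u)\lambda u\big]\lambda^{-1}\,dx>0
\]
by \eqref{f2-p-case}, on the set where $u\neq0$, which has positive measure. Next I will show $h(0^+)=0$: indeed $h(\lambda)\le C\lambda^{r-p}\|u\|_r^r$, so this needs $r>p$, which holds for $p$ close to $1$ since $r>1$. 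I will also show $h(\infty)=\infty$, which is the same superlinearity issue as above. Hence $g'$ has a unique zero $\lambda_{\ast,p}$, with $g'>0$ before it and $g'<0$ after it. This gives the monotonicity and the maximum in (ii), and it is cleaner than the second-derivative contradiction used in Lemma \ref{Lemma:2.3}. Part (iii) then follows from $I^{(p)}(\lambda u)=\lambda g'(\lambda)$.

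The main obstacle is ensuring the order relations $p<r$ and $p<\theta$ (or an equivalent growth of $h$ at infinity). I would handle this by restricting to $1<p<\min\{r,\theta,\frac{N}{N+s-1}\}$, which is harmless since the scheme only uses $p\to1^+$.
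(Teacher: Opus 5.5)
Your argument is correct for $1<p<\theta$, and it takes a different route from the paper. The paper's proof is only a reference to the $p$-analogue of the proof of Lemma \ref{Lemma:2.3}. In that argument a critical point of $g(\lambda)=E^{(p)}(\lambda u)$ exists because $g>0$ for small $\lambda$ (this needs $r>p$) and $g\to-\infty$. Uniqueness follows because at every critical point $g''(\lambda_\ast)=\lambda_\ast^{-2}\int_\Omega [(p-1)f(x,\lambda_\ast u)\lambda_\ast u-f'(x,\lambda_\ast u)(\lambda_\ast u)^2]\,dx<0$, so two critical points would force an interior local minimum.

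You instead write $g'(\lambda)=\lambda^{p-1}(\|u\|^p-h(\lambda))$ with $h$ strictly increasing from $0$ to $+\infty$. This gives existence, uniqueness, the monotonicity in (ii) and the sign pattern of $I^{(p)}(\lambda u)=\lambda g'(\lambda)$ in (iii) at once, with no second-derivative test. It is also slightly more robust. It only needs the pointwise fact, from \eqref{f2-p-case}, that $\tau\mapsto f(x,\tau)\tau/|\tau|^{p}$ is strictly increasing in $|\tau|$. So prove the monotonicity of $h$ pointwise rather than by differentiating under the integral sign, since \eqref{Assump:f_0} gives no control of $f'$. The second-derivative route tacitly needs $\int_\Omega f'(x,\lambda u)u^2\,dx$ to be finite.

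Your restriction $p<\theta$ is not merely harmless; it is necessary. With only $\theta>1$, as in \eqref{Assump:f_1}, the lemma as printed is false, and monotonicity of $f(x,t)/(|t|^{p-2}t)$ cannot replace the restriction. Take $f(t)=t(1+t^2)^{(p-2)/2}$. It satisfies \eqref{Assump:f_0} for any admissible $r\in[p,2]$, \eqref{Assump:f_1} with $\theta=p$, and \eqref{f2-p-case}, since $t^2f'(t)-(p-1)tf(t)=(2-p)t^2(1+t^2)^{(p-4)/2}>0$. Yet $f(\lambda u)u<\lambda^{p-1}|u|^p$ wherever $u\neq 0$, so $g'(\lambda)>\lambda^{p-1}(\|u\|^p_{W_0^{s_p,p}(\Omega)}-\|u\|^p_{L^p(\Omega)})$. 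For any $u$ whose seminorm dominates its $L^p$ norm (e.g. rapidly oscillating $u$), no $\lambda_{\ast,p}$ exists and (i) fails.

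The paper only applies the lemma for $p<\underline{p}\le\theta$, which is the intended range. There $h(\lambda)\to\infty$ follows directly from \eqref{F:bounds}: $h(\lambda)\ge B\theta\lambda^{\theta-p}\int_{\{|u|\ge 1/\lambda\}}|u|^\theta\,dx$.

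Your separate condition $p<r$ is redundant. \eqref{Assump:f_0} gives $F(x,t)\le C|t|^r/r$, while \eqref{F:bounds} gives $F(x,t)\ge B|t|^\theta$ for $|t|\ge 1$, hence $\theta\le r$.

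Finally, your displayed relation for $s_p$ is garbled. The identity that matters is $N+s_pp=(N+s)p$. Well-definedness of $E^{(p)}$ follows from $W_0^{s_p,p}(\Omega)\hookrightarrow L^r(\Omega)$, since $r<1^*_s<p^*_{s_p}$.
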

\begin{proof}
    The proof follows the by repeating the same argument as in \cite[Lemma 2.20]{Arora-2025}.
\end{proof}
\begin{remark}\label{rem:condi}
    Note that the assumptions \eqref{Assump:f_1}-\eqref{Assump:f_2} implies \eqref{f2-p-case} for any $p \in (1, \Theta]$ where $\Theta$ is given in \eqref{Assump:f_2}.
\end{remark}
Set
\[
\underline{p}:= \min\big\{\frac{N+1}{N+s}, \Theta, \theta\big\}
\]
where $\theta, \Theta$ are given in \eqref{Assump:f_1} and \eqref{Assump:f_2} respectively.

\begin{lemma}\label{Positive:depth dp}
           Let $f$ satisfy the conditions \eqref{Assump:f_0}-\eqref{Assump:f_1} and $q\in(1,\underline{p})$. Then, for $p\in(1,\min\{q,\frac{r}{\gamma}\})$ and $1< \gamma <r$, the depth $d_{p}$ of the potential well $W^{(p)}$  is positive. Moreover, there exists a constant $C_{q,\gamma}>0$ (independent of $p$) such that $d_{p} \geq C_{q,\gamma}$.
\end{lemma}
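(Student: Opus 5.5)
The plan is to follow the proof of Lemma~\ref{Positive:depth}, with the fractional Sobolev constant replaced by one that does not depend on $p$. Throughout, $p\in(1,\min\{q,\frac{r}{\gamma}\})$ and $\|\cdot\|_p:=\|\cdot\|_{W_{0}^{s_p,p}(\Omega)}$. Take $u\in\mathcal{N}^{(p)}$. By \eqref{Assump:f_1}, $\int_\Omega F(x,u)\,dx\le\frac1\theta\int_\Omega f(x,u)u\,dx=\frac1\theta\|u\|_p^p$. Hence
\[
E^{(p)}(u)\ \ge\ \Big(\frac1p-\frac1\theta\Big)\|u\|_p^p\ \ge\ \Big(\frac1q-\frac1\theta\Big)\|u\|_p^p,
\]
and the coefficient is positive because $q<\underline{p}\le\theta$. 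It therefore suffices to bound $\|u\|_p^p$ from below, uniformly in $p$, on $\mathcal{N}^{(p)}$. From \eqref{Assump:f_0} we have $\|u\|_p^p=\int_\Omega f(x,u)u\,dx\le C\|u\|_{L^r(\Omega)}^r$. So the whole argument reduces to an embedding $\|u\|_{L^r(\Omega)}\le K\|u\|_p$ with $K$ independent of $p\in(1,q)$.

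To get this embedding I go through $W^{s',1}_0(\Omega)$ for a fixed $s'<s$ chosen so that $r<\frac{N}{N-s'}$; this is possible because $r<\frac{N}{N-s}$. Proposition~\ref{prop:embedding}(i), applied with $s'$, gives $\|u\|_{L^r}\le C_{Sob}(N,s',\Omega)\|u\|_{W^{s',1}_0}$. To bound $\|u\|_{W^{s',1}_0}$, split $Q$ into the near region $\{|x-y|<1\}$ and the far region $\{|x-y|\ge 1\}$.

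\textbf{Near region.} Since $s_p=N+s-\frac Np$, we can write
\[
\frac{|u(x)-u(y)|}{|x-y|^{N+s'}}=\frac{|u(x)-u(y)|}{|x-y|^{\frac Np+s_p}}\,|x-y|^{s-s'}.
\]
Apply H\"older with exponents $p,p'$. The second factor is $\big(\iint_{Q\cap\{|x-y|<1\}}|x-y|^{(s-s')p'}\big)^{1/p'}$. Here $|x-y|^{(s-s')p'}\le1$, and the set is contained in $\{x\in\Omega,\ |x-y|<1\}\cup\{y\in\Omega,\ |x-y|<1\}$, which has measure at most $2|\Omega||B_1|$. So this factor is at most $\max\{1,2|\Omega||B_1|\}$, uniformly in $p$.

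\textbf{Far region.} The contribution is at most $2\|u\|_{L^1}\int_{|h|\ge1}|h|^{-N-s'}\,dh=C(N,s')\|u\|_{L^1}$. Next, $\|u\|_{L^1}\le\max\{1,|\Omega|\}\|u\|_{L^p}$. Then use a Poincaré bound obtained from the region $x\in\Omega$, $y\in\mathcal{C}\Omega$:
\[
\|u\|_{L^p}^p\le c_0^{-1}\|u\|_p^p,\qquad c_0:=\inf_{x\in\Omega}\int_{\mathcal{C}\Omega}|x-y|^{-N-s_pp}\,dy .
\]
We have $c_0\ge\int_{D\le|h|\le D+1}|h|^{-N-s_pp}\,dh$ with $D=\operatorname{diam}\Omega$. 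Since $s_pp\in(s,N+s)$ for $p\in(1,q)$, this gives a positive lower bound on $c_0$ independent of $p$. Consequently $\|u\|_{L^p}\le\max\{1,c_0^{-1}\}\|u\|_p$, again uniformly.

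Collecting the two regions, $\|u\|_{L^r}\le K\|u\|_p$ with $K=K(N,s,s',\Omega,q)$. Then $\|u\|_p^p\le CK^r\|u\|_p^r$, and since $u\neq0$, $\|u\|_p^{\,r-p}\ge(CK^r)^{-1}$, that is, $\|u\|_p^p\ge(CK^r)^{-\frac{p}{r-p}}$. The restriction $p<\frac r\gamma$ is used at this point. It gives $r-p>r(1-\frac1\gamma)>0$, so the exponent satisfies $\frac{p}{r-p}\le\frac{q}{r(1-1/\gamma)}$. Therefore $(CK^r)^{-p/(r-p)}\ge\min\{1,(CK^r)^{-q\gamma/(r(\gamma-1))}\}$, with the minimum covering both $CK^r\ge1$ and $CK^r<1$. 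This yields
\[
d_p\ \ge\ C_{q,\gamma}:=\Big(\frac1q-\frac1\theta\Big)\min\Big\{1,(CK^r)^{-\frac{q\gamma}{r(\gamma-1)}}\Big\}>0 .
\]
The step I expect to be the real obstacle is the uniformity of $K$. This means tracking the H\"older factor and the Poincaré constant $c_0$ as $p\to1^+$, and making sure the tail integral over $\mathbb{R}^N$ does not blow up. I handle the tail by separating out the far region, as above, rather than running H\"older on all of $Q$.
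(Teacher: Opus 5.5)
Your proof is correct, and its skeleton is the paper's. The Nehari identity and \eqref{Assump:f_0} give $\|u\|_{W_0^{s_p,p}(\Omega)}^p\le C\|u\|_{L^r(\Omega)}^r$. A $p$-uniform embedding turns this into a lower bound on $\|u\|_{W_0^{s_p,p}(\Omega)}$, and \eqref{Assump:f_1} transfers it to $E^{(p)}$. Finally, $p<r/\gamma$ keeps the exponent $p/(r-p)$ bounded: the paper uses $p/(r-p)<1/(\gamma-1)$, while you use the cruder but equally valid bound $q\gamma/(r(\gamma-1))$.

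The real difference lies in how the uniform embedding is obtained. The paper imports the comparison $\|u\|_{W_0^{s,1}(\Omega)}\le C_1^{(1-p)/p}\|u\|_{W_0^{s_p,p}(\Omega)}$ from \cite[Lemma 2.4]{DLi-2024} and then applies Proposition~\ref{prop:embedding}(i). You instead prove $\|u\|_{L^r(\Omega)}\le K\|u\|_{W_0^{s_p,p}(\Omega)}$ directly. You split $Q$ at $|x-y|=1$ and apply H\"older only on the finite-measure near region. You bound the tail by $\|u\|_{L^1(\Omega)}$ together with a fractional Poincar\'e inequality from $\Omega\times\mathcal{C}\Omega$, whose constant is $p$-independent via $\operatorname{diam}\Omega$.

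Your route is self-contained and makes visible why H\"older cannot be run on all of $Q$. The detour through $s'<s$ is unnecessary: for $s'=s$ the near-region factor is just $\max\{1,2|\Omega||B_1|\}$, and $\int_{|h|\ge 1}|h|^{-N-s}\,dh<\infty$. What the cited lemma buys in exchange is a constant $C_1^{(1-p)/p}\to1$ as $p\to1^+$, rather than one that is merely bounded. That is irrelevant for positivity of $d_p$, but the paper relies on it later, in the proof of $\liminf_{n} d_{p_n}\ge d$ in Lemma~\ref{Dpn converges}.
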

\begin{proof}
Let $u\in\mathcal{N}^{(p)}$. Since $r<1^{*}_{s}<p_{s}^{*}$ for all $p>1$ then, from \eqref{Assump:f_0} and Proposition \ref{prop:embedding}(i), we obtain
 \begin{align}
         \label{lower bound of norm up}
   \frac{1}{p}\|u\|_{W_{0}^{s_{p},p}(\Omega)}^{p}=\frac{1}{p}\iint_{Q} \frac{|u(x)-u(y)|^{p}}{|x-y|^{N+s_{p}p}} \,dx\,dy &= \int_{\Omega}f(x,u)u\,dx \leq C\|u\|_{L^{r}(\Omega)}^{r}\leq S_{1}\|u\|_{W_{0}^{s,1}(\Omega)}^{r},
       \end{align}
        where
         $S_1:= C(C_{Sob})^{r}$ and $C_{Sob}$ is embedding constant, and $C$  is given in \eqref{Assump:f_0}. From \cite[Lemma 2.4]{DLi-2024}, we have 
         \begin{equation*}
           \|u\|_{W_{0}^{s,1}(\Omega)}^{r}\leq C_{1}^{\frac{r(1-p)}{p}}\|u\|_{W_{0}^{s_{p},p}(\Omega)}^{r}\leq \max\{1, C_{1}^{\frac{r(1-q)}{q}}\}\|u\|_{W_{0}^{s_{p},p}(\Omega)}^{r}. 
         \end{equation*}
         Combining this with \eqref{lower bound of norm up}, we have 
         \begin{equation}
             \label{lower bound independent of p}
             \|u\|_{W_{0}^{s_{p},p}(\Omega)} \geq \left(C_{q}\right)^{\frac{-1}{r-p}}, \quad \text{where} \quad C_{q}=qS_{1}\max\{1, C_{1}^{\frac{r(1-q)}{q}}\}. 
         \end{equation}
Next, by using \eqref{Assump:f_1} and \eqref{lower bound independent of p} we infer that
       \begin{align*}
        E^{(p)}(u) & \geq \frac{1}{p}{\iint_{Q}}\frac{|u(x)-u(y)|^{p}}{|x-y|^{N+s_{p}p}}\,dx\,dy-\int_{\Omega}\frac{f(x,u)u}{\theta}\,dx\\
        &= \frac{1}{p}\iint_{Q} \frac{|u(x)-u(y)|^{p}}{|x-y|^{N+s_{p}p}}\,dx\,dy-\frac{1}{\theta}{\iint_{Q}}\frac{|u(x)-u(y)|^{p}}{|x-y|^{N+s_{p}p}}\,dx\,dy\\
        &=\left(\frac{1}{p}-\frac{1}{\theta}\right)\|u\|_{W_{0}^{s_{p},p}(\Omega)}^{p}
        \geq \left(\frac{1}{p}-\frac{1}{\theta}\right)\left(\frac{1}{C_{q}}\right)^{\frac{p}{r-p}}\geq \left(\frac{1}{q}-\frac{1}{\theta}\right)\min\left\{1, \left(\frac{1}{C_{q}}\right)^{\frac{1}{\gamma-1}}\right\} .
\end{align*}
Finally, by taking infimum over $\mathcal{N}$, we obtain
$$d_{p}=\inf_{u\in\mathcal{N}}E^{(p)}(u)\geq \left(\frac{1}{q}-\frac{1}{\theta}\right)\min\left\{1, \left(\frac{1}{C_{q}}\right)^{\frac{1}{\gamma-1}}\right\}=:C_{q,\gamma}.$$
Thus, the desired result follows.
\end{proof}
\begin{definition}
    \label{weaksolution 2}
    A function \( u^{(p)} \in L^{\infty}(0,T;W_{0}^{s_p,p}(\Omega)\cap L^{2}(\Omega)) \) with \( u^{(p)}_{t} \in L^{2}(0,T;L^{2}(\Omega)) \) is said to be a weak solution of the problem \eqref{main:problem2} if 
\begin{enumerate}
    \item[\textnormal{(i)}] \( u^{(p)}(\cdot,0) = u^{(p)}_{0} \) a.e in $\Omega$.
    \item[\textnormal{(ii)}] for all
$\phi\in W_{0}^{s_p,p}(\Omega)\cap L^{2}(\Omega)$ and a.e. $t\in [0,T]$ the following equality holds:
\begin{equation}
\label{definition of weak sol up}
\begin{aligned}
   \int_{\Omega}u^{(p)}_{t}(x,t)\phi(x)\,dx &+\iint_{Q}\frac{|u^{(p)}(x,t)-u^{(p)}(y,t)|^{p-2}(u^{(p)}(x,t)-u^{(p)}(y,t))}{|x-y|^{N+s_pp}}(\phi(x)-\phi(y))\,dx\,dy\\
&=\int_{\Omega}f(x,u^{(p)})\phi(x)\,dx.
\end{aligned}
\end{equation}
\item[\textnormal{(iii)}]  $u^{(p)} \in C(0, T; L^k(\Omega))$ for all $k\in [1,p^{*}_{s_p})$ and the following energy  relation is satisfied:
\begin{equation}
\label{sol: up}
\begin{aligned}
    \int_{0}^{t} \|u^{(p)}_t(\cdot, \tau)\|^{2}_{L^{2}(\Omega)} \, d\tau + E^{(p)}(u^{(p)}(\cdot,t))
    &\leq E^{(p)}(u^{(p)}_{0}) \quad \text{a.e. } t \in [0,T).
\end{aligned}
\end{equation}

\end{enumerate}    
\end{definition}
\begin{lemma}
    \label{Approximate problem main thm}
     Assume that conditions \eqref{Assump:f_0}-\eqref{Assump:f_2} hold. Let $p\in (1, q)$ and the initial data $u_{0}^{(p)}$ satisfies $u_{0}^{(p)} \in W_{0}^{s_p,p}(\Omega)\cap L^{2}(\Omega)$ such that $E^{(p)}(u_{0}^{(p)})<d_p$ and $I^{(p)}(u_{0}^{(p)})>0$. Then the problem \eqref{main:problem2} admits a global weak solution
     \[
u^{(p)} \in L^\infty(0, \infty; W_{0}^{s_p,p}(\Omega)\cap L^{2}(\Omega)), \quad u^{(p)}_t \in L^2(0, \infty; L^2(\Omega)).
\] 
Moreover 
\begin{equation*}
 \int_{0}^{t}||u^{(p)}_t(\cdot,\tau) ||^{2}_{L^{2}(\Omega)} ~d\tau <d_{p},\quad
 \text{and}\quad \|u^{(p)}(\cdot,t)\|_{s_p,p,2}\leq \left(\frac{d_{p}p\theta}{\theta-p}\right)^{\frac{1}{p}}+\|u^{(p)}_0\|_{L^{2}(\Omega)}+o(1) \quad\forall\, t\in [0,\infty),
\end{equation*}
where $d_p$ is defined in \eqref{depth-p} and $\theta$ is given in \eqref{Assump:f_1}.
\end{lemma}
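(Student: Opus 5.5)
We follow the standard route: a Faedo--Galerkin scheme, combined with the invariance of the potential well $W^{(p)}$ to obtain uniform-in-time estimates. Choose a basis $\{w_j\}_{j\ge1}$ of $W_0^{s_p,p}(\Omega)\cap L^2(\Omega)$ that is orthonormal in $L^2(\Omega)$. An example is eigenfunctions of a higher-order fractional Laplacian whose domain embeds into $W_0^{s_p,p}(\Omega)$. We look for
\[
u_m(x,t)=\sum_{j=1}^m g_{jm}(t)w_j(x),
\]
solving the projected version of \eqref{definition of weak sol up} with $\phi=w_j$, $j=1,\dots,m$. The initial datum is $u_m(\cdot,0)=u_{0m}\to u_0^{(p)}$ strongly in $W_0^{s_p,p}(\Omega)\cap L^2(\Omega)$. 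The right-hand side of the resulting ODE system is continuous in $g$, because $f$ is Carath\'eodory with the growth \eqref{Assump:f_0} and the $p$-Laplacian term is continuous on finite-dimensional spaces. Peano's theorem therefore gives a local solution on some $[0,t_m)$.

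Next we derive the energy identity at the Galerkin level. We multiply the $j$-th equation by $g_{jm}'(t)$ and sum. This yields
\[
\int_0^t\|u_{mt}\|_{L^2(\Omega)}^2\,d\tau+E^{(p)}(u_m(t))=E^{(p)}(u_{0m}).
\]
Strong convergence $u_{0m}\to u_0^{(p)}$ and continuity of $E^{(p)}$ and $I^{(p)}$ give $E^{(p)}(u_{0m})<d_p$ and $I^{(p)}(u_{0m})>0$ for $m$ large.

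The key step is invariance: we claim $u_m(t)\in W^{(p)}$ for all $t\in[0,t_m)$. Suppose not, and let $t_0$ be the first time with $I^{(p)}(u_m(t_0))=0$ and $u_m(t_0)\neq0$. Then $u_m(t_0)\in\mathcal N^{(p)}$, so $E^{(p)}(u_m(t_0))\ge d_p$, which contradicts the energy identity because $E^{(p)}(u_m(t_0))\le E^{(p)}(u_{0m})<d_p$. The case $E^{(p)}=d_p$ is excluded in the same way. On $W^{(p)}$, the relation $I^{(p)}>0$ together with \eqref{Assump:f_1} gives
\[
E^{(p)}(u)\ge\Big(\frac1p-\frac1\theta\Big)\|u\|_{W_0^{s_p,p}(\Omega)}^p+\frac1\theta I^{(p)}(u).
\]
Here we use $p<\underline p\le\theta$, which is where Remark~\ref{rem:condi} and the restriction $p<q$ enter. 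This bound gives
\[
\|u_m(t)\|^p_{W_0^{s_p,p}(\Omega)}<\frac{d_p p\theta}{\theta-p}
\qquad\text{and}\qquad
\int_0^t\|u_{mt}\|_{L^2(\Omega)}^2\,d\tau<d_p,
\]
where for the second bound we also use $E^{(p)}(u_m(t))\ge0$.

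For the $L^2$ part, we write $\|u_m(t)\|_{L^2}\le\|u_{0m}\|_{L^2}+\int_0^t\|u_{mt}\|_{L^2}\,d\tau$. A bound uniform in time needs more care. One option is to test the equation with $u_m$, which gives $\tfrac12\frac{d}{dt}\|u_m\|_{L^2}^2=-I^{(p)}(u_m)<0$, so $\|u_m(t)\|_{L^2}\le\|u_{0m}\|_{L^2}=\|u_0^{(p)}\|_{L^2}+o(1)$. This matches the $o(1)$ in the statement. These bounds are independent of $t$, so $t_m=\infty$.

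Passing to the limit is the main obstacle. Up to a subsequence we have $u_m\rightharpoonup^* u$ in $L^\infty(0,\infty;W_0^{s_p,p}\cap L^2)$ and $u_{mt}\rightharpoonup u_t$ in $L^2(0,\infty;L^2)$. Aubin--Lions, using the compact embedding $W_0^{s_p,p}(\Omega)\hookrightarrow L^k(\Omega)$ for $k<p^*_{s_p}$, gives $u_m\to u$ in $C([0,T];L^k(\Omega))$. This handles $f(x,u_m)\to f(x,u)$, since $r<p^*_{s_p}$. The nonlinear term is handled as follows. The quantity $|U_m|^{p-2}U_m/|x-y|^{(N+s_pp)/p'}$, with $U_m(x,y,t)=u_m(x,t)-u_m(y,t)$, is bounded in $L^{p'}(Q\times(0,T))$ and so converges weakly to some $\chi$. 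To identify $\chi$ we use the Minty trick. Test the limiting equation with $u$, compare with $\limsup$ of the Galerkin identity tested with $u_m$, and use the energy identity together with weak lower semicontinuity. This gives
\[
\limsup_{m\to\infty}\int_0^T\langle(-\Delta)^{s_p}_pu_m,u_m\rangle\,dt\le\int_0^T\langle\chi,u\rangle\,dt,
\]
and monotonicity then yields $\chi=(-\Delta)^{s_p}_pu$. Finally, the energy inequality \eqref{sol: up} follows from weak lower semicontinuity of $\int_0^t\|u_t\|^2$ and of the $W^{s_p,p}$-seminorm, together with strong convergence of $\int_\Omega F(x,u_m)$.
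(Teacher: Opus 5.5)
Your proposal is correct and follows essentially the same route as the paper: a Galerkin scheme solved by Peano's theorem, invariance of $W^{(p)}$ via the Galerkin energy identity and a first-exit-time argument, the uniform bounds $\|u_m(t)\|^p_{W_0^{s_p,p}(\Omega)}<\frac{d_pp\theta}{\theta-p}$ and $\int_0^t\|u_{mt}\|^2_{L^2(\Omega)}\,d\tau<d_p$, and the $L^2$ decay from $\frac12\frac{d}{dt}\|u_m\|^2_{L^2(\Omega)}=-I^{(p)}(u_m)<0$, followed by Aubin--Lions and a monotonicity (Minty) argument for the fractional $p$-Laplacian term. The only difference is that you spell out the Minty identification step, whereas the paper delegates it to its earlier work.
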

 \begin{proof}
We have the Gelfand triple
 $$W_{0}^{s_p,p}(\Omega)\cap L^{2}(\Omega)\hookrightarrow^{c,d}L^{2}(\Omega)\hookrightarrow^{c,d}\left(W_{0}^{s_p,p}(\Omega)\cap L^{2}(\Omega)\right)^{*}.$$
 Here, $\hookrightarrow^{c,d}$ denotes a continuous and dense embedding. Let $\{V_{n}\}_{n\in\mathbb{{N}}}$ be a Galerkin scheme of the separable Banach space $W_{0}^{s_p,p}(\Omega)\cap L^{2}(\Omega)$, {\it i.e.}
 $$V_{n}=span\{e_{1},e_{2},\cdots,e_{n}\},\quad\overline{\bigcup_{n\in \mathbb{N}}V_{n}}^{\|\cdot\|_{s_p,p,2}}=W_{0}^{s_p,p}(\Omega)\cap L^{2}(\Omega),$$
 where $\{e_{j}\}_{j=1}^{\infty}$ is an orthonormal basis of $L^{2}(\Omega).$ Now, consider the Galerkin approximation
\begin{equation*}
u^{[n]}(x, t) = \sum_{j=1}^n c_j^{[n]}(t) e_j(x),
\end{equation*}
where the functions $c_j^{[n]}(t): [0, T] \to \mathbb{R}$ satisfy the following system of ordinary differential equations for $j = 1, 2, \dots, n$:
\begin{equation}
\label{Galerkin:approximate}
\begin{aligned}
    \left(\frac{du^{[n]}}{dt}, e_j\right)_{L^2(\Omega)} + \left(u^{[n]}, e_j\right)_{W_{0}^{s_p,p}(\Omega)} = \int_{\Omega}f(x, u^{[n]}) e_j\,dx, \quad u^{[n]}(x, 0) = \sum_{i=1}^n \left(u_{0}^{(p)}, e_i\right)_{L^2(\Omega)} e_i(x),
\end{aligned}
\end{equation}
with
\begin{equation*}
\left(u^{[n]}, e_j\right)_{W_{0}^{s_p,p}(\Omega)}= \iint_Q \frac{|u^{[n]}(x,t) - u^{[n]}(y,t)|^{p-2}(u^{[n]}(x,t) - u^{[n]}(y,t))(e_j(x) - e_j(y))}{|x-y|^{N+s_pp}}\, dx\,dy,
\end{equation*}
and the initial condition
\begin{equation*}
 u^{[n]}(x, 0) \to u^{(p)}_0 \quad \text{in } \,W_{0}^{s_p,p}(\Omega)\cap L^{2}(\Omega) \quad \text{as } n \to \infty.
\end{equation*}
It follows from \eqref{Galerkin:approximate} that
\begin{equation}\label{aneq}
\begin{split}
   \frac{dc^{[n]}_{j}}{dt}(t) & = -\iint_Q \frac{|u^{[n]}(x,t) - u^{[n]}(y,t)|^{p-2}(u^{[n]}(x,t) - u^{[n]}(y,t))(e_j(x) - e_j(y))}{|x-y|^{N+s_pp}}\,dx\,dy  \\
   & \qquad + \int_{\Omega}f(x, u^{[n]}) e_j\,dx.
   \end{split}
\end{equation}
Define
\begin{align*}
F_j^{[n]}(c^{[n]}(t)) &= -\iint_{Q} \frac{\left|\sum_{i=1}^n c_i^{[n]}(t)e_i(x) - \sum_{i=1}^n c_i^{[n]}(t)e_i(y)\right|^{p-2}\left(\sum_{i=1}^n c_i^{[n]}(t)e_i(x) - \sum_{i=1}^n c_i^{[n]}(t)e_i(y)\right)}{|x-y|^{N+s_pp}} \\
&\qquad\qquad \times(e_j(x) - e_j(y))\,dx\,dy + \int_{\Omega} f\big(x, \sum_{i=1}^n c_i^{[n]}(t)e_i(x)\big) e_j(x) \,dx,
\end{align*}
and
\[
c^{[n]}(\cdot) = \left(c_i^{[n]}(\cdot)\right)_{i=1}^n, \quad F^{[n]}(\cdot) = \left(F_i^{[n]}(\cdot)\right)_{i=1}^n, \quad c^{[n]}_0 = \left((u^{(p)}_0, e_i)_{L^2(\Omega)}\right)_{i=1}^n.
\]
In view of \eqref{aneq}, equation \eqref{Galerkin:approximate} can be rewritten as
\begin{equation}
\label{Galerkin:system}
    \frac{dc^{[n]}(t)}{dt} = F^{[n]}(c^{[n]}(t)), \qquad \text{and} \qquad c^{[n]}(0) = c^{[n]}_0.
\end{equation}
\textbf{Claim $1$}:  Equation \eqref{Galerkin:system} admits a solution in $[0, T_{\max})$. \\
Multiplying the first equality of \eqref{Galerkin:system} by $c^{[n]}(t)$ and using the condition \eqref{Assump:f_0}, we obtain
\begin{align*}
 \frac{1}{2}\frac{d|c^{[n]}(t)|^{2}}{dt}&\leq CC(n)\left(|c^{[n]}(t)|^{r}\right) \quad \text{where} \quad C(n,r)= \sum_{i=1}^{n}\|e_{i}(x)\|_{L^{r}(\Omega)}^{r}.
 \end{align*}
The remainder of the argument solving the resulting ordinary differential inequality 
and applying Peano's theorem iteratively to extend the solution to the maximal interval 
$[0, T_{\max})$ follows by the same line of reasoning as in the proof of 
\cite[Claim 1, Theorem 4.2]{Arora-2025}, with condition \eqref{Assump:f_0} playing the 
role of Corollary 2.17 and Lemma 2.10 therein.\\
\textbf{Claim 2}: \(u^{[n]}(\cdot,t) \in W^{(p)}\) for all $t \in [0, T_{\max})$.\\
For this purpose, we multiply \eqref{Galerkin:approximate} by \(\frac{dc_j^{[n]}}{dt}\), sum over \(j\) from \(1\) to \(n\), and integrate from \(0\) to \(t\).
As a result, we obtain:
\begin{equation*}
\int_{0}^{t}||u^{[n]}_t(\cdot,\tau)||^{2}_{L^{2}(\Omega)} ~d\tau +E^{(p)}(u^{[n]}(\cdot, t))=E^{(p)}(u^{[n]}(\cdot, 0)).
\end{equation*}
Hence, from the fact that $u^{[n]}(\cdot,0)\rightarrow u^{(p)}_{0} \text{ in }W_{0}^{s_p,p}(\Omega)\cap L^{2}(\Omega)$ and the continuity of $E^{(p)}$ and $I^{(p)}$ ensures that 
\[
E^{(p)}(u^{[n]}(\cdot,0))\rightarrow E^{(p)}(u^{(p)}_{0})<d_p \quad \text{and} \quad I^{(p)}(u^{[n]}(\cdot,0))\rightarrow I^{(p)}(u^{(p)}_{0})>0\ \text{as} \ n \to \infty. 
\]
Therefore, for sufficiently large \( n \), we get 
\begin{equation}
\label{for large n}
    \int_{0}^{t} \|u^{[n]}_t(\cdot,\tau)\|^{2}_{L^{2}(\Omega)} \,d\tau + E^{(p)}(u^{[n]}(\cdot, t)) = E^{(p)}(u^{[n]}(\cdot, 0)) < d_{p}, \quad \text{and} \quad I^{(p)}(u^{[n]}(\cdot,0)) > 0,
\end{equation}  
which implies that \( u^{[n]}(\cdot,0) \in W^{(p)} \).  

Now, suppose that \( u^{[n]}(\cdot,t) \notin W^{(p)} \) for sufficiently large \( n \). Then, there exists \( t_0 > 0 \) such that \( u^{[n]}(\cdot,t_0) \notin W^{(p)} \). If \( t_0 \) is not unique, we may assume, without loss of generality, that \( t_0 \) is the first time for which \( u^{[n]}(\cdot, t_{0}) \notin W^{(p)} \). This implies that \( u^{[n]}(\cdot, t) \not\equiv 0 \) for all \( t \in (0, t_0] \), and  
\[
E^{(p)}(u^{[n]}(\cdot,t_0)) = d_{p}, \quad \text{or} \quad E^{(p)}(u^{[n]}(\cdot,t_0)) > d_{p}, \quad \text{or} \quad I^{(p)}(u^{[n]}(\cdot,t_0)) = 0, \quad \text{or} \quad I^{(p)}(u^{[n]}(\cdot,t_0)) < 0.
\]
Clearly, from \eqref{for large n}, we have \(E^{(p)}(u^{[n]}(\cdot,t_0)) < d_{p}\).
Now, suppose \(I^{(p)}(u^{[n]}(\cdot,t_0)) = 0\). This implies that \(u^{[n]}(\cdot,t_0) \in \mathcal{N}^{(p)}\), {\it i.e.}
$$
d_{p} \leq E^{(p)}(u^{[n]}(\cdot,t_0)).
$$
This contradicts \eqref{for large n}. Now, if \( I^{(p)}(u^{[n]}(\cdot,t_0))< 0 \), then, since \( I^{(p)} \) is a continuous functional and \( I^{(p)}(u^{[n]}(\cdot,0))>0 \), there exists \( t_{1}\in (0,t_{0}) \) such that  
\[
I^{(p)}(u^{[n]}(\cdot,t_{1}))=0 \quad \text{and} \quad u^{[n]}(\cdot, t_1) \not\equiv 0.
\]  
This further implies that \( u^{[n]}(\cdot,t_1) \in \mathcal{N}^{(p)} \), \textit{i.e.}  
\[
d_{p} \leq E^{(p)}(u^{[n]}(\cdot,t_1)),
\]  
which contradicts \eqref{for large n} once again, thereby proving \textbf{Claim 2}.
\\
\textbf{Claim 3}: Uniform boundedness of $u^{[n]}$ in $W_{0}^{s_p,p}(\Omega))\cap L^{2}(\Omega)$ and $f(x,u^{[n]})$ in $L^{\frac{r}{r-1}}(\Omega)$ for all $t \in [0, T_{\max})$. \\ From \textbf{Claim 2} and \eqref{Assump:f_1}, we obtain
\begin{align*}
d_{p}>E^{(p)}(u^{[n]})&=\frac{1}{p}\iint_{Q}\frac{|u^{[n]}(x,t)-u^{[n]}(y,t)|^{p}}{|x-y|^{N+s_pp}}\,dx\,dy-\int_{\Omega}F(x,u^{[n]})\,dx\\
&\geq \frac{1}{p}\iint_{Q}\frac{|u^{[n]}(x,t)-u^{[n]}(y,t)|^{p}}{|x-y|^{N+s_pp}}\,dx\,dy - \frac{1}{\theta} \int_{\Omega}  f(x,u^{[n]}) u^{[n]} \,dx\\
&\geq \frac{1}{p}\iint_{Q}\frac{|u^{[n]}(x,t)-u^{[n]}(y,t)|^{p}}{|x-y|^{N+s_pp}}\,dx\,dy-\frac{1}{\theta}\iint_{Q}\frac{|u^{[n]}(x,t)-u^{[n]}(y,t)|^{p}}{|x-y|^{N+s_pp}}\,dx\,dy+\frac{1}{\theta}I(u^{[n]})\\
&\geq\left(\frac{1}{p}-\frac{1}{\theta}\right)\iint_{Q}\frac{|u^{[n]}(x,t)-u^{[n]}(y,t)|^{p}}{|x-y|^{N+s_pp}}\,dx\,dy \geq \left(\frac{1}{p}-\frac{1}{\theta}\right)\|u^{[n]}(\cdot,t)\|^{p}_{W_{0}^{s_p,p}(\Omega)}
\end{align*}
Thus, due to \eqref{for large n}, we conclude that
$$\int_{0}^{t}||u^{[n]}_t(\cdot,\tau) ||^{2}_{L^{2}(\Omega)}\,d \tau+\left(\frac{1}{p}-\frac{1}{\theta}\right)\|u^{[n]}(\cdot,t)\|^{p}_{W_{0}^{s_p,p}(\Omega)}\leq E^{(p)}(u^{[n]}(0))<d_{p},$$
which shows that
\begin{equation}
    \label{u_t uniform bound}
    \int_{0}^{t}||u^{[n]}_t(\cdot,\tau) ||^{2}_{L^{2}(\Omega)} ~d\tau <d_{p}\quad\forall\, t\in [0,T_{\max})
\end{equation}
and
\begin{equation}
    \label{u uniform bound}
    \|u^{[n]}(\cdot,t)\|_{W_{0}^{s_p,p}(\Omega)}\leq \left(\frac{p\theta d_{p}}{\theta-p}\right)^{\frac{1}{p}}\quad\forall\, t\in [0,T_{\max}).
\end{equation}
Next, we multiply \eqref{Galerkin:approximate} by \(c_j^{[n]}(t)\), sum over \(j\) from \(1\) to \(n\) and using \eqref{for large n}, we obtain
$$\frac{1}{2}\frac{d}{dt}\|u^{[n]}(\cdot,t)\|_{L^{2}(\Omega)}^{2}=-I^{(p)}(u^{[n]}(\cdot,t))<0,\quad\forall\, t\in [0,T_{\max}).$$
This implies that 
\begin{align}
    \label{u L2 uniform bound}
    \|u^{[n]}(\cdot,t)\|_{L^{2}(\Omega)}&\leq \|u^{[n]}(\cdot,0)\|_{L^{2}(\Omega)}= \|u^{(p)}_0\|_{L^{2}(\Omega)}+o(1)\quad\forall\, t\in [0,T_{\max}).
\end{align}
Collecting the estimates in \eqref{u uniform bound} and \eqref{u L2 uniform bound}, we infer that
\begin{equation}
    \label{sp,p,2 uniform bound}
    \|u^{[n]}(\cdot,t)\|_{s_p,p,2}\leq \left(\frac{d_{p}p\theta}{\theta-p}\right)^{\frac{1}{p}}+\|u^{(p)}_0\|_{L^{2}(\Omega)}+o(1) \quad\forall\, t\in [0,T_{\max}).
\end{equation}
Hence, the above estimates yield $T_{\max}= +\infty$. Furthermore, using condition \eqref{Assump:f_0} and embedding of $W_{0}^{s_p,p}(\Omega)\hookrightarrow L^{q}(\Omega)$ for all $q\in [1,p^{*}_{s_p}]$, we obtain
\begin{align}
\label{f(x,un) uniform bound}
    \int_{\Omega}|f(x,u^{[n]}(\cdot,t))|^{\frac{r}{r-1}}\,dx&\leq C^{\frac{r}{r-1}}\int_{\Omega}|u^{[n]}(\cdot,t)|^{r}\,dx \leq C_{*} \|u^{[n]}(\cdot,t)\|^{r}_{{W_{0}^{s_p,p}(\Omega)}} \leq C_{*} \left(\frac{d_{p}p\theta}{\theta-p}\right)^{\frac{r}{p}},
\end{align}
 where  $C_{*}:= C^{\frac{r}{r-1}} (C^{*})^{r}$ and $C^\ast$ is the embedding constant. This further gives the existence of $u^{(p)}$ and a subsequence $\{u^{[n]}\}_{n\in \mathbb{N}}$ (still denoted by $\{u^{[n]}\}_{n\in \mathbb{N}}$) such that, as $n \to \infty$, we have
 \begin{equation}
 \label{weak convergence}
 \left.
 \begin{aligned}
     u^{[n]}&\stackrel{\ast}{\rightharpoonup} u^{(p)}&&\text{ in }L^{\infty}(0,+\infty;W_{0}^{s_p,p}(\Omega)\cap L^{2}(\Omega)),\\
 u_{t}^{[n]}&\weak u^{(p)}_{t}&&\text{ in } L^{2}(0,+\infty;L^{2}(\Omega)),\\
 f(x,u^{[n]})&\stackrel{\ast}{\rightharpoonup}\xi&&\text{ in }L^{\infty}(0,+\infty;L^{\frac{r}{r-1}}\Omega)).
 \end{aligned}
 \right\}
 \end{equation}
Then, from Aubin-Lions compactness theorem\cite[Corollary 4, p. 85]{simon} for any $T>0$, we get
 \begin{equation}
 \label{stong:conv}
 \begin{aligned}
 u^{[n]}&\to u^{(p)} \text{ in } C([0,T];L^{k}(\Omega)),\text{  as } n \to \infty, \text{ for all } k\in[1,p^{*}_{s})
 \end{aligned}
  \end{equation}
and so, $\xi=f(x,u^{(p)}).$ By using the weak lower semicontinuity of the norm and uniform bounds in \eqref{u_t uniform bound}, \eqref{sp,p,2 uniform bound}, we obtain
  \begin{equation*}
      \int_{0}^{t}||u^{(p)}_t(\cdot,\tau) ||^{2}_{L^{2}(\Omega)} ~d\tau \leq \liminf_{n\to\infty} \int_{0}^{t}||u^{[n]}_t(\cdot,\tau) ||^{2}_{L^{2}(\Omega)} ~d\tau<d_{p}\quad\text{for all } t\in [0,\infty). 
  \end{equation*} 
  and
  \begin{equation*}
  \label{uniform bound sp,1,2 of up}
     \|u^{(p)}(\cdot,t)\|_{s_p,p,2} \leq \liminf_{n\to\infty}\|u^{[n]}(\cdot,t)\|_{s_p,p,2} \leq \left(\frac{d_{p}p\theta}{\theta-p}\right)^{\frac{1}{p}}+ \|u^{(p)}_0\|_{L^{2}(\Omega)}+o(1)\quad \text{for all } t\in [0,\infty). 
  \end{equation*}
\textbf{Claim 4}: The function $u$ is a weak solution to problem (\ref{main:problem2}). \\
The proof follows by the same arguments as in \cite[Theorem~4.2, Claim~4]{Arora-2025}. Specifically, choosing test functions of the form 
 $$v=\sum_{j=1}^{k}l_{j}(t)e_{j},$$
where $l_{j}(t)\in C^{1}(0,+\infty)$ with $j=1,2,3,\dots,k $ ($k\leq n $). Passing to the limit as $n \to \infty$ via the theory of monotone and hemicontinuous 
operators as in \cite{Arora-2025}, and using the convergences \eqref{weak convergence}-\eqref{stong:conv} together with the lower semicontinuity of the norm, we conclude that $u^{(p)}$ satisfies the weak formulation \eqref{definition of weak sol up} for all $\phi \in W_0^{s_p,p}(\Omega) \cap L^2(\Omega)$ and a.e.\ $t \in (0,+\infty)$, along with the energy inequality \eqref{sol: up}.
\end{proof}
\subsection{Convergence estimates} 
\begin{lemma}
\label{Nonlinearity convergence}
    Assume that conditions \eqref{Assump:f_0}-\eqref{Assump:f_2} hold true. Let $u^{[n]}_{0} \to u_{0}$ in $L^\alpha(\Omega)$ for some $\alpha >r$ where $r$ is given in \eqref{Assump:f_0}. Then,  
    \[
  \lim_{n\to\infty}\int_\Omega F(x,u_0^{[n]})\,dx=\int_\Omega F(x,u_0)\,dx
  \quad \text{and}\quad
  \lim_{n\to\infty}\int_\Omega f(x,u_0^{[n]})u_0^{[n]}\,dx=\int_\Omega f(x,u_0)u_0\,dx.
\]
\end{lemma}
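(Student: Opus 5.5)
The plan is to obtain both limits from the generalized (Vitali/Pratt) dominated convergence theorem, with the growth bound \eqref{Assump:f_0} supplying the domination. Since the argument needs pointwise convergence, I first reduce to subsequences. It suffices to show that every subsequence of $\{u_0^{[n]}\}$ has a further subsequence along which both integrals converge to the stated limits; the full sequence then converges. So fix a subsequence. Because $u_0^{[n]}\to u_0$ in $L^\alpha(\Omega)$, I may pass to a further subsequence such that $u_0^{[n]}\to u_0$ a.e. in $\Omega$, and such that there is $h\in L^\alpha(\Omega)$ with $|u_0^{[n]}|\le h$ a.e. for all $n$; this is the partial converse of dominated convergence (Brezis, Theorem 4.9).

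\textbf{Pointwise convergence.} Since $f$ is Carath\'eodory, $t\mapsto f(x,t)$ is continuous for a.e. $x$. Hence $F(x,\cdot)$, being the integral of a continuous function, is also continuous. Consequently $F(x,u_0^{[n]}(x))\to F(x,u_0(x))$ and $f(x,u_0^{[n]}(x))u_0^{[n]}(x)\to f(x,u_0(x))u_0(x)$ for a.e. $x\in\Omega$.

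\textbf{Domination.} By \eqref{Assump:f_0}, $|f(x,t)t|\le C|t|^r$ and $|F(x,t)|\le \frac{C}{r}|t|^r$. It follows that both integrands are bounded a.e. by $C\,h^r$. Since $\Omega$ is bounded and $\alpha>r$, we have $h^r\in L^{\alpha/r}(\Omega)\subset L^1(\Omega)$. The Lebesgue dominated convergence theorem therefore yields both limits along the sub-subsequence, and the subsequence principle gives them for the whole sequence.

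\textbf{Alternative and main obstacle.} One could avoid extracting a dominating function and argue directly with Vitali's theorem. The family $\{|u_0^{[n]}|^r\}$ is uniformly integrable, because it is bounded in $L^{\alpha/r}$ with $\alpha/r>1$; this follows from H\"older: $\int_A|u_0^{[n]}|^r\le \|u_0^{[n]}\|_{L^\alpha}^r|A|^{1-r/\alpha}$. Vitali's theorem still requires a.e. convergence, however, which again is available only along subsequences, so the subsequence step is the only delicate point. The hypotheses \eqref{Assump:f_1}--\eqref{Assump:f_2} are not needed here beyond ensuring continuity of $F$.
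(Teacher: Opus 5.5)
Your proof is correct and follows essentially the same strategy as the paper: a.e. convergence of the integrands combined with the growth control from \eqref{Assump:f_0} and a generalized convergence theorem. The paper applies Vitali's theorem, getting uniform integrability from the uniform $L^{q}$ bounds. You instead use dominated convergence with the dominating function from the Brezis partial converse, and you note that Vitali would also work.

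Your explicit subsequence-of-subsequence step is more careful than the paper. The paper asserts a.e. convergence of the whole sequence directly from $L^{\alpha}$ convergence, which in general only holds along a subsequence.
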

\begin{proof}
Since $u^{[n]}_{0}\to u_{0}$ in $L^{q}(\Omega)$ for every $q\in[1,\alpha]$ and $u_{0}^{[n]}(x)\to u_{0}(x)$ a.e. in $\Omega$. Consequently, for each $q\in[1,\alpha]$, there exists a constant $M_{q}>0$ independent of $n$, such that $\|u_{0}^{[n]}\|_{L^{q}(\Omega)}\leq M_{q}$ for all $n\in\mathbb{N}$. Now, by using the pointwise convergence of the sequence $u_{0}^{[n]}$ and \eqref{Assump:f_2}, we have
   \begin{equation}
       \label{poinwise convergence of nonlinearity}
      f(x,u_{0}^{[n]}(x))u_{0}^{[n]}(x)\to f(x,u_{0}(x))u_{0}(x)\quad\text{a.e. in }\Omega.
   \end{equation}
Moreover, from \eqref{Assump:f_0}, we obtain
the sequence $\{f(x,u_{0}^{[n]})u_{0}^{[n]}\}_{n \in \mathbb{N}}$ is uniformly integrable. Analogously, from \eqref{Assump:f_1} we obtain that the sequence $\{F(x,u_{0}^{[n]})\}_{n \in \mathbb{N}}$ is uniformly integrable and pointwise convergence to $F(x,u_{0})$ a.e. in $\Omega$. Now, by applying the Vitali's converegence theorem, we obtain the required claim. 
\end{proof}
  \begin{lemma}
        \label{strong convergence u_0}
      Assume that conditions \eqref{Assump:f_0}-\eqref{Assump:f_2} hold and $q \in (1, \underline{p})$. If $u^{[n]}_{0}\to u_{0}$ in $W_{0}^{s_{q},q}(\Omega)$ as $n\to \infty$, then
      \begin{equation}
    \label{EQ:BBM formula}
    \frac{1}{p_{n}}\iint_{Q}\frac{|u_{0}^{[n]}(x)-u_{0}^{[n]}(y)|^{p_{n}}}{|x-y|^{N+s_{p_n}p_{n}}}\,dx\,dy \to \iint_{Q}\frac{|u_{0}(x)-u_{0}(y)|}{|x-y|^{N+s}}\,dx\,dy \quad \text{ as } n\to\infty.
\end{equation}
Moreover, the following convergence hold:
        $$E^{(p_n)}(u_{0}^{[n]})\to E(u_{0}) \quad\text{and}\quad I^{(p_n)}(u_{0}^{[n]})\to I(u_{0}) \quad \text{as } \,p_n\to 1^{+}.$$
    \end{lemma}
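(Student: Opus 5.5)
The key observation is that the exponent $s_p$ in \eqref{choice:sp} is chosen so that $N+s_pp=(N+s)p$. Hence, writing
\[
g_n(x,y):=\frac{|u_0^{[n]}(x)-u_0^{[n]}(y)|}{|x-y|^{N+s}},\qquad g(x,y):=\frac{|u_0(x)-u_0(y)|}{|x-y|^{N+s}},
\]
the quantity on the left of \eqref{EQ:BBM formula} is exactly $\frac1{p_n}\iint_Q g_n^{p_n}\,dx\,dy$, and the right-hand side is $\iint_Q g\,dx\,dy$. Likewise, $\|v\|_{W_0^{s_q,q}(\Omega)}^q=\iint_Q\big(|v(x)-v(y)|/|x-y|^{N+s}\big)^q\,dx\,dy$. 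So the hypothesis $u_0^{[n]}\to u_0$ in $W_0^{s_q,q}(\Omega)$ says precisely that $g_n\to g$ in $L^q(Q)$. The plan is therefore to reduce \eqref{EQ:BBM formula} to a convergence statement for the family of powers $g_n^{p_n}$ on the (infinite-measure) set $Q$, with $1<p_n<q$.

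\emph{First step: convergence of $g_n$ in $L^1(Q)$.} Since $Q$ has infinite measure, $L^q$ convergence alone does not control integrals of $g_n^{p}$ for $p$ near $1$. I will invoke the comparison inequality from \cite[Lemma 2.4]{DLi-2024}, already used in Lemma \ref{Positive:depth dp}, namely $\|v\|_{W_0^{s,1}(\Omega)}\le C\|v\|_{W_0^{s_q,q}(\Omega)}$. Applied to $v=u_0^{[n]}-u_0$, and combined with $\big||a|-|b|\big|\le|a-b|$, this gives $g_n\to g$ in $L^1(Q)$ as well. Passing to a subsequence, I may also assume $g_n\to g$ a.e. in $Q$ and that there exist $h_1\in L^1(Q)$, $h_q\in L^q(Q)$ with $g_n\le h_1$ a.e.\ and $g_n^q\le h_q^q$ a.e.; these follow from the partial converse of dominated convergence.

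\emph{Second step: passing to the limit in the powers.} For $t\ge0$ and $1\le p\le q$ one has the elementary bound $t^p\le t+t^q$. Hence
\[
g_n^{p_n}\le h_1+h_q^q\in L^1(Q).
\]
Pointwise, $g_n^{p_n}\to g$ a.e.: on $\{g>0\}$ this follows from continuity of $(t,p)\mapsto t^p$, and on $\{g=0\}$ we have $g_n\to0$, so $g_n^{p_n}\le g_n+g_n^q\to0$. Dominated convergence then gives $\iint_Q g_n^{p_n}\to\iint_Q g$, and since $1/p_n\to1$, \eqref{EQ:BBM formula} holds along the subsequence. Because every subsequence has a further subsequence with the same limit, the whole sequence converges. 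The same argument without the factor $1/p_n$ gives $\iint_Q g_n^{p_n}\to\|u_0\|_{W_0^{s,1}(\Omega)}$, which is the term needed for $I^{(p_n)}$.

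\emph{Third step: the nonlinear terms.} By the first step, $u_0^{[n]}\to u_0$ in $W_0^{s,1}(\Omega)$. Proposition \ref{prop:embedding}(i) then gives convergence in $L^{\alpha}(\Omega)$ with $\alpha=1^*_s=\frac{N}{N-s}>r$. Lemma \ref{Nonlinearity convergence} now yields $\int_\Omega F(x,u_0^{[n]})\to\int_\Omega F(x,u_0)$ and $\int_\Omega f(x,u_0^{[n]})u_0^{[n]}\to\int_\Omega f(x,u_0)u_0$. Combining these with the second step gives $E^{(p_n)}(u_0^{[n]})\to E(u_0)$ and $I^{(p_n)}(u_0^{[n]})\to I(u_0)$.

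The main obstacle is the first step: obtaining the $L^1(Q)$ control uniformly in $n$, i.e.\ the embedding $W_0^{s_q,q}(\Omega)\hookrightarrow W_0^{s,1}(\Omega)$. This is what makes the domination by $g_n+g_n^q$ usable on the infinite-measure set $Q$. I would rely on \cite[Lemma 2.4]{DLi-2024} for it. If only a version for $p$ near $1$ is available, I would prove it directly by splitting $Q$ into $\{|x-y|<1\}$ and its complement and using H\"older's inequality on the near-diagonal part together with the boundedness of $\Omega$.
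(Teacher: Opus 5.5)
Your proof is correct, but it reaches \eqref{EQ:BBM formula} by a different route from the paper's.

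The paper splits $Q$ into $\Omega\times\Omega$ and two copies of $\Omega\times\mathcal{C}\Omega$. On $\Omega\times\Omega$ it applies H\"older's inequality with exponents $q/p_n$ and $q/(q-p_n)$, via the identity $\frac{p_nq(s_q-s_{p_n})}{q-p_n}=N$, and then Vitali's theorem. On $\Omega\times\mathcal{C}\Omega$ it integrates out $x$ and bounds $\int_{\mathcal{C}\Omega}|x-y|^{-(N+s_{p_n}p_n)}\,dx$ by a multiple of $\dist(y,\mathcal{C}\Omega)^{-s_{p_n}p_n}$. It then controls the resulting weighted integral in $y$ by H\"older and the fractional Hardy inequality, and applies Vitali again. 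The embedding $W_0^{s_q,q}(\Omega)\hookrightarrow W_0^{s,1}(\Omega)$ of \cite[Lemma 2.4]{DLi-2024} appears only afterwards, for the nonlinear terms.

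You invoke that embedding at the outset to get $g_n\to g$ in $L^1(Q)\cap L^q(Q)$. You then build a single majorant $h_1+h_q^q$ on all of $Q$ from $t^p\le t+t^q$ and the partial converse of dominated convergence, and finish with the subsequence principle.

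What your route buys:
\begin{itemize}
\item It is shorter and needs neither the splitting nor the Hardy inequality.
\item It gives a genuine $L^1(Q)$ dominating function. The paper passes from uniform integral bounds directly to uniform integrability, which really requires the small-set version of its H\"older estimates.
\item It handles explicitly the fact that a.e.\ convergence is only available along subsequences.
\end{itemize}

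What the paper's route buys: its estimates rely only on a uniform $W_0^{s_q,q}$ bound plus a.e.\ convergence, so they would survive under weaker convergence hypotheses. Your majorant uses strong convergence in an essential way.

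Your fallback proof of the embedding also works. The set $Q\cap\{|x-y|<1\}$ has measure at most $2\omega_N|\Omega|$, so H\"older applies there. On $\{|x-y|\ge 1\}$ you only need $\|v\|_{L^1(\Omega)}\lesssim\|v\|_{W_0^{s_q,q}(\Omega)}$. This follows from the boundedness of $\Omega$, since $\int_{\mathcal{C}\Omega}|x-y|^{-N-s_qq}\,dx$ is bounded below uniformly in $y\in\Omega$.
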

        \begin{proof}
Setting $s_{p_n}=N+s-\frac{N}{p_n}$, we have $s_{p_{n}}\in (s,1)$. The choice of $s_{p_n}$ allows us to write the integrand as
\[
  \frac{\abs{u_{0}^{[n]}(x)-u_{0}^{[n]}(y)}^{p_{n}}}{\abs{x-y}^{N+s_{p_n}p_n}}=
  \left(\frac{\abs{u_{0}^{[n]}(x)-u_{0}^{[n]}(y)}}{\abs{x-y}^{N+s}}\right)^{p_n}.
\]
Since $u_{0}^{[n]}(x)\to u_{0}(x)$ a.e. in $\Omega$, the above integrand
converges to
$\frac{\abs{u_{0}(x)-u_{0}(y)}}{\abs{x-y}^{N+s}}$ a.e. $(x,y)\in Q$. Moreover, as $u^{[n]}_{0}\in W_{0}^{s_{q},q}(\Omega)$, we split the integral over $Q$ as 
\begin{align*}
    \iint_{Q}\frac{|u_{0}^{[n]}(x)-u_{0}^{[n]}(y)|^{p_{n}}}{|x-y|^{N+s_{p_n}p_{n}}}\,dx\,dy&=\int_{\Omega}\int_{\Omega}\frac{|u_{0}^{[n]}(x)-u_{0}^{[n]}(y)|^{p_{n}}}{|x-y|^{N+s_{p_n}p_{n}}}\,dx\,dy+2\int_{\Omega}\int_{\mathcal{C}\Omega}\frac{|u_{0}^{[n]}(y)|^{p_{n}}}{|x-y|^{N+s_{p_n}p_{n}}}\,dx\,dy.
\end{align*}
Next, we derive the convergence estimates on the right hand integrals separately:\\
\textbf{Claim 1:}
\begin{equation}\label{convergence in Omega}
    \int_{\Omega}\int_{\Omega}\frac{|u_{0}^{[n]}(x)-u_{0}^{[n]}(y)|^{p_{n}}}{|x-y|^{N+s_{p_n}p_{n}}}\,dx\,dy \to \int_{\Omega}\int_{\Omega}\frac{|u_{0}(x)-u_{0}(y)|}{|x-y|^{N+s}}\,dx\,dy \quad \text{as} \ n \to \infty.
\end{equation}
Since $u^{[n]}_{0}\to u_{0}$ in $W_{0}^{s_{q},q}(\Omega)$, there exists $M>0$ such that $\|u^{[n]}_{0}\|_{W_{0}^{s_{q},q}(\Omega)}<M$ for all $n\in \mathbb{N}$. To this end, we can write
\begin{equation}\label{inte:def}
    \frac{|u_{0}^{[n]}(x)-u_{0}^{[n]}(y)|^{p_{n}}}{|x-y|^{N+s_{p_n}p_{n}}}=\frac{|u_{0}^{[n]}(x)-u_{0}^{[n]}(y)|^{p_{n}}}{|x-y|^{\frac{Np_n}{q}+s_qp_n}}\times\frac{1}{|x-y|^{\frac{N(q-p_n)}{q}-p_n(s_q-s_{p_n})}}.
\end{equation}
Next, using H\"older's inequality, and the fact that $\frac{p_nq(s_q-s_{p_n})}{q-p_n}=N$,
we obtain 
\begin{align*}
\int_{\Omega}\int_{\Omega}\frac{|u_{0}^{[n]}(x)-u_{0}^{[n]}(y)|^{p_{n}}}{|x-y|^{N+s_{p_n}p_{n}}}\,dx\,dy &\leq \left(\int_{\Omega}\int_{\Omega} {\abs{x-y}^{\frac{p_nq(s_q-s_{p_n})}{q-p_n}-N}}\,dx\,dy\right)^{\frac{q-p_n}{q}}\\
&\qquad\qquad\times\left(\int_{\Omega}\int_{\Omega}\frac{|u_{0}^{[n]}(x)-u_{0}^{[n]}(y)|^{q}}{|x-y|^{N+s_{q}q}}\,dx\,dy\right)^{\frac{p_n}{q}}\\\\
&\leq \abs{\Omega\times\Omega}^{\frac{q-p_n}{q}} \|u^{[n]}_{0}\|_{W_{0}^{s_{q},q}(\Omega)}^{p_n} \leq \max\{1, \abs{\Omega\times\Omega}^{\frac{q-1}{q}}\}\max\left\{1,M^{q}\right\},
\end{align*}
where $\abs{\Omega\times\Omega}$ denotes the Lebesgue measure of $\Omega\times\Omega$. Hence the sequence of integrands defined in \eqref{inte:def} is uniformly integrable. Combining this fact with the pointwise convergence established above  and applying the Vitali's convergence theorem, we obtain \textbf{Claim 1}. \\
\textbf{Claim 2:}
$$ \lim_{n\to\infty}\int_{\Omega}\int_{\mathcal{C}\Omega}\frac{|u_{0}^{[n]}(y)|^{p_{n}}}{|x-y|^{N+s_{p_n}p_{n}}}\,dx\,dy =  \int_{\Omega}\int_{\mathcal{C}\Omega}\frac{|u_{0}(y)|}{|x-y|^{N+s}}\,dx\,dy.$$
Next, we define a sequence 
$$h^{(n)}(y)=\int_{\mathcal{C}\Omega}\frac{1}{|x-y|^{N+s_{p_n}p_{n}}}\,dx,\qquad y\in\Omega.$$
The integrand $\left(\frac{1}{\abs{x-y}^{N+s}}\right)^{p_n}$ converges pointwise to $\frac{1}{\abs{x-y}^{N+s}}$ for all $x\in\mathcal{C}\Omega$ and $y\in\Omega$.
Since $\Omega$ is an open subset of $\mathbb{R}^{N}$, there exists $\delta_{y}:=\dist(y,\mathcal{C}\Omega)>0$ such that $|x-y|>\delta_{y}$. For a fix $y\in \Omega$, we can split $\mathcal{C}\Omega=Q_1\cup Q_2$, where $Q_1=\{x\in\mathcal{C}\Omega :\delta_{y}<\abs{x-y}\leq 1\}$
and $Q_2=\{x\in \mathcal{C}\Omega:\abs{x-y}>1\}$. For all $y\in\Omega$ and $x\in\mathcal{C}\Omega$, we have  
$$\frac{1}{|x-y|^{N+s_{p_n}p_{n}}}< G_{y}(x):=\frac{1}{\abs{x-y}^{N+q}}\chi_{Q_1}+\frac{1}{\abs{x-y}^{N+s}}\chi_{Q_2},$$
The combined dominating function $G_{y}\in L^1(\mathcal{C}\Omega)$ for all $y\in\Omega$ and it is independent of $n$. By the dominated convergence theorem, we have 
\begin{equation}
    \label{x-y pn converegence}
    \lim_{n\to\infty}\int_{\mathcal{C}\Omega}\frac{1}{|x-y|^{N+s_{p_n}p_{n}}}\,dx=\int_{\mathcal{C}\Omega}\frac{1}{|x-y|^{N+s}}\,dx.
\end{equation}
Due to the fact that $u_{0}^{[n]}(y)\to u_{0}(y)$ a.e. in $\Omega$ combining this with \eqref{x-y pn converegence}, we have $|u_{0}^{[n]}(y)|^{p_n}\int_{\mathcal{C}\Omega}\frac{1}{|x-y|^{N+s_{p_n}p_{n}}}$ converges pointwise to  $|u_{0}(y)|\int_{\mathcal{C}\Omega}\frac{1}{|x-y|^{N+s}}$ a.e $(x,y)\in \mathcal{C}\Omega\times\Omega$. Now it is only remain to prove the uniform integrability of $|u_{0}^{[n]}(y)|^{p_n}h^{(n)}(y)$ over $\Omega$. Further, we have
\begin{align}
\label{hn bound}
 h^{(n)}(y)=\int_{\mathcal{C}\Omega}\frac{1}{|x-y|^{N+s_{p_n}p_{n}}}\,dx& \leq\int_{\abs{z}>\delta_y}\frac{1}{\abs{z}^{N+s_{p_n}p_n}}\,dz=\frac{\omega_{N}}{s_{p_n}p_n}\delta_{y}^{-s_{p_n}p_n}<\frac{\omega_{N}}{s}\delta_{y}^{-s_{p_n}p_n},  
\end{align}
where $\omega_{N}$ denotes the volume of a unit ball in $\mathbb{R}^{N}$. Now, by using \eqref{hn bound}, the H\"older's inequality and fractional Hardy's inequality \cite[Theorem 2.1]{Bucur-2025}, we have 
\begin{align*}
    2\int_{\Omega}|u_{0}^{[n]}(y)|^{p_{n}}\int_{\mathcal{C}\Omega}\frac{1}{|x-y|^{N+s_{p_n}p_{n}}}\,dx\,dy&\leq \frac{2\omega_{N}}{s}\int_{\Omega}|u_{0}^{[n]}(y)|^{p_{n}}\delta_{y}^{-s_{p_n}p_n}\,dy\\
    &\leq\frac{2\omega_{N}}{s}\left(\int_{\Omega}\frac{|u_{0}^{[n]}(y)|^{q}}{\delta_{y}^{s_{q}q}}\,dy\right)^{\frac{p_n}{q}}\left(\int_{\Omega}\delta_{y}^{\frac{p_nq(s_q-s_{p_n})}{q-p_n}}\, dy\right)^{\frac{q-p_n}{q}}\\
    &\leq \frac{2\omega_{N}}{s}(C_{H})^{p_n}\|u_{0}^{[n]}\|_{W^{s_q,q}(\Omega)}^{p_n}\left(\int_{\Omega}\delta_{y}^{N}\, dy\right)^{\frac{q-p_n}{q}}\\
    &\leq \frac{2\omega_{N}}{s}(C_{H})^{p_n}\|u_{0}^{[n]}\|_{W_{0}^{s_q,q}(\Omega)}^{p_n}\left(\abs{\Omega}\mathrm{diam}(\Omega)^{N}\right)^{\frac{q-p_n}{q}}\\
    &\leq \frac{2\omega_{N}}{s}\max\left\{1,\left(C_{H}M\right)^{q}\right\}\max\left\{1,\left(\abs{\Omega}\text{diam}(\Omega)^{N}\right)^{\frac{q-1}{q}}\right\},
\end{align*}
where $C_{H}=C_{H}(N,s_q,q,\Omega)>0$ is a constant appearing in fractional Hardy's inequality. It implies that the sequence of integrand is uniformly integrable over $\Omega$. Combining this fact with the pointwise convergence of $\int_{\mathcal{C}\Omega}\frac{|u_{0}^{[n]}(y)|^{p_n}}{|x-y|^{N+s_{p_n}p_{n}}} \to \int_{\mathcal{C}\Omega}\frac{|u_{0}(y)|}{|x-y|^{N+s}}$ a.e. $(x,y)\in \mathcal{C}\Omega\times\Omega$ and applying the Vitali's convergence theorem once again, we have 
\begin{equation}
    \label{C omega converegnce}
    2\lim_{n\to\infty}\int_{\Omega}|u_{0}^{[n]}(y)|^{p_{n}}\int_{\mathcal{C}\Omega}\frac{1}{|x-y|^{N+s_{p_n}p_{n}}}\,dx\,dy=2 \int_{\Omega}|u_{0}(y)|\int_{\mathcal{C}\Omega}\frac{1}{|x-y|^{N+s}}\,dx\,dy.
\end{equation}
This proves \textbf{Claim 2}. Since $p_n\to 1^{+}$, from \eqref{convergence in Omega} and \eqref{C omega converegnce}, we prove \eqref{EQ:BBM formula}. Next, since $u_{0}^{[n]}\to u_{0}$ in $W_{0}^{s_q,q}(\Omega)$ from \cite[Lemma 2.4]{DLi-2024}, we have $u_{0}^{[n]}\to u_{0}$ in $W_{0}^{s,1}(\Omega)$. Therefore, by Lemma \ref{Nonlinearity convergence}, we have
\[
  \int_\Omega F(x,u_0^{[n]})\,dx\to\int_\Omega F(x,u_0)\,dx
  \quad \text{and}\quad
  \int_\Omega f(x,u_0^{[n]})u_0^{(n)}\,dx\to\int_\Omega f(x,u_0)u_0\,dx.
\]
Combining this with \eqref{EQ:BBM formula}, we obtain
\begin{equation}
    \label{E_p_converge}
    E^{(p_n)}(u_{0}^{[n]})\to E(u_{0})\quad \text{and}\quad I^{(p_n)}(u_{0}^{[n]})\to I(u_{0})\quad \text{as}\,\, n\to \infty.
\end{equation}
This proves our \textbf{Claim}.
\end{proof}
\subsection{Asymptotics of \texorpdfstring{$\lambda_p$}{lambda p} and \texorpdfstring{$d_p$}{d p}} 
    Next, we derive the asymptotics estimates on the critical point $\lambda_p$ obtained in he Nehari manifold analysis in Lemma \ref{existence of lambda*p} and the depth of the potential well $d_p$ as $p \to 1^+$.
   
\begin{lemma}
\label{boundedness of lambda}
  Assume that conditions \eqref{Assump:f_0}-\eqref{Assump:f_2} hold true. Let $q\in(1,\underline{p})$, $v \in \mathcal{N}\cap C_{c}^{\infty}(\Omega)$ and $\{\lambda_{p_n}\}_{n \in \mathbb{N}}$ is a sequence (obtained in Lemma \ref{existence of lambda*p}) such that $\lambda_{p_n}v\in\mathcal{N}^{(p_n)}.$ Then, 
  \begin{enumerate}
      \item[\textnormal{(i)}]  there exists  $m, M>0$ such that $m \leq \lambda_{p_n} \leq M$ for all $n \in \mathbb{N}$ and
      \item[\textnormal{(ii)}]  $\lambda_{p_n}\to 1$ where $p_n \to 1^+$ as $n \to \infty.$
  \end{enumerate} 
\end{lemma}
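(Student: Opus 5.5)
Write $A_n:=\iint_Q \frac{|v(x)-v(y)|^{p_n}}{|x-y|^{N+s_{p_n}p_n}}\,dx\,dy$, $A:=\|v\|_{W_0^{s,1}(\Omega)}$, and $g(\lambda):=\int_\Omega f(x,\lambda v)\,v\,dx$. Since $v\in C_c^\infty(\Omega)\subset W_0^{s_q,q}(\Omega)$, Lemma~\ref{strong convergence u_0} applied to the constant sequence $u_0^{[n]}=v$ gives $A_n\to A$. (We use the unnormalized integral here: Claims 1--2 of that proof establish convergence of $A_n$ itself, and $A_n/p_n$ has the same limit because $p_n\to1$.) Since $v\in\mathcal N$, we have $A=g(1)>0$. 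The Nehari condition $I^{(p_n)}(\lambda_{p_n}v)=0$ reads
\[
\lambda_{p_n}^{p_n-1}A_n=g(\lambda_{p_n}).
\]
The map $\lambda\mapsto g(\lambda)$ is continuous and strictly increasing on $(0,\infty)$, with $g(0^+)=0$ and $g(+\infty)=+\infty$. This follows from \eqref{Assump:f_2} and \eqref{F:bounds}, exactly as for the function $\eta$ in the proof of Lemma~\ref{prop:d(delta)}. Everything reduces to comparing the two sides of this scalar identity.

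\textbf{Step (i).} For the upper bound, fix $\Lambda>1$ with $g(\Lambda)>2A\Lambda^{q-1}+1$. Such a $\Lambda$ exists because \eqref{F:bounds} gives $g(\lambda)\ge B\theta\lambda^{\theta-1}\int_{\{|v|\ge1/\lambda\}}|v|^\theta\,dx$ for $\lambda\ge1$, and $\theta>q$ since $q<\underline p\le\theta$. For $n$ large we have $A_n\le 2A$ and $p_n<q$. If $\lambda_{p_n}\ge\Lambda$, then the ratio
\[
\frac{g(\lambda)}{\lambda^{p_n-1}}
\]
is strictly increasing in $\lambda$, because $(p_n-1)g(\lambda)<\lambda g'(\lambda)$ by \eqref{f2-p-case} and Remark~\ref{rem:condi}. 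Hence
\[
\lambda_{p_n}^{p_n-1}A_n\le 2A\lambda_{p_n}^{p_n-1}<g(\lambda_{p_n}),
\]
which contradicts the Nehari identity. So $\lambda_{p_n}\le\Lambda$.

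For the lower bound, \eqref{Assump:f_0} gives $g(\lambda)\le C\lambda^{r-1}\|v\|_{L^r}^r$. If $\lambda_{p_n}\le1$, the Nehari identity gives
\[
\tfrac12 A\,\lambda_{p_n}^{p_n-1}\le C\lambda_{p_n}^{r-1}\|v\|_{L^r}^r .
\]
Using $p_n<q<r$, this yields
\[
\lambda_{p_n}^{\,r-p_n}\ge \frac{A}{2C\|v\|_{L^r}^r},
\]
so $\lambda_{p_n}\ge\min\{1,(A/(2C\|v\|_{L^r}^r))^{1/(r-1)}\}$. The finitely many remaining indices contribute positive finite values, so after shrinking $m$ and enlarging $M$ we obtain $m\le\lambda_{p_n}\le M$ for all $n$.

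\textbf{Step (ii).} Take any subsequence with $\lambda_{p_n}\to\lambda_0\in[m,M]$. Because $\lambda_{p_n}$ stays in a compact subset of $(0,\infty)$ and $p_n\to1$, we get $\lambda_{p_n}^{p_n-1}\to1$. Continuity of $g$ comes from dominated convergence, using \eqref{Assump:f_0} and $v\in L^\infty$. Passing to the limit in the Nehari identity gives $A=g(\lambda_0)$. Since $A=g(1)$ and $g$ is strictly increasing, $\lambda_0=1$. Every subsequence therefore has the same limit, so the whole sequence satisfies $\lambda_{p_n}\to1$.

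\textbf{Main obstacle.} The delicate point is uniformity in $n$ in Step (i): the exponent $p_n-1$ varies with $n$. Choosing $\Lambda$ against the fixed exponent $q-1$, and using the monotonicity of $g(\lambda)/\lambda^{p_n-1}$ supplied by \eqref{f2-p-case}, is what makes the bounds independent of $n$.
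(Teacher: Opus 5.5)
Your argument is essentially the paper's. You reduce everything to the scalar identity $\lambda_{p_n}^{p_n-1}A_n=g(\lambda_{p_n})$, bound $\lambda_{p_n}$ above via \eqref{F:bounds} (using $\theta>q$) and below via \eqref{Assump:f_0}, and then force every subsequential limit to equal $1$ by strict monotonicity of $g$. The only difference is that you argue directly with explicit thresholds, whereas the paper argues by contradiction along subsequences tending to $\infty$ or $0$; this changes nothing.

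One slip in the lower bound needs fixing. From $\lambda_{p_n}^{\,r-p_n}\ge c:=A/(2C\|v\|_{L^r(\Omega)}^r)$ and $\lambda_{p_n}\le 1$ you only get $\lambda_{p_n}\ge c^{1/(r-p_n)}$. For $c<1$ this is smaller than your stated $c^{1/(r-1)}$, because $r-p_n<r-1$.

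The correct uniform bound is $\lambda_{p_n}\ge\min\{1,c^{1/(r-q)}\}$, which uses $r-p_n\ge r-q>0$. The inequality $q<r$ that you invoke does hold: $q<\underline{p}\le\theta$, and \eqref{F:bounds} together with \eqref{Assump:f_0} force $\theta\le r$.
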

\begin{proof}
   From Lemma \ref{Lemma:2.3}(iii), it is easy to see that the set $\mathcal{N}\cap C_{c}^{\infty}(\Omega)\neq \emptyset$. Choose a sequence $p_n\in (1, q)$ such that $p_n\to 1^{+}$ as $n\to\infty.$ Since $p_n\to 1^{+}$ as $n\to\infty$, we can assume that $p_n<r$ for all $n\in\mathbb{N}$, where $r$ is given in \eqref{Assump:f_0}. Let $v\in \mathcal{N}\cap C_{c}^{\infty}(\Omega)$. By Lemma \ref{existence of lambda*p}(iii) there exists a unique $\lambda_{p_n}=\lambda_{p_n}(v,p_n)>0$  such that $\lambda_{p_n}v\in \mathcal{N}^{(p_n)}$, {\it i.e.}
\begin{equation}
    \label{lambda conv}
\lambda_{p_n}^{p_n}\iint_{Q}\frac{\abs{v(x)-v(y)}^{p_n}}{\abs{x-y}^{N+s_{p_n}p_n}}\,dx\,dy=\int_{\Omega}f(x,\lambda_{p_n} v)\lambda_{p_n} v\,dx.
\end{equation}
First, we show \text{(i)}. Assume that $\lambda_{p_n}\to\infty$ as $n \to \infty$ (upto a subsequence). From \eqref{lambda conv} and \eqref{F:bounds}, we obtain
\begin{equation*}
    \lambda_{p_n}^{p_n-1}\iint_{Q}\frac{\abs{v(x)-v(y)}^{p_n}}{\abs{x-y}^{N+s_{p_n}p_n}}\,dx\,dy=\int_{\Omega}f(x,\lambda_{p_n} v)v\,dx \geq B\theta\int_{\Omega\cap\{|v|\geq\frac{1}{\lambda_{p_n}}\}}\lambda_{p_n}^{\theta-1}\abs{v}^{\theta}\,dx.
\end{equation*}
 Since $v\in\mathcal{N}\cap C_{c}^{\infty}(\Omega)$, it follows that
 \begin{equation}
     \label{Lambda bounded above}
     \frac{\iint_{Q}\left(\frac{\abs{v(x)-v(y)}}{\abs{x-y}^{N+s}}\right)^{p_n}\,dx\,dy}{B\theta\int_{\Omega\cap\{|v|\geq\frac{1}{\lambda_{p_n}}\}}\abs{v}^{\theta}\,dx}\geq  \lambda_{p_n}^{\theta-p_n}.
 \end{equation}
Since $\theta>p_n$ and using \eqref{EQ:BBM formula}, the left hand side of the inequality \eqref{Lambda bounded above} remains bounded as $n\to \infty$,  while the right hand side diverges to $\infty$. This contradicts our assumption that $\lambda_{p_n}\to \infty$ as $n \to \infty.$ Therefore, that there exists a constant $M>0$ such that $\lambda_{p_n}\leq M$ for all $n\in\mathbb{N}.$
 Next, we assume that $\lambda_{p_n}\to 0$ as $n\to\infty$ (upto a subsequence). Fix $p_0\in(1,r)$ such that there exists $n_{0}\in \mathbb{N}$ with $p_n<p_{0}$ for all $n\geq n_{0}$. Dividing \eqref{lambda conv} by $\lambda_{p_n}^{p_0}$ and using \eqref{Assump:f_0}, we obtain
 \begin{align}
     \label{lambda bounded below}
      \lambda_{p_n}^{p_n-p_0}\iint_{Q}\left(\frac{\abs{v(x)-v(y)}}{\abs{x-y}^{N+s}}\right)^{p_n}\,dx\,dy&=\int_{\Omega}\frac{f(x,\lambda_{p_n} v)}{|\lambda_{p_n}v|^{p_{0}-1}}v|v|^{p_{0}-1}\,dx \leq \int_{\Omega}\frac{|f(x,\lambda_{p_n} v)|}{|\lambda_{p_n}v|^{p_{0}-1}}|v|^{p_{0}}\,dx\nonumber\\
      &\leq C\int_{\Omega}|v|^{p_{0}}|\lambda_{p_n} v|^{r-p_{0}}\, dx \leq C \lambda_{p_n}^{r-p_{0}}\int_{\Omega}|v|^{r}\,dx.
 \end{align}
 Since $v\in C_{c}^{\infty}(\Omega)$, and $p_{0} <r $, it follows that the right hand side of \eqref{lambda bounded below} converges to $0$ as $n\to \infty$. On the other hand, left hand side diverges to $\infty$ because $p_n<p_0$ for all $n\geq n_{0}$. This again contradicts our assumption that $\lambda_{p_n}\to 0$ as $n \to \infty.$ This proves \text{(i)}. Next, we derive \text{(ii)}. Assume, by contradiction, that $\lambda_{p_n} \not\to 1$. Then, there exists a subsequence $\{\lambda_{{p_n}_k}\}$ which does not converges to $1$. As $\{\lambda_{{p_n}_k}\}_{k\in \mathbb{N}} \subset [m, M]$ is bounded, the Bolzano--Weierstrass theorem guarantees the existence of a further subsequence $\{\lambda_{{p_n}_{k_j}}\}$ such that
\[
\lambda_{{p_n}_{k_j}} \to \bar{\lambda} \in [m, M].
\]
We now pass limit in \eqref{lambda conv} along the subsequence $\{\lambda_{n_{k_j}}\}$. For $v\in C_{c}^{\infty}(\Omega)$, by using Lemma \ref{Nonlinearity convergence} and Lemma \ref{strong convergence u_0}, we obtain
\begin{align}
    \label{lambda converges to 1}
    \bar{\lambda}\iint_{Q}\frac{\abs{v(x)-v(y)}}{\abs{x-y}^{N+s}}\,dx\,dy=\int_{\Omega}f(x,\bar{\lambda} v)\bar{\lambda} v\,dx.
\end{align}
On the other hand, since $v \in \mathcal{N}\cap C_{c}^{\infty}(\Omega)$, the above identity holds for $\bar{\lambda} = 1$. Moreover, the map
\[
\lambda \mapsto \int_\Omega f(x,\lambda v)\,v\,dx
\]
is strictly increasing, in view of assumption \eqref{Assump:f_2}, since
\[
\frac{d}{d\lambda} \int_\Omega f(x,\lambda v)\,v\,dx
= \int_\Omega f'(x,\lambda v)\,v^2\,dx > 0.
\]
Therefore, the above equation admits a unique solution, which necessarily implies that $\bar{\lambda} = 1$. This contradicts our assumption. Hence, our claim. 
\end{proof}
  \begin{lemma}
 \label{dp id bounded}
 Let $f$ satisfies the condition \eqref{Assump:f_0}-\eqref{Assump:f_2}, then there exists $q \in (1, \underline{p})$ and $M>0$ such that $d_{p}\leq M$ for all $p\in (1,q]$.
\end{lemma}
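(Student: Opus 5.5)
The plan is to bound $d_p$ from above by testing the infimum in \eqref{depth-p} with one fixed competitor, rescaled onto $\mathcal{N}^{(p)}$.

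\textbf{Step 1: choice of the test function.} By Lemma \ref{Lemma:2.3}(iii), $\mathcal{N}\cap C_c^\infty(\Omega)\neq\emptyset$. Fix $v$ in this set. Since $v$ is smooth with compact support, $v\in W_0^{s_q,q}(\Omega)$ for every $q\in(1,\underline{p})$.

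\textbf{Step 2: projection onto $\mathcal{N}^{(p)}$.} For $p\in(1,\underline p)$, Remark \ref{rem:condi} shows that \eqref{f2-p-case} holds, because $p\le\Theta$. So Lemma \ref{existence of lambda*p} applies and gives a unique $\lambda_p>0$ with $\lambda_p v\in\mathcal{N}^{(p)}$. Then
\[
d_p\le E^{(p)}(\lambda_p v)\le \frac{\lambda_p^{p}}{p}\iint_Q\Big(\frac{|v(x)-v(y)|}{|x-y|^{N+s}}\Big)^{p}\,dx\,dy ,
\]
where we dropped $-\int_\Omega F(x,\lambda_p v)\,dx\le 0$, using $F\ge0$ from \eqref{Assump:f_1}.

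\textbf{Step 3: uniform control of both factors.} We need to bound $\lambda_p$ and the double integral uniformly for $p\in(1,q]$, not only along a sequence.

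For the integral, apply the H\"older splitting from the proof of Lemma \ref{strong convergence u_0}, Claims 1 and 2. Over $\Omega\times\Omega$ it is bounded by $\max\{1,|\Omega\times\Omega|^{(q-1)/q}\}\max\{1,\|v\|^q_{W_0^{s_q,q}}\}$. The complement part is bounded via \eqref{hn bound} and the fractional Hardy inequality. Neither bound depends on $p\le q$, which gives a constant $K$.

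For $\lambda_p$, redo the argument of Lemma \ref{boundedness of lambda}(i) quantitatively rather than by contradiction. From the Nehari identity \eqref{lambda conv} and \eqref{F:bounds}, if $\lambda_p\ge1$ then
\[
\lambda_p^{\theta-p}\le \frac{K}{B\theta\int_{\{|v|\ge1\}}|v|^\theta\,dx}.
\]
This needs $\int_{\{|v|\ge 1\}}|v|^\theta>0$. If that fails, replace $v$ first by a multiple, or note that $\lambda_p\ge \lambda_0$ eventually forces $\{|v|\ge1/\lambda_0\}$ to have positive measure. Choosing $q$ with $q<\theta$ strictly (possible since $\underline p\le\theta$; shrink $q$ if needed) makes $\theta-p\ge\theta-q>0$. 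This yields $\lambda_p\le \max\{1,(K/c)^{1/(\theta-q)}\}$ uniformly.

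\textbf{Conclusion and main obstacle.} Combining, $d_p\le \lambda_p^p K/p\le \max\{1,\Lambda\}^{q}K=:M$ for all $p\in(1,q]$. The main obstacle is the uniformity in Step 3: the earlier lemmas only give sequential statements, so we must make sure every constant depends only on $q$, $v$, $\Omega$, $\theta$, $B$.

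The most delicate point is the lower bound on $\int_{\{|v|\ge 1/\lambda\}}|v|^\theta$, which should be handled as follows. Fix $\mu>0$ with $|\{|v|\ge\mu\}|>0$. Either $\lambda_p\le 1/\mu$, or $\lambda_p>1/\mu$, in which case $\int_{\{|v|\ge 1/\lambda_p\}}|v|^\theta\ge \int_{\{|v|\ge\mu\}}|v|^\theta>0$ and the power bound applies. This gives a uniform bound in either case.
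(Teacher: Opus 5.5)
Your proposal is correct and follows the paper's proof: fix a smooth test function, project it onto $\mathcal{N}^{(p)}$, drop the $F$ term using (f1), and bound $\lambda_p$ through $\lambda_p^{\theta-p}\int_{\{|v|\ge 1/\lambda_p\}}|v|^\theta\,dx \le K_p/(B\theta)$. The differences are only technical. The paper proves that the Gagliardo-type integral converges as $p\to1^+$ by dominated convergence (Lipschitz bound near the diagonal) and bounds $\lambda_{*,p}$ only qualitatively near $p=1$; your H\"older interpolation against $\|v\|_{W_0^{s_q,q}(\Omega)}$ and your level-set choice of $\mu$ instead give explicit bounds on all of $(1,q]$, for every $q<\underline{p}$.
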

\begin{proof}
For $\varphi\in C_{c}^{\infty}(\Omega)\setminus\{0\}$ and $p\in (1, \underline{p})$, first we prove that
\begin{equation*}
    \iint_{Q}\frac{|\varphi(x)-\varphi(y)|^{p}}{|x-y|^{N+s_{p}p}}\,dx\,dy \to \iint_{Q}\frac{|\varphi(x)-\varphi(y)|}{|x-y|^{N+s}}\,dx\,dy \quad \text{ as } p\to 1^{+}.
\end{equation*}
 The choice of $s_{p}$ in \eqref{choice:sp} allows us to write the integrand as
\[
  \frac{\abs{\varphi(x)-\varphi(y)}^{p}}{\abs{x-y}^{N+s_{p}p}}=
  \left(\frac{\abs{\varphi(x)-\varphi(y)}}{\abs{x-y}^{N+s}}\right)^{p} \to \frac{\abs{\varphi(x)-\varphi(y)}}{\abs{x-y}^{N+s}} \quad \text{as} \ p \to 1^+ \quad \text{a.e. in} \ Q.
\]
Let $M_{1}:=\|\nabla \varphi\|_{L^\infty{(\Omega)}}<\infty$.
Split $Q=Q_1\cup Q_2$, where $Q_1=\{(x,y)\in Q:\abs{x-y}\leq 1\}$
and $Q_2=\{(x,y)\in Q:\abs{x-y}>1\}$. For $Q_1$, the mean value theorem gives,
$\abs{\varphi(x)-\varphi(y)}\leq M_{1}\abs{x-y}$. Now, for $p \in (1, \underline{p})$, we can choose $\epsilon>0$ (independent of $p$) such that $N+ p s_{p} -p \leq N-\epsilon$. This further gives
\[
  \frac{\abs{\varphi(x)-\varphi(y)}^{p}}{\abs{x-y}^{N+s_{p}p}}
  \leq\frac{\max\{1,M^{\underline{p}}_{1}\}}{\abs{x-y}^{N-\epsilon}}\in L^{1}(Q_1),
\]
Since $\abs{\varphi(x)-\varphi(y)}\leq 2\|\varphi\|_{L^\infty(\Omega)}=:M_{2}$ and
$s_p\in(s,1)$ for $p\in (1, \underline{p})$. For $(x, y) \in Q_2$, we have 
\[ \abs{x-y}^{N+s_{p}p}\geq\abs{x-y}^{N+s} \quad\text{ and }\quad
   \frac{\abs{\varphi(x)-\varphi(y)}^{p}}{\abs{x-y}^{N+s_{p}p}}\leq\frac{\max\{1,M_{2}^{\underline{p}}\}}{\abs{x-y}^{N+s}}\in L^1(Q_2).
\]
The combined dominating function
$$G(x,y):=\frac{\max\{1,M_{1}^{\underline{p}}\}}{\abs{x-y}^{N-\epsilon}}\chi_{Q_1}
          +\frac{\max\{1,M_{2}^{\underline{p}}\}}{\abs{x-y}^{N+s}}\chi_{Q_2},$$
belongs to $L^1(Q)$ and is independent of $p$.
By applying Lebesgue dominated convergence theorem, we have 
\begin{equation}
    \label{EQ:BBM formula for fix varphi}
    \iint_{Q}\frac{\abs{\varphi(x)-\varphi(y)}^{p}}{|x-y|^{N+s_{p}p}}\,dx\,dy \to \iint_{Q}\frac{\abs{\varphi(x)-\varphi(y)}}{|x-y|^{N+s}}\,dx\,dy \quad \text{ as } p\to 1^{+}.
\end{equation}
For $\varphi \in C_{0}^{\infty}(\Omega)\setminus\{0\}$, by Lemma \ref{existence of lambda*p}(iii), for each $p\in (1,\underline{p})$ in view of Remark \ref{rem:condi}, there exists $\lambda_{*,p}>0$ such that $\lambda_{*,p}\varphi\in \mathcal{N}^{(p)}$ {\it i.e. 
$$(\lambda_{*,p})^p\iint_{Q}\frac{|\varphi(x)-\varphi(y)|^{p}}{|x-y|^{N+s_pp}}\,dx\,dy = \int_{\Omega} f(x,\lambda_{*,p}\varphi) \lambda_{*,p}\varphi \,dx.$$}
From \eqref{F:bounds} and \eqref{Assump:f_1}, we have 
    \begin{align*}
    \frac{\iint_{Q}\left(\frac{|\varphi(x)-\varphi(y)|}{|x-y|^{N+s}}\right)^{p}\,dx\,dy}{B\theta\int_{\Omega\cap\{|\varphi|>\frac{1}{\lambda_{*,p}}\}}|\varphi|^{\theta}\,dx} &\geq \left(\lambda_{*,p}\right)^{\theta-p}.
    \end{align*}
     Since $\theta>p$ and $\varphi \in C_{0}^{\infty}(\Omega)\setminus\{0\}$ together with \eqref{EQ:BBM formula for fix varphi} and Lemma \ref{boundedness of lambda}, it follows that $\lambda_{*,p}$ remains bounded as $p\to 1^{+}$. Now from the definition of $d_{p}$ and \eqref{Assump:f_1}, we have 
    \begin{align}
    \label{alpha p}
        d_{p}\leq E^{(p)}(\lambda_{*,p}\varphi)\leq \frac{(\lambda_{*,p})^{p}}{p}\iint_{Q} \frac{|\varphi(x)-\varphi(y)|^{p}}{|x-y|^{N+s_pp}}\,dx\,dy.
    \end{align}
    Since $\lambda_{*,p}$ is bounded as $p\to 1^{+}$ and $\varphi\in C_{0}^{\infty}(\Omega)\setminus\{0\}$, from \eqref{alpha p} and \eqref{EQ:BBM formula for fix varphi} we deduce that there exists $q\in (1, \underline{p})$ and $M>0$ such that $d_{p}\leq M$ for all $p\in (1,q].$
    \end{proof}
    \begin{lemma}
    \label{Dpn converges}
       Let conditions \eqref{Assump:f_0}-\eqref{Assump:f_2} hold. Then $d_{p_n}\to d$ where $p_n \to 1^{+}$ as $n \to \infty.$
    \end{lemma}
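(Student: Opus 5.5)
We will prove the two inequalities $\limsup_{n\to\infty} d_{p_n}\le d$ and $\liminf_{n\to\infty} d_{p_n}\ge d$ separately.

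\textbf{Upper bound.} Fix $\varepsilon>0$. We first claim there is $v\in\mathcal{N}\cap C_c^\infty(\Omega)$ with $E(v)<d+\varepsilon$. To get it, take $w\in\mathcal{N}$ with $E(w)<d+\varepsilon/2$ and approximate $w$ in $W_0^{s,1}(\Omega)$ by $\varphi_k\in C_c^\infty(\Omega)$, using density, which holds for Lipschitz $\Omega$. Then renormalize $\varphi_k$ onto $\mathcal{N}$ via Lemma~\ref{Lemma:2.3}(ii)--(iii): set $v_k=\lambda_\ast(\varphi_k)\varphi_k$. The uniqueness of $\lambda_\ast$, the strict monotonicity of $\lambda\mapsto\int_\Omega f(x,\lambda\varphi)\varphi\,dx$, and Lemma~\ref{Nonlinearity convergence} give $\lambda_\ast(\varphi_k)\to 1$. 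Hence $E(v_k)\to E(w)$.

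With such a $v$ fixed, Lemma~\ref{boundedness of lambda} gives $\lambda_{p_n}$ with $\lambda_{p_n}v\in\mathcal{N}^{(p_n)}$ and $\lambda_{p_n}\to1$. We then have
\[
d_{p_n}\le E^{(p_n)}(\lambda_{p_n}v)=\frac{\lambda_{p_n}^{p_n}}{p_n}\iint_Q\Big(\frac{|v(x)-v(y)|}{|x-y|^{N+s}}\Big)^{p_n}dx\,dy-\int_\Omega F(x,\lambda_{p_n}v)\,dx .
\]
By \eqref{EQ:BBM formula for fix varphi} and dominated convergence in the $F$ term (note $v$ is bounded), the right-hand side tends to $E(v)<d+\varepsilon$.

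\textbf{Lower bound (main obstacle).} Pick $u_n\in\mathcal{N}^{(p_n)}$ with $E^{(p_n)}(u_n)<d_{p_n}+1/n$. Lemma~\ref{dp id bounded} bounds $d_{p_n}$, and the Nehari identity together with \eqref{Assump:f_1} gives
\[
\Big(\frac1{p_n}-\frac1\theta\Big)\|u_n\|^{p_n}_{W_0^{s_{p_n},p_n}}\le E^{(p_n)}(u_n),
\]
so $\|u_n\|_{W_0^{s_{p_n},p_n}}$ is bounded uniformly in $n$. From \eqref{lower bound independent of p} it is also bounded below away from $0$. By \cite[Lemma 2.4]{DLi-2024}, $u_n$ is then bounded in $W_0^{s,1}(\Omega)$, and the compact embedding of Proposition~\ref{prop:embedding}(ii) yields $u_n\to u$ in $L^r(\Omega)$ along a subsequence. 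We will also use the inequality $\frac1p a^p\ge a-\frac{p-1}{p}$ (Young's inequality), applied pointwise with $a=|u_n(x)-u_n(y)|/|x-y|^{N+s}$. Integrating it over $Q$ does not work, since $Q$ has infinite measure. Instead, we will integrate directly: combine Hölder's inequality, as in Claim~1 of Lemma~\ref{strong convergence u_0} but in the reverse direction, with Fatou's lemma. Together with the lower semicontinuity of the $W_0^{s,1}$ seminorm, this should give
\[
\liminf_{n\to\infty}\frac1{p_n}\|u_n\|^{p_n}_{W_0^{s_{p_n},p_n}}\ge \|u\|_{W_0^{s,1}(\Omega)}.
\]
The delicate point is $u\neq0$. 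For this we pass to the limit in the Nehari identity, obtaining $\int_\Omega f(x,u_n)u_n\,dx\to\int_\Omega f(x,u)u\,dx$; the lower bound on the norms then shows this limit is positive, so $u\ne0$. Since the Fatou step gives only an inequality, in the limit we get $I(u)\le 0$ rather than $I(u)=0$. Lemma~\ref{Lemma:2.3}(iii) then supplies $\lambda_\ast\le1$ with $\lambda_\ast u\in\mathcal N$. Finally, we will use that
\[
E-\tfrac1\theta I\ \ge\ \Big(1-\tfrac1\theta\Big)\|\cdot\|_{W_0^{s,1}(\Omega)},
\]
which is monotone under scaling, and compare it with its $p_n$-analogue to conclude
\[
d\le E(\lambda_\ast u)\le \liminf_{n\to\infty}E^{(p_n)}(u_n)=\liminf_{n\to\infty} d_{p_n}.
\]
This last comparison is the step we expect to be hardest to make fully rigorous.
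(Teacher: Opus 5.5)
\textbf{Where the proposal stands.} Your upper bound is the paper's argument: density of $C_c^\infty(\Omega)$, renormalization onto $\mathcal N$, then $\lambda_{p_n}v\in\mathcal N^{(p_n)}$ with $\lambda_{p_n}\to1$. Your lower-bound argument is sound up to $I(u)\le0$ and $\lambda_\ast\le1$. In particular, Hölder on finite-measure subsets of $Q$ plus Fatou and exhaustion does give $\|u\|_{W_0^{s,1}(\Omega)}\le\liminf_n\frac1{p_n}\|u_n\|^{p_n}_{W_0^{s_{p_n},p_n}(\Omega)}$. This is a valid substitute for the paper's use of \cite[Lemma 2.4]{DLi-2024}.

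\textbf{The gap.} The gap is the final comparison, and the idea you sketch does not close it. The inequality $E-\frac1\theta I\ge(1-\frac1\theta)\|\cdot\|_{W_0^{s,1}(\Omega)}$ bounds $E$ only \emph{from below} on $\mathcal N$. You need an \emph{upper} bound on $E(\lambda_\ast u)$, so the monotonicity of that lower bound is irrelevant. What you actually need is that $\lambda\mapsto E(\lambda u)-\beta I(\lambda u)$ is nondecreasing on $[\lambda_\ast,1]$. Its derivative is
\[
(1-\beta)\|u\|_{W_0^{s,1}(\Omega)}+\frac1\lambda\int_\Omega\Big(\beta f'(x,\lambda u)(\lambda u)^2-(1-\beta)f(x,\lambda u)\lambda u\Big)\,dx .
\]
Assumption \eqref{Assump:f_2} makes the integral nonnegative only when $\beta\ge1/\Theta$. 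With your choice $\beta=1/\theta$, this requires $\theta\le\Theta$, which is not among the hypotheses.

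\textbf{The missing idea.} Take $\beta=1$, i.e.\ express the energy on the Nehari manifolds purely through the nonlinearity.
\begin{itemize}
\item For $v\in\mathcal N$, $E(v)=\int_\Omega(f(x,v)v-F(x,v))\,dx$.
\item For $u_n\in\mathcal N^{(p_n)}$, $E^{(p_n)}(u_n)=\frac1{p_n}\int_\Omega f(x,u_n)u_n\,dx-\int_\Omega F(x,u_n)\,dx$. By Lemma~\ref{Nonlinearity convergence} this converges to $\int_\Omega(f(x,u)u-F(x,u))\,dx$, with no seminorm and no lower semicontinuity involved.
\item Set $\Psi(x,\mu):=\mu uf(x,\mu u)-F(x,\mu u)$. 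Then $\partial_\mu\Psi=\mu u^2f'(x,\mu u)\ge0$, because \eqref{Assump:f_1}--\eqref{Assump:f_2} give $t^2f'(x,t)>(\Theta-1)tf(x,t)>0$ for $t\ne0$.
\end{itemize}
Hence
\[
d\le E(\lambda_\ast u)=\int_\Omega\Psi(x,\lambda_\ast)\,dx\le\int_\Omega\Psi(x,1)\,dx=\lim_{n\to\infty}E^{(p_n)}(u_n)\le\liminf_{n\to\infty}d_{p_n},
\]
which is exactly how the paper concludes.

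\textbf{Repairing your version.} Your $E-\beta I$ scheme also works verbatim for any $\beta\in[1/\Theta,1]$. The coefficient $\frac1{p_n}-\beta\to1-\beta\ge0$ keeps your lower semicontinuity step valid. The choice $\beta=1/\theta$ is what breaks it.
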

    \begin{proof}
     Choose a sequence $p_n\in (1, q)$ such that $p_n\to 1^{+}$ as $n\to\infty$. Let $v\in\mathcal{N}\cap C_{c}^{\infty}(\Omega)$. From Lemma \ref{existence of lambda*p}(iii), there exists a unique $\lambda_{p_n}>0$ such that $\lambda_{p_n}v\in \mathcal{N}^{(p_n)}$ and
\[
d_{p_n} \leq E^{(p_n)}(\lambda_{p_n} v)
= \frac{\lambda_{p_n}^{p_n}}{p_n}\iint_{Q}\frac{\abs{v(x)-v(y)}^{p_n}}{\abs{x-y}^{N+s_{p_n}p_n}} \,dx\,dy
- \int_\Omega F(x,\lambda_{p_n} v)\,dx.
\]
From Lemma \ref{strong convergence u_0} and Lemma \ref{boundedness of lambda}, we note that $\lambda_{p_n}^{p_n} \to 1$ as $n \to \infty$ and 
\[
E^{(p_n)}(\lambda_{p_n} v)\to E(v)\quad\text{as}\quad n\to\infty.
\]
Consequently,
\begin{equation}
    \label{limsup dpn and Ev}
    \limsup_{n \to \infty} d_{p_n} \leq E(v)
\quad \text{for all } v \in \mathcal{N} \cap C_c^\infty(\Omega).
\end{equation}
Next, we claim that
\[
\inf_{v \in \mathcal{N} \cap C_c^\infty(\Omega)} E(v) = d.
\]
The inequality $\inf_{v\in\mathcal{N} \cap C_c^\infty(\Omega)} E(v) \geq d$ is immediate, since $\mathcal{N} \cap C_c^\infty(\Omega) \subset \mathcal{N}$. 

To prove the reverse inequality, let $\{w_k\}_{k \in \mathbb{N}} \subset \mathcal{N}$ be such that $E(w_k) \to d$.  Since $C_c^\infty(\Omega)$ is dense in $W^{s,1}_0(\Omega)$, for each $k$, approximate $w_k$ by a sequence $\{\phi_{k,j}\}_{j\in\mathbb{N}} \subset C_c^\infty(\Omega)$ such that $\phi_{k,j} \to w_k$ strongly in $W^{s,1}_0(\Omega)$. For each $j$, let $\mu_{k,j} > 0$ be the unique scalar given by Lemma \ref{Lemma:2.3}(iii) such that $\mu_{k,j}\phi_{k,j} \in \mathcal{N}$, {\it i.e.}

\begin{equation}
    \label{ukj convergnece}
    \iint_{Q}\frac{\abs{\phi_{k,j}(x)-\phi_{k,j}(y)}}{\abs{x-y}^{N+s}}\,dx\,dy=\int_{\Omega}f(x, \mu_{k,j}\phi_{k,j})\phi_{k,j}\,dx.
\end{equation}
Since $\phi_{k,j} \to w_k$ strongly in $W^{s,1}_0(\Omega)$ arguing as in the case of $\lambda_{p_n}$ (with $\phi_{k,j} \to w_k$ in place of a fixed $v$) with \eqref{Assump:f_0}, Proposition \ref{prop:embedding}(i) and \eqref{Assump:f_2} we deduce that $\mu_{k,j} \to 1$ as $j \to \infty$. It follows that
\[
E(\mu_{k,j}\phi_{k,j}) \to E(w_k) \quad \text{as } j \to \infty.
\]
Hence, choosing $j = j(k)$ sufficiently large, we obtain
\[
\inf_{v \in \mathcal{N} \cap C_c^\infty(\Omega)} E(v)
\leq E(\mu_{k,j(k)}\phi_{k,j(k)})
\leq E(w_k) + \frac{1}{k} \to d.
\]
Therefore,
\begin{equation}
    \label{lim sup equality}
    \inf_{v \in \mathcal{N} \cap C_c^\infty(\Omega)} E(v) = d.
\end{equation}
Combining \eqref{lim sup equality} with \eqref{limsup dpn and Ev}, we obtain
\begin{equation}
\label{d_p_n:limsup}
\limsup_{n \to \infty} d_{p_n} \leq d.
\end{equation}
Next, for each $n \in \mathbb{N}$, let $u_n \in \mathcal{N}^{(p_n)}$ be such that
\begin{equation}\label{u_n:min sq}
E^{(p_n)}(u_n) \leq d_{p_n} + \frac{1}{n}.
\end{equation}
From \cite[Lemma 2.4]{DLi-2024}, it follows that $u_n \in W^{s,1}_0(\Omega)\setminus\{0\}$.
From \eqref{u_n:min sq}, assumption \eqref{Assump:f_1} and Lemma \ref{dp id bounded} we have
\[
\left(\frac{1}{p_n}-\frac{1}{\theta}\right)\|u_n\|_{W^{s_{p_n},p_n}_{0}(\Omega)}^{p_n} \leq E^{(p_n)}(u_n)\leq d_{p_n}+\frac{1}{n}\leq M+1. \]
Since $p_n\in (1,q)$ and $\theta>q$, it follows that
$$\|u_n\|_{W^{s_{p_n},p_n}_{0}(\Omega)}\leq \left(\frac{(M+1)(\theta p_n)}{\theta-p_n}\right)^{\frac{1}{p_n}}\leq \left(\frac{(M+1)(\theta q)}{\theta-q}\right)=:C_4. $$
Using \cite[Lemma 2.4]{DLi-2024}, it follows that $\{u_n\}_{n\in\mathbb{N}}$ is uniformly bounded in $W^{s,1}_0(\Omega)$.
From Proposition \ref{prop:embedding}(ii) for all $r \in [1,1_s^*)$, there exist a subsequence of $\{u_n\}_{n\in\mathbb{N}}$ (still denoted by $\{u_n\}_{n\in\mathbb{N}}$) and a function $u \in L^r(\Omega)$ such that
\[
u_n \to u \quad \text{in } L^r(\Omega) \qquad\text{and}
\qquad
u_n(x) \to u(x) \quad \text{a.e. in } \Omega.
\]
Next, we show that $u \neq 0.$ Suppose by contradiction that $u\equiv 0$. From the identity $I^{(p_n)}(u_n)=0$ and assumption \eqref{Assump:f_0}, we obtain
\begin{equation}
    \label{u is nonzero}
    \|u_n\|_{W^{s_{p_n},p_n}_{0}(\Omega)}^{p_n}
\leq C\|u_n\|_{L^{r}(\Omega)}^r\to 0,
\end{equation}
then from \eqref{u is nonzero}, Lemma \ref{Nonlinearity convergence} and Lemma \ref{Positive:depth dp}, we have 
$$0< C< d_{p_n}\leq E^{(p_n)}(u_n)=\frac{1}{p_n}\|u_n\|_{W^{s_{p_n},p_n}_{0}(\Omega)}^{p_n}-\int_{\Omega}F(x,u_n)\,dx \to 0.$$
This contradicts the fact that $u_n\in \mathcal{N}^{(p_n)}$. It follows that $u \not\equiv 0$.
Using the fact that $I^{(p_n)}(u_n)=0$, we write
\begin{equation}
    \label{Epn inequality}
    E^{(p_n)}(u_n)
= \frac{1}{p_n} \int_\Omega f(x,u_n)u_n\,dx
- \int_\Omega F(x,u_n)\,dx.
\end{equation}
Using $u_n\to u$ in $L^r(\Omega)$ and Lemma \ref{Nonlinearity convergence}, we obtain
\begin{equation}
    \label{f(x,un) and vitali}
    \int_\Omega f(x,u_n)u_n\,dx \to \int_\Omega f(x,u)u\,dx
\quad \text{and} \quad
\int_\Omega F(x,u_n)\,dx \to \int_\Omega F(x,u)\,dx.
\end{equation}
Since ${p_n} \to 1^{+}$, from \eqref{Epn inequality} and \eqref{f(x,un) and vitali}, it follows that
\begin{equation}\label{Lim :Epn(u_n)}
\lim_{n\to\infty} E^{(p_n)}(u_n)
= \int_\Omega f(x,u)u\,dx - \int_\Omega F(x,u)\,dx.
\end{equation}
Using pointwise convergence and Fatou's lemma, we obtain
\begin{equation}\label{eq:Fatou}
\|u\|_{W^{s,1}_{0}(\Omega)}
\leq \liminf_{n\to\infty} \|u_n\|_{W^{s,1}_{0}(\Omega)}.
\end{equation}
On the other hand, from $I^{(p_n)}(u_n)=0$, we have
\[
\lim_{n\to\infty}\|u_n\|_{W^{s_{p_n},p_n}_{0}(\Omega)}^{p_n}
= \lim_{n\to\infty}\int_\Omega f(x,u_n)u_n\,dx = \int_\Omega f(x,u)u\,dx,
\]
which implies $\lim_{n\to\infty}\|u_n\|_{W^{s_{p_n},p_n}_{0}(\Omega)} = \int_\Omega f(x,u)u\,dx$. Using \cite[Lemma 2.4]{DLi-2024}, we get
\[
\lim_{n\to\infty}\|u_n\|_{W^{s,1}_{0}(\Omega)}
\leq \lim_{n\to\infty}C_{1}^{\frac{1-p_n}{p_n}} \|u_n\|_{W^{s_{p_n},p_n}_{0}(\Omega)} = \int_\Omega f(x,u)u\,dx.
\]
Therefore,
\begin{equation}\label{eq:limsupnorm}
\limsup_{n\to\infty} \|u_n\|_{W^{s,1}_{0}(\Omega)} \leq \int_\Omega f(x,u)u\,dx.
\end{equation}
Combining \eqref{eq:Fatou} and \eqref{eq:limsupnorm}, we conclude that
\[
\|u\|_{W^{s,1}_{0}(\Omega)} \leq  \int_\Omega f(x,u)u\,dx.
\]
Hence $I(u) \leq 0$ and from \eqref{eq:Fatou}, we have  $u \in W^{s,1}_0(\Omega)\setminus\{0\}$. Further Lemma \ref{Lemma:2.3}(iii) ensures the existence of a unique $\mu^*>0$ such that $\mu^* u \in \mathcal{N}$. Moreover,  $I(\mu u)\geq 0$ for all $0\leq \mu\leq \mu^{*}$ and $I(\mu u)\leq 0$ for all $\mu\geq \mu^{*}$, it follows that $\mu^* \leq 1$. Define $\Psi(x,\mu) := \mu u(x)f(x,\mu u(x)) - F(x,\mu u(x))$ for $\mu\in [0,\infty)$ and $x \in \Omega$. Then from condition \eqref{Assump:f_2}, we obtain
\[
\frac{\partial \Psi}{\partial \mu}
= \mu u(x)^2 f'(x,\mu u(x)) \geq 0.
\]
 Hence $\Psi(x,\cdot)$ is non-decreasing for $\mu\in [0,\infty)$. Since $\mu^*u\in \mathcal{N}$ and $\mu^{*} \leq 1$, we have

\begin{align}
\label{eq:monotone}
E(\mu^* u)&= \mu^{*}\iint_{Q}\frac{\abs{u(x)-u(y)}}{\abs{x-y}^{N+s}}\,dx\,dy-\int_{\Omega}F(x,\mu^{*}u)\,dx\nonumber\\
&=\int_{\Omega}\left(f(x,\mu^{*}u)\mu^{*}u - F(x,\mu^{*}u)\right)\,dx \leq \int_\Omega f(x,u)u\,dx - \int_\Omega F(x,u)\,dx.
\end{align}
Combining \eqref{eq:monotone}, \eqref{Lim :Epn(u_n)}, and \eqref{u_n:min sq}, we obtain
\[
d \leq E(\mu^* u)
\leq \int_\Omega f(x,u)u\,dx - \int_\Omega F(x,u)\,dx
= \lim_{n\to\infty} E^{(p_n)}(u_n)
\leq \liminf_{n\to\infty} d_{p_n}.
\]
Therefore, $\liminf_{n\to\infty} d_{p_n} \geq d.$ Combining this with \eqref{d_p_n:limsup}, we conclude that
\begin{equation}
    \label{convergence of dpn to d}
    \lim_{n\to \infty}d_{p_n} = d.
\end{equation}
\end{proof}
\begin{corollary}
\label{u0 in Wpn}
Let conditions \eqref{Assump:f_0}-\eqref{Assump:f_2} hold and $u_{0}\in W_{0}^{s_{q},q}(\Omega)$. Let $\{u^{[n]}_{0}\}_{n\in\mathbb{N}}\subset W_{0}^{s_{q},q}(\Omega)$ such that $u^{[n]}_{0} \to u_0 $ in $W_{0}^{s_{q},q}(\Omega)$, and $E(u_0)<d$ and $I(u_0)>0$. Then there exists $n_{0}\in\mathbb{N}$ such that 
       $$E^{(p_n)}(u_{0}^{[n]})< d_{p_n} \quad\text{and}\quad I^{(p_n)}(u_{0}^{[n]})>0\qquad \forall\, n\geq n_{0}.$$   
\end{corollary}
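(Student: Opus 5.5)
The argument combines three limits already established: $E^{(p_n)}(u_0^{[n]})\to E(u_0)$, $I^{(p_n)}(u_0^{[n]})\to I(u_0)$, and $d_{p_n}\to d$. From these, strict inequalities at the limit pass to strict inequalities for all large $n$. Throughout, $p_n\in(1,q)$ is the sequence with $p_n\to1^{+}$ used in Lemmas \ref{boundedness of lambda} and \ref{Dpn converges}, and we take $n$ large enough that $u_0^{[n]}\in W_0^{s_{p_n},p_n}(\Omega)$. This membership holds because $W_0^{s_q,q}(\Omega)\hookrightarrow W_0^{s_{p},p}(\Omega)$ for $p\le q$, by the Hölder splitting used in Claim 1 of Lemma \ref{strong convergence u_0}, or by \cite[Lemma 2.4]{DLi-2024}. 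So the functionals $E^{(p_n)}$ and $I^{(p_n)}$ make sense at $u_0^{[n]}$.

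\textbf{Step 1.} Since $u_0^{[n]}\to u_0$ in $W_0^{s_q,q}(\Omega)$ with $q\in(1,\underline p)$, Lemma \ref{strong convergence u_0} gives
\[
E^{(p_n)}(u_0^{[n]})\to E(u_0) \quad\text{and}\quad I^{(p_n)}(u_0^{[n]})\to I(u_0).
\]

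\textbf{Step 2.} Lemma \ref{Dpn converges} gives $d_{p_n}\to d$. Set $\varepsilon:=\tfrac12\min\{d-E(u_0),\,I(u_0)\}>0$. Choose $n_1$ such that $|E^{(p_n)}(u_0^{[n]})-E(u_0)|<\varepsilon$ and $|I^{(p_n)}(u_0^{[n]})-I(u_0)|<\varepsilon$ for $n\ge n_1$. Choose $n_2$ such that $|d_{p_n}-d|<\varepsilon$ for $n\ge n_2$. Then for $n\ge n_0:=\max\{n_1,n_2\}$,
\[
E^{(p_n)}(u_0^{[n]})<E(u_0)+\varepsilon\le d-\varepsilon<d_{p_n} \quad\text{and}\quad I^{(p_n)}(u_0^{[n]})>I(u_0)-\varepsilon>0.
\]

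\textbf{Main point to check.} The only delicate issue is that the indices are consistent. Lemma \ref{strong convergence u_0} pairs the $n$-th datum $u_0^{[n]}$ with the exponent $p_n$. Lemma \ref{Dpn converges} must be applied along that same sequence $p_n$, or along any sequence tending to $1^{+}$, since its proof works for an arbitrary such sequence. Once the same sequence is used in both lemmas, the conclusion is immediate.
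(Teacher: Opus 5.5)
Your proposal is correct and follows the paper's route. Like the paper, you take $E^{(p_n)}(u_0^{[n]})\to E(u_0)$ and $I^{(p_n)}(u_0^{[n]})\to I(u_0)$ from Lemma \ref{strong convergence u_0} and $d_{p_n}\to d$ from Lemma \ref{Dpn converges}, then pass the strict inequalities to all large $n$ by an $\varepsilon$-argument (the paper uses $\delta/4$ with $\delta=d-E(u_0)$ and handles $I$ "similarly"); your added remarks on the well-definedness of $E^{(p_n)}(u_0^{[n]})$ and on using one sequence $p_n$ in both lemmas are correct and state what the paper leaves implicit.
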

\begin{proof}
Since $E(u_{0})<d$ and $I(u_{0})>0$, let $\delta := d - E(u_0) > 0$. From Lemma \ref{strong convergence u_0} and Lemma \ref{Dpn converges}, we have $E^{(p_n)}(u_0^{[n]}) \to E(u_0)$ and $d_{p_n} \to d$ as $n\to\infty$. It implies that there exist $n_1,n_2\in \mathbb{N}$ such that
$$\abs{E^{(p_n)}(u_0^{[n]})- E(u_0)}<\frac{\delta}{4} \quad\forall\, n\geq n_1,\qquad \abs{d_{p_n}-d}<\frac{\delta}{4}\quad\forall\, n\geq n_2.$$
For $\overline{n}=\max\{n_1,n_2\}$, we have
\begin{align*}
    E^{(p_n)}(u_0^{[n]})-d_{p_n}&=E^{(p_n)}(u_0^{[n]})-E(u_0)+E(u_0)-d+ d-d_{p_n}<\frac{\delta}{4}-\delta+\frac{\delta}{4}<\frac{-\delta}{2}.
\end{align*}
This implies that for $n\geq \overline{n}$, we have 
\begin{equation*}
    E^{(p_n)}(u_0^{[n]}) < d_{p_n}.
\end{equation*}  
Similarly we can prove that there exists $\tilde{n}\in\mathbb{N}$ such that
$I^{(p_n)}(u_0^{[n]}) > 0,\,\,\forall\, n\geq\tilde{n}$. Hence, the claim.
\end{proof}
\section{Global existence of weak/strong solutions}
\label{Global existence of weak/strong solutions}
\subsection{On the case of low initial energy:  \texorpdfstring{$E(u_{0})<d$}{E(u0)<d}}
In this subsection, we address the case of low initial energy. We establish the global existence of weak solution and strong solution.

 \begin{proof}[\textbf{Proof of the Theorem \ref{Main Theorem low initial energy}}]
Choose a sequence $p_n\in (1, q)$ such that $p_n\to 1^{+}$. Let $u_{0}\in W_{0}^{s_{q},q}(\Omega)\cap L^{2}(\Omega)$ such that $E(u_{0})<d$ and $I(u_{0})>0$. From Lemma \ref{strong convergence u_0} there exists a sequence $\{u_{0}^{[n]}\}_{n\in\mathbb{N}}\subset W_{0}^{s_q,q}(\Omega)\cap L^{2}(\Omega)$ such that $u_{0}^{[n]}\to u_{0}$ in $W_{0}^{s_{q},q}(\Omega)\cap L^{2}(\Omega)$ as $n\to\infty$. Using $u_0 \in W$ and Corollary \ref{u0 in Wpn}, there exists $n_0 \in \mathbb{N}$ such that 
\begin{equation}
    \label{Ep assumption}
    E^{(p_{n})}(u^{[n]}_{0})<d_{p_n}\qquad \text{and}\qquad I^{(p_n)}(u^{[n]}_{0})>0 \quad \text{for all $n\geq n_{0}$}.
    \end{equation}
Now, by applying Lemma \ref{Approximate problem main thm}  for each $n\geq n_{0}$, there exists a global weak solution $u^{(p_{n})}$ to the problem \eqref{main:problem2} for $p=p_n$ such that
$$\int_{0}^{t}||u^{(p_n)}_t(\cdot,\tau) ||^{2}_{L^{2}(\Omega)} ~d\tau <d_{p_{n}},\,\,\|u^{(p_{n})}(\cdot,t)\|_{W_{0}^{s_{p_n}, p_{n}}(\Omega)}\leq \left(\frac{d_{p_{n}}p_{n}\theta}{\theta-p_{n}}\right)^{\frac{1}{p_{n}}} \quad\text{for all } t\in [0,+\infty)$$
and
\begin{equation}
    \label{approximate sol L2 bound}
    \|u^{(p_{n})}(\cdot,t)\|_{L^{2}(\Omega)}\leq \|u_{0}^{[n]}\|_{L^{2}(\Omega)}+o(1) \quad\text{for all } t\in [0,+\infty).
\end{equation}
Using $\theta>q$ and the fact that $d_{p_{n}}$ is bounded, by Lemma \ref{dp id bounded} there exists $M>0$ such that
\begin{equation}
\label{bound upn}
    \int_{0}^{t}||u^{(p_n)}_t(\cdot,\tau) ||^{2}_{L^{2}(\Omega)}\,d\tau <M,\,\,\|u^{(p_{n})}(\cdot,t)\|_{W_{0}^{s_{p_n},p_{n}}(\Omega)}^{p_n} \leq \frac{Mq\theta}{\theta-q}\quad\text{for all } t\in [0,+\infty).
\end{equation}
\textbf{Claim 1:} Uniform boundedness of $u^{(p_{n})}$ in $W_{0}^{s,1}(\Omega)$ for all $t\in (0,+\infty).$\\
For $n \geq n_0$, we have $u^{(p_n)}\in L^{\infty}(0,+\infty;W^{s_{p_n},p_n}_{0}(\Omega)\cap L^{2}(\Omega))$ and from \eqref{bound upn}, we have
\begin{equation}
    \label{bound of upn in Wsp}
\iint_{Q}\frac{|u^{(p_n)}(x,t)-u^{(p_n)}(y,t)|^{p_n}}{|x-y|^{N+s_{p_n}p_n}}\,dx\,dy \leq \frac{Mq\theta}{\theta-q},
\end{equation}
and 
\begin{align}
    \label{existence of up}
     \int_{\Omega}u^{(p_n)}_{t}\phi \,dx+\iint_{Q}\frac{|u^{(p_n)}(x,t)-u^{(p_n)}(y,t)|^{p_n-2}(u^{(p_n)}(x,t)-u^{(p_n)}(y,t))}{|x-y|^{N+s_{p_n}p_n}}&(\phi(x)-\phi(y))\,dx\,dy\nonumber\\
     &=\int_{\Omega}f(x,u^{(p_n)})\phi \,dx
\end{align}
for all
$\phi\in W_{0}^{s_{p_n},p_n}(\Omega)\cap L^{2}(\Omega)$ and a.e. $t\in [0,+\infty).$ From \cite[Lemma 2.4]{DLi-2024} and using the fact that $p_n\in (1,q)$,  we have 
\begin{align}
\label{u in Ws1}
\|u^{(p_{n})}(\cdot,t)\|_{W_{0}^{s,1}(\Omega)}&\leq C^{\frac{1-p_{n}}{p_n}}_{1}\,\|u^{(p_{n})}(\cdot,t)\|_{W_{0}^{s_{p_n},p_{n}}(\Omega)} \leq \max\{1,C^{\frac{1-q}{q}}_{1}\}\,\|u^{(p_{n})}(\cdot,t)\|_{W_{0}^{s_{p_n},p_{n}}(\Omega)}
\end{align}
From \eqref{bound upn}, \eqref{u in Ws1} and \eqref{approximate sol L2 bound}, we have 
$$\|u^{(p_{n})}(\cdot,t)\|_{W_{0}^{s,1}(\Omega)}\leq C_{q,\theta}\quad \text{and}\quad u^{(p_{n})}\in L^{\infty}(0,+\infty;W_{0}^{s,1}(\Omega)\cap L^{2}(\Omega)).$$
where $C_{q,\theta}:= \max\{1,C^{\frac{1-q}{q}}_{1}\}\,\max\left\{1,\left(\frac{Mq\theta}{\theta-q}\right) \right\}$. Furthermore, using condition \eqref{Assump:f_0} and Proposition \ref{prop:embedding}(i), we obtain
\begin{align}
\label{f(x,u^{p_n}) uniform bound}
    \int_{\Omega}|f(x,u^{(p_n)}(\cdot,t))|^{\frac{r}{r-1}}\,dx \leq C^{\frac{r}{r-1}}\int_{\Omega}|u^{(p_n)}(\cdot,t)|^{r}\,dx \leq C^{\frac{r}{r-1}} (C_{Sob})^{r}\|u^{(p_n)}(\cdot,t)\|^{r}_{{W_{0}^{s,1}(\Omega)}} \leq C^{\frac{r}{r-1}} \left(C_{Sob}C_{q,\theta}\right)^{r}.
    \end{align}
    
This establishes the existence of $u \in L^{\infty}(0,+\infty;W_{0}^{s,1}(\Omega)\cap L^{2}(\Omega))$ and a subsequence of $\{u^{(p_n)}\}_{n\in \mathbb{N}}$ (still denoted by $\{u^{(p_n)}\}_{n\in \mathbb{N}}$) such that, as $n \to \infty$, we have
 \begin{equation}
 \label{weak convergence of upn}
 \left.
 \begin{aligned}
 u^{(p_n)}&\stackrel{\ast}{\rightharpoonup} u&&\text{ in }L^{\infty}(0,+\infty;L^{2}(\Omega)),\\
 u_{t}^{(p_n)}&\weak u_{t} &&\text{ in }L^{2}(0,+\infty;L^{2}(\Omega)),\\
 f(x,u^{(p_n)})&\stackrel{\ast}{\rightharpoonup}\xi && \text{ in }L^{\infty}(0,+\infty;L^{\frac{r}{r-1}}(\Omega)).
 \end{aligned}
 \right\}
 \end{equation}
Then, again from Aubin-Lions compactness theorem\cite[Corollary 4, p. 85]{simon} for any $T>0$, we get
 \begin{equation}
 \label{stong:conv of upn}
 \begin{aligned}
 u^{(p_n)}&\to u \text{ in } C([0,T];L^{q}(\Omega)),\text{  as } n \to \infty \text{ for all } q\in[1,1^{*}_{s}),
 \end{aligned}
  \end{equation}
  and so $\xi=f(x,u).$ Moreover, we have
  $$u^{(p_{n})}(x,t)\to u(x,t) \quad \text{a.e.} \quad (x,t)\in \Omega\times(0,+\infty).$$
  \textbf{Claim 2:} There exists $\eta(\cdot,\cdot,t)\in L^{\infty}(\mathbb{R}^{N}\times\mathbb{R}^{N}),\, \eta(x,y,t)=-\eta(y,x,t)$ for almost all $(x,y)\in \mathbb{R}^{N}\times\mathbb{R}^{N},\,\|\eta(\cdot,\cdot,t)\|_{L^{\infty}(\mathbb{R}^{N}\times\mathbb{R}^{N})}\leq 1$, such that following equality holds:
\begin{align*}
   \int_{\Omega}u_{t}\phi\,dx+\iint_{Q}\eta(x,y,t)\frac{\phi(x)-\phi(y)}{|x-y|^{N+s}}\,dx\,dy=\int_{\Omega}f(x,u)\phi \,dx \quad  \text{for all $\phi\in W_{0}^{s,1}(\Omega)\cap L^{2}(\Omega)$}
\end{align*}
and a.e. $t\in[0,+\infty)$. For $k>0$, we set
$$C_{p_{n},k}:=\left\{ (x,y)\in \mathbb{R}^{N}\times\mathbb{R}^{N}:\,\left|\frac{u^{(p_n)}(x,t)-u^{(p_n)}(y,t)}{|x-y|^{N+s}}\right|>k\right\}.$$
Then, by using the Chebyshev's inequality and \eqref{bound of upn in Wsp}, we obtain
\begin{equation}
    \label{measure of C_p}
    |C_{p_{n},k}|\leq\frac{1}{k^{p_n}}\iint_{Q}\abs{\frac{u^{(p_n)}(x,t)-u^{(p_n)}(y,t)}{|x-y|^{N+s}}}^{p_n}\,dx\,dy \leq \frac{C_{3}}{k^{p_{n}}},
\end{equation}
where $C_3:=\frac{Mq\theta}{\theta-q}$. On the other hand, for all $(x,y) \in \R^N \times \R^N $ and $t\geq 0$, we have
\[
\Phi(x,y,t, p_{n}, u^{(p_{n})}) \chi_{C^c_{p_{n},k}}(x,y) :=\left| \frac{u^{(p_{n})}(x,t) - u^{(p_{n})}(y,t)}{|x - y|^{N+s}} \right|^{p_{n}-2} \frac{u^{(p_{n})}(x,t) - u^{(p_{n})}(y,t)}{|x - y|^{N+s}} \chi_{C^c_{p_{n},k}}(x,y) \leq k^{p_{n}-1}.
\]
Therefore, for any $k \in \mathbb{N}$ there exists a subsequence of $\{p_n\}_{n\in \mathbb{N}}$, denoted by $\{p_{n_{j}}\}$, such that
\[
\Phi(x,y,t, p_{n_j}, u^{(p_{n_j})}) \chi_{C^c_{p_{n_j}}, k} (x,y)\stackrel{\ast}{\rightharpoonup} \eta_k(x,y,t),
\]
in $L^\infty (\R^N \times \R^N)$, with $\eta_k$ antisymmetric such that $\|\eta_k(\cdot,\cdot,t)\|_{L^\infty (\R^N \times \R^N)} \leq 1$ for all $t\geq 0$. Now there exists a subsequence $\{\eta_{k_m}\}$ of $\eta_{k}$ and an antisymmetric function $\eta \in L^\infty (\R^N \times \R^N)$ such that
\[
\eta_{k_m}(\cdot,\cdot,t) \stackrel{\ast}{\rightharpoonup} \eta(\cdot,\cdot,,t) \quad \text{in}\quad L^\infty (\R^N \times \R^N), \quad \text{and} \quad  \|\eta(\cdot,\cdot,t)\|_{L^\infty (\R^N \times \R^N)} \leq 1.
\]
In order to pass limit in \eqref{existence of up}, let us choose first $\phi\in C_{c}^{\infty}(\Omega).$ Then, for a fixed $q_{0}\in(1,\frac{N}{N+s-1})$, choose $r_{0}=\frac{(N+s)q_0-N}{q_0}$ such that $(N+s)q_{0}=N+r_{0}q_{0}$ and $\phi\in W_{0}^{r_{0},q_{0}}(\Omega)$. Let us fix $k \in \mathbb{N} $. From \eqref{existence of up}, we have
\begin{align}
\label{eta estimate}
&\iint_{Q} \Phi(x,y,t, p_{n_j}, u^{(p_{n_j})}) \chi_{C^c_{p_{n_j}}, k} (x,y) \frac{\phi(x) - \phi(y)}{|x-y|^{N+s}} \,dx\,dy - \int_{\Omega} (f(x,u^{(p_{n_j})}) - u_{t}^{(p_{n_j})}) \phi\,dx\nonumber\\
&= -\iint_{Q}\Phi(x,y,t, p_{n_j}, u^{(p_{n_j})}) \chi_{C_{p_{n_j}}, k} (x,y) \frac{\phi(x) - \phi(y)}{|x-y|^{N+s}}\,dx\,dy.
\end{align}
We will find the upper bound of the right-hand side of \eqref{eta estimate}. Now, for $p_{n_{j}} < q_0$, first we apply the  H\"older's inequality with the exponents $p_{n_j}$ and $p_{n_j}/(p_{n_j}-1)$, we obtain 
\begin{align}
\label{approximating upnj}
&\left|\iint_{Q} \Phi(x,y,t, p_{n_j}, u^{(p_{n_j})}) \chi_{C_{p_{n_j}}, k} (x,y) \frac{\phi(x) - \phi(y)}{|x-y|^{N+s}} \,dx\,dy\right|\nonumber \\
& \leq \left( \iint_{Q} \left| \frac{u^{(p_{n_{j}})}(x,t) - u^{(p_{n_j})}(y,t)}{|x-y|^{N+s}} \right|^{p_{n_{j}}}\,dx\,dy \right)^{\frac{p_{n_{j}}-1}{p_{n_{j}}}} \times \left( \iint_{C_{p_{n_{j}},k}} \left| \frac{\phi(x) - \phi(y)}{|x-y|^{N+s}} \right|^{p_{n_{j}}} \,dx\,dy \right)^{1/p_{n_{j}}}
\end{align}
Next, we estimate the second factor on the right-hand side of \eqref{approximating upnj}. Using the H\"older's inequality with exponents $q_{0}/p_{n_j}$ and $q_0/(q_{0}-p_{n_j})$ and using the fact that $(N+s)q_{0}=N+r_{0}q_{0}$, $\|\phi\|_{W_{0}^{r_{0},q_0}(\Omega)} \leq C_\phi$ together with the measure estimate $|C_{p_{n_j},k}| \leq \frac{C_3}{k^{p_{n_j}}}$, we get
\begin{align}
\label{approximating phi}
\left( \iint_{C_{p_{n_{j}},k}} \left| \frac{\phi(x) - \phi(y)}{|x-y|^{N+s}} \right|^{p_{n_{j}}} \,dx\,dy \right)^{1/p_{n_{j}}}
\leq\frac{C_{\phi} C_3^{\frac{q_0-p_{n_{j}}}{p_{n_{j}}q_0}}}{k^{\frac{q_0-p_{n_{j}}}{q_0}}}.
\end{align}
Combining \eqref{approximating phi} with \eqref{approximating upnj} and using \eqref{bound of upn in Wsp} in view of the fact that $(N+s)p_{n_j} = N+ s_{p_{n_j}} p_{n_j}$, we obtain
\begin{align*}
 &\left|\iint_{Q} \left| \frac{u^{(p_{n_j})}(x,t) - u^{(p_{n_j})}(y,t)}{|x-y|^{N+s}} \right|^{p_{n_{j}}-2} \frac{u^{(p_{n_j})}(x,t) - u^{(p_{n_j})}(y,t)}{|x-y|^{N+s}} \chi_{C_{p_{n_{j}},k}}(x,y) \frac{\phi(x) - \phi(y)}{|x-y|^{N+s}} \,dx\,dy\right|\\ 
&\qquad\qquad\qquad\qquad\qquad\qquad\qquad\qquad\qquad\qquad\qquad\qquad\qquad\qquad\qquad\qquad \leq\frac{{C_3}^{\frac{p_{n_{j}}-1}{p_{n_{j}}}}C_{\phi}C_{3}^{{\frac{q_0-p_{n_{j}}}{p_{n_{j}}q_0}}}}{k^{\frac{q_0-p_{n_{j}}}{q_0}}}.  
\end{align*}
Therefore, taking limits as $j \to \infty$ in \eqref{eta estimate} in view of \eqref{weak convergence of upn}, we get

\[
\left| \iint_{Q} \frac{\eta_k(x,y,t)(\phi(x) - \phi(y))}{|x-y|^{N+s}}\,dx\,dy - \int_{\Omega} (f(x,u) - u_{t}(x,t)) \phi(x)\,dx \right| \leq \frac{C_{\phi}C_{3}^{\frac{q_{0}-1}{q_{0}}}}{k^{\frac{q_{0}-1}{q_{0}}}}.
\]
In particular,
\[
\left|\iint_{Q}\frac{\eta_{k_{m}}(x,y,t)(\phi(x)-\phi(y))}{|x-y|^{N+s}}\,dx\,dy - \int_{\Omega} (f(x,u)-u_{t}(x,t)) \phi(x)\,dx \right| \leq \frac{C_\phi C_{3}^{\frac{q_{0}-1}{q_{0}}}}{k^{\frac{q_{0}-1}{q_{0}}}_m}.
\]
Therefore, taking the limit as $ m\to\infty $, we obtain that
\begin{equation}
    \label{existence of eta}
\iint_{Q}\frac{\eta(x,y,t)(\phi(x)-\phi(y))}{|x-y|^{N+s}}\,dx\,dy - \int_{\Omega} (f(x,u)-u_{t}(x,t)) \phi(x)\,dx = 0.
\end{equation}
Now suppose that $\phi \in W_0^{s,1}(\Omega) \cap L^2(\Omega)$ there exists $\phi_n \in C_{c}^{\infty}(\Omega)$ such that
\[
\phi_n \to \phi \quad \text{in } W_0^{s,1}(\Omega) \cap L^2(\Omega) \quad \text{as } n \to +\infty.
\]
By Fatou's Lemma and \eqref{existence of eta}, we have
\begin{align*}
&\iint_{Q}\left( \frac{|\phi(x)-\phi(y)| }{|x-y|^{N+s}}- \frac{\eta(x,y,t)(\phi(x)-\phi(y))}{|x-y|^{N+s}}\right)\,dx\,dy\\
&\qquad\qquad\leq \liminf_{n \to \infty}\iint_{Q}\left( \frac{|\phi_n(x)-\phi_n(y)|}{|x-y|^{N+s}} - \frac{\eta(x,y,t)(\phi_n(x)-\phi_n(y))}{|x-y|^{N+s}} \right)\,dx\,dy\\
&\qquad\qquad=\iint_{Q}\frac{|\phi(x)-\phi(y)|}{|x-y|^{N+s}}\,dx\,dy - \int_{\Omega} (f(x,u)-u_{t}(x,t))\phi(x)\,dx.
\end{align*}
It implies that
\[
\iint_{Q} \frac{\eta(x,y,t)(\phi(x)-\phi(y))}{|x-y|^{N+s}}\,dx\,dy \geq \int_{\Omega} (f(x,u)-u_{t}(x,t)) \phi(x)\,dx
\]
for all $\phi \in W_0^{s,1}(\Omega) \cap L^2(\Omega)$. Since the above inequality is also true for $-\phi$, we get the equality
\begin{equation}
    \label{existence of weak sol}
\iint_{Q}\frac{\eta(x,y,t)(\phi(x)-\phi(y))}{|x-y|^{N+s}}\,dx\,dy - \int_{\Omega} (f(x,u)-u_{t}(x))\phi(x)\,dx = 0
\end{equation}
for all $\phi \in W_0^{s,1}(\Omega) \cap L^2(\Omega)$.
To finish the proof, we only need to show that
\begin{equation}
    \label{eta signum}
\eta(x,y,t) \in \operatorname{sign}(u(x,t)-u(y,t)) \quad \text{a.e. } (x,y,t) \in \R^N \times \R^N\times[0,\infty).
\end{equation}
Choose $\phi(\cdot) = u^{(p_{n})}(\cdot,t)$ in \eqref{existence of up} and $\phi(\cdot) = u(\cdot,t)$  in \eqref{existence of weak sol} and integrate both with respect to $t$ from $0$ to $T$, then we have
\begin{align}
\label{eta vector}
\int_{0}^{T} & \iint_{Q}\frac{|u^{(p_{n})}(x,t)-u^{(p_{n})}(y,t)|^{p_n}}{|x-y|^{(N+s)p_n}}\,dx\,dy\,dt = \int_{0}^{T}\int_{\Omega}f(x,u^{(p_{n})})u^{(p_{n})}(\cdot,t)\,dx\,dt -\frac{1}{2}\|u^{(p_{n})}(\cdot,T)\|_{L^{2}(\Omega)}^{2}\nonumber\\
&\qquad + \frac{1}{2}\|u^{(p_{n})}(\cdot,0)\|_{L^{2}(\Omega)}^{2}\nonumber\\
&=\int_{0}^{T}\iint_{Q}\frac{\eta(x,y,t)(u(x,t)-u(y,t))}{|x-y|^{N+s}} \,dx\,dy\,dt +\int_{0}^{T}\int_{\Omega}\left(f(x,u^{(p_{n})})u^{(p_{n})}(x,t)-f(x,u)u(x,t)\right)\,dx\,dt\nonumber\\
&\qquad-\left(\frac{1}{2}\|u^{(p_{n})}(\cdot,T)\|_{L^{2}(\Omega)}^{2}-\frac{1}{2}\|u(\cdot,T)\|_{L^{2}(\Omega)}^{2} \right) + \left( \frac{1}{2}\|u^{(p_{n})}(\cdot,0)\|_{L^{2}(\Omega)}^{2}-\frac{1}{2}\|u(\cdot,0)\|_{L^{2}(\Omega)}^{2}\right).
\end{align}
Letting $n\to \infty $ in \eqref{eta vector}, Lemma \ref{Nonlinearity convergence} and the strong convergence of the initial data in $L^{2}(\Omega)$, we obtain
\begin{equation}
\label{low semi Q}
    \liminf_{n \to \infty}\int_{0}^{T}\iint_{Q}\frac{|u^{(p_{n})}(x,t)-u^{(p_{n})}(y,t)|^{p_n}}{|x-y|^{(N+s)p_n}}\,dx\,dy\,dt
\leq\int_{0}^{T}\iint_{Q}\frac{\eta(x,y,t)(u(x,t)-u(y,t))}{|x-y|^{N+s}} dx\,dy\,dt.
\end{equation}
On the other hand since $\Omega$ is a bounded subset of $\mathbb{R}^{N}$, for given $\epsilon>0$ we can find $\Omega \subset A$ with $ |A|<+\infty $ such that
\[
\int_{\R^N \setminus A} \frac{1}{|x-y|^{N+s}}\,dx \leq \frac{\epsilon}{2C_{6}} \quad \forall\, y \in \Omega,
\]
where $C_{6}>0$ such that $\|u^{(p_{n})}\|_{L^{1}([0,T];L^{1}(\Omega))}\leq C_{6}.$
Then,
\begin{equation}
\label{lower semi}
\begin{split}
\int_{0}^{T}&\iint_{Q}\frac{|u^{(p_{n})}(x,t)-u^{(p_{n})}(y,t)|}{|x-y|^{N+s}}\,dx\,dy\,dt\\
&\leq 2\int_{0}^{T}\int_{\Omega} \int_{\mathbb{R}^{N}\setminus A} \frac{|u^{(p_{n})}(x,t)-u^{(p_{n})}(y,t)|}{|x-y|^{N+s}}\,dx\,dy\,dt +\int_{0}^{T}\int_{A} \int_{A} \frac{|u^{(p_{n})}(x,t)-u^{(p_{n})}(y,t)|}{|x-y|^{N+s}}\,dx\,dy\,dt\\
&= 2\int_{0}^{T}\int_{\Omega} |u^{(p_{n})}(y,t)| \left( \int_{\R^N \setminus A} \frac{1}{|x-y|^{N+s}}\,dx \right) dy\,dt +\int_{0}^{T}\int_{A} \int_{A} \frac{|u^{(p_{n})}(x,t)-u^{(p_{n})}(y,t)|}{|x-y|^{N+s}}\,dx\,dy\,dt\\
&\leq \epsilon +\int_{0}^{T}\int_{A}\int_{A} \frac{|u^{(p_{n})}(x,t)-u^{(p_{n})}(y,t)|}{|x-y|^{N+s}} \,dx\,dy\,dt.
\end{split}
\end{equation}
Using Fatou's Lemma together with \eqref{lower semi} and \eqref{low semi Q}, we obtain
\begin{align*}
\int_{0}^{T}\iint_{Q}\frac{|u(x,t)-u(y,t)|}{|x-y|^{N+s}}\,dx\,dy\,dt
&\leq \liminf_{n \to \infty} \int_{0}^{T}\iint_{Q}\frac{|u^{(p_{n})}(x,t)-u^{(p_{n})}(y,t)|}{|x-y|^{N+s}}\,dx\,dy\,dt\\
&\leq \epsilon + \liminf_{n \to \infty}\int_{0}^{T}\int_{A} \int_{A} \frac{|u^{(p_{n})}(x,t)-u^{(p_{n})}(y,t)|}{|x-y|^{N+s}}\,dx\,dy\,dt\\
&\leq \epsilon + \liminf_{n \to \infty} \left( \int_{0}^{T}\iint_{Q}\frac{|u^{(p_{n})}(x,t)-u^{(p_{n})}(y,t)|^{p_n}}{|x-y|^{(N+s)p_n}}\,dx\,dy\,dt\right)^{1/p_n}\\
&\qquad\qquad\times\left|(A \times A)\times [0,T]\right|^{\frac{p_n-1}{p_n}}\\
&\leq \epsilon + \iint_{Q}\eta(x,y,t)\frac{u(x,t)-u(y,t)}{|x-y|^{N+s}}\,dx\,dy\,dt.
\end{align*}
Therefore,
\[
\int_{0}^{T}\iint_{Q}\frac{|u(x,t)-u(y,t)|}{|x-y|^{N+s}}\,dx\,dy\,dt \leq \epsilon + \int_{0}^{T}\iint_{Q}\eta(x,y,t)\frac{u(x,t)-u(y,t)}{|x-y|^{N+s}}\,dx\,dy\,dt,
\]
Since $\epsilon>0$ and $T>0$ is arbitrary it follows that
$$\eta(x,y,t) \in \operatorname{sign}(u(x,t)-u(y,t)) \quad \text{a.e. } (x,y,t) \in \R^N \times \R^N\times[0,\infty).$$
\textbf{Claim 3:} 
$$\int_{0}^{t} \|u_t(\cdot, \tau)\|^{2}_{L^{2}(\Omega)} \, d\tau + E(u(\cdot,t))
    \leq E(u_{0}) \quad \text{a.e. } t \in [0,\infty).$$
From \eqref{sol: up}, we have 
$$\int_{0}^{t}||u_t^{(p_{n})}(\cdot, \tau)||^{2}_{L^{2}(\Omega)} ~d\tau + E^{(p_n)}(u^{(p_n)}(\cdot,t))\leq E^{(p_n)}(u^{(n)}_{0})\qquad \text{a.e. } t \in (0, +\infty).$$
Then, using \eqref{weak convergence of upn}-\eqref{stong:conv of upn}, the weak lower semicontinuity of the norm, Fatou's Lemma, and Lemmas \ref{Nonlinearity convergence} and \ref{strong convergence u_0}, we obtain
\begin{align*}
   \int_{0}^{t} & ||u_t(\cdot, \tau)||^{2}_{L^{2}(\Omega)}d\tau +\iint_{Q}\frac{|u(x,t)-u(y,t)|}{|x-y|^{N+s}}\,dx\,dy \\
   &\leq \liminf_{n \to \infty}\int_{0}^{t}||u^{(p_n)}_t(\cdot,\tau)||^{2}_{L^{2}(\Omega)}d\tau + \liminf_{n \to \infty}\frac{1}{p_n}\iint_{Q}\frac{|u^{(p_n)}(x,t)-u^{(p_n)}(y,t)|^{p_n}}{|x-y|^{N+s_{p_n}p_n}}\,dx\,dy\\
   &\leq\liminf_{n \to \infty}\left( E^{(p_n)}(u^{(n)}(\cdot,0)) + \int_{\Omega}F(x,u^{(p_n)})\,dx \right)\\
    &=\lim_{n \to \infty}\left( E^{(p_n)}(u^{(n)}(\cdot,0)) + \int_{\Omega}F(x,u^{(p_n)})\,dx \right) =E(u_{0}) + \int_{\Omega}F(x,u)\,dx. 
\end{align*}
Thus, we have \textbf{Claim 3}. This establishes the global existence of weak solutions.
\end{proof}

 \begin{proof}[\textbf{Proof of the Theorem \ref{Main Theorem low initial energy-strong}}]
Since $C_{c}^{\infty}(\Omega)$ is dense in $W_{0}^{s,1}(\Omega)\cap L^{2}(\Omega)$, then there exists a sequence $u_{0}^{(m)}\in C_{c}^{\infty}(\Omega)\subset W_{0}^{s_{q},q}(\Omega)\cap L^{2}(\Omega)$ such that $u_{0}^{(m)}\to u_{0}$ in $W_{0}^{s,1}(\Omega)\cap L^{2}(\Omega)$. Due to the fact that $E(u_{0})<d, I(u_{0})>0$ and both $E$ and $I$ are continuous functionals on $W_{0}^{s,1}(\Omega)$. Therefore, there exists $m_{0}\in \mathbb{N}$ such that $E(u_{0}^{(m)})<d$ and $I(u_{0}^{(m)})>0$ for all $m\geq m_{0}$. For each fixed $m\geq m_{0}$, the existence of global weak solution $u^{(m)}$ for $u_{0}^{(m)}\in W_{0}^{s_{q},q}(\Omega)\cap L^{2}(\Omega)$ follows by \textbf{Claim 1, Claim 2} and \textbf{Claim 3} in the proof of Theorem \ref{Main Theorem low initial energy}. Using \eqref{Assump:f_3}, we obtain $f(\cdot, u^{(m)}) \in L^2(\Omega).$ By taking $g= f(\cdot, u^{(m)}) - u_t^{(m)}$ in  Lemma \ref{continuity of strong solution}, we obtain
       \begin{equation}
           \label{u0m strong solution}
           u^{(m)}\in C(0,\infty ; W_{0}^{s,1}(\Omega))\,\,\text{and}\,\, \int_{0}^{t}\|u_{t}^{(m)}(\cdot,\tau)\|_{L^{2}(\Omega)}^{2}+E(u^{(m)}(\cdot,t))=E(u^{(m)}(\cdot,0)),\, \text{for all } t\in [0,\infty).
       \end{equation}
Hence, $u^{(m)}$ is the strong solution of the problem \eqref{main:problem}. Next, we proceed with the proof in several steps.\\
\textbf{Step 1:} Uniform bounds of $u^{(m)}$  in $W_{0}^{s,1}(\Omega)\cap L^{2}(\Omega)$ .\\
Since $I(u_{0}^{(m)})>0$, by applying Lemma \ref{u in W delta} for $\delta=1$, we have $u^{(m)}(\cdot, t) \in W$ for all $t\geq 0$. Combining this with assumption \eqref{Assump:f_1}, we obtain
\begin{align*}
d>E(u^{(m)}(\cdot,t))&=\iint_{Q}\frac{|u^{(m)}(x,t)-u^{(m)}(y,t)|}{|x-y|^{N+s}}\,dx\,dy-\int_{\Omega}F(x,u^{(m)})\,dx\\
&\geq \iint_{Q}\frac{|u^{(m)}(x,t)-u^{(m)}(y,t)|}{|x-y|^{N+s}}\,dx\,dy-\int_{\Omega}\frac{1}{\theta}u^{(m)}f(x,u^{(m)})\,dx\\
&\geq \left(1-\frac{1}{\theta}\right)\|u^{(m)}(\cdot,t)\|_{W_{0}^{s,1}(\Omega)}+\frac{1}{\theta}I(u^{(m)}(\cdot,t)) \geq \left(1-\frac{1}{\theta}\right)\|u^{(m)}(\cdot,t)\|_{W_{0}^{s,1}(\Omega)}.
\end{align*}
Since $u^{(m)}$ is a strong solution and $u^{(m)}\in W$ for all $t\in[0,\infty)$. Choosing $\phi=u^{(m)}(\cdot,t)$ in \eqref{definition of weak sol u1}, we obtain 
$$\frac{1}{2}\frac{d}{dt}\|u^{(m)}(\cdot,t)\|_{L^{2}(\Omega)}^{2}=-I(u^{(m)}(\cdot,t))<0,\quad\forall\, t\in [0,\infty).$$
This implies that 
\begin{align}
    \label{um L2 uniform bound}
    \|u^{(m)}(\cdot,t)\|_{L^{2}(\Omega)}&\leq \|u^{(m)}(\cdot,0)\|_{L^{2}(\Omega)}= \|u(\cdot,0)\|_{L^{2}(\Omega)}+o(1)\quad\forall\, t\in [0,\infty).
\end{align}
Considering \eqref{u0m strong solution}, we obtain
\begin{equation}
    \label{uniform bound um}
    \int_{0}^{t}\|u_{t}^{(m)}(\cdot,\tau)\|_{L^{2}(\Omega)}^{2}<d \quad\text{and }\quad \|u^{(m)}(\cdot,t)\|_{W_{0}^{s,1}(\Omega)}<\frac{d\theta}{\theta-1}.
\end{equation}
Following the same argument as in \eqref{f(x,u^{p_n}) uniform bound} and from \eqref{uniform bound um}, we obatin
\begin{align}
    \int_{\Omega}|f(x,u^{(m)}(\cdot,t))|^{\frac{r}{r-1}}\,dx&\leq C^{\frac{r}{r-1}}\int_{\Omega}|u^{(m)}(\cdot,t)|^{r}\,dx \leq C^{\frac{r}{r-1}} (C_{Sob})^{r}\|u^{(m)}(\cdot,t)\|^{r}_{{W_{0}^{s,1}(\Omega)}} \leq  \left(\frac{d\theta C_{Sob}C^{\frac{1}{r-1}}}{\theta-1}\right)^{r}.
    \end{align}
    Combining all the uniform estimates on $u^{(m)}$ implies the existence of subsequence of $\{u^{(m)}\}_{n\in\mathbb{N}}$ (still denoted by $\{u^{(m)}\}_{m\in\mathbb{N}}$) and $u \in L^{\infty}(0,+\infty;W_{0}^{s,1}(\Omega)\cap L^{2}(\Omega))$ such that, as $m\to\infty$, we have 
\begin{equation}
\label{weak convergence of um}
 \left.
 \begin{aligned}
 u^{(m)}&\stackrel{\ast}{\rightharpoonup} u\qquad\text{ in }L^{\infty}(0,+\infty;L^{2}(\Omega));\\
 u_{t}^{(m)}&\weak u_{t} \qquad\text{ in }L^{2}(0,+\infty;L^{2}(\Omega));\\
 f(x,u^{(m)})&\stackrel{\ast}{\rightharpoonup}\xi\qquad\text{ in }L^{\infty}(0,+\infty;L^{\frac{r}{r-1}}(\Omega)).
 \end{aligned}
 \right\}
\end{equation}
Then, again from Aubin-Lions compactness theorem\cite[Corollary 4, p. 85]{simon} for any $T>0$, we get
 \begin{equation}
 \label{stong:conv of um}
 \begin{aligned}
 u^{(m)}&\to u \text{ in } C([0,T];L^{q}(\Omega)),\text{  as } n \to \infty, \text{ for all } q\in[1,1^{*}_{s}),
 \end{aligned}
  \end{equation}
  and so $\xi=f(x,u).$ This implies that $u^{(m)}(x,t)\to u(x,t)$ for a.e. 
  $(x,t)\in \Omega\times(0,+\infty).$\\
   \textbf{Step 2:} We show that there exists $\eta(\cdot,\cdot,t)\in L^{\infty}(\mathbb{R}^{N}\times\mathbb{R}^{N}),\, \eta(x,y,t)=-\eta(y,x,t)$ for almost all $(x,y)\in \mathbb{R}^{N}\times\mathbb{R}^{N},\,\|\eta(\cdot,\cdot,t)\|_{L^{\infty}(\mathbb{R}^{N}\times\mathbb{R}^{N})}\leq 1$, such that following equality holds:
\begin{align}
\label{strong solution u0 in W1}
   \int_{\Omega}u_{t}\phi \,dx+\iint_{Q}\eta(x,y,t)\frac{\phi(x)-\phi(y)}{|x-y|^{N+s}}\,dx\,dy=\int_{\Omega}f(x,u)\phi \,dx
\end{align}
for all
$\phi\in W_{0}^{s,1}(\Omega)\cap L^{2}(\Omega)$ and a.e. $t\in[0,+\infty)$.\\
By Definition \ref{definition of weak sol u1}, corresponding to each $u^{(m)}$ there exists $\eta^{(m)}(\cdot,\cdot,t)\in L^{\infty}(\mathbb{R}^{N}\times\mathbb{R}^{N}),\, \eta^{(m)}(x,y,t)=-\eta^{(m)}(y,x,t)$ for almost all $(x,y)\in \mathbb{R}^{N}\times\mathbb{R}^{N},\,\|\eta^{(m)}(\cdot,\cdot,t)\|_{L^{\infty}(\mathbb{R}^{N}\times\mathbb{R}^{N})}\leq 1$, such that following equality holds
\begin{align}
\label{definition weak um}
   \int_{\Omega}u_{t}^{(m)}\phi \,dx+\iint_{Q}\eta^{(m)}(x,y,t)\frac{\phi(x)-\phi(y)}{|x-y|^{N+s}}\,dx\,dy=\int_{\Omega}f(x,u^{(m)})\phi\,dx
\end{align}
for all
$\phi\in W_{0}^{s,1}(\Omega)\cap L^{2}(\Omega)$ and a.e. $t\in[0,+\infty)$. Since $\|\eta^{(m)}(\cdot,\cdot,t)\|_{L^{\infty}(\mathbb{R}^{N}\times\mathbb{R}^{N})}\leq 1$, there exist a subsequence of $\{\eta^{(m)}\}_{m\in\mathbb{N}}$ still denoted by $\{\eta^{(m)}\}_{m\in\mathbb{N}}$  and a function $\eta(\cdot,\cdot,t)\in L^{\infty}(\mathbb{R}^{N}\times\mathbb{R}^{N})$ such that 
$$\eta^{(m)}(\cdot,\cdot,t)\stackrel{\ast}{\rightharpoonup}\eta(\cdot,\cdot,t) \quad \text{in}\quad L^{\infty}(\mathbb{R}^{N}\times\mathbb{R}^{N})\quad\text{and}\quad \|\eta(\cdot,\cdot,t)\|_{L^{\infty}(\mathbb{R}^{N}\times\mathbb{R}^{N})}\leq 1. $$
Moreover, since each $\eta^{(m)}$ is antisymmetric, $\eta$ is also antisymmetric.
 To pass the limit in \eqref{definition weak um}, by using density of $\phi\in C_{c}^{\infty}(\Omega)$ in $ W_{0}^{s,1}(\Omega)\cap L^{2}(\Omega)$ and following the same argument as in \textbf{Claim 2}, as $m\to\infty$, we obtain that
 $$ \int_{\Omega}u_{t}\phi \,dx+\iint_{Q}\eta(x,y,t)\frac{\phi(x)-\phi(y)}{|x-y|^{N+s}}\,dx\,dy=\int_{\Omega}f(x,u)\phi \,dx,$$
 for all $\phi\in W_{0}^{s,1}(\Omega)\cap L^{2}(\Omega)$ and
$\eta(x,y,t)\in \operatorname{sign}(u(x,t)-u(y,t)) \quad a.e. \,(x,y,t)\in \mathbb{R}^{N}\times\mathbb{R}^{N}\times [0,\infty).$\\
\textbf{Step 3:} We show that
$$\int_{0}^{t} \|u_t(\cdot, \tau)\|^{2}_{L^{2}(\Omega)} \, d\tau + E(u(\cdot,t))
    = E(u_{0}) \quad \text{for all } t \in [0,\infty).$$
Following the same argument of \textbf{Claim 3}, we obtain
$$\int_{0}^{t} \|u_t(\cdot, \tau)\|^{2}_{L^{2}(\Omega)} \, d\tau + E(u(\cdot,t))
    \leq E(u_{0}) \quad \text{a.e. } t \in [0,\infty).$$
 It implies that $u$ is a global weak solution of \eqref{main:problem} with initial data $u_{0}\in W_{0}^{s,1}(\Omega)\cap L^{2}(\Omega)$. Using \eqref{Assump:f_3} we obtain $f(\cdot,u)\in L^{2}(\Omega)$. By taking $g=f(\cdot,u)-u_{t}$ in  Lemma \ref{continuity of strong solution}, we obtain
       \begin{equation}
           \label{u strong solution final}
           u\in C(0,\infty ; W_{0}^{s,1}(\Omega))\quad \text{and}\quad \int_{0}^{t}\|u_{t}(\cdot,\tau)\|_{L^{2}(\Omega)}^{2}+E(u(\cdot,t))=E(u(\cdot,0)) \quad \text{for all}\,\,t\in [0,\infty).
       \end{equation}
    \end{proof}
    \subsection{On the case of critical initial energy: \texorpdfstring{$E(u_{0})=d$}{E(u0)=d}}
In this subsection, we address the case of critical initial energy \( E(u_{0}) = d \). Specifically, we will prove that if \( I(u_{0}) \geq 0 \), then problem \eqref{main:problem} admits a global weak solution and strong solution.
\begin{proof}[\textbf{Proof of the Theorem \ref{main-exist-critical-weak}}]
    Let $\lambda_k = 1 - \frac{1}{k}$, for $k \in \mathbb{N}$ and \( u_{0} \in W_{0}^{s_{q},q}(\Omega)\cap L^{2}(\Omega).\) Consider the following initial value problem:   
\begin{equation} \label{modified:problem}
    \left\{
    \begin{aligned}
      u_t + (-\Delta)^{s}_{1} u &= f(x,u) && \text{in } \Omega \times (0, \infty), \\
      u &= 0 && \text{in } \mathbb{R}^N \setminus \Omega \times (0, \infty), \\
      u(x,0) &= \lambda_k u_0(x) := u_{0k} && \text{in } \Omega.
    \end{aligned}
    \right.
\end{equation}
Let $I(u_0) \geq 0$. By  Lemma \ref{Positive:depth} and Lemma \ref{Lemma:2.3}(iii), we deduce that $u_0 \not \equiv 0$ and there exists a unique $\lambda_{\ast} = \lambda_{\ast}(u_0) \geq 1$ such that $I(\lambda_{\ast} u_0) = 0$. Since $\lambda_k < 1 \leq \lambda_{\ast}$, and using Lemma \ref{Lemma:2.3}(ii)-(iii), we obtain the following:
$$ I(u_{0k}) = I(\lambda_k u_0) > 0 \quad \text{and} \quad E(u_{0k}) = E(\lambda_k u_0) < E(u_0) = d. $$
Therefore, by Theorem \ref{Main Theorem low initial energy}, for each $k$, the problem \eqref{modified:problem} has a global weak solution $u^{(k)} \in L^\infty(0, \infty; W_{0}^{s,1}(\Omega)\cap L^{2}(\Omega))$ with $u_t^{(k)} \in L^2(0, \infty; L^2(\Omega))$ satisfying
$$ \int_0^t \| u^{(k)}_t(\cdot, \tau) \|_{L^2(\Omega)}^2 \,d\tau + E(u^{(k)}(\cdot, t)) \leq  E(u_{0k}) < d \quad \text{a.e }\, t\in [0,\infty).$$
Following the same argument as in \textbf{Claim 1} and \textbf{Claim 2} of Theorem \ref{Main Theorem low initial energy}, there exists a subsequence (denoted with the same notation) $\{u^{(k)}\}_{k \in \mathbb{N}}$ that converges weakly to a function $u$  as in \eqref{weak convergence of um}. Moreover, $u\in L^\infty(0, \infty; W_{0}^{s,1}(\Omega)\cap L^{2}(\Omega))$ and $u$ is a weak solution of \eqref{main:problem}. If we assume that $I(u(\cdot,t))>0$ for $0<t<t^{*}$ and $I(u(\cdot,t^{*}))=0$. Then by choosing $\phi= u(\cdot,t)$ in \eqref{definition of weak sol u1}, we obtain
\begin{equation}
    \label{L2 norm decreasing}
     \frac{1}{2}\frac{d}{dt}\|u(\cdot,t)\|^{2}_{L^{2}(\Omega)}=-I(u(\cdot,t)).
\end{equation}
This implies that $u_{t}(\cdot, t)\not \equiv 0$ for $0<t<t^{*}$.
Therefore, by \eqref{sol: u} we have
$$E(u(\cdot,t^{*})) \leq \  d-\int_{0}^{t^{*}}\|u_t(\cdot,\tau)\|^{2}_{L^{2}(\Omega)}\,d\tau < d.$$
By the definition of $d$, we get $\|u(\cdot,t^{*})\|_{W_{0}^{s,1}(\Omega)}=0$. Now, by extending the function $u$ as $u(\cdot,t)\equiv 0$ for $t\geq t^{*}$, we obtain a weak solution vanishing in finite time $t^{*}$. The existence of global strong solution can be established by following the same arguments as in case of weak solution and by using the Theorem \ref{Main Theorem low initial energy-strong}. 
    \end{proof}
We conclude this section by showing that uniqueness does hold under the additional assumption that $f(x,\cdot)$ is
uniformly Lipschitz continuous.
    \begin{proof}[\textbf{Proof of the Corollary \ref{Uniqueness of weak/strong solution}}]
     Assume $u$ and $v$ are weak/strong solutions of \eqref{main:problem}. Then, by Definition~\ref{Def:weak solution}, for any test function $\phi \in W_{0}^{s,1}(\Omega)\cap L^{2}(\Omega)$, we have
     \begin{align*}
         &\int_{\Omega}u_{t}\phi \,dx+\iint_{Q}\eta_{u}(x,y,t)\frac{\phi(x)-\phi(y)}{|x-y|^{N+s}}\,dx\,dy =\int_{\Omega}f(x,u)\phi \,dx,\\
 &\int_{\Omega}v_{t}\phi \,dx+\iint_{Q}\eta_{v}(x,y,t)\frac{\phi(x)-\phi(y)}{|x-y|^{N+s}}\,dx\,dy =\int_{\Omega}f(x,v)\phi \,dx.
     \end{align*}
    $$$$
Subtracting the two equalities above, setting \( \phi = u - v \in W_{0}^{s,1}(\Omega)\cap L^{2}(\Omega)\), and integrating with respect to \( t \) over the interval \([0, t]\), we get
\begin{align*}
&\int_{0}^{t}\int_{\Omega}(u_{t}-v_{t})(u-v)\,dx\,dt
+\int_{0}^{t}\iint_{Q}\left(\eta_{u}(x,y,t)-\eta_{v}(x,y,t)\right)\left(\frac{u(x,t)-v(x,t)-u(y,t)+v(y,t)}{|x-y|^{N+s}}\right)\,dx\,dy\, dt\\
&\qquad\qquad\qquad\qquad\qquad\qquad\qquad\qquad\qquad\qquad\qquad\qquad\qquad=\int_{0}^{t}\int_{\Omega}\left(f(x,u)-f(x,v)\right)(u-v)\,dx\,dt.
\end{align*}
Since $\eta_{u}(x,y,t)$ and $\eta_{v}(x,y,t)$ are sign functions, the second term on the left-hand side of the equality is always non-negative. So, we have
\begin{align*}
\int_{0}^{t}\int_{\Omega}(u_{t}-v_{t})(u-v)\,dx\,dt \leq \int_{0}^{t}\int_{\Omega}\left(f(x,u)-f(x,v)\right)(u-v)\,dx\,dt.
\end{align*}
Thus, since $f(x,\cdot)$ is uniformly Lipschitz for almost all $x \in \Omega$, using this fact, we obtain 
\begin{align*}
\int_{\Omega}\int_{0}^{t} (u_{t} - v_{t}) (u-v) dt\, dx \leq M\int_{0}^{t}\int_{\Omega}\left(u-v\right)^{2}\,dx\,dt,
\end{align*}
 where $M$ is a Lipschitz constant. Thus, by Gronwall's inequality and the fact that $u(x,0)=v(x,0)$, we deduce that $u=v$ a.e. in $\Omega.$
    \end{proof}
\section{Wellposedness and Dynamics in low dimension}
\label{Wellposedness and Dynamics in low dimension}
\subsection{Local existence of strong solution}
In this section, we examine the local existence of strong solution of \eqref{main:problem} in low dimension. In view of Lemma \ref{operator and subdifferential}, the system \eqref{main:problem} can be rewritten as the following abstract Cauchy problem:
\begin{equation}
\label{subdifferenial main prob}
 \frac{du}{dt} + \partial\varphi(u)-\partial\psi(u) \ni 0  \quad \text{in } H,\, 0<t<T, \quad \text{and} \quad u(\cdot,0) = u_{0}(\cdot) \quad  \text{in } \Omega.
\end{equation}
\begin{lemma}
\label{D(phi,r) is compact}
    Let the conditions \eqref{Assump:f_0}--\eqref{Assump:f_1} hold and $N<2s$. Then, the set \( D(\varphi,a) \) is compact in \( L^{2}(\Omega) \) for any \( a\in\mathbb{R} \). Moreover, \( D(\varphi) \subset D(\psi) \).
\end{lemma}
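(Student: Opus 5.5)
The plan is to reduce compactness of the sublevel set to the compact embedding $W_0^{s,1}(\Omega)\hookrightarrow L^2(\Omega)$, which is available exactly because $N<2s$. In that regime $1^*_s=\frac{N}{N-s}>2$: indeed $N<2s$ forces $N=1$ and $s>1/2$, hence $N-s<N/2$. Proposition \ref{prop:embedding}(ii) therefore gives that $W_0^{s,1}(\Omega)\hookrightarrow L^2(\Omega)$ is compact. Since $\varphi\ge 0$, the set $D(\varphi,a)$ is empty for $a<0$, and for $a\ge 0$ we have
\[
D(\varphi,a)=\{u\in W_0^{s,1}(\Omega):\|u\|_{W_0^{s,1}(\Omega)}\le a\}.
\]
This set is bounded in $W_0^{s,1}(\Omega)$, and hence relatively compact in $L^2(\Omega)$.

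It remains to show that $D(\varphi,a)$ is closed in $L^2(\Omega)$; this is the only point requiring care. Take $u_n\in D(\varphi,a)$ with $u_n\to u$ in $L^2(\Omega)$, extended by zero outside $\Omega$. Passing to a subsequence, $u_n\to u$ a.e. in $\mathbb{R}^N$, so $u=0$ in $\mathcal{C}\Omega$. Fatou's lemma applied to the nonnegative integrands $\frac{|u_n(x)-u_n(y)|}{|x-y|^{N+s}}$ on $Q$ then gives $\varphi(u)\le\liminf\varphi(u_n)\le a$. Hence $u\in W_0^{s,1}(\Omega)$ and $u\in D(\varphi,a)$. This is exactly the lower semicontinuity of $\varphi$ on $L^2(\Omega)$ stated earlier. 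A closed, relatively compact set is compact.

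For the inclusion $D(\varphi)\subset D(\psi)$, take $u\in D(\varphi)$. Then $u\in W_0^{s,1}(\Omega)\cap L^2(\Omega)$, and by Proposition \ref{prop:embedding}(i) with $r<1^*_s$ from \eqref{Assump:f_0}, also $u\in L^r(\Omega)$. From \eqref{Assump:f_0} we get $|F(x,u)|\le \frac{C}{r}|u|^r$, so
\[
\psi(u)=\int_\Omega F(x,u)\,dx\le \frac{C}{r}\,C_{Sob}^r\|u\|^r_{W_0^{s,1}(\Omega)}<\infty .
\]
Thus $u\in D(\psi)$.

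I expect no real obstacle here. The one point to state explicitly is the dimensional fact that $N<2s$ places the exponent $2$ strictly below $1^*_s$, since this is what makes the embedding into $L^2(\Omega)$ compact.
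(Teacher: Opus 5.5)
Your proposal is correct and follows the same route as the paper. The compact embedding $W_0^{s,1}(\Omega)\hookrightarrow L^2(\Omega)$ for $N<2s$ gives compactness of the sublevel sets, and the continuous embedding into $L^r(\Omega)$ combined with \eqref{Assump:f_0} gives $D(\varphi)\subset D(\psi)$. You also make explicit, via Fatou's lemma, that $D(\varphi,a)$ is closed in $L^2(\Omega)$; the paper leaves this implicit in its earlier claim that $\varphi$ is lower semicontinuous.
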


\begin{proof}
    From assumptions that $N<2s$, it follows that \( W_{0}^{s,1}(\Omega) \) is compactly embedded in \( L^{2}(\Omega) \). From Proposition \ref{prop:embedding}(i) we have \( W_{0}^{s,1}(\Omega) \) is continuously embedded in \( L^{r}(\Omega) \) for all $r\in [1,1^{*}_{s}]$. Hence \( D(\varphi) \subset D(\psi) \) and the set \( D(\varphi, a) \) is compact in \( L^{2}(\Omega) \) for any \( a\in\mathbb{R} \).
\end{proof}
\begin{lemma}
\label{del phi is bounded}
   Let the conditions \eqref{Assump:f_0}--\eqref{Assump:f_3}  hold. Then, $
   \left\{(\partial\psi)^{0}(u) \mid u\in D(\varphi, a)\right\} = \{f(\cdot,u)\}$ for any \( a\in \mathbb{R} \) and \( f(\cdot, u) \in L^{2}(\Omega) \).
\end{lemma}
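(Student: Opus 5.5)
We would prove Lemma \ref{del phi is bounded} in two steps: first that $f(\cdot,u)\in L^2(\Omega)$ with a bound depending only on $a$, and then that $\partial\psi(u)$ is exactly the singleton $\{f(\cdot,u)\}$, so that its minimal-norm element is $f(\cdot,u)$. This also yields hypothesis (iii) of Theorem \ref{wellposedness}.

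\textbf{Step 1: the $L^2$ bound.} Fix $a\in\mathbb{R}$ and $u\in D(\varphi,a)$. Then $u\in W_0^{s,1}(\Omega)$ with $\|u\|_{W_0^{s,1}(\Omega)}\le a$. By \eqref{Assump:f_0},
\[
\int_\Omega |f(x,u)|^2\,dx\le C^2\int_\Omega |u|^{2(r-1)}\,dx .
\]
Assumption \eqref{Assump:f_3} states $r<1+\frac{N}{2(N-s)}$, which is equivalent to $2(r-1)<\frac{N}{N-s}=1^*_s$. If $2(r-1)<1$, we first use H\"older on the bounded domain $\Omega$ to raise the exponent to $1$. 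In either case Proposition \ref{prop:embedding}(i) applies and gives
\[
\|f(\cdot,u)\|_{L^2(\Omega)}\le C\,C_{Sob}^{\,r-1}\max\{1,|\Omega|\}\,a^{r-1}.
\]
This bound is uniform on $D(\varphi,a)$. In the regime $N<2s$ one has $1^*_s=\infty$, or $N\le 1<2s$, so the constraint is automatic there.

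\textbf{Step 2: identifying the subdifferential.} We show $f(\cdot,u)\in\partial\psi(u)$. The function $t\mapsto F(x,t)$ is $C^1$, and it is convex because $f(x,\cdot)$ is increasing. Indeed, \eqref{Assump:f_2} gives $t^2f'(x,t)>(\Theta-1)tf(x,t)>0$ for $t\ne0$, using \eqref{Assump:f_1}; hence $f'\ge0$. Convexity gives, pointwise,
\[
F(x,v)-F(x,u)\ge f(x,u)(v-u).
\]
Integrating over $\Omega$ yields $\psi(v)-\psi(u)\ge (f(\cdot,u),v-u)_{L^2}$ for every $v\in H$. If $\psi(v)=+\infty$ the inequality is trivial. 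If $v\in L^r$, the right-hand side is finite because $f(\cdot,u)\in L^2$ and $v-u\in L^2$.

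For uniqueness, take any $g\in\partial\psi(u)$ and test with $v=u+\tau w$, where $w\in L^\infty(\Omega)$ and $\tau\to0^\pm$. Dividing by $\tau$ and passing to the limit via dominated convergence (using \eqref{Assump:f_0} and the mean value theorem) shows that $\psi$ is G\^ateaux differentiable along bounded directions with derivative $\int f(x,u)w$. This forces $\int (g-f(\cdot,u))w=0$ for all bounded $w$, and hence $g=f(\cdot,u)$. Therefore $\partial\psi(u)=\{f(\cdot,u)\}$ and $(\partial\psi)^0(u)=f(\cdot,u)$.

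\textbf{Main obstacle.} The only delicate point is the exponent bookkeeping in Step 1: we need the growth exponent $2(r-1)$ to stay within the Sobolev range. This is precisely what \eqref{Assump:f_3} is designed to guarantee. The dominated-convergence step in Step 2 is routine, since $w$ is bounded and $|f(x,\xi)|\le C|\xi|^{r-1}$ is integrable for $\xi$ between $u$ and $u+\tau w$.
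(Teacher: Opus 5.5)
Your proof is correct and follows essentially the paper's route: an $L^{2}(\Omega)$ bound on $f(\cdot,u)$ from \eqref{Assump:f_0}, \eqref{Assump:f_3} and Proposition~\ref{prop:embedding}(i), together with the identification $\partial\psi(u)=\{f(\cdot,u)\}$, where your convexity-plus-directional-derivative argument spells out what the paper takes from $\psi\in C^{1}(L^{r}(\Omega),\mathbb{R})$. Your side remark about the regime $N<2s$ is wrong, though harmless since \eqref{Assump:f_3} is a hypothesis: $1^{*}_{s}=\frac{N}{N-s}$ is always finite (because $s<1\le N$), and comparing with \eqref{Assump:f_0} shows that \eqref{Assump:f_3} is automatic exactly when $N\ge 2s$ and is a genuine extra restriction precisely when $N<2s$, which is where the lemma is used.
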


\begin{proof}
    It is enough to consider \( a>0 \). Since \( \psi \in C^{1}(L^{r}(\Omega), \mathbb{R}) \), there exists a unique \( f_{u} \in L^{2}(\Omega) \) such that \( \partial\psi(u) = \{ f_{u} \} \), and so \( (\partial\psi)^{0}(u) = \{ f_{u} \} \). Moreover,
    \[
    \int_{\Omega} f(x,u) v \,dx = (f_{u},v)_{L^{2}(\Omega)} \quad \text{for all } v\in C_{0}^{\infty}(\Omega).
    \]
    The above equality implies \( f(x,u) = f_{u} \) a.e. in \( \Omega \). Now, using the condition \eqref{Assump:f_0} and applying H\"older's inequality, we obtain
    \begin{align}
    \label{bound of least norm sub}
         \left| (f_{u},v)_{L^{2}(\Omega)}\right| &\leq C\int_{\Omega} |u|^{r-1} |v(x)|\,dx \leq C \||u|^{r-1}\|_{L^{2}(\Omega)} \|v\|_{L^{2}(\Omega)}.
    \end{align}
    From \eqref{Assump:f_3} and Proposition \ref{prop:embedding}(i), we have 
    \begin{align}
    \label{bound of least norm sub-1}
        \int_{\Omega} |u|^{2(r-1)}\,dx \leq |\Omega| + \int_{\Omega \cap \{|u| > 1\}} |u|^{1^{\ast}_{s}}\,dx \leq |\Omega| + \left(C_{Sob}\right)^{1^{\ast}_{s}} \|u\|_{W_{0}^{s,1}(\Omega)}^{1^{\ast}_{s}}.
    \end{align}
    Since \( u\in D(\varphi,a) \), there exists a constant \( C_{a}>0\) such that \( \|u\|_{W_{0}^{s,1}(\Omega)}^{1^{\ast}_{s}} \leq C_a \). Now, combining \eqref{bound of least norm sub} and \eqref{bound of least norm sub-1}, we obtain the required claim.
\end{proof}

\begin{proof}[Proof of the Theorem \ref{loc}]
    The existence of a strong solution \( u \) directly follows from Theorem \ref{wellposedness} combined with Lemmas \ref{D(phi,r) is compact} and \ref{del phi is bounded}. Since \( u \) satisfies \eqref{v in subdifferential}, it follows that  $g = f(x,u) - u_t \in \partial \varphi(u).$
Therefore, by setting \( g = f(\cdot,u) - u_t \) in Lemma \ref{continuity of strong solution}, we obtain $u \in C([0,T];W_{0}^{s,1}(\Omega)).$
Moreover, the energy identity holds:
\begin{align*}
    \int_{0}^{t} \|u_{t}(\cdot,\tau)\|_{L^{2}(\Omega)}^{2} + E(u(\cdot,t)) = E(u(\cdot,0)) \qquad \text{for all }\, t\in [0,T].
\end{align*}
\end{proof}
\subsection{Asymptotic behavior and finite time blow-up}
Next, we are concerned with the asymptotic behavior and finite time blow-up of the strong solution for low initial energy $E(u_{0})<d$.
    \begin{proof}[\textbf{Proof of the Theorem \ref{low energy asymptotic behaviour}}]
        The existence of global strong solution follows by Theorem \ref{Main Theorem low initial energy}. Now, we derive the upper estimates. By applying Lemma \ref{u in W delta}, we conclude that \(u(\cdot,t) \in W_{\delta}\) for \(0 < t < \infty\) and \(\delta_{1} < \delta < \delta_{2}\), where \(\delta_1 < 1 < \delta_2\). The parameters \(\delta_1\) and \(\delta_2\) are determined as the solutions of the equation \(d(\delta) = E(u_0)\) in view of Lemma \ref{prop:d(delta)}.
Furthermore, \(u(\cdot,t) \in W_{\delta}\) implies that \(I_{\delta'}(u) > 0\) for all \(0 < t < \infty\) and \(\delta_{1} < \delta' < 1 \). Now, by taking $\phi =u(\cdot, t)$ for $0< t< \infty$ in \eqref{definition of weak sol u1}, we obtain
\begin{equation}
\label{u_t is nonzero}
    \begin{aligned}
        \frac{1}{2}\frac{d}{dt}\|u(\cdot,t)\|^{2}_{L^{2}(\Omega)}&=-I(u(\cdot,t)).
    \end{aligned}
\end{equation}
Therefore, for \(\delta_{1} < \delta' < 1\), we obtain
\begin{align*}
    \frac{1}{2}\frac{d}{dt}\|u(\cdot,t)\|^{2}_{L^{2}(\Omega)}&=-I(u(\cdot,t)) =(\delta'-1){\iint_{Q}}\frac{\abs{u(x,t)-u(y,t)}}{\abs{x-y}^{N+s}}\,dx\,dy-I_{\delta'}(u(\cdot,t))
    \leq (\delta'-1)\|u(\cdot,t)\|_{W_{0}^{s,1}(\Omega)}.
\end{align*}
Since $N\leq 2s$, by applying Proposition \ref{prop:embedding}(i), we  obtain 
\begin{equation*}
    \frac{1}{2}\frac{d}{dt}\|u(\cdot,t)\|^{2}_{L^{2}(\Omega)} \leq (\delta'-1)C_{Sob}\|u(\cdot,t)\|_{L^{2}(\Omega)}
\end{equation*}
After solving this differential inequality, we obtain
$$\|u(\cdot,t)\|_{L^{2}(\Omega)}\leq \left(\|u_{0}\|_{L^{2}(\Omega)}+(\delta'-1)C_{Sob}t\right)_{+} \quad \text{for all} \   t\in[0,\infty)$$
which further implies that the solution vanishes at a time $t^{*}=\frac{\|u_{0}\|_{L^{2}(\Omega)}}{(1-\delta')C_{Sob}}.$\\
Now, if $E(u_{0})=d$ and $I(u(\cdot, t))= 0$ for all $0\leq t<\infty$, then from \eqref{u_t is nonzero}, we obtain 
$$\frac{d}{dt}\|u(\cdot,t)\|^{2}_{L^{2}(\Omega)}=0\quad\text{for all }t\in[0,\infty).$$
Hence, $\|u(\cdot,t)\|^{2}_{L^{2}(\Omega)}$ is conserved in time. Therefore,
$$\|u(\cdot,t)\|_{L^{2}(\Omega)}=\|u_{0}\|_{L^{2}(\Omega)}\quad\text{for all }t\in[0,\infty).$$
If there exists a $t_{0}>0$ such that $I(u(\cdot,t_{0}))>0$, then by the continuity of the map $t\mapsto I(u(\cdot,t))$, there exists $\epsilon>0$ such that $I(u(\cdot,t))>0$ for all $t\in (t_{0}-\epsilon, t_{0}+\epsilon)$. From \eqref{u_t is nonzero}, we obtain $u_{t}(\cdot, t) \not \equiv 0$ for all $t\in (t_{0}-\epsilon,t_{0}+\epsilon)$. Therefore, by \eqref{strong solution}, for $t_0>0$ we have
 $$E(u(\cdot, t_{0})) = \ d-\int_{0}^{t_0} \|u_t (\cdot, \tau)\|^{2}_{L^{2}(\Omega)}\, d\tau<d.$$
Now, by taking $t=t_{0}$ as the initial time from Lemma \ref{u in W delta}, we know that $u(\cdot,t)\in W_{\delta}$ for all $\delta_{1}<\delta<\delta_{2}$ and $t_{0}<t<\infty$ under the condition $E(u(\cdot, t_{0}))< d$ and $I(u(\cdot,t_{0}))> 0$, where $\delta_{1}<\delta<\delta_{2}$ are two roots of $d(\delta)=E(u(\cdot,t_{0}))$. Thus,  $I_{\delta^{'}}(u(\cdot,t)) > 0$ for $\delta^{'}\in (\delta_{1},1)$ and $t_{0}\leq t<\infty$. Finally, by repeating the same arguments as in the case of $E(u_{0})<d$, we obtain
\begin{equation*}
    \frac{1}{2}\frac{d}{dt}\|u(\cdot,t)\|^{2}_{L^{2}(\Omega)} \leq (\delta'-1)C_{Sob}\|u(\cdot,t)\|_{L^{2}(\Omega)}\quad \text{for all } t\in[t_{0},\infty).
\end{equation*}
Integrating it with respect to $t$ from $t_{0}$ to $t$, we get
$$\|u(\cdot,t)\|_{L^{2}(\Omega)}\leq \left(\|u(\cdot,t_{0})\|_{L^{2}(\Omega)}+(\delta'-1)C_{Sob}(t-t_{0})\right)_{+} \quad \text{for all} \   t\in[t_{0},\infty)$$
which further implies that the solution vanishes at a time $t^{**}=t_{0}+\frac{\|u(\cdot,t_{0})\|_{L^{2}(\Omega)}}{(1-\delta')C_{Sob}}.$

    \end{proof}  
\begin{proof}[\textbf{Proof of the Theorem \ref{Blow-up thm low initial energy}}] Local existence of strong solution follows from Theorem \ref{loc}. Arguing by contradiction, we assume that the solution is global in time, {\it i.e.} $T_{max} = +\infty.$ For $T>0$, we define the auxiliary function $M:[0, T] \rightarrow (0,\infty)$ as
\begin{equation}
\label{def of M}
\begin{aligned}
  M(t)&:= \int_{0}^{t}\|u(\cdot, \tau)\|_{L^2(\Omega)}^{2}\,d\tau+(T-t)\|u_{0}\|_{L^2(\Omega)}^{2}+b(t+a)^{2} \qquad \text{for all } t \in [0, T],
  \end{aligned}
\end{equation}
where $a$ and $b$ are positive constants satisfying appropriate conditions which will be stated later. By differentiating $M$ with respect to $t$ and choosing $\phi(\cdot)=u(\cdot,t)$ in \eqref{definition of weak sol u1}, we obtain
    \begin{equation}
    \label{DM(t):u}
    \begin{aligned}
     & M'(t)=\|u(\cdot, t)\|^{2}_{L^{2}(\Omega)}-\|u_{0}\|^{2}_{L^{2}(\Omega)}+2b(t+a) \ \ \text{and} \ \ M''(t)=-2I(u(\cdot, t))+2b \quad\text{for all } t \in [0, T].
    \end{aligned}
\end{equation}
Now, by using the fact that $u_{0}\in V$ and applying Lemma \ref{u in W delta}(ii) for $\delta=1$, we get 
    \begin{equation}
        \label{u in V}
I(u(\cdot,t))<0\,\text{ for all }\,t\in [0,\infty) \ \text{and the maps} \ t \longmapsto M(t),  t \longmapsto M'(t) \ \text{are strictly increasing in} \ [0, T].
     \end{equation}
Moreover, by the definition of $d$ and Lemma \ref{Lemma:2.3}(iii), there exists a $\lambda_{\ast}\in (0,1)$ such that 
     \begin{equation}\label{lambda-star-est}
    I(\lambda_{\ast}u(\cdot,t))=0\quad\text{and} \quad d\leq E(\lambda_{\ast}u(\cdot,t)) \quad\text{for all } t\in [0,\infty).
     \end{equation}
Now, for a fixed $t \in [0, T]$ and $\beta \in \big[\frac{1}{\Theta}, \frac{1}{2}\big)$, where $\Theta$ is defined in \eqref{Assump:f_2} and $\Theta>2$. We define a function $g:[\lambda_{\ast},1]\rightarrow (0,\infty)$ as
    \[g(\lambda):= E(\lambda u(\cdot,t))-\beta I(\lambda u(\cdot,t)).\]
By differentiating the function $g$ with respect to $\lambda$ and using \eqref{existence of lambda d lambda}, definition of $I$ and \eqref{Assump:f_2}, we obtain
\begin{align*}
    \frac{dg(\lambda)}{d\lambda} &= \frac{d}{d\lambda}E(\lambda u(\cdot,t))-\frac{d}{d\lambda}\left(\beta I(\lambda u(\cdot,t)) \right) =\frac{I(\lambda u(\cdot,t))}{\lambda}-\beta \left(\|u\|_{W_{0}^{s,1}(\Omega)}-\frac{d}{d\lambda}\int_{\Omega}f(x,\lambda u)\lambda u \,dx\right)\\
    &= \frac{I(\lambda u(\cdot,t))}{\lambda}-\beta\left(\|u\|_{W_{0}^{s,1}(\Omega)}-\int_{\Omega}f'(x,\lambda u)\lambda u^{2}\,dx-\int_{\Omega}f(x,\lambda u)u\,dx\right)\\
    &=\left(1-\beta\right)\|u\|_{W_{0}^{s,1}(\Omega)}+ \frac{\beta}{\lambda}\left(\int_{\Omega} f'(x,\lambda u)(\lambda u)^{2}\,dx-\left(\frac{1}{\beta}-1\right)\int_{\Omega}f(x,\lambda u)\lambda u\,dx\right)\\
    &\geq \left(1-\beta\right)\|u\|_{W_{0}^{s,1}(\Omega)} \geq 0.
\end{align*}
This implies that $g$ is an increasing function in $[\lambda_{\ast},1]$. Moreover, in view of \eqref{lambda-star-est}, we have
     \begin{align}
     \label{estimate for I}
         I(u(\cdot,t))&\leq \frac{1}{\beta}\left(E(u(\cdot,t))-d\right) \quad\text{for all } t \in [0, T].
     \end{align}
By repeating the same argument as in \cite[Theorem 4.4]{Arora-2025}, we obtain 

\begin{equation}
    \label{final estimate}
    \begin{aligned}
       M(t)M''(t)-\frac{1}{2\beta}\left(M'(t)\right)^{2}\geq M(t)\left[\frac{2}{\beta}\left( d-E(u_{0})\right)-2b\left(\frac{1}{\beta}-1\right)\right] \geq 0 \qquad \text{for all } t \in [0, T],   
    \end{aligned}
\end{equation}
where 
\begin{equation}\label{cond:b}
   \frac{1}{\Theta} \leq \beta <1 \quad \text{and} \quad b \leq \frac{d-E(u_{0})}{1-\beta}.
\end{equation}
Next, by following the concavity method introduced by Levine \cite[Theorem I]{Levine-1973}, we observe that \eqref{u in V}, \eqref{final estimate} and $M'(0) = 2ab>0$ it follows that
\begin{equation}
    \label{Tmax}
    \begin{aligned}
        &M(t) \to +\infty \ \text{as} \ t \to  \frac{M(0)}{\gamma M'(0)} = \frac{T \|u_0\|_{L^2(\Omega)}^2 + ba^2}{2ab\l(\frac{1}{2\beta}-1\r)} \leq T
    \end{aligned}
\end{equation}
where the last inequality follows by choosing $a$ and $T$ large enough such that 
\begin{equation}
    \label{T-ast}
    \begin{aligned}
        \frac{\|u_0\|_{L^2(\Omega)}^2}{\l(\frac{1}{2\beta}-1\r)2a} < b \leq \frac{d-E(u_0)}{1-\beta} \qquad \text{and} \qquad T_a(b, \beta):=\frac{a^{2}b}{2ab\l(\frac{1}{2\beta}-1\r)-\|u_0\|_{L^2(\Omega)}^2}\leq T.
    \end{aligned}
\end{equation}
Moreover, by taking $T= T_a(b, \beta)$ in \eqref{Tmax}, we have
\[
M(t) \to +\infty \ \text{as} \ t \to  T_a(b, \beta):= \frac{a^{2}b}{2ab\l(\frac{1}{2\beta}-1\r)-\|u_0\|_{L^2(\Omega)}^2}.
\]
This is a contradiction of $u$ being a global strong solution. Hence, $T_{\max} < +\infty.$ 

Finally, by minimizing the blow-up time $T_a(b, \beta)$ with respect to the parameters $a, b$ and $\beta$ such that $(b, \beta) \in \mathcal{R}_a:= \bigg(\frac{\|u_0\|_{L^2(\Omega)}^2}{\l(\frac{1}{2\beta}-1\r)2a}, \frac{d-E(u_0)}{1-\beta} \bigg] \times \bigg[\frac{1}{\Theta}, \frac{1}{2}\bigg)$,
the least blow time $T^\ast$ independent of parameters $a, b$ and $\beta$ is given by
\[T^{\ast}:=\frac{4\|u_0\|_{L^2(\Omega)}^2(\Theta-1)}{\Theta(\Theta-2)^{2}(d-E(u_{0}))} \quad \text{such that} \quad \lim_{t \to T^{*}} \int_{0}^{t} \|u(\cdot, \tau)\|^{2}_{L^{2}(\Omega)} \, d\tau = +\infty.\]
Next, we will prove blow-up of strong solution $u$ for $E(u_{0})=d$ and $I(u_{0})< 0$. Applying Lemma \ref{Positive:depth} and Lemma \ref{u in W delta}, we obtain $E(u_{0})=d>0$, and $E(u(\cdot,t))$ and $I(u(\cdot,t))$ are continuous with respect to $t$. Then, there exists a $t_{0}$ such that $E(u(\cdot,t))>0$ and $I(u(\cdot,t))< 0$ for $0<t \leq t_{0}.$ Using \eqref{L2 norm decreasing}, we have $u_{t}(\cdot,t)\not\equiv 0$  for $0<t \leq t_{0}$. From \eqref{strong solution}
   $$ 0< E(u(\cdot,t_{0}))= d-\int_{0}^{t_{0}}\|u_t(\cdot,\tau)\|^{2}_{L^{2}(\Omega)}d\tau < d. $$ 
 Taking $t=t_{0}$ as the initial time we have $E(u(\cdot,t_{0}))<d$ and $I(u(\cdot,t_{0}))<0$ {\it i.e.} $u(\cdot,t_{0})\in V$. By Lemma \ref{u in W delta}(ii) we have $E(u(\cdot,t))<d$ and  $I(u(\cdot,t))<0$ for all $t\geq t_{0}$. The rest of the proof is the same as for $E(u_{0})<d$.
\end{proof}   
\section{On the case of high initial energy: \texorpdfstring{$E(u_{0})> d$}{E(u0)> d}}
\label{On the case of high initial energy}
   This section provides sufficient conditions for the global existence of strong solutions and blow-up in finite time, particularly for high initial energy. Throughout this section, we assume that $N=1$ and $s\in(\frac{1}{2},1)$, so that $N<2s$.
\begin{lemma}
    \label{N+ is bounded}
    Let $f$ satisfies the conditions \eqref{Assump:f_0}-\eqref{Assump:f_1}. Then,
 \begin{enumerate}
 \item[\textnormal{(i)}] $0$ is away from both $\mathcal{N}$ and $\mathcal{N_{-}}$, {\it i.e.} $\dist(0,\mathcal{N})> 0$ and $\dist(0,\mathcal{N_{-}})> 0.$
 \item[\textnormal{(ii)}] For any $\zeta> 0$, the set $O_{\zeta}\cap \mathcal{N_{+}}$ is bounded in $W_{0}^{s,1}(\Omega)$.
\end{enumerate}
\end{lemma}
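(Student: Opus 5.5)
For (i), I will derive a uniform lower bound on the $W_0^{s,1}(\Omega)$-norm over the sets in question, arguing exactly as in \eqref{lower bound of norm u} from the proof of Lemma \ref{Positive:depth}. Let $u\in\mathcal{N}\cup\mathcal{N}_{-}$ with $u\neq 0$ (note that $0\notin\mathcal{N}_-$ since $I(0)=0$). Then $I(u)\le 0$, so by \eqref{Assump:f_0} and Proposition \ref{prop:embedding}(i)
\[
\|u\|_{W_{0}^{s,1}(\Omega)}\le \int_\Omega f(x,u)u\,dx\le C\|u\|_{L^r(\Omega)}^r\le S_1\|u\|_{W_{0}^{s,1}(\Omega)}^{r},\qquad S_1=C(C_{Sob})^r .
\]
Since $u\neq0$ and $r>1$, dividing gives $\|u\|_{W_{0}^{s,1}(\Omega)}\ge S_1^{-1/(r-1)}$. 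Hence $\dist(0,\mathcal{N})\ge S_1^{-1/(r-1)}>0$ and $\dist(0,\mathcal{N}_-)\ge S_1^{-1/(r-1)}>0$, the distance being measured in $W_0^{s,1}(\Omega)$. The only small point is that elements of $\mathcal{N}_-$ are automatically nonzero, so the division is legitimate.

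For (ii), I will reproduce the energy/Nehari comparison from Step 1 in the proof of Theorem \ref{Main Theorem low initial energy-strong}. Take $u\in O_\zeta\cap\mathcal{N}_+$, so that $E(u)<\zeta$ and $I(u)>0$. By \eqref{Assump:f_1} we have $\int_\Omega F(x,u)\,dx\le\frac1\theta\int_\Omega f(x,u)u\,dx$. This gives
\[
\zeta>E(u)\ge \Big(1-\frac1\theta\Big)\|u\|_{W_{0}^{s,1}(\Omega)}+\frac1\theta I(u)>\Big(1-\frac1\theta\Big)\|u\|_{W_{0}^{s,1}(\Omega)} .
\]
Since $\theta>1$, we conclude $\|u\|_{W_{0}^{s,1}(\Omega)}<\frac{\theta\zeta}{\theta-1}$, so the set is bounded.

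Neither step presents a real obstacle. The estimates only require $F\ge0$ in the form $\theta F\le tf$ and the subcritical bound $r<1^*_s$, both already used for Lemma \ref{Positive:depth}. The one point needing care is that $\zeta>0$ is needed for the set to be nonempty in a meaningful way; for $\zeta\le0$ the inequality shows $O_\zeta\cap\mathcal{N}_+=\emptyset$, which is trivially bounded.
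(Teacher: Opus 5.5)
Your proof is correct and essentially the paper's. Part (ii) is identical, and your bound $\|u\|_{W_0^{s,1}(\Omega)}\ge S_1^{-1/(r-1)}$, obtained from $I(u)\le 0$, is exactly how the paper handles $\mathcal{N}_-$. For $\mathcal{N}$ the paper argues differently: it uses $d\le E(u)\le \|u\|_{W_0^{s,1}(\Omega)}+C(C_{Sob})^r\|u\|_{W_0^{s,1}(\Omega)}^r$, the positivity of $d$ from Lemma~\ref{Positive:depth}, and a case split on whether $\|u\|_{W_0^{s,1}(\Omega)}\ge 1$. Your single argument for $\mathcal{N}\cup\mathcal{N}_-$ is a slight simplification that does not need $d>0$ and gives one explicit lower bound for both distances.
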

\begin{proof}
$(i)$     Let $u\in \mathcal{N}$. From conditions \eqref{Assump:f_0}, \eqref{Assump:f_1} and Proposition \ref{prop:embedding}(i), we get
    \begin{align}
    \label{dist(0,N)}
        d\leq E(u)&={\iint_{Q}}\frac{|u(x)-u(y)|}{|x-y|^{N+s}}\,dx\,dy-\int_{\Omega}F(x,u)\,dx \leq \|u\|_{W_{0}^{s,1}(\Omega)}+ C\|u\|_{L^{r}(\Omega)}^{r}\nonumber\\
        &\leq \|u\|_{W_{0}^{s,1}(\Omega)}+ C(C_{sob})^{r}\|u\|_{W_{0}^{s,1}(\Omega)}^{r}.
    \end{align}
Now, if $\|u\|_{W_{0}^{s,1}(\Omega)}\geq 1$, then clearly dist$(0,\mathcal{N})>0$. Otherwise, if $\|u\|_{W_{0}^{s,1}(\Omega)} < 1$, then from \eqref{dist(0,N)}, we find that $  \|u\|_{W_{0}^{s,1}(\Omega)} \geq \left(\frac{d}{1+C(C_{sob})^{r}}\right).$
This implies that there exists a constant $\rho >0$ such that $\text{dist}(0, \mathcal{N})=\inf_{u\in \mathcal{N}}\|u\|_{W_{0}^{s,1}(\Omega)}\geq\rho>0.$ For $u\in \mathcal{N_{-}}$, we get $\|u\|_{W_{0}^{s,1}(\Omega)}\neq 0$. From \eqref{lower bound of norm u} we have
\begin{align}
\label{dist(0,N-)}
\|u\|_{W_{0}^{s,1}(\Omega)} < S_{1}\|u\|_{W_{0}^{s,1}(\Omega)}^{r}.
 \end{align}
\noindent If $\|u\|_{W_{0}^{s,1}(\Omega)}\geq 1$, then clearly dist$(0,\mathcal{N}_{-})>0.$ Otherwise, if $\|u\|_{W_{0}^{s,1}(\Omega)}< 1$, then from \eqref{dist(0,N-)}, we get $\|u\|_{W_{0}^{s,1}(\Omega)} > \left(\frac{1}{S_{1}}\right)^{\frac{1}{r-1}}.$
   This implies that there exists a constant $\delta >0$ such that $\text{dist}(0, \mathcal{N}_{-})=\inf_{u\in \mathcal{N}_{-}}\|u\|_{W_{0}^{s,1}(\Omega)}\geq\delta>0.$ Next, we show
\text{(ii)}. If $u\in O_{\zeta}\cap \mathcal{N_{+}}$, then $E(u)<\zeta$ and $I(u)>0.$ Therefore, from condition \eqref{Assump:f_1}, we get
\begin{align}
\label{E tau is bounded}
      \zeta> E(u)
    \geq \iint_{Q}\frac{|u(x)-u(y)|}{|x-y|^{N+s}}\,dx\,dy-\int_{\Omega}\frac{1}{\theta}f(x,u)u\,dx
     &\geq \left(1-\frac{1}{\theta}\right)\iint_{Q}\frac{|u(x)-u(y)|}{|x-y|^{N+s}}\,dx\,dy +\frac{1}{\theta}I(u) \nonumber\\
     &\geq \left(\frac{\theta-1}{\theta}\right)\|u\|_{W_{0}^{s,1}(\Omega)}. 
\end{align}
 From \eqref{E tau is bounded} we conclude that the set $O_{\zeta}\cap \mathcal{N_{+}}$ is bounded in $W_{0}^{s,1}(\Omega)$.
\end{proof}
\begin{lemma}
\label{lambda{zeta} away from zero}
    Let the conditions \eqref{Assump:f_0}-\eqref{Assump:f_1} hold. Then, for any $\zeta > d$, the constants $\lambda_{\zeta}$ and $\Lambda_{\zeta}$ satisfy $0 < \lambda_{\zeta} \leq \Lambda_{\zeta} < +\infty.$ 
\end{lemma}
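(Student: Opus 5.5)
We need to show three things: $\mathcal{N}_\zeta$ is nonempty, $\lambda_\zeta>0$, and $\Lambda_\zeta<\infty$. The inequality $\lambda_\zeta\le\Lambda_\zeta$ is then automatic.

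\emph{Nonemptiness.} By the definition of $d$ as an infimum over $\mathcal{N}$ and the assumption $\zeta>d$, there is some $u\in\mathcal{N}$ with $E(u)<\zeta$, so $\mathcal{N}_\zeta\neq\emptyset$. This is the fact already recorded before Theorem~\ref{Main theorem high initial energy}. Both quantities are therefore well defined in $[0,+\infty]$, and $\lambda_\zeta\le\Lambda_\zeta$.

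\emph{Upper bound.} The plan is to run the estimate \eqref{E tau is bounded} with $I(u)=0$ instead of $I(u)>0$. For $u\in\mathcal{N}_\zeta$, condition \eqref{Assump:f_1} together with $I(u)=0$ gives
\[
\zeta>E(u)\ \ge\ \Big(1-\tfrac1\theta\Big)\|u\|_{W_{0}^{s,1}(\Omega)}+\tfrac1\theta I(u)=\tfrac{\theta-1}{\theta}\,\|u\|_{W_{0}^{s,1}(\Omega)}.
\]
Hence $\|u\|_{W_{0}^{s,1}(\Omega)}<\frac{\theta\zeta}{\theta-1}$. In this section $N<2s$, so $1^*_s=\frac{N}{N-s}>2$ (and in fact $1^*_s=\infty$ when $N=1$, $s>\tfrac12$; read the embedding appropriately for $N=1\le s$... note $N=1>s$, so $1^*_s=\frac{1}{1-s}>2$). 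Proposition~\ref{prop:embedding}(i) then yields $\|u\|_{L^2(\Omega)}\le C_{Sob}\|u\|_{W_{0}^{s,1}(\Omega)}<C_{Sob}\frac{\theta\zeta}{\theta-1}$. Taking the supremum over $\mathcal{N}_\zeta$ gives $\Lambda_\zeta<\infty$.

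\emph{Lower bound.} This is the main point, because the distance bound of Lemma~\ref{N+ is bounded}(i) is measured in $W_{0}^{s,1}(\Omega)$, while $\lambda_\zeta$ is measured in $L^2(\Omega)$, which is the weaker norm. A small $L^2$ norm must therefore be converted into a small nonlocal energy, using both the Nehari identity and the energy cap.

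For $u\in\mathcal{N}$, \eqref{Assump:f_0} gives $\|u\|_{W_{0}^{s,1}(\Omega)}=\int_\Omega f(x,u)u\,dx\le C\|u\|_{L^r(\Omega)}^r$. Since $1<r<1^*_s$ and $2<1^*_s$, I interpolate $L^r$ between $L^{\min(r,2)}$ (or $L^1$ and $L^2$) and $L^{1^*_s}$. I will write the exponent as $\|u\|_{L^r}\le\|u\|_{L^{a}}^{\alpha}\|u\|_{L^{1^*_s}}^{1-\alpha}$ with $a=\min\{r,2\}$, $\alpha\in(0,1]$, and bound $\|u\|_{L^a}\le|\Omega|^{c}\|u\|_{L^2}$. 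Combining with the Sobolev inequality gives
\[
\|u\|_{W_{0}^{s,1}(\Omega)}\le C'\,\|u\|_{L^2(\Omega)}^{r\alpha}\,\|u\|_{W_{0}^{s,1}(\Omega)}^{r(1-\alpha)}.
\]
On $\mathcal{N}_\zeta$ the $W_{0}^{s,1}$ norm is bounded above by $\frac{\theta\zeta}{\theta-1}$ from the previous step, and bounded below by $\rho>0$ from Lemma~\ref{N+ is bounded}(i). Dividing through by $\|u\|_{W_{0}^{s,1}(\Omega)}^{r(1-\alpha)}$ covers both cases $r(1-\alpha)\le1$ and $r(1-\alpha)>1$ once we use these two-sided bounds. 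The result is $\|u\|_{L^2(\Omega)}^{r\alpha}\ge c(\rho,\zeta,\theta)>0$ uniformly on $\mathcal{N}_\zeta$, hence $\lambda_\zeta>0$.

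The main obstacle is this interpolation step, and specifically keeping the constant uniform over $\mathcal{N}_\zeta$. Here the energy cap $E<\zeta$ is essential: without an upper bound on the $W_{0}^{s,1}$ norm, the factor $\|u\|_{W_{0}^{s,1}}^{r(1-\alpha)}$ cannot be controlled.
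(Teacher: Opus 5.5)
Your proof is correct. Your upper bound is the same as the paper's: you redo \eqref{E tau is bounded} with $I(u)=0$ in place of $I(u)>0$, then use $W_0^{s,1}(\Omega)\hookrightarrow L^2(\Omega)$, which holds because $N<2s$ gives $1^*_s>2$.

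You depart from the paper in the lower bound. The paper argues by contradiction. It takes $u_n\in\mathcal N_\zeta$ with $\|u_n\|_{L^2(\Omega)}\to 0$. The energy cap makes $(u_n)$ bounded in $W_0^{s,1}(\Omega)$, so the compact embedding $W_0^{s,1}(\Omega)\hookrightarrow L^r(\Omega)$ (Proposition~\ref{prop:embedding}(ii)) gives a subsequence converging in $L^r(\Omega)$. The limit is identified as $0$ through a.e.\ convergence. The Nehari identity with \eqref{Assump:f_0} then gives $\|u_n\|_{W_0^{s,1}(\Omega)}\le C\|u_n\|_{L^r(\Omega)}^r\to 0$, which contradicts $\operatorname{dist}(0,\mathcal N)\ge\rho>0$ from Lemma~\ref{N+ is bounded}(i).

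You replace compactness by H\"older interpolation between $L^{\min\{r,2\}}$ and $L^{1^*_s}$ together with the continuous endpoint Sobolev inequality. You then close using the two-sided bound $\rho\le\|u\|_{W_0^{s,1}(\Omega)}<\frac{\theta\zeta}{\theta-1}$, splitting on the sign of $1-r(1-\alpha)$. This yields an explicit bound $\lambda_\zeta\ge c(\rho,\zeta,\theta,r,\Omega)>0$ with no subsequence extraction. When $r\le 2$ it collapses to the one-line estimate $\rho\le C|\Omega|^{r(\frac1r-\frac12)}\|u\|_{L^2(\Omega)}^r$. The paper's route is shorter and needs no exponent bookkeeping, but it is purely qualitative. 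Both arguments rest on the same three facts: the Nehari identity with \eqref{Assump:f_0}, boundedness in $W_0^{s,1}(\Omega)$ from the energy cap, and the $W_0^{s,1}$-norm being bounded away from zero on $\mathcal N$.

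One cosmetic fix: delete the false start claiming $1^*_s=\infty$ for $N=1$, $s>\frac12$. Since $s<1\le N$, $1^*_s=\frac{N}{N-s}$ is always finite, as you yourself then note.
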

\begin{proof}
Using the fact that $W_{0}^{s,1}(\Omega)$ is embedded in $L^{2}(\Omega)$ and \eqref{E tau is bounded}, we have
$$\|u\|_{L^{2}(\Omega)}\leq C_{Sob}\|u\|_{W_{0}^{s,1}(\Omega)}\leq C_{Sob}\left(\frac{\zeta\theta}{\theta-1}\right).$$
It is enough to show that $\lambda_{\zeta}>0$. We proceed by contradiction. Assume that $\lambda_\zeta = 0$, then there exists a sequence $\{u_n\}_{n\in\mathbb{N}} \subset \mathcal{N}_\zeta$ such that $u_n \to 0$ in $L^2(\Omega)$ and a.e. in $\Omega$. By Lemma \ref{N+ is bounded}(i), there exists a constant $\rho > 0$, independent of $n$, such that
\begin{equation} 
    \label{un positive delta}
    \|u_n\|_{W_0^{s,1}(\Omega)} \geq \rho >0.
\end{equation}
On the other hand, from \eqref{E tau is bounded},  the sequence $\{u_n\}$ is uniformly bounded in $W_0^{s,1}(\Omega)$.
Since the embedding $W_0^{s,1}(\Omega) \hookrightarrow L^r(\Omega)$ is compact for all $r\in[1,\frac{N}{N-s})$. Therefore, up to a subsequence, there exists $u \in L^r(\Omega)$ such that $u_n \to u$ in $L^r(\Omega)$ and a.e in $\Omega$. By the uniqueness of the limit, it follows that $u(x)= 0$ a.e in $\Omega$. In particular, $ \|u_n\|_{L^r(\Omega)} \to 0.$
This further implies together with \eqref{Assump:f_0}, 
\begin{align*}
  \|u_n\|_{W_0^{s,1}(\Omega)} = \int_\Omega f(x,u_n)\,u_n\,dx \leq C \int_\Omega |u_n|^r\,dx = C \|u_n\|_{L^r(\Omega)}^r \to 0
\end{align*}
and contradicts \eqref{un positive delta}. Hence, the required claim.
\end{proof} 
Next, we define the $\omega$-limit set $\omega(u_{0})$ of the initial data $u_{0}\in W_{0}^{s,1}(\Omega)$ by
\[\omega(u_{0})=\bigcap_{\ell \geq 0} \overline{\left\{u(\cdot,t)\,|\, t \geq \ell \right\}}^{W_{0}^{s,1}(\Omega)}.\]
\begin{proof}[\textbf{Proof of the Theorem \ref{Main theorem high initial energy}}]
    $(i)$ If \( u_{0} \in \mathcal{N}_{+} \) and \( \|u_{0}\|_{L^{2}(\Omega)} \leq \lambda_{E(u_{0})} \), then by Lemma \ref{u in W delta} and following same argument as in \cite[Theorem 4.10]{Arora-2025}, we obtain 
    $ u(\cdot,t) \in \mathcal{N}_{+}\quad \text{for all } t \in [0,T_{\max}).$ Next, from \eqref{L2 norm decreasing} and \eqref{strong solution}, we get \( u(\cdot,t) \in O_{E(u_{0})} \) for all \( t \in [0,T_{\max}) \). By Lemma \ref{N+ is bounded}, we conclude that \( u(\cdot,t) \) is bounded in $W_{0}^{s,1}(\Omega)$ and \( E(u(\cdot,t)) < E(u_{0}) \) and \( I(u(\cdot,t)) > 0 \) for all \( t \in [0,T_{\max}) \).\\
\textbf{Claim 1: }$T_{\max}=\infty$. \\
For this purpose suppose by contradiction $T_{\max}<+\infty$, {\it i.e.}
\begin{equation}
    \label{Tmax infty}
    \|u(\cdot,t)\|_{L^{2}(\Omega)}\to \infty\quad \text{as }\, t\to T_{\max}^{-}.
\end{equation}
Since $u(\cdot,t)\in \mathcal{N}_{+}\cap \mathcal{O}_{E(u_{0})}$ for all $t\in [0,T_{\max})$, from \eqref{L2 norm decreasing}, we have 
 $\|u(\cdot,t)\|_{L^{2}(\Omega)}\leq \|u_{0}\|_{L^{2}(\Omega)}.$
Which contradicts \eqref{Tmax infty}. Hence $T_{\max}=\infty.$
This implies that \( u \in \mathcal{N}_{+} \cap O_{E(u_{0})} \) for all \( 0 \leq t < \infty \).  
Since \( I(u(\cdot,t)) > 0 \) for all \( 0 \leq t < \infty \), it follows from \eqref{L2 norm decreasing} that \( \|u(\cdot,t)\|_{L^{2}(\Omega)} \) is decreasing for \( 0 \leq t < \infty \) and \( E(u(\cdot,t)) < E(u_{0}) \) for all \( 0 \leq t < \infty \).
Therefore, for any \( w \in \omega(u_{0}) \) using the definition of $\omega$-limit and the lower semicontinuity of the norm, we obtain
\begin{align}
\label{w limit}
\|w\|_{L^{2}(\Omega)} = \lim_{t \to \infty} \|u(\cdot,t)\|_{L^{2}(\Omega)} < \lambda_{E(u_{0})}\quad\mbox{and}  
\quad E(w) \leq \liminf_{t \to \infty} E(u(\cdot,t)) < E(u_{0}).
\end{align}
Again applying the arguments of \cite[Theorem 4.10]{Arora-2025}, we obtain $ \omega(u_{0}) = \{0\}$ , {\it i.e.} $ u_{0} \in \mathcal{G}_{0}$.\\
$(ii)$ If \( u_{0} \in \mathcal{N}_{-} \) and \( \|u_{0}\|_{L^{2}(\Omega)} \geq \Lambda_{E(u_{0})} \), 
then\ by Lemma \ref{u in W delta} and using the same argument as in \cite[Theorem 4.10]{Arora-2025}, we get
$ u \in \mathcal{N}_{-} \cap O_{E(u_{0})}\quad \text{for all }  0 \leq t < T_{\max}.$ Suppose \( T_{\max} = \infty \). This implies that \( u \in \mathcal{N}_{-} \cap O_{E(u_{0})} \) for all \( 0 \leq t < \infty \). Since \( I(u(\cdot,t)) < 0 \) for all \( 0 \leq t < \infty \), it follows from \eqref{L2 norm decreasing} that \( \|u(\cdot,t)\|_{L^{2}(\Omega)} \) is strictly increasing for \( 0 \leq t < \infty \), and \( E(u(\cdot,t)) < E(u_{0}) \) for all \( 0 \leq t < \infty \). Therefore, for any \( w \in \omega(u_{0}) \), we obtain
\begin{align}
\label{w'' limit}
\|w\|_{L^{2}(\Omega)} = \lim_{t \to \infty} \|u(\cdot,t)\|_{L^{2}(\Omega)} > \Lambda_{E(u_{0})}, \ \ \mbox{and} \ 
E(w) \leq \liminf_{t \to \infty} E(u(\cdot,t)) < E(u_{0}).
\end{align}
Following the same argument as in \cite[Theorem 4.10]{Arora-2025}, we obain $\omega(u_{0}) = \emptyset$, which contradicts \( T_{\max} = \infty \). 
\end{proof}

Next, we show that the set $\{u_0 \in \mathcal{N}_- : \|u_0\|_{L^2(\Omega)} \geq \Lambda_{E(u_0)}\}$ is nonempty for all high initial energy data $u_0$ satisfying $E(u_0)>d.$
\begin{corollary}
\label{high initial blow up}
    Let the conditions \eqref{Assump:f_0}--\eqref{Assump:f_1} hold and $u_{0}\in W_{0}^{s,1}(\Omega)$. If
    \begin{align}
        \label{E(u0)<d}
         &d<E(u_{0})< \left(\frac{\theta-1}{C_{Sob}\theta}\right)\|u_{0}\|_{L^{2}(\Omega)},
    \end{align}
    then $u_{0}\in \mathcal{N}_{-}$ and $\|u_{0}\|_{L^{2}(\Omega)}\geq \Lambda_{E(u_{0})}$. 
\end{corollary}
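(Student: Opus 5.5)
We prove the two conclusions separately, using only \eqref{Assump:f_1}, the embedding $W_{0}^{s,1}(\Omega)\hookrightarrow L^{2}(\Omega)$ (valid here since $N=1<2s$, with constant $C_{Sob}$), and the estimate \eqref{E tau is bounded} from the proof of Lemma \ref{N+ is bounded}(ii).

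\textbf{Step 1: $u_0\in\mathcal{N}_{-}$.} We argue by contradiction and suppose $I(u_0)\ge 0$. If $u_0\equiv 0$, then $E(u_0)=0<d$, contradicting $E(u_0)>d$. So $u_0\neq 0$. By \eqref{Assump:f_1}, $F(x,u_0)\le \frac{1}{\theta}f(x,u_0)u_0$, and repeating the chain \eqref{E tau is bounded} (which uses only $I(u)\ge0$) gives
\[
E(u_0)\ge \Big(1-\frac1\theta\Big)\|u_0\|_{W_{0}^{s,1}(\Omega)}+\frac1\theta I(u_0)\ge \frac{\theta-1}{\theta}\|u_0\|_{W_{0}^{s,1}(\Omega)}.
\]
Combining this with $\|u_0\|_{L^2(\Omega)}\le C_{Sob}\|u_0\|_{W_{0}^{s,1}(\Omega)}$ yields
\[
E(u_0)\ge \frac{\theta-1}{C_{Sob}\theta}\|u_0\|_{L^2(\Omega)},
\]
which contradicts the upper bound in \eqref{E(u0)<d}. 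Hence $I(u_0)<0$, that is, $u_0\in\mathcal{N}_{-}$.

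\textbf{Step 2: $\|u_0\|_{L^2(\Omega)}\ge\Lambda_{E(u_0)}$.} Set $\zeta=E(u_0)>d$ and take any $u\in\mathcal{N}_\zeta$. Then $I(u)=0$ and $E(u)<\zeta$, so the same computation (with $I(u)=0$) gives
\[
\|u\|_{L^2(\Omega)}\le C_{Sob}\|u\|_{W_{0}^{s,1}(\Omega)}\le \frac{C_{Sob}\theta}{\theta-1}E(u)<\frac{C_{Sob}\theta}{\theta-1}E(u_0)<\|u_0\|_{L^2(\Omega)},
\]
where the last inequality is \eqref{E(u0)<d} again. Taking the supremum over $u\in\mathcal{N}_\zeta$ gives $\Lambda_{E(u_0)}\le\|u_0\|_{L^2(\Omega)}$. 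Since $\mathcal{N}_\zeta\neq\emptyset$ for $\zeta>d$, this supremum is finite, consistent with Lemma \ref{lambda{zeta} away from zero}.

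There is essentially no obstacle. The one point needing care is that the estimate $E(u)\ge\frac{\theta-1}{\theta}\|u\|_{W_{0}^{s,1}(\Omega)}$ holds exactly when $I(u)\ge 0$, and this covers both the contradiction hypothesis in Step 1 and the Nehari case $I(u)=0$ in Step 2.
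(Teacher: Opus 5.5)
Your proof is correct and follows the paper's own argument. The paper likewise combines $E(u)\ge\bigl(1-\frac1\theta\bigr)\|u\|_{W_0^{s,1}(\Omega)}+\frac1\theta I(u)$ with $\|u\|_{L^2(\Omega)}\le C_{Sob}\|u\|_{W_0^{s,1}(\Omega)}$. It applies this first to $u_0$ to get $I(u_0)<0$, and then to each $u\in\mathcal{N}_{E(u_0)}$ to bound $\Lambda_{E(u_0)}$. The only difference is cosmetic: you argue Step 1 by contradiction and drop the $I$-term, whereas the paper keeps that term and compares with $E(u_0)$ directly. Your version makes it slightly clearer that the strict inequality $I(u_0)<0$ comes from the strict hypothesis.
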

\begin{proof}
From the condition \eqref{Assump:f_1}, \eqref{E(u0)<d} and the embedding of $W_{0}^{s,1}(\Omega)\hookrightarrow L^{2}(\Omega)$, we obtain
\begin{align}
\label{NE(U0)}
    E(u_0) &\geq \left(1-\frac{1}{\theta}\right)\|u_{0}\|_{W_{0}^{s,1}(\Omega)}+ \frac{1}{\theta} I(u_0)
    \geq  \left(\frac{\theta-1}{C_{Sob}\theta}\right)\|u_{0}\|_{L^{2}(\Omega)} + \frac{1}{\theta} I(u_0)
    \geq E(u_0)+ \frac{1}{\theta} I(u_0).
\end{align}
This implies that \( I(u_0) < 0 \). Therefore, we conclude that \( u_0 \in \mathcal{N}_- \).
Next, let \( u \in \mathcal{N}_{E(u_0)} \). From \eqref{NE(U0)} for $u$ (in place of $u_0$) and using the fact that $I(u) =0$, we obtain
\begin{equation}
\label{E(u0)}
\begin{aligned}
    E(u)
    &\geq \left(\frac{\theta-1}{C_{Sob}\theta}\right)\|u\|_{L^{2}(\Omega)}+\frac{1}{\theta} I(u) \geq \left(\frac{\theta-1}{C_{Sob}\theta}\right)\|u\|_{L^{2}(\Omega)}.
\end{aligned}
\end{equation}
From \eqref{E(u0)<d}, we have 
$$\left(\frac{\theta-1}{C_{Sob}\theta}\right)\|u\|_{L^{2}(\Omega)}\leq E(u)< E(u_{0})< \left(\frac{\theta-1}{C_{Sob}\theta}\right)\|u_{0}\|_{L^{2}(\Omega)}.$$
Taking the supremum over the set \( \mathcal{N}_{E(u_0)} \), we finish the proof.
\end{proof}
By following the same arguments as in \cite[Theorem 4.12]{Arora-2025}, we can prove the following result:
\begin{corollary}
\label{finthm}
 Let the conditions \eqref{Assump:f_0}--\eqref{Assump:f_1} hold. For any $M> d$, then there exists $u_{M}\in \mathcal{N}_{-}$ such that $E(u_{M})=M$ and $\|u_{M}\|_{L^{2}(\Omega)}\geq \Lambda_{E(u_{M})}$.
    \end{corollary}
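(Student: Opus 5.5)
Fix $M>d$. The plan is to build $u_M$ as a sum of two pieces with disjoint supports: a fixed function $v$ lying strictly inside $\mathcal{N}_-$, and a small-$L^2$ \emph{oscillating} piece whose only role is to raise the energy up to $M$. We then tune a scalar so that $E(u_M)=M$ exactly, and finally check the norm condition $\|u_M\|_{L^2(\Omega)}\geq\Lambda_{M}$ using the $L^2$ bound on $\mathcal{N}_M$ from the proof of Lemma \ref{lambda{zeta} away from zero}.

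\textbf{Step 1 (an upper bound for $\Lambda_M$).} By \eqref{E tau is bounded}, any $u\in\mathcal{N}_M\subset\mathcal{N}_+\cup\mathcal{N}$ satisfies $\|u\|_{W_0^{s,1}(\Omega)}\leq \frac{\theta M}{\theta-1}$. With the embedding ($N<2s$) this gives
\[
\Lambda_M\leq C_{Sob}\frac{\theta M}{\theta-1}=:K_M .
\]
So it suffices to produce $u_M\in\mathcal{N}_-$ with $E(u_M)=M$ and $\|u_M\|_{L^2(\Omega)}\geq K_M$.

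\textbf{Step 2 (the base function).} Fix disjoint balls $B_1,B_2\Subset\Omega$ and a nonzero $\phi\in C_c^\infty(B_1)$. By Lemma \ref{Lemma:2.3}(i),(iii), for $\lambda$ large we have $I(\lambda\phi)<0$, $E(\lambda\phi)\to-\infty$, and $\|\lambda\phi\|_{L^2}\to\infty$. Choose $\lambda$ so large that
\[
v=\lambda\phi,\qquad \|v\|_{L^2(\Omega)}\geq K_M,\qquad E(v)<M,\qquad I(v)<0 .
\]

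\textbf{Step 3 (the oscillating piece).} Take $w_k\in C_c^\infty(B_2)$ with $\|w_k\|_{L^\infty}\leq\varepsilon$, hence $\|w_k\|_{L^r}$ small uniformly in $k$, while $\|w_k\|_{W_0^{s,1}}\to\infty$; rapidly oscillating bumps do this. Set $u=v+\mu w_k$ with $\mu\in[0,1]$. Because the supports are disjoint, the potential terms split: $\int F(x,u)=\int F(x,v)+\int F(x,\mu w_k)$, and likewise for $\int f(x,u)u$. The Gagliardo seminorm satisfies
\[
\|u\|_{W_0^{s,1}}\leq \|v\|_{W_0^{s,1}}+\mu\|w_k\|_{W_0^{s,1}} ,
\]
with a matching lower bound up to a cross term that is controlled. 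The map $\mu\mapsto E(v+\mu w_k)$ is continuous, equals $E(v)<M$ at $\mu=0$, and for $k$ large exceeds $M$ at $\mu=1$. By the intermediate value theorem we obtain $\mu_k$ with $E(v+\mu_k w_k)=M$.

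\textbf{Main obstacle.} We must keep $I(u)<0$ while the seminorm is being pushed up. Since
\[
E-I=\int\bigl(f(x,u)u-F(x,u)\bigr),
\]
and this quantity equals the value for $v$ plus a term $O(\varepsilon^r)$, at the level where $E(u)=M$ we get
\[
I(u)=M-\int_\Omega\bigl(f(x,u)u-F(x,u)\bigr)=M-(E(v)-I(v))+O(\varepsilon^r).
\]
So the requirement is to choose $v$ in Step 2 with $E(v)-I(v)>M$. This holds along the ray $\lambda\phi$, because by \eqref{Assump:f_1} and \eqref{F:bounds},
\[
\int\bigl(f(x,\lambda\phi)\lambda\phi-F(x,\lambda\phi)\bigr)\geq(\theta-1)\int F(x,\lambda\phi)\to\infty .
\]
We fix $\lambda$ satisfying this condition first, and then choose $\varepsilon$ small. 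The $L^2$ norm only increases, since the supports are disjoint, so $\|u_M\|_{L^2}\geq\|v\|_{L^2}\geq K_M\geq\Lambda_M$, and $u_M=v+\mu_kw_k$ is the required function.
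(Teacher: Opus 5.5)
Your argument is correct. Its skeleton is the standard disjoint-support construction that the paper invokes by citation (no proof is written out here): two disjoint balls, a large multiple $v=\lambda\phi$ carrying the $L^2$ mass, and a second piece on the other ball used only to bring the energy to exactly $M$.

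\textbf{Where you differ: how $I(u_M)<0$ is obtained.} The natural closing step is Corollary~\ref{high initial blow up}, stated just before this one. Take $\|v\|_{L^2(\Omega)}>K_M$ strictly. Then $\|u_M\|_{L^2(\Omega)}\geq\|v\|_{L^2(\Omega)}>\frac{C_{Sob}\theta}{\theta-1}E(u_M)$ with $E(u_M)=M>d$. That corollary then gives $u_M\in\mathcal{N}_-$ and $\|u_M\|_{L^2(\Omega)}\geq\Lambda_M$ at once. So your ``main obstacle'' disappears, and so do the conditions $I(v)<0$ and $E(v)-I(v)>M$; only $E(v)<M$ is still needed for the intermediate-value step.

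Your alternative through $E-I=\int_\Omega\bigl(f(x,u)u-F(x,u)\bigr)\,dx$ is nonetheless valid. It does not even need $\varepsilon$ small: by \eqref{Assump:f_1}, the contribution of $\mu w_k$ to this integral is nonnegative. Hence $I(u_M)\leq M-\bigl(E(v)-I(v)\bigr)<0$ directly.

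\textbf{The intermediate-value step.} Your continuity argument along $\mu\mapsto E(v+\mu w_k)$ is exactly what the nonlocal setting requires. For the Gagliardo seminorm, $E(v+w)\neq E(v)+E(w)$ even when the supports are disjoint, so one cannot simply choose $w$ with $E(w)=M-E(v)$ as in the local case. You also do not need any cross-term estimate there. The bounds $\|v+w_k\|_{W_0^{s,1}(\Omega)}\geq\|w_k\|_{W_0^{s,1}(\Omega)}-\|v\|_{W_0^{s,1}(\Omega)}$ and $\int_\Omega F(x,v+w_k)\,dx=\int_\Omega F(x,v)\,dx+O(\varepsilon^r)$ already force $E(v+w_k)\to\infty$.
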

\section*{Acknowledgement}
The first author acknowledges the financial support from the Anusandhan National Research Foundation (ANRF), India,
under Grant No. ANRF/ARGM/2025/000272/MTR. The second author is funded by the UGC Senior Research Fellowship with reference no. 211610115269.
\end{document}